\title[Invariant Hilbert scheme resolution: the non-toric case]{Invariant Hilbert scheme resolution of Popov's $SL(2)$-varieties II: the non-toric case}
\author{Ayako Kubota}
\address{
Department of Mathematics, Graduate School of Fundamental Science and Engineering, 
Waseda University, 
3-4-1 Ohkubo, Shinjuku, Tokyo 169-8555, Japan}
\email{ayako.kubota.math@gmail.com}
\theoremstyle{plain}
\newtheorem{theorem}[subsection]{Theorem}
\newtheorem{lemma}[subsection]{Lemma}
\newtheorem{proposition}[subsection]{Proposition}
\newtheorem{corollary}[subsection]{Corollary}
\newtheorem*{theorem*}{Theorem}
\newtheorem*{corollary*}{Corollary}
\newtheorem*{MainTheorem}{Main Theorem}
\theoremstyle{definition}
\newtheorem{definition}[subsection]{Definition}
\newtheorem{example}[subsection]{Example}
\theoremstyle{remark}
\newtheorem{remark}[subsubsection]{Remark}
\DeclareSymbolFont{cmletters}{OML}{cmm}{m}{it}
\DeclareSymbolFont{cmsymbols}{OMS}{cmsy}{m}{n}
\DeclareSymbolFont{cmlargesymbols}{OMX}{cmex}{m}{n}
\DeclareMathSymbol{\myjmath}{\mathord}{cmletters}{"7C}
\DeclareMathSymbol{\myamalg}{\mathbin}{cmsymbols}{"71}
\DeclareMathSymbol{\mycoprod}{\mathop}{cmlargesymbols}{"60}
\let\jmath\myjmath
\DeclareMathOperator{\Hilb}{Hilb}
\DeclareMathOperator{\Sym}{Sym}
\DeclareMathOperator{\Spec}{Spec}
\DeclareMathOperator{\pr}{pr}
\DeclareMathOperator{\id}{id}
\DeclareMathOperator{\Hom}{Hom}
\DeclareMathOperator{\diag}{diag}
\DeclareMathOperator{\Gr}{Gr}
\DeclareMathOperator{\Ker}{Ker}
\DeclareMathOperator{\Irr}{Irr}
\DeclareMathOperator{\Div}{div}
\DeclareMathOperator{\Car}{Car}
\DeclareMathOperator{\Rem}{Rem}
\def\git{/\!\!/}
\def\G{G_0 \times G_m}
\def\C{\mathbb C}
\def\E{E_{l,m}}
\def\P{\mathbb P}
\def\w{\omega}
\def\Z{\mathbb Z}
\newcommand{\biggg}[1]{{\hbox{$\left#1\vbox to 20.5pt{}\right.\n@space$}}}
\newcommand{\Biggg}[1]{{\hbox{$\left#1\vbox to 23.5pt{}\right.\n@space$}}}
\newcommand{\bigggg}[1]{{\hbox{$\left#1\vbox to 26.5pt{}\right.\n@space$}}}
\newcommand{\Bigggg}[1]{{\hbox{$\left#1\vbox to 29.5pt{}\right.\n@space$}}}
\newcommand{\biggggg}[1]{{\hbox{$\left#1\vbox to 32.5pt{}\right.\n@space$}}}
\newcommand{\Biggggg}[1]{{\hbox{$\left#1\vbox to 35.5pt{}\right.\n@space$}}}
\newcommand{\bigggggg}[1]{{\hbox{$\left#1\vbox to 38.5pt{}\right.\n@space$}}}
\newcommand{\Bigggggg}[1]{{\hbox{$\left#1\vbox to 41.5pt{}\right.\n@space$}}}
\begin{document}

\baselineskip 17pt
\parskip 7pt

\maketitle
\vspace{-1cm}

\begin{abstract}
This article is a continuation of \cite{Ku}, which proves that if a $3$-dimensional affine normal quasihomogeneous $SL(2)$-variety $E$ is toric,   then it has an equivariant resolution of singularities given by an invariant Hilbert scheme $\mathcal H$. 
In this article, we consider the case where $E$ is non-toric and show that the Hilbert--Chow morphism $\gamma : \mathcal H \to E$ is a resolution of singularities 
and that $\mathcal H$ is isomorphic to 
the minimal resolution of a weighted blow-up of $E$. 
\end{abstract}


\section*{Introduction}
Let $\E$ be a $3$-dimensional affine normal quasihomogeneous $SL(2)$-variety of height $l$ and degree $m$, and write $l$ as an irreducible fraction $l=p/q$. 
Batyrev and Haddad \cite{BH} showed that 
$\E$ has a description as an affine categorical quotient of a hypersurface $H_{q-p}$ in $\C^5$ modulo an action of $\C^* \times \mu_m$. 
Also, they proved that an $SL(2)$-variety $\E$ admits an action of $\C^*$ and becomes a spherical $SL(2) \times \C^*$-variety with respect to the Borel subgroup $B \times \C^*$. 
Further, it is shown that there is an  
equivariant flip diagram 
\[
\xymatrix@R=5pt{
\E^- \ar@{.>}[rr] \ar[rd] & & \E^+ \ar[ld] \\
& \E &
},
\]
where $\E^{-}$ and $\E^{+}$ are different GIT quotients of $H_{q-p}$ corresponding to some non-trivial characters, 
and that the varieties $\E$, $\E^-$, and $\E^+$ are dominated by the weighted blow-up $\E'=Bl_O^{\w}(\E)$ of $\E$ with a weight $\w$ defined by the above-mentioned $\C^*$-action on $\E$. 
The weight $\w$ is trivial if and only if the $SL(2)$-variety $\E$ is toric, namely if $m=a(q-p)$ holds for some $a>0$ (see \cite{G, BH}). 

In our previous article  \cite{Ku}, we used the GIT quotient 
description of $\E$ due to Batyrev and Haddad to construct the invariant Hilbert scheme $\mathcal H=\Hilb^{\C^* \times \mu_m}_h(H_{q-p})$,  
where $h$ is the Hilbert function of the general fibers of the quotient morphism $H_{q-p} \to H_{q-p} \git (\C^* \times \mu_m)$,  
and considered the corresponding Hilbert--Chow morphism 
\[
\xymatrix{
\gamma : \mathcal H \ar[r] &  H_{q-p} \git (\C^* \times \mu_m) \cong \E
},
\]
which is an isomorphism over the dense open orbit $\mathfrak U \subset \E$. 
We treated the case where $\E$ is toric and showed that  
the main component $\mathcal H^{main}=\overline{\gamma^{-1}(\mathfrak U)}$ is isomorphic to the blow-up $\E'$ and that $\mathcal H$ coincides with $\mathcal H^{main}$ (\cite
[Corollary 5.2 and Theorem 6.1]{Ku}). 

The goal of this article is to prove the following result. 
\begin{MainTheorem}[Theorem \ref{main theorem} and Corollary \ref{duval}]
If $\E$ is non-toric, then:
\begin{itemize}
\item [\rm(i)] the main component $\mathcal H^{main}$ is isomorphic to the minimal resolution $\widetilde{\E'}$ of the weighted blow-up $\E'$; 
\item [\rm(ii)] the invariant Hilbert scheme $\mathcal H$ coincides with the main component $\mathcal H^{main}$. 
\end{itemize}
\end{MainTheorem}
The problem of deciding the main component $\mathcal H^{main}$ and showing the smoothness of $\mathcal H=\mathcal H^{main}$ is easier in the toric case than in the non-toric case, since we have the relation $m=a(q-p)$. 
 The non-toric case requires more intricate arguments, which is mainly because there is nothing to relate the height $l=p/q$ and the degree $m$ directly, but the essential idea for the proof is the same as in the toric case. 
 In the following, we outline our approach for the non-toric case. 
 First, as in the toric case, we show that the restriction $\gamma|_{\mathcal H^{main}}$ factors equivariantly through the weighted blow-up $\E'$: 
\[
\xymatrix{
\mathcal H^{main} \ar[rr]^{\; \; \; \; \psi} \ar[rrd]_{\gamma |_{\mathcal H^{main}}} &  & \E'  \ar[d]\\
& & \E
}
\]
If $\E$ is toric then $\psi$ is an isomorphism, while if $\E$ is non-toric then 
we see by an                                                                                                                                                                                                                                                                 easy observation that $\psi$ is not an isomorphism. 
On the other hand, according to \cite{BH}, the weighted blow-up  $\E'$ contains a family of cyclic quotient singularities $\C^2/ \mu_b$, and  
therefore the natural candidate for $\mathcal H^{main}$ is 
the minimal resolution  $\widetilde{\E'}$ of these quotient singularities, which is known to be described by the Hirzebruch--Jung continued fraction. 
So what we do next 
is to construct an equivariant morphism $\mathcal H^{main} \to \widetilde{\E'}$: 
we first realize $\widetilde{\E'}$ as a closed subscheme of a projective space over $\E$ and then use Becker's idea \cite[\S 4]{Bec} of embedding an invariant Hilbert scheme to products of Grassmannians to 
construct a morphism $\Psi$ from $\mathcal H$ to the projective space such that $\Psi(\mathcal H^{main}) \cong \widetilde{\E'}$. 
Finally, we show that $\Psi|_{\mathcal H^{main}}: \mathcal H^{main} \to \widetilde{\E'}$ is an isomorphism. 
By the Zariski's Main Theorem, it suffices to show that $\Psi|_{\mathcal H^{main}}$ is injective, and concerning that it is equivariant we are left to show the injectivity orbit-wise: 
we take a ``representative'' point from each orbit in $\widetilde{\E'}$ (e.g. we take a Borel-fixed point if the orbit is closed) and show that its fiber consists of one point, say $[Z] \in \mathcal H^{main}$. 
In showing the injectivity, the differences from the toric case are that the number of orbits in $\widetilde{\E'}$ depends on the pair $(l,m)$ and that the degrees of generators of the ideal $I_Z$ of $Z$ can not be expressed in terms of $p,\; q$, or $m$. 
What becomes a key here is the spherical geometry of $\widetilde{\E'}$, which enables us to give a uniform approach independent of the pair $(l,m)$. 
To be more precise,  
the number of orbits can be read off from the colored fan of $\widetilde{\E'}$. 
Also, the degrees of the generators of $I_Z$ are described by using ray generators of maximal cones contained in the fan of $\widetilde{\E'}$, and the relations among them come from recursive relations arising from the Hirzebruch--Jung continued fraction. 
This is why the calculation of generators of the ideal $I_Z$ 
involves intricate combinatorial arguments in contrast to the toric case. 

This article is organized as follows: 
we first summarize some general properties of invariant Hilbert schemes in \S \ref{s-generalhilb} and of spherical varieties in \S \ref{s-spherical}. 
Afterwards, we review Popov's classification of $3$-dimensional affine normal quasihomogeneous $SL(2)$-varieties  (Theorem \ref{popov}) and the GIT quotient description due to Batyrev and Haddad (Theorems \ref{batyrev} and \ref{batyrev2}). 
In \S \ref{s-flat}, we first review some facts from \cite{Ku}, and then describe the minimal resolution $\widetilde{\E'}$ in terms of its colored fan. 
In \S \ref{s-proof}, we realize $\widetilde{\E'}$ as a closed subscheme of a projective space over $\E$ by using the spherical geometry of $\widetilde{\E'}$ (Proposition \ref{phicl}).  
\S \ref{s-gen} is a preparation for later sections and is mainly devoted to the proof of Theorem \ref{generator2}, which requires some complicated combinatorial arguments. 
In \S \ref{s-second}, we construct the morphism $\Psi$ by using Theorem \ref{generator2}. 
In \S \ref{s-idealo}, we calculate ideals (Theorems \ref{idealO2} and \ref{idealO}), which will be shown to correspond to  ``representative'' points in $\widetilde{\E'}$ via the isomorphism 
$\mathcal H^{main} \cong \widetilde{\E'}$.    
In the last section, we give the proof of Main Theorem. 
\section{Generalities on the invariant Hilbert scheme}\label{s-generalhilb}
We review some generalities on  the invariant Hilbert scheme introduced by Alexeev and Brion in \cite{AB}. For more details refer to Brion's survey \cite{B}. 

Let $G$ be a reductive algebraic group. For any $G$-module $V$, we have its isotypical decomposition
\[
V \cong \bigoplus_{M \in \Irr(G)} \Hom^G(M,V) \otimes M,
\]
where $\Irr(G)$ stands for the set of isomorphism classes of irreducible representations of $G$.  
We call the dimension of $\Hom^G(M,V)$ the \emph{multiplicity} of $M$ in $V$. 
If the multiplicity is finite for every $M \in \Irr(G)$, we can define a function 
\[
h_V: \Irr(G) \to \mathbb Z_{\geq 0}, \quad M \mapsto h_V(M):=\dim \Hom^G(M,V),
\]
which is called the \emph{Hilbert function} of $V$. 
Let $X$ be an affine $G$-scheme of finite type, and $h$ a Hilbert function. 
The \emph{invariant Hilbert scheme} $\Hilb^G_h(X)$ associated to the triple $(G,X,h)$ is a moduli space that parametrizes $G$-stable closed subschemes of $X$ whose coordinate rings have Hilbert function $h$.  
Namely, the set-theoretical description of $\Hilb^G_h(X)$ is given as follows:
\[
\Hilb^G_h(X)=\left\{Z \subset X\; : \; 
\begin{matrix}
Z\; \mbox{is a closed}\; G\mbox{-subscheme of}\; X; \qquad \; \; \; \; \; \\
\C[Z] \cong \bigoplus_{M \in \Irr(G)} M^{\oplus h(M)}\; \mbox{as}\; G\mbox{-modules}
\end{matrix}
\right\}.
\] 
We denote by $T_{[Z]} \Hilb^G_h(X)$ the Zariski tangent space to the invariant Hilbert scheme $\Hilb^G_h(X)$ at a closed point $[Z]$. 
Let 
\[
\xymatrix{
\pi : X \ar[r] &  X \git G:=\Spec(\C[X]^G)
}
\]
 be the quotient morphism, and suppose that $X$ is irreducible. 
Then by the generic flatness theorem $\pi$ is flat over a non-empty open subset $Y_0$ of $X \git G$.  
The Hilbert function of the flat locus $\pi^{-1}(Y_0) \to Y_0$ is called the \emph{Hilbert function of the general fibers of $\pi$} and denoted by $h_X$. 
The associated Hilbert--Chow morphism 
\[
\gamma : \Hilb^G_{h_X}(X) \to X \git G,\qquad [Z] \mapsto Z \git G
\]
is an isomorphism over $Y_0$, and its restriction to the \emph{main component} $\mathcal H^{main}:=\overline{\gamma^{-1}(Y_0)}$ 
is projective and birational 
({\cite[Theorem I.1.1]{Bud}}, \cite[Proposition 3.15]{B}, 
see also \cite{Bec, Ter14, Ter}). 

To conclude this short section, we summarize Becker's idea \cite[\S 4.2]{Bec} of embedding an invariant Hilbert scheme into products of Grassmannians.  
Suppose that there is an action on $X$ by another connected reductive algebraic group $G'$.  
For any irreducible representation 
$M \in \Irr(G)$, there is a finite-dimensional $G'$-module $F_M$ that 
generates $\Hom^G(M, \C[X])$ as $\C[X]^G$-modules. 
For $[Z] \in \Hilb^G_{h_X}(X)$, we let 
\[
\xymatrix{
f_{M,Z} : F_M \ar[r] &  \Hom^G(M, \C[Z])
}
\]
 be 
the composition of the inclusion 
$F_M \hookrightarrow  \Hom^G(M, \C[X])$ and the natural surjection $\Hom^G(M, \C[X]) \to \Hom^G(M, \C[Z])$. 
Then, the quotient vector space $F_M/\Ker f_{M,Z}$ defines a point in the Grassmannian $\Gr(h_X(M), F_M^{\vee})$. 
In this way, we obtain a $G'$-equivariant morphism
\[
\eta_M : 
\Hilb^G_{h_X}(X) \to \Gr(h_X(M), F_M^{\vee}) , \qquad 
[Z] \mapsto F_M/\Ker f_{M,Z}.
\]
Furthermore, there is a finite subset $\mathcal M \subset \Irr(G)$ 
such that the morphism
\[
\xymatrix{
\gamma \times\prod_{M \in \mathcal M} \eta_M : 
\Hilb^G_{h_X}(X) \ar[r] & X \git G \times \prod_{M \in \mathcal M} \Gr(h_X(M), F_M^{\vee})
}
\]
is a closed immersion. 
\section{Generalities on spherical varieties}\label{s-spherical}
Let $G$ be a connected reductive algebraic group, and $H$ an algebraic subgroup of $G$. 
A normal $G$-variety is called \emph{spherical} if it contains a dense orbit under a Borel subgroup of $G$. 
By a \emph{spherical embedding}, we mean a normal $G$-variety $X$ together with an equivariant open embedding of a homogeneous spherical variety $G/H \hookrightarrow X$.  

Let $X$ be a spherical embedding of $G/H$ with respect to a Borel subgroup $B$. We denote by $\mathfrak X(B)$ the group of characters of $B$, and by $\C(G/H)^{(B)}$ the set of rational $B$-eigenfunctions: 
\[
\C(G/H)^{(B)}=\left\{f \in \C(G/H)^*\; :\; \exists \chi_f \in \mathfrak X(B)\; \forall g \in B \; g \cdot f= \chi_f(g)f\right\}.
\] 
Consider a homomorphism 
$\C(G/H)^{(B)} \to \mathfrak X(B)$ defined by $f \mapsto \chi_f$, and let $\Gamma \subset \mathfrak X(B)$ be its image. 
Then, $\Gamma$ is a finitely generated free abelian group, and its rank is called the \emph{rank} of $G/H$.  Since $G/H$ contains a dense $B$-orbit, the kernel of the above homomorphism  consists of constant functions.  
Therefore, we get the exact sequence
\[
1 \longrightarrow \C^* \longrightarrow \C(G/H)^{(B)} \longrightarrow \Gamma \longrightarrow 0.
\]
We see that any valuation $v : \C(G/H)^* \to \mathbb Q$ of $G/H$ defines a homomorphism $\C(G/H)^{(B)} \to \mathbb Q,\; 
f \mapsto v(f)$, 
which factors through $\Gamma$. 
Hence it induces an element 
\[
\rho_v \in Q:=\Hom(\Gamma, \mathbb Q),
\]
 namely $\rho_v(\chi_f)=v(f)$. A valuation $v$ is called \emph{$G$-invariant} if $v(g\cdot f)=v(f)$ holds for any $g \in G$, and we denote by $\mathcal V$ the set of $G$-invariant valuations. 
\begin{proposition}[{\cite[7.4 Proposition]{LV}}]
The map $\mathcal V \to Q,\; v \mapsto \rho_v$ is injective.
\end{proposition}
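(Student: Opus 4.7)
The plan is to show that every $G$-invariant valuation of $\C(G/H)$ is already determined by its restriction to the multiplicative group $\C(G/H)^{(B)}$ of $B$-semi-invariant rational functions. Concretely, assume $v_1, v_2 \in \mathcal V$ satisfy $\rho_{v_1} = \rho_{v_2}$, which translates into the equality $v_1(f) = v_2(f)$ for every $f \in \C(G/H)^{(B)}$; the goal is to deduce $v_1(f) = v_2(f)$ for every $f \in \C(G/H)^*$.

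First I would reduce to regular functions on a $G$-stable affine open $U \subset G/H$ by writing $f = f_1/f_2$ with $f_1, f_2 \in \C[U]$ and using $v(f) = v(f_1) - v(f_2)$. For a regular $\phi \in \C[U]$, take the finite-dimensional $G$-submodule $V := \langle G \cdot \phi \rangle \subset \C[U]$. Since $G/H$ is spherical, $\C[U]$ is a multiplicity-free $G$-module, and the isotypical decomposition $V = \bigoplus_i V_{\lambda_i}$ has each $V_{\lambda_i}$ irreducible, containing a (unique up to scalar) $B$-eigenvector $\phi_{\lambda_i}$, which automatically belongs to $\C(G/H)^{(B)}$.

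The heart of the argument is the formula, valid for any $v \in \mathcal V$ and any $\phi = \sum_i \phi^{(i)} \in V$ with $\phi^{(i)} \in V_{\lambda_i}$:
\[
v(\phi) = \min\bigl\{\,v(\phi_{\lambda_i}) : \phi^{(i)} \neq 0\,\bigr\}.
\]
The inequality $v(\phi) \geq \min_i v(\phi_{\lambda_i})$ is straightforward: each $\phi^{(i)}$ is a finite linear combination of $G$-translates of $\phi_{\lambda_i}$, so the ultrametric inequality together with $G$-invariance of $v$ yields $v(\phi^{(i)}) \geq v(\phi_{\lambda_i})$. The reverse inequality is more delicate: using that $V_{\lambda_i}$ is irreducible and appears with multiplicity one in $V$, one extracts a nonzero scalar multiple of $\phi_{\lambda_i}$ from $\phi$ by combining a $G$-translation with a limiting process under a one-parameter subgroup of $B$ that contracts $V_{\lambda_i}$ onto the line $\C \phi_{\lambda_i}$; since $v$ is $G$-invariant and semicontinuous along such families, this forces $v(\phi_{\lambda_i}) \geq v(\phi)$ whenever $\phi^{(i)} \neq 0$.

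With the formula in hand, the proposition is immediate: the minimum on the right is controlled by the values $v(\phi_{\lambda_i})$, which for $\phi_{\lambda_i} \in \C(G/H)^{(B)}$ depend only on $\rho_v$, so $\rho_{v_1} = \rho_{v_2}$ forces $v_1 = v_2$ on $V$, hence on $\C[U]$, and finally on $\C(G/H)^*$. The main obstacle is the reverse inequality in the key formula, which is the one step where sphericity is used essentially (beyond merely defining $\rho_v$); making the limiting argument precise, so that $G$-invariance genuinely transports the valuation from $\phi$ to the $B$-eigenvector $\phi_{\lambda_i}$, is the crux of the proof.
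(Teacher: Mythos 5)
First, a point of comparison: the paper does not prove this Proposition at all --- it is quoted verbatim from Luna--Vust \cite{LV}*{7.4 Proposition} --- so your argument can only be measured against the standard proof from the literature, not against anything in the text. Your skeleton is indeed that standard route: pass to the finite-dimensional $G$-module $V=\langle G\cdot \phi\rangle$, decompose it, and prove $v(\phi)=\min_i v(\phi_{\lambda_i})$ with each $\phi_{\lambda_i}\in\C(G/H)^{(B)}$, so that $\rho_{v_1}=\rho_{v_2}$ forces $v_1=v_2$. The formula is true, and your ``easy'' direction is fine.

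However, the step you single out as the crux is not justified as written, and your reduction step has a genuine defect. (1) You prove $v(\phi_{\lambda_i})\geq v(\phi)$ by a limiting process under a one-parameter subgroup together with an alleged ``semicontinuity'' of $v$ along such families; a valuation has no such semicontinuity property, so this step is a gap. It is also unnecessary: $\phi_{\lambda_i}$ lies in $V$, which is by definition the linear span of the translates $g\cdot\phi$, so $\phi_{\lambda_i}=\sum_s c_s\, g_s\cdot\phi$ and the ultrametric inequality plus $G$-invariance give $v(\phi_{\lambda_i})\geq \min_s v(g_s\cdot\phi)=v(\phi)$ --- the same one-line argument you used in the other direction. (If you want to keep the contraction picture, expand $\rho(t)\cdot\psi$ as a Laurent polynomial in $t$ and recover each coefficient, in particular the ``limit,'' as a linear combination of finitely many translates $\rho(t_s)\cdot\psi$ via a Vandermonde argument; this again reduces to the same elementary bound, not to any continuity of $v$.) (2) The reduction ``to regular functions on a $G$-stable affine open $U\subset G/H$'' does not make sense in general: $G/H$ is homogeneous, so its only nonempty $G$-stable open subset is $G/H$ itself, and this works only when $G/H$ is quasi-affine (true for the open orbit $\mathfrak U$ in this paper, but not for the Proposition as stated). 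In general one must write $f=s_1/s_2$ with $s_1,s_2$ in a finite-dimensional $G$-stable space of sections of a $G$-linearized ample line bundle on a projective model (equivalently, produce a finite-dimensional $G$-stable $M\subset\C(G/H)$ and $0\neq h\in M$ with $hf\in M$) and run the same argument there. Finally, multiplicity-freeness of $\C[G/H]$ is not really needed: all that matters is that every $B$-eigenvector of $V$ lies in $\C(G/H)^{(B)}$, where $v_1=v_2$ by hypothesis.
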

Let us denote by $\mathcal D(X)$ the set of $B$-stable prime divisors on $X$. 
We simply write $\mathcal D$ for $\mathcal D(G/H)$ and call  
an element of $\mathcal D$ a \emph{color}. 
If $D \in \mathcal D(X)$ non-trivially meets the open orbit $G/H$, then we have $D \cap G/H \in \mathcal D$. 
Otherwise, $D$ is an irreducible component of the complement $X\setminus (G/H)$ and hence is $G$-stable. 
Therefore, each $G$-orbit $Y$ in $X$ determines two sets
\[
\mathcal B_Y(X):=\{v_D \in \mathcal V\; :\; D \in \mathcal D_Y(X)\; \mbox{is}\; G\mbox{-stable}\}
\]
and 
\[
\mathcal F_Y(X):=\{D \cap G/H \in \mathcal D\; :\; D \in \mathcal D_Y(X)\; \mbox{is not}\; G\mbox{-stable}\},
\]
where 
\[
\mathcal D_Y(X):=\{D \in \mathcal D(X)\; :\; Y \subset D\}.
\] 
\begin{definition}
A spherical embedding $X$ is called \emph{simple} if it contains a unique closed $G$-orbit. 
\end{definition}
\begin{remark}
Any spherical embedding is covered by finitely many simple open subembeddings. 
\end{remark}
\begin{remark}\label{coordinate ring}
Let $X$ be a simple spherical embedding with a closed orbit $Y$, and set 
\[
(X)_0:=X \setminus \bigcup_{D \in \mathcal D(X) \setminus \mathcal D_Y(X)} D 
\]
and 
\[
(X)_1:=G/H \setminus \bigcup_{D \in \mathcal D \setminus \mathcal F_Y(X)} D. 
\]
Then $(X)_0$ is a $B$-stable affine open subset, and we have 
 \[
\C[(X)_0]=\{f \in \C[(X)_1]\; :\; v(f) \geq 0\; \mbox{for all}\; v \in \mathcal B_Y(X)\}.
\]
Also, we have $X=G(X)_0$. 
(see {\cite[Theorems 2.1 and 2.3]{Knop}}).
\end{remark}
Now with the preceding notation, we see that there is a natural map 
\[
\varrho: \mathcal D \to Q,\quad D \mapsto \varrho(D):=\rho_{v_D}.
\]
\begin{definition}
A \emph{colored cone} is a pair $(\mathcal C, \mathcal F)$ with 
$\mathcal C \subset Q$ and $\mathcal F \subset \mathcal D$ that satisfies the following properties:
\begin{itemize}
\item $\mathcal C$ is a cone generated by $\varrho(\mathcal F)$ and finitely many elements of $\mathcal V$;   
\item $\mathcal C^{\circ} \cap \mathcal V \neq \phi$, where $\mathcal C^{\circ}$ stands for the relative interior of $\mathcal C$. 
\end{itemize}
A colored cone $(\mathcal C, \mathcal F)$ is called \emph{strictly convex} if $\mathcal C$ is strictly convex and $0 \notin \varrho(\mathcal F)$. 
\end{definition}
Let $Y$ be a $G$-orbit in a spherical embedding $X$, and $\mathcal C_Y(X) \subset Q$ the cone generated by $\varrho(\mathcal F_Y(X))$ and $\mathcal B_Y(X)$. Then, the pair $(\mathcal C_Y(X),\mathcal F_Y(X))$ is a 
strictly convex colored cone. 
\begin{theorem}[{\cite[8.10 Proposition]{LV}}]\label{one to one}
The map $X \mapsto (\mathcal C_Y(X), \mathcal F_Y(X))$ gives a bijective correspondence between the isomorphism classes of simple spherical embeddings $X$ with a closed orbit $Y$ and strictly convex colored cones.
\end{theorem}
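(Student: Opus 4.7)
The plan is to prove the correspondence in both directions by reconstructing a simple spherical embedding from its colored cone. For injectivity (and well-definedness), I would invoke the local structure recalled in Remark \ref{coordinate ring}: any simple embedding $X$ with closed orbit $Y$ satisfies $X = G \cdot (X)_0$, where $(X)_0 = \Spec R$ is a $B$-stable affine open subset whose coordinate ring is
\[
R = \{f \in \C[(X)_1] : v(f) \geq 0 \; \mbox{for all} \; v \in \mathcal B_Y(X)\},
\]
with $(X)_1 = G/H \setminus \bigcup_{D \in \mathcal D \setminus \mathcal F_Y(X)} D$. Both ingredients are encoded by the colored cone: $\mathcal F_Y(X)$ is part of the datum, while $\mathcal B_Y(X)$ is recovered as the set of primitive generators of those extremal rays of $\mathcal C_Y(X)$ that do not come from $\varrho(\mathcal F_Y(X))$, using the injectivity of $\mathcal V \hookrightarrow Q$. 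Hence two simple embeddings sharing a colored cone have isomorphic $(X)_0$, and therefore are isomorphic as embeddings of $G/H$.

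For surjectivity, given a strictly convex colored cone $(\mathcal C, \mathcal F)$, I would build $X$ by mirroring the reconstruction above: set $U_1 := G/H \setminus \bigcup_{D \in \mathcal D \setminus \mathcal F} D$ and
\[
R := \{f \in \C[U_1] : v(f) \geq 0 \; \mbox{for all} \; v \in \mathcal C \cap \mathcal V\}.
\]
The algebra $R$ decomposes as a sum of $B$-eigenspaces indexed by the weight monoid $\{\chi \in \Gamma : \rho(\chi) \geq 0 \; \mbox{on}\; \mathcal C\}$, which is finitely generated by Gordan's lemma; each graded piece is finite-dimensional, and this is where the assumption $\mathcal C^{\circ} \cap \mathcal V \neq \phi$ is used, by bounding multiplicities via an interior $G$-invariant valuation. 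Thus $R$ is a finitely generated $\C$-algebra, and $U_0 := \Spec R$ is a normal affine $B$-variety containing $U_1$ as an open subset.

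The final and most delicate step, which I expect to be the main obstacle, is to extend the $B$-action on $U_0$ to a $G$-action producing a normal separated $G$-scheme $X$ that contains $G/H$ as its open orbit and satisfies $X = G \cdot U_0$. Concretely, one must show that the tautological $B$-equivariant open immersion $U_1 \hookrightarrow U_0$ extends to a $G$-equivariant open immersion $G/H \hookrightarrow X$ by patching $G$-translates of $U_0$ along their common open orbit; this relies on Sumihiro-type $G$-linearization of the $B$-action together with the valuative criteria for normality and separatedness applied to the combinatorics of $\mathcal V$. Once $X$ has been constructed, simplicity follows because the ideal in $R$ generated by $B$-eigenfunctions of strictly positive $\mathcal C$-weight is the unique $G$-stable maximal ideal, corresponding to the closed orbit $Y$; finally, the identity $(\mathcal C_Y(X), \mathcal F_Y(X)) = (\mathcal C, \mathcal F)$ is obtained by reversing the dictionary between $G$-invariant valuations, $B$-stable prime divisors, and $B$-weights.
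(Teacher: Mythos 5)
A preliminary remark on scope: the paper does not prove this statement at all — it is quoted from Luna--Vust (\cite{LV}, 8.10 Proposition; see also \cite{Knop}) and used as a black box, so there is no proof in the paper to compare yours against. What you have written is an outline of the Luna--Vust classification itself, and in spirit it follows the standard strategy (reduce to the $B$-chart $(X)_0$ via $X=G(X)_0$ and the description of $\C[(X)_0]$ recalled in Remark \ref{coordinate ring}, then reconstruct a ring $R$ from the colored cone and recover $X$ as $G\cdot \Spec R$). So the approach is the right one, but as a proof it has genuine gaps.

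Concretely: (1) in the existence step, the claim that $R=\{f\in\C[U_1]\,:\,v(f)\geq 0\ \forall v\in\mathcal C\cap\mathcal V\}$ ``decomposes as a sum of $B$-eigenspaces indexed by the weight monoid'' is false in general: $U_1$ is only $B$-stable and rational $B$-modules are not semisimple (already $\C[SL(2)/U]$ is not spanned by $B$-eigenvectors), so Gordan's lemma applied to the weight monoid does not yield finite generation of $R$. Moreover, finite-dimensionality of the eigenspaces is automatic from sphericity (the $B$-eigenfunctions of a fixed weight form a single line up to scalars), so this cannot be ``where $\mathcal C^{\circ}\cap\mathcal V\neq\phi$ is used''; in the actual theory that hypothesis is what guarantees that $G\cdot\Spec R$ has a closed $G$-orbit whose colored cone is exactly $(\mathcal C,\mathcal F)$. (2) The step you yourself flag as the main obstacle --- producing from $U_0$ a normal, separated $G$-variety $X=G\cdot U_0$ of finite type containing $G/H$, with the prescribed closed orbit --- is precisely the content of the theorem and is not carried out: ``patching $G$-translates along the open orbit'' together with an appeal to Sumihiro and unspecified valuative criteria does not by itself give separatedness, the finiteness of the covering (which uses completeness of $G/B$), or the existence of the closed orbit. (3) In the injectivity step, recovering $\mathcal B_Y(X)$ as the extremal rays of $\mathcal C_Y(X)$ not generated by colors is itself a consequence of the orbit--face correspondence (Theorem \ref{closure}) and should be cited or proved rather than asserted. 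In short, your outline reproduces the skeleton of the Luna--Vust argument, but the load-bearing steps are either incorrect (finite generation) or missing (construction and separatedness); for the purposes of this paper the correct move is simply to cite \cite{LV} or \cite{Knop}, as the author does.
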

We say that a  pair $(\mathcal C_0,\mathcal F_0)$ is a \emph{face} of a colored cone 
$(\mathcal C, \mathcal F)$ if $\mathcal C_0$ is a face of $\mathcal C$, $\mathcal C_0^{\circ} \cap \mathcal V \neq \phi$, and $\mathcal F_0=\mathcal F \cap \varrho^{-1}(\mathcal C_0)$. 
\begin{theorem}[{\cite[Lemma 3.2]{Knop}}]\label{closure}
Let $X$ be a spherical embedding, and $Y$ a $G$-orbit. Then, the map 
$Z \mapsto (\mathcal C_Z(X), \mathcal F_Z(X))$ gives a bijective correspondence between $G$-orbits whose closure contain $Y$ and faces of $(\mathcal C_Y(X), \mathcal F_Y(X))$. 
\end{theorem}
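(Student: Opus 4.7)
The plan is to first reduce to the case where $X$ is a simple spherical embedding with closed orbit $Y$. Any spherical embedding is covered by simple open subembeddings, and one checks that the simple open subembedding $X_Y \subset X$ having $Y$ as its unique closed orbit contains exactly those $G$-orbits whose closure contains $Y$; indeed, any other simple open subembedding would either fail to meet $Y$ or would have a different closed orbit. After replacing $X$ by $X_Y$, the claim becomes that $Z \mapsto (\mathcal C_Z(X), \mathcal F_Z(X))$ is a bijection between the set of all $G$-orbits in $X$ and the set of faces of $(\mathcal C_Y(X), \mathcal F_Y(X))$.

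For the construction of the inverse map, I would attach to each $G$-orbit $Z \subset X$ the open subset $X_Z := X \setminus \bigcup D$, where $D$ runs over the $G$-stable prime divisors of $X$ not containing $Z$. By construction $X_Z$ is a simple spherical embedding with closed orbit $Z$, so Theorem \ref{one to one} yields a strictly convex colored cone, which one identifies with $(\mathcal C_Z(X), \mathcal F_Z(X))$ directly from the definitions of $\mathcal B_Z$ and $\mathcal F_Z$. I would then verify that this is a face of $(\mathcal C_Y(X), \mathcal F_Y(X))$ by comparing the coordinate rings of $(X_Y)_0$ and $(X_Z)_0$ given in Remark \ref{coordinate ring}: passing from $Y$ to $Z$ amounts to relaxing the inequalities $v(f) \geq 0$ corresponding to the $G$-stable divisors in $\mathcal D_Y(X) \setminus \mathcal D_Z(X)$, which geometrically extracts the face of $\mathcal C_Y(X)$ cut out by the dual hyperplanes to the removed ray generators.

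Conversely, given a face $(\mathcal C_0, \mathcal F_0)$ of $(\mathcal C_Y(X), \mathcal F_Y(X))$, I would use Remark \ref{coordinate ring} to define a $B$-stable affine variety $U_0$ by the semigroup algebra attached to this face, observe that the defining inequalities form a subset of those for $(X_Y)_0$ so that $U_0$ is a $B$-stable open subscheme of $(X_Y)_0$, saturate by $G$ to obtain an open subembedding of $X$, and take its unique closed orbit as the desired $Z$. Injectivity of $Z \mapsto (\mathcal C_Z(X), \mathcal F_Z(X))$ then follows from Theorem \ref{one to one}, since two orbits yielding the same colored cone give rise to the same simple open subembedding of $X$ and hence to the same closed orbit.

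The main obstacle I anticipate lies in the color bookkeeping, namely verifying that $\mathcal F_Z(X) = \mathcal F_Y(X) \cap \varrho^{-1}(\mathcal C_Z(X))$. The inclusion $\mathcal F_Z(X) \subset \mathcal F_Y(X) \cap \varrho^{-1}(\mathcal C_Z(X))$ is essentially tautological from the definitions, but the reverse inclusion requires showing that a color $D \in \mathcal F_Y(X)$ with $\varrho(D) \in \mathcal C_Z(X)$ must actually contain $Z$. This forces one to translate the containment into a statement about $B$-semi-invariant functions being regular but non-invertible along $Z$, and to exploit the nontrivial interaction between the map $\varrho$ and the $G$-invariant valuation cone $\mathcal V$, an interaction absent in the purely toric setting and responsible for most of the technical subtlety of Luna--Vust theory.
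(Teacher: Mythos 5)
The paper itself offers no argument for this statement — it is quoted directly from Knop (\cite{Knop}, Lemma 3.2) — so the only question is whether your sketch stands on its own, and at present it does not. The decisive gap is your forward construction: you set $X_Z := X \setminus \bigcup D$, with $D$ running over the $G$-stable prime divisors not containing $Z$, and assert that this is a simple embedding with closed orbit $Z$. That is false in general, because $G$-orbits of codimension at least two need not be cut out by $G$-stable divisors. The paper's own basic example refutes it: $\E$ is simple with closed orbit $\{O\}$, its only $SL(2)\times\C^*$-stable prime divisor is $\overline{\mathfrak D}$, so for $Z = \mathfrak D$ your recipe removes nothing and returns all of $\E$, whose unique closed orbit is $\{O\}$, not $\mathfrak D$; the correct simple subembedding is $\E \setminus \{O\}$, and $\{O\}$ has codimension three. (The same defect occurs already for $\C^2$ under $SL(2)$, where there are no $G$-stable divisors at all.) The correct object is $X_Z = \{x \in X : Z \subseteq \overline{G \cdot x}\}$, i.e.\ one must discard all orbit closures not containing $Z$; that this set is open, that it is simple with colored cone exactly $(\mathcal C_Z(X), \mathcal F_Z(X))$, and that this colored cone is a face of $(\mathcal C_Y(X), \mathcal F_Y(X))$ is precisely the nontrivial content of Knop's lemma and cannot be read off from Remark \ref{coordinate ring} by ``relaxing inequalities.'' Your opening reduction to $X = X_Y$ quietly relies on the same unproved facts (openness of $X_Y$ and identification of its orbits).

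The converse direction has an analogous gap. Given a face $(\mathcal C_0, \mathcal F_0)$, you define $U_0$ by the $B$-eigenfunctions whose valuations are nonnegative on the $G$-invariant valuations lying on the face and claim it is an open subscheme of $(X)_0$. Dropping inequalities enlarges the ring, but openness is not automatic: one needs the spherical analogue of the toric face--localization lemma, namely a $B$-eigenfunction $f$ with $v(f)=0$ for the invariant valuations spanning $\mathcal C_0$, $v(f)>0$ for the remaining generators of $\mathcal C_Y(X)$, and $v_D(f)>0$ exactly for the colors in $\mathcal F_Y(X) \setminus \mathcal F_0$; this is exactly where the conditions $\mathcal F_0 = \mathcal F_Y(X) \cap \varrho^{-1}(\mathcal C_0)$ and $\mathcal C_0^{\circ} \cap \mathcal V \neq \emptyset$ must be used. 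You correctly flag this color bookkeeping as the hard point, but the proposal leaves it, together with the openness statements above, unresolved; only the final injectivity step via Theorem \ref{one to one} is unproblematic once those ingredients are in place.
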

\begin{definition}
A \emph{colored fan} is a non-empty finite set $\mathfrak F$ of colored cones satisfying the following properties: 
\begin{itemize}
\item every face of $(\mathcal C, \mathcal F) \in \mathfrak F$ belongs to $\mathfrak F$; 
\item for every $v \in \mathcal V$, there is at most one 
$(\mathcal C, \mathcal F) \in \mathfrak F$ such that  $v \in \mathcal C^{\circ}$. 
\end{itemize}
A colored fan $\mathfrak F$ is called \emph{strictly convex} if $(0, \phi) \in \mathfrak F$. 
This is equivalent to saying that all elements of $\mathfrak F$ are strictly convex.
\end{definition} 
For a spherical embedding $X$, we define 
\[
\mathfrak F(X):=\{(\mathcal C_Y(X), \mathcal F_Y(X))\; :\; Y \subset X\; \mbox{is a}\; G\mbox{-orbit}\}.
\]
Then, $\mathfrak F(X)$ is a strictly convex colored fan. 
\begin{remark}[\cite{Knop}]\label{orbit}
We can give an order relation to the set of $G$-orbits by the inclusion of closures. 
Theorems \ref{one to one} and \ref{closure} imply that $Y \mapsto (\mathcal C_Y(X), \mathcal F_Y(X))$ is an order-reversing bijection between the set of $G$-orbits and $\mathfrak F(X)$. The open orbit corresponds to $(0, \phi)$.  
\end{remark}
\begin{theorem}[{\cite[Theorem 3.3]{Knop}}]
The map $X \mapsto \mathfrak F(X)$ gives a bijective correspondence between the isomorphism classes of spherical embeddings and strictly convex colored fans. 
\end{theorem}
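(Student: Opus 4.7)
The plan is to follow the route outlined in the introduction: produce an equivariant morphism $\mathcal H^{main} \to \widetilde{\E'}$, show it is an isomorphism, and then rule out extra components of $\mathcal H$. First I recall that the Hilbert--Chow morphism factors equivariantly through the weighted blow-up $\E'$ via a morphism $\psi : \mathcal H^{main} \to \E'$ (this is carried over from the toric argument of \cite{Ku} and, as noted in the introduction, is easy to see directly). Since $\E'$ carries a family of cyclic quotient singularities $\C^2/\mu_b$ by Batyrev--Haddad and $\mathcal H^{main}$ is expected to be smooth, $\psi$ itself cannot be an isomorphism in the non-toric case, so the target must be lifted to the minimal resolution $\widetilde{\E'}$. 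I will describe $\widetilde{\E'}$ combinatorially via its colored fan, whose rays are read off from the Hirzebruch--Jung continued fraction expansion associated with $(l,m)$; Theorem \ref{one to one} and Remark \ref{orbit} then simultaneously fix the orbit structure and the colored cones of every $SL(2)\times \C^*$-orbit on $\widetilde{\E'}$.

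Next I realise $\widetilde{\E'}$ as a closed subscheme of a suitable projective space $\mathbb P$ over $\E$ by assembling the $B$-eigenfunctions indexed by the rays of its fan (using Remark \ref{coordinate ring} to describe the coordinate rings of its simple affine pieces). On the Hilbert scheme side I invoke Becker's construction recalled in \S \ref{s-generalhilb}: for a finite set $\mathcal M \subset \Irr(SL(2)\times \C^*)$ chosen compatibly with these rays, the product $\gamma \times \prod_{M \in \mathcal M}\eta_M$ is a closed immersion, and pairing the Grassmannian factors with their Plücker embeddings yields an equivariant morphism $\Psi : \mathcal H \to \mathbb P$. I then verify that $\Psi(\mathcal H^{main}) \subset \widetilde{\E'}$ by checking the inclusion on the open subset where $\gamma$ is an isomorphism and extending by closedness.

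The main obstacle is the isomorphism claim $\Psi|_{\mathcal H^{main}} : \mathcal H^{main} \xrightarrow{\sim} \widetilde{\E'}$. Since this morphism is projective and birational and $\widetilde{\E'}$ is normal, Zariski's Main Theorem reduces the task to injectivity on closed points, and $SL(2)\times \C^*$-equivariance reduces the latter to an orbit-by-orbit check. Orbits of $\widetilde{\E'}$ are classified by faces of its colored fan via Theorem \ref{closure}; for each orbit I pick a representative point (a $B$-fixed point when the orbit is closed) and show that its fibre consists of a single scheme $[Z]$ by writing explicit generators of the ideal $I_Z \subset \C[H_{q-p}]$. Here the toric argument genuinely breaks down: there is no direct link between the height $l=p/q$ and the degree $m$, so the exponents appearing in the generators cannot be written in terms of $p,q,m$ and must instead be expressed through ray generators of maximal cones of the fan, tied together by the recursion of the Hirzebruch--Jung continued fraction. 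The bulk of the work is therefore a combinatorial case analysis over the colored-fan data proving uniqueness of each $I_Z$; this is where most of the new technical effort of the paper will be concentrated.

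For part (ii), once (i) gives $\mathcal H^{main} \cong \widetilde{\E'}$, and in particular that $\mathcal H^{main}$ is smooth of dimension three, I rule out extra irreducible components or embedded structure of $\mathcal H$ by a tangent space estimate. At each closed point $[Z] \in \mathcal H$ I identify $T_{[Z]}\mathcal H$ with $\Hom^{SL(2)\times \C^*}(I_Z/I_Z^2,\C[Z])$ and, using the explicit ideal presentations produced in the orbit-wise analysis above, bound $\dim T_{[Z]}\mathcal H \leq 3$. This bound forces every such $[Z]$ to lie in $\mathcal H^{main}$ scheme-theoretically, yielding $\mathcal H = \mathcal H^{main}$ and completing the proof.
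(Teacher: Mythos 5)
Your proposal does not prove the statement at hand. The statement is Knop's classification theorem (quoted in the paper from \cite[Theorem 3.3]{Knop}): the map $X \mapsto \mathfrak F(X)$ is a bijection between isomorphism classes of spherical embeddings of $G/H$ and strictly convex colored fans. This is a foundational result of Luna--Vust theory which the paper only cites and does not prove. What you have written instead is an outline of the proof of the paper's Main Theorem (Theorem \ref{main theorem} and Corollary \ref{duval}), i.e.\ the identification of $\mathcal H^{main}$ with $\widetilde{\E'}$ and the equality $\mathcal H=\mathcal H^{main}$; nowhere do you address the correspondence between embeddings and fans itself.

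A proof of the actual statement would have to run along entirely different lines: starting from the classification of \emph{simple} spherical embeddings by strictly convex colored cones (Theorem \ref{one to one}) and the description of orbit closures by faces (Theorem \ref{closure}), one shows that the colored cones $(\mathcal C_Y(X),\mathcal F_Y(X))$ of the orbits of an arbitrary embedding $X$ assemble into a strictly convex colored fan (the key point being that distinct cones intersect $\mathcal V$ only along common faces, which encodes separatedness of $X$); conversely, given a strictly convex colored fan one glues the simple embeddings attached to its maximal colored cones along the open subembeddings corresponding to common faces and verifies that the result is a separated normal $G$-variety, and that equivariant isomorphisms correspond to equality of fans. None of these ingredients — simple coverings, gluing, the separatedness criterion in terms of $\mathcal V$ — appear in your proposal, so as an argument for the stated theorem it has a complete gap; it is an answer to a different (later) question in the paper.
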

\begin{definition}
A spherical embedding $X$ is called \emph{toroidal} if $\mathcal F_Y(X)=\phi$ for any 
$G$-orbit $Y$. 
This is equivalent to saying that no $D \in \mathcal D$ contains a $G$-orbit in its closure.  
\end{definition}
\begin{remark}[{\cite[3.4]{BP}}, see also {\cite[\S 3.3]{Perrin}}]
\label{integralstr}
A local structure theorem for toroidal spherical embeddings implies that a toroidal spherical embedding $X$ has singularities of a toric variety with the same cones as those of $X$ and that subdividing its fan for toric varieties gives an equivariant resolution of $X$.
\end{remark}
Equivariant birational morphisms between spherical embeddings have an implication in terms of colored fans. 
\begin{theorem}[{\cite[Theorem 4.1]{Knop}}]\label{knop}
Let $X$ and $X'$ be spherical embeddings of $G/H$. 
Then, the following are equivalent. 
\begin{itemize}
\item [\rm(i)] An equivariant birational morphism $X \to X'$ exists.  
\item [\rm(ii)] For any $(\mathcal C, \mathcal F) \in \mathfrak F(X)$ there exists $(\mathcal C', \mathcal F') \in \mathfrak F(X')$ such that $\mathcal C \subset \mathcal C'$ and $\mathcal F \subset \mathcal F'$. 
\end{itemize}
\end{theorem}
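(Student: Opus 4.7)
The plan is to reduce to simple spherical embeddings and then exploit the coordinate-ring description of Remark \ref{coordinate ring}. After normalizing so that any equivariant birational morphism $\varphi : X \to X'$ restricts to the identity on the common open orbit $G/H$, the colors of both embeddings are identified with $\mathcal D$ and the set $\mathcal V$ of $G$-invariant valuations on $\C(G/H)$ is shared; this provides the common framework in which $\mathfrak F(X)$ and $\mathfrak F(X')$ can be compared.

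For (i) $\Rightarrow$ (ii), I would pick any $G$-orbit $Y \subset X$; by continuity and $G$-equivariance, $\varphi(Y)$ is a single $G$-orbit $Y' \subset X'$. If $D \in \mathcal F_Y(X)$, then $\overline{D}^X \supset Y$, so $\varphi(\overline{D}^X) \supset Y'$ and taking closure in $X'$ yields $\overline{D}^{X'} \supset Y'$, giving $D \in \mathcal F_{Y'}(X')$. For $v \in \mathcal B_Y(X)$, the valuation $v$ arises from a $G$-stable prime divisor of $X$ through $Y$; the inclusion of local rings $\mathcal O_{X', Y'} \subseteq \mathcal O_{X, Y}$ induced by $\varphi^{\sharp}$ forces $v$ to be nonnegative on $\mathcal O_{X', Y'}$, which by the description in Remark \ref{coordinate ring} translates into $\rho_v \in \mathcal C_{Y'}(X')$.

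For (ii) $\Rightarrow$ (i), I would cover $X$ by the simple open subembeddings $X_i$ around each closed orbit $Y_i$, with colored cones $(\mathcal C_i, \mathcal F_i) := (\mathcal C_{Y_i}(X_i), \mathcal F_{Y_i}(X_i))$. By hypothesis, each sits inside some $(\mathcal C', \mathcal F') \in \mathfrak F(X')$ with corresponding orbit $Y'$ and simple subembedding $X'_{Y'}$. Since every morphism I construct will restrict to the identity on $G/H$, the local morphisms automatically glue, so it suffices to produce a $G$-equivariant morphism $X_i \to X'_{Y'}$ for each $i$. Using $X_i = G \cdot (X_i)_0$, $X'_{Y'} = G \cdot (X'_{Y'})_0$, and Remark \ref{coordinate ring}, this reduces to exhibiting an inclusion of coordinate rings $\C[(X'_{Y'})_0] \hookrightarrow \C[(X_i)_0]$ on the $B$-stable affine opens, which gives a $B$-equivariant morphism that then extends uniquely to a $G$-equivariant one.

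The main obstacle will be verifying this last inclusion. The containment $\mathcal F_i \subseteq \mathcal F'$ immediately gives $\C[(X'_{Y'})_1] \subseteq \C[(X_i)_1]$ since fewer colors are inverted on the left. For the valuative condition, given a $B$-eigenfunction $f \in \C[(X'_{Y'})_0]$ of weight $\chi_f \in \Gamma$ and $v \in \mathcal B_{Y_i}(X_i)$, I would use $\rho_v \in \mathcal C_i \subseteq \mathcal C'$ to write $\rho_v = \sum \lambda_k \rho_{w_k} + \sum \mu_l \varrho(D_l)$ with $w_k \in \mathcal B_{Y'}(X')$, $D_l \in \mathcal F'$ and $\lambda_k, \mu_l \geq 0$; then $v(f) = \rho_v(\chi_f) = \sum \lambda_k w_k(f) + \sum \mu_l v_{D_l}(f) \geq 0$, each term being nonnegative because $f$ is regular on $(X'_{Y'})_0$, which retains open pieces of every $G$-divisor through $Y'$ and every color in $\mathcal F'$. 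The extension from $B$-eigenfunctions to general elements of $\C[(X'_{Y'})_0]$ will rely on the injectivity of $\mathcal V \to Q$, so that $v$ is determined by its values on $\C(G/H)^{(B)}$, together with standard arguments exploiting the $B$-module structure of the coordinate ring of a spherical variety.
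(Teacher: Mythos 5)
Note first that the paper does not prove this statement at all: it is quoted verbatim from Knop [Theorem 4.1], so the only question is whether your argument would stand on its own. Your direction (i) $\Rightarrow$ (ii) is essentially sound: the color containment argument is correct, and for $v\in\mathcal B_Y(X)$ the chain $\C[(X'_{Y'})_0]\subseteq\mathcal O_{X',\overline{Y'}}\subseteq\mathcal O_{X,\overline{Y}}\subseteq\mathcal O_v$ does give $v\geq 0$ on the canonical chart of the simple subembedding attached to $Y'$. But the phrase ``which by the description in Remark \ref{coordinate ring} translates into $\rho_v\in\mathcal C_{Y'}(X')$'' hides the actual step: from the Remark one must extract that the $B$-eigenfunctions lying in $\C[(X'_{Y'})_0]$ are exactly those with weight in $\mathcal C_{Y'}(X')^{\vee}\cap\Gamma$, and then use biduality of the finitely generated cone $\mathcal C_{Y'}(X')$ to conclude $\rho_v\in\mathcal C_{Y'}(X')$. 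That is elementary and you could fill it in, so I would not call it a gap, only a compression.

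The genuine gap is in (ii) $\Rightarrow$ (i), at the inclusion $\C[(X'_{Y'})_0]\subseteq\C[(X_i)_0]$, which is the whole content of the theorem. Your computation handles $B$-eigenfunctions, but the extension to an arbitrary $f\in\C[(X'_{Y'})_0]$ is not supplied by the tools you name: injectivity of $\mathcal V\to Q$ only says that $v$ is \emph{determined} by its values on $\C(G/H)^{(B)}$, not that inequalities valid on eigenfunctions propagate to all regular functions. The natural ``standard argument'' — for $G$-invariant $v$ the minimum of $v$ on the $G$-module $M=\langle G\cdot f\rangle$ is attained on some $B$-eigenvector $h$, so $v(f)=v(h)=\rho_v(\chi_h)$ — controls the terms $w_k(h)$ for $w_k\in\mathcal B_{Y'}(X')$ (these are $G$-invariant, hence nonnegative on all of $M$), but it does \emph{not} control $v_{D_l}(h)$ for the colors $D_l\in\mathcal F'$: $h$ lies only in $M$, not in $\C[(X'_{Y'})_0]$, and $G$-translates of $f$ may have poles along $D_l$ (a translate of a color can be another color outside $\mathcal F'$, along which $f$ is allowed poles), so $v_{D_l}(h)\geq 0$ can fail along this route and the decomposition $\rho_v=\sum\lambda_k\rho_{w_k}+\sum\mu_l\varrho(D_l)$ no longer yields $v(f)\geq 0$. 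Your outline therefore closes only in the toroidal case $\mathcal F'=\phi$; when colors are present, the assertion ``$\rho_v\in\mathcal C_{Y'}(X')$ implies $v\geq 0$ on the whole chart ring'' is precisely the hard half of Knop's theory (proved via the local structure theorem and his results on simple embeddings), and it cannot be waved through as a routine $B$-module argument.
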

In the rest of this section, we consider Weil divisors on a spherical embedding $X$. 
According to \cite{Perrin}, any Weil divisor on $X$ is linearly equivalent to a divisor of the form 
\[
\delta=\sum_{D \in \mathcal D(X)} n_D D.
\]  
\begin{theorem}[{\cite[Theorem 3.2.1]{Perrin}}]\label{cartiercriterion}
Keep the above notation. 
Then, $\delta$ is Cartier if and only if for any $G$-orbit $Y$ there exists 
$f_Y \in \C(G/H)^{(B)}$ that satisfies $n_D=v_D(f_Y)$ for any $D \in \mathcal D_Y(X)$. 
\end{theorem}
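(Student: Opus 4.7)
The plan is to exploit that Cartier-ness is Zariski-local on $X$ together with the remark that every spherical embedding is covered by finitely many simple open sub-embeddings. Fixing a $G$-orbit $Y$, I would pass to the simple sub-embedding whose unique closed orbit is $Y$, which does not alter $\mathcal D_Y(X)$. Since $\delta=\sum n_D D$ is $B$-stable, the key reduction is to check that $\delta$ is Cartier near $Y$ if and only if $\delta$ is cut out on the $B$-stable affine open $(X)_0$ by a single $B$-eigenfunction $f_Y \in \C(G/H)^{(B)}$.

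For the ``only if'' direction, assume $\delta$ is Cartier. Because $\delta$ is $B$-stable, the invertible sheaf $\mathcal O_X(\delta)$ carries a rational $B$-linearization, and its space of sections over the affine $B$-stable open $(X)_0$ is a rational $B$-module. A Borel fixed-point argument applied to a finite-dimensional $B$-stable submodule containing a local trivialization at a point of $Y$ produces a $B$-eigenvector that trivializes $\mathcal O_X(\delta)$ on a $B$-stable open neighborhood of $Y$ inside $(X)_0$. Viewed as a rational function, this eigenvector is an $f_Y \in \C(G/H)^{(B)}$ with $\Div(f_Y)|_{(X)_0}=\delta|_{(X)_0}$. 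Since by Remark \ref{coordinate ring} the $B$-stable prime divisors that survive in $(X)_0$ are precisely those in $\mathcal D_Y(X)$, reading off coefficients yields $n_D=v_D(f_Y)$ for every $D \in \mathcal D_Y(X)$.

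For the converse, given such $f_Y$ I would consider $\delta-\Div(f_Y)$. This difference is $B$-stable and, by hypothesis, has coefficient zero along every $D \in \mathcal D_Y(X)$, so its support is contained in the $B$-stable prime divisors that do not contain $Y$. Removing those finitely many divisors produces a $B$-stable open neighborhood of $Y$ on which $\delta$ equals $\Div(f_Y)$ and is therefore principal. Running this argument at every $G$-orbit $Y$ shows that $\delta$ is locally principal throughout $X$, hence Cartier.

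The main obstacle is the production of the $B$-eigenfunction $f_Y$ in the forward direction, rather than just an arbitrary local defining equation for $\delta$; this is where the spherical hypothesis enters, via the rationality of the $B$-action on sections of $\mathcal O_X(\delta)$ and the existence of Borel fixed points on projectivizations of finite-dimensional $B$-stable submodules. A secondary point to verify is that the $B$-stable components of $\delta-\Div(f_Y)$ lying outside $\mathcal D_Y(X)$ really do avoid $Y$, which is immediate from the definition of $\mathcal D_Y(X)$ but must be invoked to localize around $Y$.
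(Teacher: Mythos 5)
This theorem is quoted in the paper from Perrin \cite{Perrin} without proof, so your attempt can only be measured against the standard argument, and both halves of it have genuine gaps. In the forward direction, the step ``a Borel fixed-point argument \dots produces a $B$-eigenvector that trivializes $\mathcal O_X(\delta)$ on a $B$-stable open neighborhood of $Y$'' is precisely the crux, and it does not follow from Lie--Kolchin as stated: the sections of $\mathcal O(\delta)$ over $(X)_0$ that fail to generate the stalk along $Y$ form a $B$-stable subspace $N$, and Lie--Kolchin only produces \emph{some} eigenvector in your finite-dimensional $B$-stable subspace $W$. Since $B$ is not reductive, nothing prevents every eigenvector of $W$ from lying in $W\cap N$ (already in the standard $2$-dimensional $B$-module all eigenvectors lie in one fixed line), and an eigenvector of $W/(W\cap N)$ cannot in general be lifted to one of $W$. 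Closing this gap needs a genuinely new input, which is where the known proofs do their work: either the local structure theorem, which reduces the question to sections over an affine slice acted on by the \emph{reductive} Levi with a fixed point on $Y$, where complete reducibility applies; or a straightening of the unit cocycle $b\mapsto (b\cdot h)/h$ attached to a local equation $h$ of $\delta$ at the generic point of $Y$, using $\C(X)^{B}=\C$ and Rosenlicht-type facts about units. Your proposal contains neither idea.

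In the converse direction, removing the finitely many $B$-stable prime divisors in the support of $\delta-\Div(f_Y)$ produces a $B$-stable open set that \emph{meets} $Y$ but in general does not \emph{contain} $Y$: a divisor $D\in\mathcal D(X)$ with $Y\not\subseteq D$ can still intersect $Y$ (for $X=\P^1\times\P^1$ viewed as an $SL(2)$-embedding of $SL(2)/T$ with closed orbit the diagonal, each color meets the diagonal in a point without containing it). So your argument only shows that $\delta$ is principal near the generic point of each orbit, and the open sets you construct, taken over all orbits, need not cover $X$; hence ``locally principal throughout $X$'' does not follow. The standard completion is to sweep by $G$: write the simple subembedding as $X_Y=G\cdot (X)_0$, observe that $g\cdot\delta$ is principal near $g\cdot u$ with equation $g\cdot f_Y$, and then use that $g\cdot\delta$ is linearly equivalent to $\delta$ on a spherical variety (the same ingredient behind the statement, quoted just before the theorem, that every Weil divisor is linearly equivalent to a $B$-stable one) to deduce that $\delta$ itself is principal near every point of $Y$. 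Without this step the proof is incomplete.
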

\begin{definition}[{\cite[Definition 3.2.2]{Perrin}}]
Let $X$ be a spherical embedding. 
\begin{itemize}
\item [\rm(i)] We denote by $\mathcal C(X)$ the union of all $\mathcal C_Y(X)$, where $Y$ runs over all $G$-orbits.
\item [\rm(ii)] 
A collection $l=(l_Y)$ indexed by $G$-orbits $Y$ is called a \emph{piecewise linear function} if it satisfies the following conditions:
\begin{itemize}
\item [$\bullet$] for each $G$-orbit $Y$, $l_Y$ is the restriction of an element of $\Gamma$ to $\mathcal C_Y(X)$;
\item [$\bullet$] for any $G$-orbits $Y$ and $Z$ with $Z \subset \overline{Y}$, we have $l_Z|_{
\mathcal C_Y(X)}=l_Y$. 
\end{itemize}
We denote by $PL(X)$ the abelian group consists of piecewise linear functions. 
\end{itemize}
\end{definition}
\begin{remark}[{\cite[Remark 3.2.3]{Perrin}}]
An element $l \in PL(X)$ depends only on its values on maximal cones, namely cones of closed orbits in $X$. 
\end{remark}
Let $\Car^{B}(X)$ be the group of $B$-stable Cartier divisors on a spherical embedding $X$. 
Then, we have a morphism
\[
\Car^{B}(X) \to PL(X),\quad \delta \mapsto l_{\delta},
\]
where $(l_{\delta})_Y=f_Y$ with the notation as in Theorem \ref{cartiercriterion}. 
Set
\[
\mathcal D_0(X):=\bigcup \mathcal D_Y(X),
\]
where $Y$ runs over all $G$-orbits. 
\begin{theorem}[{\cite[Theorem 17.18]{Tim}}]
\label{criterion for gg}
For any $B$-stable Cartier divisor
\[
\delta=\sum_{D \in \mathcal D_0(X)} {v_D}(l_{\delta})
 D + \sum_{D \in \mathcal D(X) \setminus \mathcal D_0(X)} n_D D
\]
on $X$, the following properties are equivalent. 
\begin{itemize}
\item [\rm(i)] The divisor $\delta$ is generated by global sections. 
\item [\rm(ii)] For any $G$-orbit $Y$, there exists $f_Y \in \C(G/H)^{(B)}$ that satisfies the following conditions:
\begin{itemize}
\item [$\bullet$] $f_Y|_{\mathcal C_Y(X)}=l_{\delta}|_{\mathcal C_Y(X)}$;
\item [$\bullet$] $f_Y|_{\mathcal C(X) \setminus \mathcal C_Y(X)} \leq l_{\delta}|_{\mathcal C(X) \setminus \mathcal C_Y(X)}$; 
\item [$\bullet$] ${v_D}(f_Y) \leq n_D$ for any $D \in \mathcal D(X) \setminus \mathcal D_0(X)$.
\end{itemize}
\end{itemize}
\end{theorem}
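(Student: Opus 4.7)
The plan is to reduce global generation of $\delta$, which is a local condition at each point of $X$, to the combinatorial condition (ii) by exploiting $G$-equivariance together with the explicit description of $B$-eigensections on spherical varieties. Since $\delta$ is $B$-stable, $\mathcal O_X(\delta)$ carries a canonical $B$-linearization; because $X$ is spherical one has $\C(X)^G = \C$, and this forces the $B$-linearization to extend, uniquely up to a character of $G$, to a $G$-linearization. The space $H^0(X, \mathcal O_X(\delta))$ thus becomes a rational $G$-module whose $B$-eigenvectors correspond bijectively (up to scalar) to the rational $B$-eigenfunctions $f \in \C(G/H)^{(B)}$ such that $\mathrm{div}(f) \leq \delta$ on $X$.

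Next, since the non-generation locus is $G$-stable, it suffices to verify global generation at one point of each $G$-orbit $Y$. Passing to a simple $G$-stable open subembedding of $X$ containing $Y$ and then to the $B$-stable affine chart $(X)_0$ of Remark \ref{coordinate ring}, the Borel fixed-point theorem supplies a $B$-fixed point $y$ adherent to $Y$. The $B$-stable prime divisors of $(X)_0$ passing through $y$ are exactly the elements of $\mathcal D_Y(X)$, so a $B$-eigenfunction $f \in \C(G/H)^{(B)}$ defines a local section $1/f$ of $\mathcal O_X(\delta)$ that generates the stalk at $y$ if and only if $\mathrm{div}(f) = \delta$ in a neighborhood of $y$, equivalently $v_D(f) = v_D(l_\delta)$ for every $D \in \mathcal D_Y(X)$.

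The final step is to translate the two conditions ($\mathrm{div}(f) \leq \delta$ globally and $\mathrm{div}(f) = \delta$ locally near $y$) through the pairing $\rho_v(\chi_f) = v(f)$, which identifies $f \in \C(G/H)^{(B)}$ with a linear form $\chi_f$ on $Q$ to be compared with the piecewise linear function $l_\delta$. The equality near $y$ becomes the first bullet $f_Y|_{\mathcal C_Y(X)} = l_\delta|_{\mathcal C_Y(X)}$; the inequality $v_D(f) \leq v_D(l_\delta)$ for $D \in \mathcal D_Z(X) \subset \mathcal D_0(X)$, ranged over all orbits $Z \neq Y$ and assembled using the fact that $\mathcal C(X)$ is the union of the $\mathcal C_Z(X)$, becomes the second bullet $f_Y|_{\mathcal C(X) \setminus \mathcal C_Y(X)} \leq l_\delta|_{\mathcal C(X) \setminus \mathcal C_Y(X)}$; the remaining inequality $v_D(f) \leq n_D$ for $D \in \mathcal D(X) \setminus \mathcal D_0(X)$ is the third bullet verbatim. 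Letting $Y$ range over all orbits yields the equivalence of (i) and (ii).

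The main technical obstacle I anticipate is the careful bookkeeping around sign conventions: one must consistently regard $\C(G/H)^{(B)}$ as a sublattice of linear forms on $Q$ and verify that ``$1/f$ generates at $y$'' and ``$1/f$ is a global section'' translate to equality on $\mathcal C_Y(X)$ and to the stated inequalities on $\mathcal C(X) \setminus \mathcal C_Y(X)$, rather than to their negatives. A secondary subtlety is the existence of a suitable $B$-fixed point $y$ lying in $(X)_0$; this relies on the local structure theorem for spherical varieties, which guarantees that each simple subembedding $G$-equivariantly retracts onto its closed orbit in a neighborhood of $(X)_0$.
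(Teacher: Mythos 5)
Note first that the paper does not prove this statement: it is quoted verbatim from Timashev \cite{Tim}*{Theorem 17.18}, so your attempt has to be measured against the standard proof (Brion, Timashev), whose overall architecture you do follow — linearize $\mathcal O_X(\delta)$, identify $B$-eigensections of $\mathcal O_X(\delta)$ with eigenfunctions $f\in\C(G/H)^{(B)}$ satisfying $\Div(f)\leq\delta$, use $G$-stability of the base locus to reduce to an orbit-by-orbit check, and translate through $\rho_v(\chi_f)=v(f)$. Your final translation into the three bullets (equality on $\mathcal C_Y(X)$, inequality on $\mathcal C(X)\setminus\mathcal C_Y(X)$, inequality at the colors outside $\mathcal D_0(X)$) is the right bookkeeping.

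The genuine gap is in your middle step, where you test generation at a $B$-fixed point $y$ adherent to $Y$ in the chart $(X)_0$. Two things go wrong. First, such a point need not exist: the closed orbit of a simple spherical embedding need not be complete (already $X=G/H$ itself, or an affine toric variety whose cone is not of full dimension), and the local structure theorem $(X)_0\cong R_u(P)\times Z$ does not supply a $B$-fixed point, contrary to what your last paragraph asserts. Second, and more seriously, even at an honest $B$-fixed point the equivalence ``$\delta$ is generated at $y$ $\Leftrightarrow$ some $B$-eigensection is non-vanishing at $y$'' fails: take $X=\P^1$ under $SL(2)$, $H$ a Borel, and $\delta=[\infty]$ with $\infty$ the $B$-fixed point; here $\C(G/H)^{(B)}=\C^*$, so the only $B$-eigensections of $\mathcal O(\delta)\cong\mathcal O_{\P^1}(1)$ are the multiples of the canonical section, all vanishing at $\infty$, yet $\mathcal O_{\P^1}(1)$ is globally generated (and (ii) holds with $f_Y=1$). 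Your test would wrongly reject this divisor. The correct reduction is orbit-wise, not point-wise: $\delta$ is globally generated iff for every (equivalently, every closed) orbit $Y$ some global section has zero divisor not containing $Y$; since the sections vanishing identically along $Y$ form a $G$-submodule of $H^0(X,\mathcal O_X(\delta))$, complete reducibility lets you replace such a section by a $B$-eigensection, and for a $B$-eigensection ``zero divisor does not contain $Y$'' says precisely that no $D\in\mathcal D_Y(X)$ occurs in $\Div(1/f)+\delta$, i.e.\ the equality $\chi_{f}=l_\delta$ on $\mathcal C_Y(X)$, while membership in $H^0$ gives the two inequalities. With this repair (plus the standard caveat that $\mathcal O_X(\delta)$ may only be $G$-linearizable after passing to a power or a finite cover of $G$ — your appeal to ``$\C(X)^G=\C$'' is not the relevant fact, one needs $\mathcal O(X)^*=\C^*$ for uniqueness and $\Pic(G)$-type arguments for existence, all harmless here), your outline becomes the standard proof.
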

\section{Quasihomogeneous $SL(2)$-varieties and their spherical geometry}\label{s-popovvar}
In \cite{P}, Popov gives a complete classification of affine normal quasihomogeneous $SL(2)$-varieties. 
Consult also the book of Kraft \cite{K}. 
\begin{theorem}[\cite{P}]\label{popov}
Every 3-dimensional affine normal quasihomogeneous $SL(2)$-variety containing more than one orbit is uniquely determined by a pair of numbers 
$(l,m) \in \{\mathbb Q \cap (0,1]\} \times \mathbb N$. 
\end{theorem}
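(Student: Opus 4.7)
The plan is to establish the bijection between isomorphism classes of such varieties and the pairs $(l,m)$ in two stages: first classifying the possible dense orbits $\mathcal O = SL(2)/H$, and then classifying their affine normal equivariant completions. Since $\mathcal O \subset E$ is dense and $\dim E = 3$, we have $\dim \mathcal O = 3$, so the stabilizer $H \subset SL(2)$ is finite and $\mathcal O \cong SL(2)/H$. Enlarging the acting group to $SL(2) \times \mathbb C^*$ via the natural scaling action (as used by Batyrev--Haddad) makes $E$ into a spherical embedding with respect to the Borel $B \times \mathbb C^*$, so by the Luna--Vust theory of Section \ref{s-spherical} the classification of such $E$ reduces to that of strictly convex colored fans in $Q = \Hom(\Gamma, \mathbb Q)$ having a maximal cone of dimension $2$.

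For the first stage I would classify $\mathcal O$ through its spherical data (the character lattice $\Gamma$ and the valuation cone $\mathcal V$) rather than through the subgroup $H$ up to conjugacy directly; although the latter list is essentially the McKay--ADE classification, the former has rank $2$ and, by a direct case analysis of the $B$-orbit structure on $SL(2)/H$, depends on only a single positive integer $m$. Here $m$ arises as the order of the image inside $T \subset SL(2)$ of the cyclic part of $H$, and this is the degree parameter appearing in the statement.

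For the second stage, with $\mathcal O$ fixed, the affine normal simple embeddings of $\mathcal O$ containing exactly one additional orbit correspond to strictly convex $2$-dimensional colored cones in $Q$. Such a cone has one ray dictated by the valuation cone $\mathcal V$ and a second free ray, which is specified by a rational slope; after normalization using the residual discrete symmetries of $\mathcal O$, this slope becomes an irreducible fraction $l = p/q \in \mathbb Q \cap (0,1]$, giving the height. Existence for every pair $(l,m)$ is verified by the Batyrev--Haddad GIT construction $E_{l,m} \cong H_{q-p} \git (\mathbb C^* \times \mu_m)$, and uniqueness follows by recovering $(l,m)$ from $E$ intrinsically: $m$ as the order of the isotropy group of a generic point on the codimension-one orbit, and $l$ as a ratio of weights of the natural $\mathbb C^*$-action at the closed orbit.

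The main obstacle will be the second stage: verifying that the free ray lands precisely in the half-open interval $(0,1]$ (in irreducible form), and that each value occurs exactly once. This requires showing that the valuation cone $\mathcal V$ is a half-plane bounded by the ray coming from $\mathcal V$ described above, and identifying the residual symmetries of $\mathcal O$ (essentially the action of the Weyl element of $SL(2)$) that exchange slopes with their reciprocals, so that $(0,1]$ serves as a fundamental domain. Once these normalizations are established, the matching between colored cones and pairs $(l,m)$ becomes combinatorially straightforward, and the two-stage classification assembles into the claimed bijection.
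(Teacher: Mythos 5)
Your route cannot work as written because it is circular at its key step, and it omits the part of Popov's theorem that carries the real content. The Luna--Vust/colored-fan machinery you invoke applies to $E$ as a spherical $SL(2)\times\mathbb{C}^*$-variety, but for an \emph{abstract} affine normal quasihomogeneous $SL(2)$-variety the commuting $\mathbb{C}^*$-action is not given: in this paper (Remark \ref{clembedding}) and in Batyrev--Haddad it is constructed from the explicit equivariant embedding of $E_{l,m}$ coming from the semigroup $M^+_{l,m}$ (Remark \ref{explicit}), i.e.\ from the classification you are trying to prove. To obtain it intrinsically you would first have to show that the generic stabilizer $H$ is cyclic (so that $N_{SL(2)}(H)/H$ contains a torus at all) and then that the subalgebra $\mathbb{C}[E]\subset\mathbb{C}[SL(2)/H]$ is stable under that torus; neither point is addressed. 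Relatedly, excluding the non-cyclic finite subgroups (binary dihedral, tetrahedral, octahedral, icosahedral) for varieties with more than one orbit is precisely one of the main assertions of Popov's theorem, and your phrase ``the order of the image in $T$ of the cyclic part of $H$'' presupposes rather than proves it: for non-cyclic $H$ there is no rank-$2$ lattice $\Gamma$ for $SL(2)\times\mathbb{C}^*$ to speak of, so the ``first stage'' cannot be carried out by inspecting spherical data that only exist after the problem is solved. Note also that Popov's original argument (and the Luna--Vust treatment of $SL(2)$-embeddings) works with complexity-one methods and the algebra of $U$-invariants, exactly because sphericity for $SL(2)$ alone fails; the paper simply cites this result and proves nothing here.

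There are further slips you would need to repair even granting the spherical setup. The degree $m$ is the order of the isotropy group of a generic point of the \emph{dense} orbit ($\mathfrak U\cong SL(2)/C_m$), not of the codimension-one orbit, whose generic isotropy $U_{a(q+p)}$ is one-dimensional; and recovering $l$ ``from weights of the $\mathbb{C}^*$-action at the closed orbit'' breaks down for $l=1$, where $E_{1,m}$ is smooth with a two-dimensional closed orbit and no fixed point. Finally, a strictly convex two-dimensional colored cone gives a simple embedding that need not be affine; affineness is an extra condition on the colored datum (and for $l<1$ the embedding has two boundary orbits, not one), so even the ``second stage'' dictionary needs the affineness criterion before the interval $(0,1]$ normalization you flag as the main obstacle can be discussed.
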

We denote the corresponding variety by $E_{l,m}$. The numbers $l$ 
and $m$ are called the \emph{height} and the \emph{degree} of $E_{l,m}$,
 respectively. 
Write $l=p/q$, where $g.c.d.(q, p)=1$. 
\begin{theorem}[\cite{G}, see also {\cite[Corollary 2.7]{BH}}]\label{toric}
An affine normal quasihomogeneous $SL(2)$-variety $E_{l,m}$ is toric 
if and only if $q-p$ divides $m$. 
\end{theorem}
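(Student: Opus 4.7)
The plan is to analyze the Batyrev--Haddad GIT quotient description $\E \cong H_{q-p}\git (\C^* \times \mu_m)$ and detect toricness in terms of whether the acting group embeds into the big torus of the ambient hypersurface. First, I would observe that $H_{q-p} \subset \C^5$ is itself a toric hypersurface, carrying a natural action of a $4$-dimensional torus $T_H$ inherited from the standard torus on $\C^5$, and that $\C^* \times \mu_m$ acts on $H_{q-p}$ via the explicit weights described in \cite{BH}. Since a GIT quotient of a toric variety by a closed subgroup of its big torus is again toric, the problem reduces to determining precisely when $\C^* \times \mu_m$ embeds into $T_H$.

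For the ``if'' direction, assuming $m = a(q-p)$ for some positive integer $a$, I would exhibit a generator of $\mu_m$ as a diagonal element of $T_H$ whose entries are $m$-th roots of unity compatible with the defining equation of $H_{q-p}$; the hypothesis $q-p \mid m$ is precisely what balances these roots of unity across the monomials of that equation. The quotient torus $T_H/(\C^* \times \mu_m)$ is then $3$-dimensional and acts on $\E$ with an open orbit, so $\E$ is toric. For the ``only if'' direction, I would argue contrapositively: if $q-p \nmid m$, then no nontrivial generator of $\mu_m$ can be fully absorbed into $T_H$, so the quotient of $H_{q-p}\git \C^*$ by the residual $\mu_m$-action introduces genuine cyclic quotient singularities of the form $\C^2/\mu_b$ with $b > 1$ along a curve in $\E$. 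Such singularities occur in a configuration incompatible with any $3$-dimensional toric structure on $\E$, since they would have to correspond to a refinement of a toric fan that does not exist.

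An alternative, and likely cleaner, route is through the spherical geometry of $\E$ as an $SL(2) \times \C^*$-variety: the weight $\w$ governing the auxiliary $\C^*$-action is a numerical invariant of $(l,m)$ readable from the colored fan, and one shows directly that $\w$ is trivial exactly when $q-p \mid m$, in which case the $\C^*$-action combines with the Borel action to produce an effective $3$-dimensional torus with open orbit. The main obstacle in either approach is the converse implication: one has to verify that when $q-p \nmid m$, the finite cyclic factor $\mu_m$ genuinely obstructs toricness rather than merely enlarging the toric lattice. This requires a careful analysis of the stabilizer of a generic point in the open $B$-orbit of $\E$, whose order is read off from the pair $(l,m)$, together with the arithmetic of the defining weights of $H_{q-p}$; the divisibility $q-p \mid m$ is exactly the compatibility condition that makes this stabilizer toric.
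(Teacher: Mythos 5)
The paper does not prove this statement---it is quoted from Gaifullin \cite{G} and from \cite[Corollary 2.7]{BH}---so your argument has to stand on its own, and as written it does not. The central premise of your first approach is false: for $q-p\geq 2$ the hypersurface $H_{q-p}\subset\C^5$ is not toric. Its equation $X_0^{q-p}=X_1X_4-X_2X_3$ is a trinomial, and the subtorus of the diagonal torus $(\C^*)^5$ preserving it (weights with $(q-p)w_0=w_1+w_4=w_2+w_3$) is only $3$-dimensional, while $\dim H_{q-p}=4$; there is no ``$4$-dimensional torus $T_H$''. Moreover, even inside this framework your criterion cannot work: the group $G_0\times G_m\cong\C^*\times\mu_m$ is by definition a subgroup of the diagonal torus of $\C^5$ and preserves $H_{q-p}$ for every $m$ --- the $\mu_m$-factor multiplies $X_1X_4$ and $X_2X_3$ by $\zeta^{-1}\zeta=1$ and fixes $X_0$ --- so the question of whether $\C^*\times\mu_m$ ``embeds into $T_H$'' does not detect the divisibility $q-p\mid m$ at all. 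The place where a torus-quotient argument genuinely works is the second description (Theorem \ref{batyrev2}): when $q-p\mid m$ one has $k=q-p$, hence $b=1$ and $H_1\cong\C^4$, and $\E$ is the quotient of affine $4$-space by the diagonalizable subgroup $G_0'\times G_a$ of its big torus, hence toric. That gives the ``if'' direction correctly, but it is not the argument you wrote.

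The converse is also not established by your sketch. Curves of cyclic quotient singularities $\C^2/\mu_b$ are perfectly compatible with toric geometry ($\C\times\C^2/\mu_b$ is itself toric), so their presence cannot obstruct a toric structure; besides, the $\C^2/\mu_b$-singularities you invoke occur on the weighted blow-up $\E'$ (Theorem \ref{local}), not on $\E$. Your ``alternative route'' does not close the gap either: the Borel subgroup $\tilde B=B\times\C^*$ has a maximal torus of dimension $2$, so combining the auxiliary $\C^*$-action with the Borel action can never produce an effective $3$-torus with a dense orbit (sphericity under $\tilde B$ holds for all $(l,m)$, toric or not), and the assertion that the weight $\w$ is trivial exactly when $q-p\mid m$ and that this characterizes toricness is precisely the content of the theorem being proved, so invoking it is circular. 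A genuine proof of the ``only if'' direction requires a real argument---for instance Gaifullin's classification of toric $SL(2)$-embeddings or the analysis behind \cite[Corollary 2.7]{BH}---and nothing in your proposal substitutes for it.
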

We use the following notation for some closed subgroups of $SL(2)$:
\begin{align*}
&T:=\left\{
\begin{pmatrix}
t & 0 \\
0 & t^{-1}
\end{pmatrix}
 :\;
t \in \C^*
\right\};\quad 
B:=\left\{
\begin{pmatrix}
t & u \\
0 & t^{-1}
\end{pmatrix}:\; 
t \in \C^*,\; u \in \C
\right\}; \\
&U_n:=\left\{
\begin{pmatrix}
\zeta & u \\
0 & \zeta^{-1}
\end{pmatrix}
:\; 
\zeta^n=1,\; u \in \C
\right\}; \quad  
C_n:=\left\{
\begin{pmatrix}
\zeta & 0 \\
0 & \zeta^{-1}
\end{pmatrix}
:\; 
\zeta^n=1
\right\}.
\end{align*}
An $SL(2)$-variety $E_{l,m}$ is smooth if and only if $l=1$ (see \cite{P}). 
If $l <1$, then $E_{l,m}$ contains three $SL(2)$-orbits: 
the open orbit $\mathfrak U$, a 2-dimensional orbit $\mathfrak D$, and the closed orbit 
$\{O\}$. The fixed point $O$ is a unique $SL(2)$-invariant singular 
point.  
Let 
\begin{equation}
k:=g.c.d.(m, q-p),\quad  a:=\frac{m}{k},\quad  b:=\frac{q-p}{k}.
\label{akb}
\end{equation}
Then we have 
\[
\mathfrak U \cong SL(2)/C_m,\quad  \mathfrak D \cong SL(2)/U_{a(q+p)}.
\]
\begin{remark}\label{explicit}
An explicit construction of the variety $E_{l,m}$ is reduced to determine a system 
of generators of the following semigroup (see \cite{K}, \cite{Pa}):
\[
M^+_{l,m}:=\left\{ (i,j) \in \mathbb Z^2_{\geq 0}\;  :\;  j \leq li,\; m|(i-j)\right\}.
\]
Let $(i_1, j_1),\; \dots,\; (i_u, j_u)$ be a system of generators of 
$M^+_{l,m}$, and consider a vector
\[
v=(X^{i_1}Y^{j_1},\; \dots,\; X^{i_u}Y^{j_u}) \in V(i_1+j_1) \oplus \dots 
\oplus V(i_u+j_u),
\]
where $V(n):=\Sym^n \langle X, Y \rangle$ is the irreducible $SL(2)$-representation of highest weight $n$. Then, $E_{l,m}$ is isomorphic to the closure $\overline{SL(2) \cdot v} \subset V(i_1+j_1) \oplus \dots 
\oplus V(i_u+j_u)$.  
\begin{figure}[h]
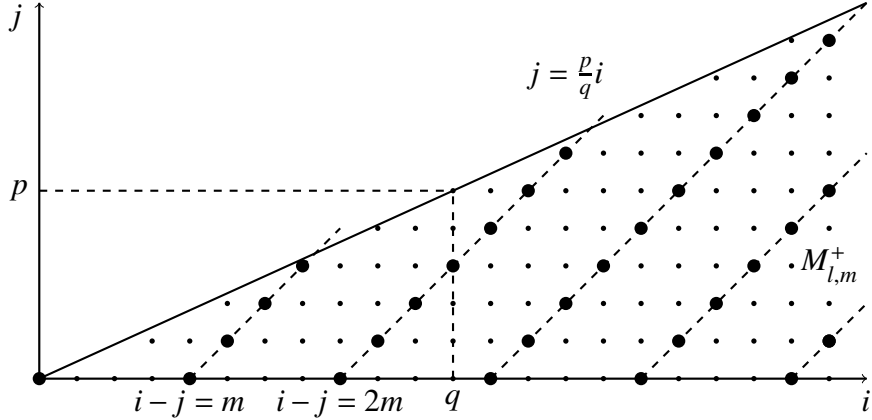

\begin{center}
\tikz{
(-1,-1) grid (11,5);
\draw[thick](0,0)--(0,5);
\draw[thick](0,0)--(11,0);
\draw[thick](0,0)--(11,5);
\draw node at(7,4){$j=\frac{p}{q}i$};
\draw node at(10.5,1.5){$M^+_{l,m}$};
\draw node at(11,-0.3){$i$};
\draw node at(-0.3,4.8){$j$};
\coordinate (J) at (0,5);
\coordinate (I) at (11,0);
\coordinate (O) at (0,0);
\draw[->] (O) -- (J);
\draw[->] (O) -- (I);
{\draw[dashed][thick](2,0)--(4,2)};
{\draw[dashed][thick](4,0)--(7.5,3.5)};
{\draw[dashed][thick](6,0)--(11,5)};
{\draw[dashed][thick](8,0)--(11,3)};
{\draw[dashed][thick](10,0)--(11,1)};
{\draw[dashed][thick](5.5,0)--(5.5,2.5)};
{\draw[dashed][thick](0,2.5)--(5.5,2.5)};
\coordinate (P) at (5.5,0);
 \path (P) node [below] {$q$};
\coordinate (Q) at (0,2.5);
 \path (Q) node [left] {$p$};
\coordinate (A) at (2,0);
       {\fill (A) circle (2.5pt)};
       \path (A) node [below] {$i-j=m$};
\coordinate (B) at (4,0);
       {\fill (B) circle (2.5pt)};
       \path (B) node [below] {$i-j=2m$};
\coordinate (C) at (6,0);
       {\fill (C) circle (2.5pt)};
\coordinate (D) at (8,0);
       {\fill (D) circle (2.5pt)};
\coordinate (E) at (10,0);
       {\fill (E) circle (2.5pt)};
{\fill (0,0) circle (2.5pt)};
{\fill (0.5,0) circle (1pt)};
{\fill (1,0) circle (1pt)};       
{\fill (1.5,0) circle (1pt)};
{\fill (1.5,0.5) circle (1pt)};
{\fill (2,0.5) circle (1pt)};
{\fill (2.5,0) circle (1pt)};
{\fill (2.5,1) circle (1pt)};
{\fill (3,0) circle (1pt)};
{\fill (3,0.5) circle (1pt)};
{\fill (3.5,0) circle (1pt)};
{\fill (3.5,0.5) circle (1pt)};
{\fill (3.5,1) circle (1pt)};
{\fill (3.5,1.5) circle (2.5pt)};
{\fill (4,0.5) circle (1pt)};
{\fill (4,1) circle (1pt)};
{\fill (4,1.5) circle (1pt)};
{\fill (4.5,0) circle (1pt)};
{\fill (4.5,1) circle (1pt)};
{\fill (4.5,1.5) circle (1pt)};
{\fill (4.5,2) circle (1pt)};
{\fill (5,0) circle (1pt)};
{\fill (5,0.5) circle (1pt)};
{\fill (5,1) circle (1pt)};
{\fill (5,1.5) circle (1pt)};
{\fill (5,2) circle (1pt)};
{\fill (5.5,0) circle (1pt)};
{\fill (5.5,0.5) circle (1pt)};
{\fill (5.5,1) circle (1pt)};
{\fill (5.5,1.5) circle (1pt)};
{\fill (5.5,2) circle (1pt)};
{\fill (5.5,2.5) circle (1pt)};
{\fill (6,0.5) circle (1pt)};
{\fill (6,1) circle (1pt)};
{\fill (6,1.5) circle (1pt)};
{\fill (6,2) circle (1pt)};
{\fill (6,2.5) circle (1pt)};
{\fill (6.5,0) circle (1pt)};
{\fill (6.5,0.5) circle (1pt)};
{\fill (6.5,1) circle (1pt)};
{\fill (6.5,1.5) circle (1pt)};
{\fill (6.5,2) circle (1pt)};
{\fill (6.5,2.5) circle (1pt)};
{\fill (7,0) circle (1pt)};
{\fill (7,0.5) circle (1pt)};
{\fill (7,1) circle (1pt)};
{\fill (7,1.5) circle (1pt)};
{\fill (7,2) circle (1pt)};
{\fill (7,2.5) circle (1pt)};
{\fill (7,3) circle (2.5pt)};
{\fill (7.5,0) circle (1pt)};
{\fill (7.5,0.5) circle (1pt)};
{\fill (7.5,1) circle (1pt)};
{\fill (7.5,1.5) circle (1pt)};
{\fill (7.5,2) circle (1pt)};
{\fill (7.5,2.5) circle (1pt)};
{\fill (7.5,3) circle (1pt)};
{\fill (8,0.5) circle (1pt)};
{\fill (8,1) circle (1pt)};
{\fill (8,1.5) circle (1pt)};
{\fill (8,2) circle (1pt)};
{\fill (8,2.5) circle (1pt)};
{\fill (8,3) circle (1pt)};
{\fill (8,3.5) circle (1pt)};
{\fill (8.5,0) circle (1pt)};
{\fill (8.5,0.5) circle (1pt)};
{\fill (8.5,1) circle (1pt)};
{\fill (8.5,1.5) circle (1pt)};
{\fill (8.5,2) circle (1pt)};
{\fill (8.5,2.5) circle (1pt)};
{\fill (8.5,3) circle (1pt)};
{\fill (8.5,3.5) circle (1pt)};
{\fill (9,0) circle (1pt)};
{\fill (9,0.5) circle (1pt)};
{\fill (9,1) circle (1pt)};
{\fill (9,1.5) circle (1pt)};
{\fill (9,2) circle (1pt)};
{\fill (9,2.5) circle (1pt)};
{\fill (9,3) circle (1pt)};
{\fill (9,3.5) circle (1pt)};
{\fill (9,4) circle (1pt)};
{\fill (9.5,0) circle (1pt)};
{\fill (9.5,0.5) circle (1pt)};
{\fill (9.5,1) circle (1pt)};
{\fill (9.5,1.5) circle (1pt)};
{\fill (9.5,2) circle (1pt)};
{\fill (9.5,2.5) circle (1pt)};
{\fill (9.5,3) circle (1pt)};
{\fill (9.5,3.5) circle (1pt)};
{\fill (9.5,4) circle (1pt)};
{\fill (10,0.5) circle (1pt)};
{\fill (10,1) circle (1pt)};
{\fill (10,1.5) circle (1pt)};
{\fill (10,2) circle (1pt)};
{\fill (10,2.5) circle (1pt)};
{\fill (10,3) circle (1pt)};
{\fill (10,3.5) circle (1pt)};
{\fill (10,4) circle (1pt)};
{\fill (10,4.5) circle (1pt)};
{\fill (10.5,0) circle (1pt)};
{\fill (10.5,0.5) circle (2.5pt)};
{\fill (10.5,1) circle (1pt)};
{\fill (10.5,2) circle (1pt)};
{\fill (10.5,2.5) circle (1pt)};
{\fill (10.5,3) circle (1pt)};
{\fill (10.5,3.5) circle (1pt)};
{\fill (10.5,4) circle (1pt)};
{\fill (10.5,4.5) circle (2.5pt)};
{\fill (2.5,0.5) circle (2.5pt)};
{\fill (3,1) circle (2.5pt)};
{\fill (4.5,0.5) circle (2.5pt)};
{\fill (5,1) circle (2.5pt)};
{\fill (5.5,1.5) circle (2.5pt)};
{\fill (6,2) circle (2.5pt)};
{\fill (6.5,2.5) circle (2.5pt)};
{\fill (6.5,0.5) circle (2.5pt)};
{\fill (7,1) circle (2.5pt)};
{\fill (7.5,1.5) circle (2.5pt)};
{\fill (8,2) circle (2.5pt)};
{\fill (8.5,2.5) circle (2.5pt)};
{\fill (9,3) circle (2.5pt)};
{\fill (9.5,3.5) circle (2.5pt)};
{\fill (10,4) circle (2.5pt)};
{\fill (8.5,0.5) circle (2.5pt)};
{\fill (9,1) circle (2.5pt)};
{\fill (9.5,1.5) circle (2.5pt)};
{\fill (10,2) circle (2.5pt)};
{\fill (10.5,2.5) circle (2.5pt)};
{\fill (10.5,0.5) circle (2.5pt)};
}
\end{center}
\caption{The semigroup $M^+_{l,m}$}
\end{figure}
\end{remark}
According to \cite[\S 1]{BH}, an affine normal quasihomogeneous $SL(2)$-variety $E_{l,m}$ has a description as a categorical quotient 
of a hypersurface in $\C^5$. 
We consider $\C^5$ as the $SL(2)$-module $V(0) \oplus 
V(1) \oplus V(1)$ with coordinates $X_0,\; X_1,\; X_2,\; X_3,\; X_4$, and 
identify $X_1,\; X_2,\; X_3,\; X_4$ with the coefficients of the $2 \times 2$ matrix 
\[
\begin{pmatrix}
X_1 & X_3 \\
X_2 & X_4\\
\end{pmatrix}
\] 
so that $SL(2)$ acts by left multiplication. 
We moreover consider actions of the following two diagonalizable groups:
\begin{align*}
& G_0:=\left\{\diag(t,\; t^{-p},\; t^{-p},\; t^q,\; t^q)\; :\; t \in \C^*\right\} \cong 
\C^*;\\
& G_m:=\left\{\diag(1,\; \zeta^{-1},\; \zeta^{-1},\; \zeta,\; \zeta)\; :\; \zeta^m=1\right\} \cong 
\mu_m.
\end{align*} 
It is easy to see that the $SL(2)$-action on $\C^5$ commutes with the $\G$-action. 
\begin{theorem}[{\cite[Theorem 1.6]{BH}}]\label{batyrev}
Let $E_{l,m}$ be a $3$-dimensional affine normal quasihomogeneous $SL(2)$-variety of height 
$l=p/q$ and degree $m$. Then, $E_{l,m}$ is isomorphic to the categorical quotient 
of the affine hypersurface 
\[
\C^5 \supset H_{q-p}:=(X_0^{q-p}=X_1X_4-X_2X_3)
\]
modulo the action of $\G$. 
\end{theorem}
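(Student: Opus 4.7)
The plan is to invoke Popov's classification (Theorem \ref{popov}): it suffices to verify that $Y := H_{q-p} \git \G$ is a three-dimensional affine normal quasihomogeneous $SL(2)$-variety of height $l$ and degree $m$. One first checks that $H_{q-p}$ is $(SL(2) \times \G)$-stable: the polynomial $F := X_0^{q-p} - X_1 X_4 + X_2 X_3$ is $SL(2)$-invariant because $X_0$ is fixed and $X_1X_4 - X_2X_3 = \det M$, while under $G_0$ every term has weight $q-p$ and under $G_m$ every term has weight $0$. Since $H_{q-p}$ is a normal irreducible hypersurface in $\C^5$ and $\G$ is reductive, the quotient $Y$ is a normal irreducible affine variety of dimension $\dim H_{q-p} - \dim \G = 3$.

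To pin down the degree $m$, I would exhibit a point whose $SL(2)$-stabilizer modulo $\G$ equals $C_m$. The natural choice is $x_0 := (1,1,0,0,1) \in H_{q-p}$, corresponding to the identity matrix $M = I$. Solving $(t,\zeta) \cdot g \cdot x_0 = x_0$ forces $t = 1$ and $g = \diag(\zeta,\zeta^{-1})$ with $\zeta^m = 1$, so the stabilizer equals $C_m$. Since $SL(2)/C_m$ is three-dimensional, this orbit is dense in $Y$, showing that $Y$ is quasihomogeneous with open orbit type $SL(2)/C_m$, hence of degree $m$.

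For the height $l = p/q$ I would analyze a codimension-one orbit using $x_1 := (0,1,0,1,0) \in H_{q-p}$. Solving the stabilizer equations yields
\[
g = \begin{pmatrix} t^p\zeta & u \\ 0 & t^{-p}\zeta^{-1} \end{pmatrix}, \qquad u \in \C, \qquad t^{p+q}\zeta^2 = 1.
\]
Setting $\xi := t^p\zeta$ one computes $\xi^{p+q} = \zeta^{q-p}$, so that $\xi^{a(p+q)} = \zeta^{a(q-p)} = 1$ because $a(q-p)$ is a multiple of $m$. Conversely any $\xi \in \mu_{a(q+p)}$ is realized by solving $t^{q-p} = \xi^{-2}$ and setting $\zeta = \xi t^{-p}$, after verifying $\zeta \in \mu_m$. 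Hence the stabilizer of $[x_1]$ is $U_{a(q+p)}$, matching the codimension-one orbit $\mathfrak D \cong SL(2)/U_{a(q+p)}$ of $E_{l,m}$ recorded after \eqref{akb}. Popov's theorem then delivers $Y \cong E_{l,m}$ as $SL(2)$-varieties.

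The main obstacle is the arithmetic bookkeeping in the surjectivity part of the height step, which must be handled using the relations $\gcd(p,q) = 1$, $k = \gcd(m,q-p)$, and $a = m/k$. An alternative, more conceptual route is to compute $\C[H_{q-p}]^{\G}$ directly: the $U$-invariants $\C[\C^5]^U = \C[X_0, X_2, X_4, X_1X_4-X_2X_3]$ form a polynomial ring, and imposing the $\G$-weight conditions together with the hypersurface relation $X_0^{q-p} = X_1X_4 - X_2X_3$ produces a basis of highest-weight vectors in bijection with the semigroup $M^+_{l,m}$ from Remark \ref{explicit}, matching the coordinate ring of $E_{l,m}$ in Popov's description.
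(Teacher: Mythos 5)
This statement is quoted from \cite[Theorem 1.6]{BH}; the paper under review does not reprove it, and the proof it implicitly relies on (see Remarks \ref{monomial} and \ref{invariant ring}) is Batyrev--Haddad's computation of the invariant ring $\C[H_{q-p}]^{\G}$ and its identification with $\C[\E]$ via the weight semigroup $M^+_{l,m}$ --- that is, exactly the route you relegate to one sentence at the end of your proposal.

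Your main argument has a genuine gap at its final step. The preliminary verifications are fine: $Y:=H_{q-p}\git\G$ is a $3$-dimensional affine normal quasihomogeneous $SL(2)$-variety, the stabilizer of $\pi(1,1,0,0,1)$ is $C_m$ (so the degree is $m$), and the stabilizer of $\pi(0,1,0,1,0)$ is $U_{a(q+p)}$ (the solvability check you postpone does go through, using $\gcd(p,q-p)=1$, whence $\gcd(q-p,pm)=k$). But Popov's classification cannot then ``deliver $Y\cong\E$'': what you have established is only that $Y\cong E_{l',m}$ for some height $l'=p'/q'$ whose two-dimensional orbit has the same isotropy group, i.e.\ $a'(q'+p')=a(q+p)$, and this does not force $l'=l$. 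For instance, with $m=1$ both $E_{1/4,1}$ and $E_{2/3,1}$ have open orbit $SL(2)$ and two-dimensional orbit $SL(2)/U_5$, yet they are non-isomorphic because their heights differ. The height is not an orbit-isotropy invariant; it is read off from finer data, namely the $B$-weight semigroup of $\C[Y]$ (equivalently its $U$-invariant algebra). Consequently the computation you call an ``alternative, more conceptual route'' --- $\C[\C^5]^U=\C[X_0,X_2,X_4,X_1X_4-X_2X_3]$, imposing the $\G$-weight conditions and the relation $X_0^{q-p}=X_1X_4-X_2X_3$, and matching the resulting semigroup of highest weights with $M^+_{l,m}$ --- is not optional bookkeeping but the essential content of the proof, and it is how Batyrev--Haddad actually argue; your write-up of it is only a sketch, so the height identification remains unproved as it stands.
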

\begin{remark}\label{monomial}
According to the proof of \cite[Theorem 1.6]{BH}, the dense open orbit $\mathfrak U$ in $\E$ is isomorphic to the $\G$-quotient of the open subset in $H_{q-p}$ defined by the condition $X_0 \neq 0$.  
Also, the ring of $G_0$-invariants of $H_{q-p} \cap \{X_0 \neq 0\}$ is generated by the monomials 
\[
X:=X_0^pX_1,\quad  Y:=X_0^{-q}X_3,\quad Z:=X_0^pX_2,\quad W:=X_0^{-q}X_4, 
\]
wich satisfy the equation 
\[
\det
\begin{pmatrix}
X & Y \\
Z & W
\end{pmatrix}
=X_0^{p-q}X_1X_4-X_0^{p-q}X_2X_3=1. 
\] 
\end{remark}
An $SL(2)$-variety $\E$ has another description as an affine categorical quotient. 
To see this, let $H_b \subset \C^5$ be an affine hypersurface defined by the equation 
\[
Y_0^b=X_1X_4-X_2X_3,
\] 
and consider the action of the group $G_0' \times G_a$, where
\begin{align*}
& G_0':=\left\{\diag(t^k,\; t^{-p},\; t^{-p},\; t^q,\; t^q)\; : \; t \in \C^*\right\} \cong \C^*,\\
& G_a:=\left\{\diag(1,\; \zeta^{-1},\; \zeta^{-1},\; \zeta,\; \zeta)\; : \; \zeta^a=1\right\} \cong \mu_a.
\end{align*}
\begin{theorem}[{\cite[Theorem 1.7]{BH}}]\label{batyrev2}
Let $\E$ be a $3$-dimensional affine normal quasihomogeneous $SL(2)$-variety of height $l=p/q$ and  degree $m$. 
Then, $\E$ is isomorphic to the categorical quotient of $H_b$ modulo the action of $G_0' \times G_a$. 
\end{theorem}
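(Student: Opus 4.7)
The plan is to reduce Theorem \ref{batyrev2} to Theorem \ref{batyrev} by realizing $H_b \git (G_0' \times G_a)$ as a further quotient of $H_{q-p} \git (G_0 \times G_m)$ by a suitable finite subgroup. First, I would introduce the $k$-th power map
\[
\varphi : H_{q-p} \lto H_b,\quad (X_0, X_1, X_2, X_3, X_4) \longmapsto (X_0^k, X_1, X_2, X_3, X_4),
\]
which is well-defined since $(X_0^k)^b = X_0^{q-p} = X_1X_4 - X_2X_3$. Identifying $Y_0$ with $X_0^k$, the morphism $\varphi$ exhibits $H_b$ as the geometric quotient of $H_{q-p}$ by the $\mu_k$-action that scales $X_0$ and fixes the remaining coordinates.

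The second step is to realize this $\mu_k$-action as coming from a subgroup of $G_0 \times G_m$. Because $\gcd(p, q) = 1$ and $k \mid q-p$ force $\gcd(p, k) = 1$, and $k \mid m$, one verifies that
\[
\Gamma := \{(\eta, \eta^{-p}) : \eta \in \mu_k\} \subset G_0 \times G_m
\]
is a well-defined subgroup isomorphic to $\mu_k$, and that each element acts on $\C^5$ as $\diag(\eta, 1, 1, 1, 1)$ (the identity $\eta^{q-p} = 1$, coming from $k \mid q-p$, ensures the actions on $X_3, X_4$ are trivial). Thus $\varphi$ is precisely the quotient by $\Gamma$.

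The main technical step is then to identify $(G_0 \times G_m)/\Gamma$ with $G_0' \times G_a$ in such a way that the induced actions on $H_b$ agree. Both are one-dimensional diagonalizable groups with identity component $\C^*$ and discrete part of order $a = m/k$; the identification should follow from matching the explicit diagonal matrices acting on $H_b$, using $m = ak$ and $\gcd(p, k) = 1$. Granted this, the theorem follows from the standard GIT fact $X \git G \cong (X/N) \git (G/N)$ for a closed normal subgroup $N$ of a reductive group $G$: combining with Theorem \ref{batyrev} yields
\[
\E \cong H_{q-p} \git (G_0 \times G_m) \cong (H_{q-p}/\Gamma) \git ((G_0 \times G_m)/\Gamma) \cong H_b \git (G_0' \times G_a).
\]

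The hard part will be the group-theoretic comparison in the third step: $(G_0 \times G_m)/\Gamma$ need not split as a literal direct product inside the ambient $5$-torus, and matching it with $G_0' \times G_a$ coordinate-by-coordinate on $H_b$ requires careful bookkeeping of how the two subgroup structures interact. Should this prove delicate, an alternative route is to compute both invariant rings directly in terms of the semigroup $M^+_{l,m}$ using Remark \ref{explicit}, leveraging the $SL(2) \times \C^*$-sphericity of $\E$ to identify both $\C[H_{q-p}]^{G_0 \times G_m}$ and $\C[H_b]^{G_0' \times G_a}$ with the semigroup algebra $\C[M^+_{l,m}]$ as $SL(2)$-algebras.
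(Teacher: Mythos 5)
Your Steps 1 and 2 are correct and are the natural way to relate the two models: $\varphi\colon(X_0,X_1,\dots,X_4)\mapsto(X_0^k,X_1,\dots,X_4)$ does exhibit $H_b$ as the quotient of $H_{q-p}$ by $\Gamma=\{(\eta,\eta^{-p}):\eta\in\mu_k\}\subset G_0\times G_m$, which acts as $\diag(\eta,1,1,1,1)$, and taking $\Gamma$-invariants first gives $\C[H_{q-p}]^{G_0\times G_m}=\C[H_b]^{(G_0\times G_m)/\Gamma}$. The genuine gap is exactly at the point you deferred to ``careful bookkeeping''. The residual group $(G_0\times G_m)/\Gamma$ acts on $H_b$ through the subgroup $\{\diag(t^k,\,t^{-p}\zeta^{-1},\,t^{-p}\zeta^{-1},\,t^{q}\zeta,\,t^{q}\zeta): t\in\C^*,\ \zeta\in\mu_m\}$, i.e.\ through $G_0'$ together with the \emph{full} $\mu_m$, not $\mu_a$. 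The elements of $G_0'$ that act trivially on $Y_0$ only contribute $\diag(1,\xi^{-1},\xi^{-1},\xi,\xi)$ with $\xi\in\mu_k$ (take $t\in\mu_k$, $\xi=t^p$), so the image of $G_0'\times G_a$ only reaches $\xi\in\mu_k\mu_a=\mu_{\mathrm{lcm}(a,k)}$. Hence, whenever $\gcd(a,k)>1$, the image of $(G_0\times G_m)/\Gamma$ in $GL_5$ strictly contains that of $G_0'\times G_a$, and no isomorphism of abstract groups (which does exist, since the extension of $\mu_a$ by $\C^*$ splits) can intertwine the two actions on $H_b$: any $\mu_a$-complement of $G_0'$ inside the residual group must act on $Y_0$ by a nontrivial character.

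This is not a repairable bookkeeping issue, and your fallback of comparing invariant rings would expose it rather than fix it: a monomial $Y_0^{d_0}X_1^{u_1}X_3^{u_2}$ is $G_0'\times G_a$-invariant iff $kd_0=pu_1-qu_2$ and $a\mid u_1-u_2$, and since $k\mid pu_1-qu_2$ is equivalent to $k\mid u_1-u_2$ (use $k\mid q-p$ and $\gcd(k,p)=1$), these conditions only force $\mathrm{lcm}(a,k)\mid u_1-u_2$, not $m\mid u_1-u_2$; so the semigroup of $\tilde B$-eigenmonomials you obtain is $M^+_{l,\mathrm{lcm}(a,k)}$ rather than $M^+_{l,m}$, i.e.\ $\C[H_b]^{G_0'\times G_a}$ is the coordinate ring of $E_{l,\,m/\gcd(a,k)}$. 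Concretely, for $(l,m)=(1/13,8)$ (non-toric, $k=4$, $a=2$, $b=3$) the function $Y_0X_1^4$ is $G_0'\times G_a$-invariant and spans, together with its $SL(2)$-translates, a copy of $V(4)$, while $\C[\E]\cong\bigoplus_{(i,j)\in M^+_{l,m}}V(i+j)$ contains no $V(4)$ because no $(i,j)$ with $i+j=4$ has $8\mid i-j$. What your covering argument does prove is $\E\cong H_b\git(G_0'\times G_m)$ with $\mu_m$ acting by $\diag(1,\zeta^{-1},\zeta^{-1},\zeta,\zeta)$; this coincides with the statement as printed only when $\gcd(a,k)=1$ (for instance whenever $k=1$, which covers all the worked examples in this article). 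Note also that the paper offers no proof to compare against --- the theorem is quoted from \cite[Theorem 1.7]{BH} --- so before attempting to close Step 3 you should check Batyrev--Haddad's original normalization of the finite group (a twist of the $\mu_a$-action on $Y_0$, or an implicit coprimality assumption, is needed for the statement in the printed form).
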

Let $L^-$ and $L^+$ be linearizations of the trivial line bundle over $H_b$ corresponding to the non-trivial characters
\[
\chi^- : G_0' \times G_a \to \C^*,\quad  (t, \zeta) \mapsto t^{k-p+q}
\]
and 
\[
\chi^+ : G_0' \times G_a \to \C^*,\quad (t, \zeta) \mapsto t^{-k+p-q}
\]
of $G_0' \times G_a$, respectively, and  
consider the following Zariski open subsets of $H_b$: 
\begin{equation*}
U^-:=H_b \setminus \{X_3=X_4=0\}, \quad U^+:=H_b \setminus \{X_1=X_2=0\}.
\end{equation*}
\begin{theorem}[{\cite[Propositions 3.2 and 3.3]{BH}}]
The subsets $H_b^{ss}(L^-)$ and $H_b^{ss}(L^+)$ of semistable points of $H_b$ with respect to the $G_0' \times G_a$-linearized line bundles $L^-$ and $L^+$ are $U^-$ and $U^+$, respectively. 
\end{theorem}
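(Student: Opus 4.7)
The plan is to invoke the Hilbert--Mumford numerical criterion for GIT semistability with respect to a character linearization. Since $G_a\cong \mu_a$ is finite, every one-parameter subgroup of $G_0'\times G_a$ factors through $G_0'\cong \mathbb G_m$, so a point $x\in H_b$ is $L^\pm$-semistable if and only if some $(G_0'\times G_a)$-semi-invariant monomial on $\C^5$ of weight $n\chi^\pm$ (for some $n\geq 1$) does not vanish at $x$. First I would tabulate the weights: the coordinates $Y_0,X_1,X_2,X_3,X_4$ carry $G_0'$-weights $(k,-p,-p,q,q)$ and $G_a$-weights $(0,-1,-1,1,1)$, and, using $m=ak$, $q-p=bk$ with $\gcd(a,b)=1$, one rewrites $\chi^\pm=\pm k(b+1)$.

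For the instability inclusion I would observe that on $H_b\setminus U^-=\{X_3=X_4=0\}\cap H_b$ the defining equation $Y_0^b=X_1X_4-X_2X_3$ forces $Y_0=0$; consequently every monomial non-vanishing at such a point involves only $X_1$ and $X_2$ and therefore has non-positive $G_0'$-weight. Since any non-trivial section of $L^{-,\otimes n}$ has strictly positive $G_0'$-weight $nk(b+1)$, no such section can be non-zero there, proving $H_b\setminus U^-\subset H_b\setminus H_b^{ss}(L^-)$. The symmetric argument, with $(X_1,X_2)$ and $(X_3,X_4)$ interchanged, handles $L^+$.

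For the semistability inclusion, given $x\in U^-$ at least one of $X_3,X_4$ is non-zero at $x$, and I would exhibit the explicit semi-invariant $X_3^{m(b+1)}$ (or $X_4^{m(b+1)}$). Its $G_0'$-weight equals $qm(b+1)=qa\cdot k(b+1)=qa\cdot\chi^-$, and its $G_a$-weight $m(b+1)$ is divisible by $a$ because $a\mid m$. Hence it defines a $(G_0'\times G_a)$-semi-invariant section of $L^{-,\otimes qa}$ not vanishing at $x$, so $x\in H_b^{ss}(L^-)$. The case of $L^+$ is obtained symmetrically, using $X_1^{m(b+1)}$ or $X_2^{m(b+1)}$.

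The only delicate aspect is ensuring simultaneous compatibility of the $G_0'$-weight condition and the $G_a$-congruence on a non-vanishing monomial; this is precisely what the divisibility $a\mid m$ built into the definition $m=ak$ provides. Once this is in place, the two inclusions above combine to give $H_b^{ss}(L^\pm)=U^\pm$.
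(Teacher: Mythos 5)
Your argument is correct: this statement is quoted in the paper from \cite{BH} (Propositions 3.2 and 3.3) without proof, and your verification via the standard criterion for character-linearized GIT on an affine variety — unstable points of $\{X_3=X_4=0\}\cap H_b$ (where $Y_0^b=0$ forces $Y_0=0$) admit no nonvanishing semi-invariant of the positive weight $n\chi^-$, while $X_3^{m(b+1)}$, $X_4^{m(b+1)}$ (resp. $X_1^{m(b+1)}$, $X_2^{m(b+1)}$) give explicit $G_a$-invariant semi-invariants of weight $qa\,\chi^-$ (resp. $pa\,\chi^+$) — is exactly the expected computation and matches the cited source's approach. The only point left implicit, and worth one line, is that semi-invariants on $H_b$ lift to semi-invariant (sums of) monomials on $\C^5$ of the same weight, which holds because the defining equation $Y_0^b-X_1X_4+X_2X_3$ is itself a $G_0'\times G_a$-eigenfunction, so the surjection $\C[\C^5]\to\C[H_b]$ respects weight spaces.
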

\begin{theorem}[{\cite[Theorem 3.4]{BH}}]\label{flip}
With the above notation, set 
\[
\E^-:=H_b^{ss}(L^-) \git (G_0' \times G_a), \quad 
\E^+:=H_b^{ss}(L^+) \git (G_0' \times G_a). 
\]
Then, the open embeddings $H_b^{ss}(L^-)\subset H_b$ and $H_b^{ss}(L^+)\subset H_b$ define natural birational morphisms $\E^- \to \E$ and 
$\E^+ \to \E$, and the $SL(2)$-equivariant flip
\[
\xymatrix@R=5pt{
\E^- \ar@{.>}[rr] \ar[rd] & & \E^+ \ar[ld] \\
& \E &
}
\]
\end{theorem}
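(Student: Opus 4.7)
The plan is to build on the GIT description of Theorem \ref{batyrev2}: $\E \cong H_b \git (G_0' \times G_a)$ corresponds to the trivial character, while $\E^\pm$ correspond to the non-trivial characters $\chi^\pm$. The morphisms $\E^\pm \to \E$ will arise from the $(G_0' \times G_a)$-equivariant open embeddings $U^\pm \hookrightarrow H_b$: restriction of invariants sits as the degree-zero part of an inclusion of graded algebras
\[
\bigoplus_{n \geq 0} H^0\bigl(H_b, (L^\pm)^{\otimes n}\bigr)^{G_0' \times G_a} \hookrightarrow \bigoplus_{n \geq 0} H^0\bigl(U^\pm, (L^\pm)^{\otimes n}\bigr)^{G_0' \times G_a},
\]
and passing to $\Proj$ over the affine base produces the required morphisms, which are automatically proper by general GIT.

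For birationality I would exhibit a common dense open subset. From the defining equation $Y_0^b = X_1 X_4 - X_2 X_3$ of $H_b$, the locus $\{Y_0 \neq 0\}$ is exactly the open set where the matrix of the $X_i$'s has nonzero determinant, so it avoids both $\{X_3 = X_4 = 0\}$ and $\{X_1 = X_2 = 0\}$ and is therefore contained in $U^- \cap U^+$. By Remark \ref{monomial} (translated into the $(G_0', G_a)$-picture), its $(G_0' \times G_a)$-quotient is the open $SL(2)$-orbit $\mathfrak U \subset \E$, so both $\E^\pm \to \E$ restrict to isomorphisms over $\mathfrak U$. The $SL(2)$-equivariance is then immediate, since the $SL(2)$-action on $\C^5$ commutes with $G_0' \times G_a$ and since the two loci $\{X_3 = X_4 = 0\}$ and $\{X_1 = X_2 = 0\}$ are $SL(2)$-stable (they are vanishing loci of individual columns of a $2 \times 2$ matrix on which $SL(2)$ acts by left multiplication); hence $U^\pm$ is $SL(2) \times (G_0' \times G_a)$-invariant, and the induced $SL(2)$-actions on $\E^\pm$ are compatible with the structure morphisms to $\E$.

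To identify $\E^- \dashrightarrow \E^+$ as a genuine \emph{flip} I would invoke variation of GIT in the sense of Thaddeus and Dolgachev--Hu: since $\chi^-$ and $\chi^+$ are mutually inverse characters of $G_0' \times G_a$, they lie in opposite open chambers adjacent to the wall at the trivial character, whose GIT quotient is precisely $\E$. This is the archetypal VGIT wall-crossing, producing a small birational modification provided the exceptional loci in $\E^\pm$ over the singular point of $\E$ have codimension at least two. I expect this codimension count to be the main technical obstacle: it requires analyzing the strictly semistable loci $U^\pm \setminus U^\mp$ inside $H_b$ and showing that their images in the $3$-dimensional quotients $\E^\pm$ are of codimension $\geq 2$, which in turn comes down to using the hypersurface equation to control the relevant fibers of the quotient morphism. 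Once this dimension estimate is in hand, the flip diagram follows formally from the VGIT framework combined with the $SL(2)$-equivariance established above.
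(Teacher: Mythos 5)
There is nothing in the paper to compare against here: Theorem \ref{flip} is quoted verbatim from Batyrev--Haddad (\cite[Theorem 3.4]{BH}) and the paper gives no proof, so your proposal has to stand on its own. The general frame you set up is the right one and matches how the cited source organizes things: $\E^{\pm}$ are the $\Proj$ of the rings of $\chi^{\pm}$-semi-invariants over $\C[H_b]^{G_0' \times G_a}$, hence come with projective morphisms to $\E$; the $SL(2)$-action commutes with $G_0' \times G_a$ and visibly preserves $U^{\pm}$, so everything is equivariant; and $\{Y_0 \neq 0\}$ is indeed contained in $U^- \cap U^+$ by the hypersurface equation.

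The genuine gap is that the actual content of the theorem is exactly the part you defer. First, a smaller issue: from ``the quotient of $\{Y_0 \neq 0\}$ is $\mathfrak U$'' it does not yet follow that $\E^{\pm} \to \E$ are isomorphisms over $\mathfrak U$; since $Y_0$ is only a semi-invariant, $\{Y_0 \neq 0\}$ need not be saturated, and you must check that over $\mathfrak U$ the closed $(G_0' \times G_a)$-orbits in $U^{\pm}$ coincide with those in $H_b$ (equivalently, that semistable points lying over $\mathfrak U$ are stable), otherwise the induced maps on quotients could identify orbits differently. Second, and more seriously, the assertion that the diagram is a \emph{flip} rests on precisely the analysis you label ``the main technical obstacle'' and do not carry out: one must identify the loci where $\E^{\pm} \to \E$ fail to be isomorphisms --- concretely, the fibers over the fixed point $O$, which in \cite{BH} are shown to be curves obtained as quotients of $\{X_1 = X_2 = 0\} \setminus \{0\}$ and $\{X_3 = X_4 = 0\} \setminus \{0\}$ --- and verify they have codimension $\geq 2$ in the three-dimensional quotients. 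Without that, the VGIT formalism only produces two projective birational morphisms to $\E$, not a flip; and even granting smallness, calling the wall-crossing a flip requires noting that the descended $\chi^{\pm}$-linearizations are relatively ample on the respective sides (so that the same divisor class changes sign across the diagram), a verification that is also absent from your sketch. So the strategy is the natural one, but the core of the proof is missing rather than merely compressed.
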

\begin{remark}[{\cite[Remarks 3.12 and 4.2]{BH}}]\label{clembedding}
Let $\E \hookrightarrow V \cong V(i_1+j_1) \oplus \dots \oplus V(i_u+j_u)$ be an equivariant closed embedding (see Remark \ref{explicit}), and consider an action of $t \in \C^*$ on $V$ defined by multiplication of $(t^{i_1-j_1},\; \dots,\; t^{i_u-j_u})$. 
Then, since this $\C^*$-action commutes with the $SL(2)$-action, an affine variety $\E \subset V$ remains stable under the $\C^*$-action, and this enables us to consider $\E$ as an $SL(2) \times \C^*$-variety. 
We remark that there is another way to define the same $\C^*$-action on $\E$:  we 
consider an action of  $\C^*$ on $H_b$ defined by the matrices
\[
\diag(1,\; s^{-1},\; s^{-1},\; s,\; s),\quad s \in \C^*.
\]
Since this $\C^*$-action commutes with the $SL(2) \times G_0' \times G_a$-action, it descends to $\E$, and we see that this $\C^*$-action coincides with the one defined above. 
\end{remark}
\begin{theorem}[{\cite[Proposition 4.1]{BH}}]\label{spherical E}
An affine $SL(2) \times \C^*$-variety $\E$ is spherical with respect to the Borel subgroup $\tilde{B}:=B \times \C^*$. 
\end{theorem}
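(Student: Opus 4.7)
The plan is to exhibit a dense open $\tilde B$-orbit in $\E$, which by definition establishes sphericality. Because the $SL(2)$- and $\C^*$-actions on $\E$ commute, the unique open $SL(2)$-orbit $\mathfrak U \subset \E$ is $\C^*$-stable and hence $\tilde B$-stable; since $\mathfrak U$ is dense in $\E$ and $\dim \tilde B = 3 = \dim \mathfrak U$, it suffices to produce a point of $\mathfrak U$ whose $\tilde B$-stabilizer is finite.

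First I would identify $\mathfrak U \cong SL(2)/C_m$ and make the $\C^*$-action explicit. Since $\C^*$ commutes with $SL(2)$, its action on $SL(2)/C_m$ is by $SL(2)$-equivariant automorphisms, which are of the form of right translations by elements of $N_{SL(2)}(C_m)/C_m$. Using the second description of the $\C^*$-action given in Remark \ref{clembedding}, namely the matrix $\diag(1, s^{-1}, s^{-1}, s, s)$ acting on $H_b$, together with the monomial coordinates from Remark \ref{monomial}, one sees that the induced action on the determinant-one matrix $\left(\begin{smallmatrix} X & Y \\ Z & W \end{smallmatrix}\right)$ is right multiplication by $\diag(s^{-1}, s) \in T$. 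Hence the $\tilde B$-action on $\mathfrak U$ becomes $(b, s) \cdot gC_m = bg\, \diag(s^{-1}, s)\, C_m$, and the claim reduces to the Bruhat-type observation that for a generic $g \in SL(2) \setminus B$ the double coset $BgT$ is the big cell of $SL(2)$. Concretely, taking $g = \left(\begin{smallmatrix} 1 & 0 \\ 1 & 1 \end{smallmatrix}\right)$ and analyzing the stabilizer equation $b = g c t_s g^{-1}$ with $c \in C_m$ and $t_s = \diag(s, s^{-1})$, the requirement $b \in B$ forces $(ct_s)^2 = 1$, leaving only finitely many solutions.

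The only delicate point is the identification of the $\C^*$-action with right multiplication by a one-parameter subgroup of $T$; the embedding description of the $\C^*$-action from Remark \ref{clembedding} is awkward to use directly, whereas the matrix description on $H_b$ makes its commutation with both $SL(2)$ and the diagonal subgroup $C_m$ transparent. Once this identification is in hand, the remainder of the argument is routine, and a finer analysis of the stabilizer (which is independent of $m$ up to finite factors) even yields the explicit form of the open $\tilde B$-orbit.
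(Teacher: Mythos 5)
Your argument is correct, but note that the paper itself offers no proof of this statement: it is quoted verbatim from Batyrev--Haddad \cite[Proposition 4.1]{BH}, so there is no internal proof to compare against. Your direct verification is a legitimate self-contained alternative: you use that the $\C^*$-action commutes with $SL(2)$ to see that $\mathfrak U\cong SL(2)/C_m$ is $\tilde B$-stable, identify the $\C^*$-action on $\mathfrak U$ as right translation by $\diag(s^{-1},s)\in T$, and then reduce sphericality to the finiteness of the $\tilde B$-stabilizer of $gC_m$ for $g=\left(\begin{smallmatrix}1&0\\1&1\end{smallmatrix}\right)$, which your computation $g\,\diag(\lambda,\lambda^{-1})g^{-1}=\left(\begin{smallmatrix}\lambda&0\\ \lambda-\lambda^{-1}&\lambda^{-1}\end{smallmatrix}\right)\in B\Leftrightarrow\lambda^2=1$ establishes; together with $\dim\tilde B=3=\dim\mathfrak U$ this gives the dense $\tilde B$-orbit, and normality of $\E$ is part of Popov's classification, so sphericality follows. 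Two small inaccuracies are worth tidying but do not affect validity: the monomial coordinates of Remark \ref{monomial} live on the model $H_{q-p}$, whereas the matrix description of the $\C^*$-action in Remark \ref{clembedding} is given on $H_b$, so you should either transport the same matrices $\diag(1,s^{-1},s^{-1},s,s)$ to $H_{q-p}$ (they preserve that hypersurface as well and induce the same action on $\E$ up to the irrelevant sign of $s$) or redo the invariant-theoretic bookkeeping for $H_b$; and the double coset $BgT$ is not literally the Bruhat big cell but the dense subset where both entries of the bottom row are nonzero---harmless, since your stabilizer computation, not the coset description, is what carries the proof.
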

Let 
$\E':=Bl^{\omega}_O(\E)$ 
be the weighted blow-up of $\E$ with weight $\omega$ defined by the $\C^*$-action considered in Remark \ref{clembedding}. Then we obtain surjective morphisms $\E' \to \E^-$ and  $\E' \to \E^+$ such that the following diagram commutes:
\[
\xymatrix@R=5pt{
& \E' \ar[ld] \ar[rd] & \\ 
\E^-  \ar@{.>}[rr] \ar[rd] & & \E^+ \ar[ld] \\
& \E &
}
\]
\begin{theorem}[{\cite[\S 3]{BH}}]\label{local}
The weighted blow-up $\E'$ contains a unique closed $SL(2) \times \C^*$-orbit $C$ isomorphic to $\P^1$. 
Moreover, along the closed orbit $C$, the variety $\E'$ is locally isomorphic to $\C \times \C^2/\mu_b$. 
\end{theorem}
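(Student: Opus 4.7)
The plan is to exploit the $(SL(2)\times\C^*)$-spherical structure of $\E$ (Theorem \ref{spherical E}) to describe the orbit stratification of $\E'$ via its colored fan, and then to pin down the local model around $C$ via a direct computation in an equivariant affine chart.

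Since the weight $\omega$ defining $\E' = Bl_O^\omega(\E)$ is the one induced by the $\C^*$-factor of Remark \ref{clembedding}, the morphism $\E' \to \E$ is $(SL(2)\times\C^*)$-equivariant, and $\E'$ is another spherical embedding of $\mathfrak U$. By Theorem \ref{knop} the colored fan $\mathfrak F(\E')$ refines $\mathfrak F(\E)$, and the introduction of the unique exceptional divisor adds precisely one new ray $\rho_\omega$, namely the $\tilde B$-invariant valuation cut out by that divisor. A careful analysis of this refinement, using the GIT flip picture of Theorem \ref{flip} to determine which subcones and colors are forced by the requirement that $\E'$ dominates both $\E^-$ and $\E^+$, shows that $\mathfrak F(\E')$ possesses a unique maximal colored cone, a two-dimensional colored cone with $\rho_\omega$ as one of its generating rays. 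By Remark \ref{orbit}, this maximal cone is the cone $\mathcal C_C(\E')$ of a unique closed $(SL(2)\times\C^*)$-orbit $C \subset \E'$.

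To see that $C \cong \P^1$, note that by construction the $\C^*$-factor of $SL(2)\times\C^*$ coincides with the $\C^*$ defining $\omega$, and this factor acts trivially on the exceptional divisor of $\E' \to \E$. Hence $C$ is a single $SL(2)$-orbit, and being projective of positive dimension it has the form $SL(2)/P$ with $P \supset B$ a parabolic, which forces $C \cong \P^1$.

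Finally, for the local model along $C$, I would work in the $\tilde B$-stable affine open neighborhood $(\E')_0$ from Remark \ref{coordinate ring}, whose coordinate ring is
\[
\C[(\E')_0] = \{f \in \C[(\E')_1] \;:\; v(f) \geq 0 \text{ for all } v \in \mathcal B_C(\E')\},
\]
and then apply the local structure theorem of Remark \ref{integralstr}. The two ray generators of $\mathcal C_C(\E')$, namely $\rho_\omega$ and the image $\varrho(D)$ of the color $D$ passing through $C$, span a sublattice of the weight lattice $\Gamma$ whose index, computed directly from the $G_0'\times G_a$-weight data of Theorem \ref{batyrev2}, equals exactly $b = (q-p)/k$. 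Hence the toric germ attached to $\mathcal C_C(\E')$ is $\C^2/\mu_b$, and together with the tangential $\C$-direction along $C$ this gives the claimed local model $\C\times\C^2/\mu_b$. The main obstacle is precisely this lattice-index computation: it requires a careful translation between the $(G_0'\times G_a)$-character data of the GIT quotient and the rank-$2$ character lattice $\Gamma$ of $\tilde B=B\times\C^*$ controlling the spherical fan, with the precise mismatch encoding the integers $k,\; a,\; b$ from \eqref{akb}.
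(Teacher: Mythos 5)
Two remarks before the substance: the paper itself contains no proof of this statement --- it is imported wholesale from \cite[\S 3]{BH}, and the only part of it the paper actually uses is the colored-cone data recalled at the end of \S\ref{s-popovvar} and in \S\ref{s-flat} ($\rho_{v_{D'}}=\mbox{\bf u}_2$, $\rho_{v_D}=-b\mbox{\bf u}_1+ap\mbox{\bf u}_2$, $\mathcal F(\E')=\phi$). Your overall route --- read the orbit structure of $\E'$ off its colored fan, then obtain the local model from the toroidal local structure theorem together with a lattice-index computation --- is viable in principle, and its numerical endpoint is right: those two ray generators span a sublattice of index $b$ in $\Gamma^{\vee}$, which is what produces $\C^2/\mu_b$.

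However, as written the proposal has genuine gaps. First, the crux is deferred rather than done: determining $\mathfrak F(\E')$ --- i.e.\ locating the valuation of the exceptional divisor inside $Q=\Hom(\Gamma,\mathbb Q)$, showing the fan has the single maximal cone spanned by it and $\rho_{v_D}$, and showing it carries no colors --- is exactly what you wave at with ``a careful analysis of this refinement \dots shows'' and then flag as ``the main obstacle''; but this translation from the $G_0'\times G_a$-weight data to $\Gamma^{\vee}$ is essentially the entire content of the theorem, and without it neither the uniqueness of $C$ nor the order $b$ is established. Second, there is an internal inconsistency: you call $D$ ``the color passing through $C$'', but $D$ is not a color --- it is cut out by the $SL(2)\times\C^*$-eigenfunction $Y_0$, hence is $SL(2)\times\C^*$-stable; the colors of $\mathfrak U$ are $S^{\pm}$, and in fact $\mathcal F(\E')=\phi$, i.e.\ $\E'$ is toroidal. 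This matters because the local structure theorem you invoke (Remark \ref{integralstr}) applies only to toroidal embeddings: if a color really did pass through $C$, the toric germ of $\mathcal C_C(\E')$ would not compute the singularity, so your argument as stated undercuts itself. Toroidality has to be proved, e.g.\ as in Lemma \ref{fibered product}, from the existence of the morphisms $\E'\to\E^{\pm}$ and $\mathcal F^-\cap\mathcal F^+=\phi$ via Theorem \ref{knop}. Third, in the $C\cong\P^1$ step, the triviality of the grading $\C^*$ on the exceptional divisor is true here (that $\C^*$ acts by scalars on $\langle X_1,X_2\rangle$ and $\langle X_3,X_4\rangle$) but is asserted without proof, and ``projective of positive dimension'' is unjustified --- you must exclude an $SL(2)$-fixed point in $D'$; once toroidality and the fan are in hand, the local structure theorem gives $\dim C=\dim\mathfrak U-\dim\mathcal C_C(\E')=3-2=1$ directly, which is the cleaner argument. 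Finally, a small point: the index is computed in the dual lattice $\Gamma^{\vee}$, not in $\Gamma$, and by itself it only identifies the order of the cyclic group; the finer type $\frac{1}{b}(1,t)$ needed later in \S\ref{s-flat} requires the normal-form computation carried out there.
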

\begin{remark}
In view of Theorem \ref{toric}, $\E'$ is smooth 
if and only if $\E$ is toric, and in the toric case the weight $\omega$ is trivial. 
\end{remark}
Batyrev and Haddad compute the colored cones of the simple spherical varieties $\E$, $\E^-$, $\E^+$, and $\E'$ (see \cite[\S 4]{BH}). 
Firstly, the lattice $\Gamma$ of rational $\tilde{B}$-eigenfunctions on $\mathfrak U$ is given as follows:
\[
\Gamma \cong \{Z^iW^j \in \C(\mathfrak U)^*\; :\; m|(i-j)\}.
\]
The varieties $\E$, $\E^-$, and $\E^+$ contain exactly three $\tilde{B}$-stable divisors
\[
D:=(H_{b} \cap \{Y_0=0\}) \git (G_0' \times G_a),
\]
\[
S^-:=(H_{b} \cap \{X_4=0\}) \git (G_0' \times G_a),
\]
and 
\[
S^+:=(H_{b} \cap \{X_2=0\}) \git (G_0' \times G_a),
\]
and $\E'$ contains one more $SL(2) \times \C^*$-stable divisor $D' \cong \mathbb P^1 \times \mathbb P^1$, the exceptional divisor of the blow-up $\E' \to \E$. 
The divisors $D$, $S^-$, $S^+$, and $D'$ define lattice vectors $\rho_{v_D},\; \rho_{v_{S^-}},\; \rho_{v_{S^+}},\; \rho_{v_{D'}} \in \Gamma^{\vee}$ in the dual space $Q=\Hom(\Gamma, \mathbb Q)$, and we can consider $\{\rho_{v_{S^-}}, \rho_{v_{S^+}}\}$ as a $\mathbb Q$-basis of $Q$. The set $\mathcal V$ of $SL(2) \times \C^*$-invariant valuations is given as $\mathcal V=\{x \rho_{v_{S^+}}+y \rho_{v_{S^-}}\in Q\; : \; x+y \leq 0\}$, and  the colored cones of $\E$, $\E^-$, $\E^+$, and $\E'$ are described as follows: 
\begin{align*}
\mathcal C:=\mathcal C(\E)=\mathbb Q_{\geq 0} \rho_{v_{D}} + \mathbb Q_{\geq 0} \rho_{v_{S^-}}, & \quad  
\mathcal F:=\mathcal F(\E)=\{\rho_{v_{S^+}}, \rho_{v_{S^-}}\} \\
 \mathcal C^-:=\mathcal C(\E^-)=\mathbb Q_{\geq 0} \rho_{v_{D}} + \mathbb Q_{\geq 0} \rho_{v_{S^+}},& \quad  
\mathcal F^-:=\mathcal F(\E^-)=\{\rho_{v_{S^+}}\} \\
 \mathcal C^+:=\mathcal C(\E^+)=\mathbb Q_{\geq 0} \rho_{v_{D}} + \mathbb Q_{\geq 0} \rho_{v_{S^-}},& \quad  
\mathcal F^+:=\mathcal F(\E^+)=\{\rho_{v_{S^-}}\} \\
 \mathcal C':=\mathcal C(\E')=\mathbb Q_{\geq 0} \rho_{v_{D}} + \mathbb Q_{\geq 0} \rho_{v_{D'}},& \quad 
\mathcal F':=\mathcal F(\E')=\phi.
\end{align*}
\section{Statement of the main result}\label{s-flat}
In this section, we first review some facts from our previous article \cite{Ku} that hold without the toric hypothesis. 
Let 
\[
\xymatrix{
\pi : H_{q-p} \ar[r] & H_{q-p}\git (\G) \cong E_{l,m}
}
\] 
be the quotient morphism. Then, $\pi$ is flat over the 
open orbit $\mathfrak U \subset \E$, and the Hilbert function $h:=h_{H_{q-p}}$ of the general fibers of $\pi$ coincides with that of the regular representation 
$\C[\G]$:  
\[
h : \Irr(\G) \cong \mathbb Z \times \mathbb Z/m \mathbb Z \to \mathbb Z_{\geq 0},\quad (n,d) \mapsto h(n,d)=1. 
\]
The Hilbert--Chow morphism 
\[
\xymatrix{
\gamma : \mathcal H=\Hilb^{\G}_h(H_{q-p}) \ar[r] & H_{q-p} \git (\G) \cong \E
}
\]
is an isomorphism over the open orbit $\mathfrak U$, and the main component $\mathcal H^{main}$ is the Zariski closure $\overline{\gamma^{-1}(\mathfrak U)}$. 
\begin{remark}
For a $\G$-module $V$, we denote by $V_{(n,d)}$ the weight space of weight $(n,d) \in \Z \times \Z/m \Z$ as in \cite[Remark 3.2.1]{Ku}. 
\end{remark}
Let $A$ be the polynomial ring $\C[X_0, X_1, X_2, X_3, X_4]$, and consider the following ideals of $A$: 
\[
I_1:=(X_0^{q-p}-X_1X_4,\; X_2,\; X_3,\; 1-X_0^{mp}X_1^{m}); 
\]
\[
I_0:=(X_0^{q-p}-X_1X_4,\; X_2,\; X_3,\; X_0^{mp}X_1^{m}).
\]
\begin{theorem}[{\cite[\S 4]
{Ku}}]\label{idealU}
The following properties are true. 
\begin{itemize}
\item [\rm(i)] The quotient rings $A/I_1$ and $A/I_0$ have Hilbert function $h$, namely we have $\dim (A/I_0)_{(n,d)}=\dim (A/I_1)_{(n,d)}=h(n,d)$ for any $(n,d) \in \Z \times \Z/m \Z$. 
\item [\rm(ii)] The $SL(2) \times \C^*$-equivariant isomorphism 
$\gamma|_{\gamma^{-1}(\mathfrak U)} : \gamma^{-1}(\mathfrak U) \to \mathfrak U$ is 
given by sending $[I_1]$ to $\pi(x)$, where $x=(1,1,0,0,1) \in H_{q-p}$.
\item [\rm(iii)] The closed point $[I_0]$ is contained in the singular fiber $\gamma^{-1}(O)$. 
\end{itemize}
\end{theorem}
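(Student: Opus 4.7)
The plan is to handle parts (i), (ii), (iii) in turn.

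For (i), I first show directly that $A/I_1$ has Hilbert function $h$ by exhibiting an explicit model. The relation $X_0^{mp}X_1^m = 1$ in $A/I_1$ makes $X_0$ invertible (with inverse $X_0^{mp-1}X_1^m$), and $\eta := X_0^p X_1$ satisfies $\eta^m = 1$. The assignment
\[
X_0 \mapsto X_0,\quad X_1 \mapsto X_0^{-p}\eta,\quad X_2 \mapsto 0,\quad X_3 \mapsto 0,\quad X_4 \mapsto X_0^q \eta^{-1}
\]
induces a $\G$-equivariant isomorphism $A/I_1 \xrightarrow{\sim} \C[X_0^{\pm 1}][\eta]/(\eta^m - 1)$, where $X_0$ carries weight $(1,0)$ and $\eta$ carries weight $(0,-1)$. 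Since the monomials $X_0^n \eta^{-d}$ form a basis indexed by $\Z \times \Z/m\Z$, we obtain $\dim(A/I_1)_{(n,d)}=1=h(n,d)$. To transfer the result to $I_0$, I build a flat deformation: the ideal $J := (X_0^{q-p} - X_1 X_4,\; X_2,\; X_3,\; s - X_0^{mp}X_1^m) \subset A[s]$ has quotient $A[s]/J$ which, via the elimination $s = X_0^{mp}X_1^m$, is isomorphic to the integral domain $\C[X_0, X_1, X_4]/(X_0^{q-p} - X_1 X_4)$, hence is torsion-free and therefore flat over $\C[s]$. The fiber at $s = 0$ is $A/I_0$, while rescaling $X_1 \mapsto s_0^{-1/m}X_1$ identifies any fiber at $s = s_0 \neq 0$ with $A/I_1$. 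Flatness then forces the weight-space dimensions to be constant across the family, yielding the desired Hilbert function of $A/I_0$.

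For (ii), a direct computation gives $(t,\zeta)\cdot x = (t,\; t^{-p}\zeta^{-1},\; 0,\; 0,\; t^q \zeta)$, showing that the $\G$-stabilizer of $x$ is trivial and that all four generators of $I_1$ vanish on $\G \cdot x$: indeed $X_2 = X_3 = 0$ and $X_0^{q-p} = X_1 X_4$ hold trivially, while $X_0^{mp}X_1^m = t^{mp} \cdot t^{-mp}\zeta^{-m} = 1$ using $\zeta^m = 1$. Combined with $A/I_1 \cong \C[\G]$ established in (i), we obtain $V(I_1) = \G \cdot x$ scheme-theoretically, so the description of the Hilbert--Chow morphism on the flat locus sends $[I_1]$ to $\pi(x)$.

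For (iii), I check that $V(I_0) \subset \pi^{-1}(O)$ set-theoretically. On $V(I_0)$, the vanishing of $X_0^{mp}X_1^m$ forces $X_0 = 0$ or $X_1 = 0$ at each closed point; in the latter case, $X_0^{q-p} = X_1 X_4 = 0$ also gives $X_0 = 0$. Thus every point is of the form $(0, X_1, 0, 0, X_4)$ with $X_1 X_4 = 0$, and admits a $G_0$-degeneration to the origin (the surviving nonzero coordinate has $G_0$-weight $-p$ or $q$, so an appropriate $t$-limit kills it); hence $\pi$ sends every such point to $\pi(0) = O$. The main obstacle is the flatness verification in (i): in particular, the rescaling $X_1 \mapsto s_0^{-1/m}X_1$ uses an $m$-th root of $s_0$, so one has to check that it is $\G$-equivariant up to a $G_m$-twist that does not affect weight-space dimensions.
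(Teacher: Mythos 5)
The paper gives no proof of this statement here; it is quoted from \cite[\S 4]{Ku}, and judging from the machinery the present paper re-imports (the $\Lambda$-grading, the monomials $f_{\lambda}$, Lemmas 4.6--4.12 of \cite{Ku}, and the parallel computations for $\tilde J^i_0,\tilde J^i_1$ in \S\ref{s-idealo}), the source argument is a direct weight-space/monomial analysis. Your route is genuinely different: the explicit presentation $A/I_1\cong \C[X_0^{\pm1}][\eta]/(\eta^m-1)$ is correct and immediately gives the Hilbert function for $I_1$; parts (ii) and (iii) are also fine ($I_1\subset I((\G)\cdot x)$ plus equality of Hilbert functions forces $I_1=I((\G)\cdot x)$, and every point of $V(I_0)$ degenerates to the origin, hence maps to $O$).

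The genuine gap is in the transfer from $I_1$ to $I_0$. From flatness (torsion-freeness) of $B:=\C[X_0,X_1,X_4]/(X_0^{q-p}-X_1X_4)$ over $\C[s]$, $s\mapsto X_0^{mp}X_1^m$, you conclude that ``flatness forces the weight-space dimensions to be constant across the family.'' That inference is false for flat modules in general: $\C[s,s^{-1}]$ is flat over $\C[s]$ with generic fibre of dimension $1$ and special fibre $0$, and ruling out exactly such a drop at $s=0$ is what is needed for $\dim(A/I_0)_{(n,d)}\geq 1$. The argument is saved by an extra input you do not supply: each isotypic component $B_{(n,d)}$ is a \emph{finitely generated} $\C[s]$-module, hence (being torsion-free over a PID) free, so all its fibres have the same dimension, namely $1$ by your computation at $s_0\neq 0$. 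Finite generation does hold and is easy to check on the monomial basis of $B$ (monomials $X_0^aX_1^bX_4^c$ with $bc=0$): in a fixed weight $(n,d)$ the monomials with $c=0$ form a single chain under multiplication by $s$, and those with $b=0$ are finitely many --- but this is precisely the kind of small monomial computation that \cite{Ku} performs directly, and it must be included for your proof to close. By contrast, the issue you flag as the main obstacle, equivariance of the rescaling, is harmless: any diagonal rescaling commutes with $\G$ and preserves every weight space; just take $X_1\mapsto s_0^{1/m}X_1$ together with $X_4\mapsto s_0^{-1/m}X_4$ (not only $X_1\mapsto s_0^{-1/m}X_1$) so that the fibre ideal at $s=s_0$ is carried exactly onto $I_1$.
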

Let $S$ be the coordinate ring of $H_{q-p}$:
\[
S:=\C[H_{q-p}] \cong A/(X_0^{q-p}-X_1X_4+X_2X_3).
\]
For any weight $(n,d) \in \mathbb Z \times \mathbb Z/ m \mathbb Z$, there is a finite-dimensional $SL(2) \times \C^*$-module 
$F_{n,d}$ that generates the weight space $S_{(n,d)}$ over the invariant ring $S^{\G}$. 
By \cite[Lemma 4.3]{Ku}, we can take $F_{-p, -1}
=\langle X_1, X_2 \rangle$ and 
$F_{q, 1}
=\langle X_3, X_4 \rangle$. 
It follows that for any closed point $[I] \in \mathcal H$, we have 
\begin{equation}
s_1X_1+s_2X_2 \in I \label{s1s2}
\end{equation}
and 
\begin{equation}
s_3X_3+s_4X_4 \in I \label{s3s4}
\end{equation}
for some $(s_1, s_2) \neq 0$ and $(s_3, s_4 ) \neq 0$, respectively. 
Therefore, 
we can construct the following equivariant morphisms: 
\[
\xymatrix{
\eta_{-p, -1} : \mathcal H \ar[r] & \Gr(1, F_{-p, -1}^{\vee}) \cong \mathbb P^1}, \quad 
\xymatrix{\eta_{q,1} : \mathcal H \ar[r] & \Gr(1, F_{q,1}^{\vee}) \cong \mathbb P^1}.
\]
Set  
\[
\xymatrix{
\psi:=\gamma \times \eta_{-p,-1} \times \eta_{q,1}
 :\mathcal H \ar[r] & \E \times \P^1 \times \P^1.
}
 \] 
Then we have 
\begin{equation}
\psi([I_1])=(\pi(x), [1:0], [0:1]) \label{point}
\end{equation}
by its construction (see \cite[\S 6]{Ku}). 

In what follows, we show that the restriction $\gamma|_{\mathcal H^{main}}$ factors through the weighted blow-up $\E'$ and that $\psi(\mathcal H^{main}) \cong \E'$. 
First, notice that we have the following equivariant commutative diagram:
\begin{equation*}
\xymatrix@R=6pt{
& & \E' \ar[rrddd] \ar[dd] \ar[llddd] & &\\
& & & & \\
& & \E^- \times_{\E} \E^+ \ar[rrd] \ar[lld]& &\\
\E^- \ar[rrd] \ar@{.>}[rrrr]& & & & \E^+ \ar[lld]\\
& & \E & &
} \label{fiber}
\end{equation*}
\begin{lemma}\label{fibered product}
We have $\E' \cong \E^- \times_{\E} \E^+$ as spherical $SL(2) \times \C^*$-varieties.
\end{lemma}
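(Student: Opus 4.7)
The plan is to construct a natural equivariant morphism $\alpha : \E' \to \E^- \times_\E \E^+$ from the universal property of fibered products, and then to show that $\alpha$ is an isomorphism by a colored fan analysis.

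First I would build $\alpha$ using the commutative diagram preceding the lemma: since $\E'$ dominates both $\E^-$ and $\E^+$ compatibly over $\E$, the universal property yields a canonical $SL(2)\times\C^*$-equivariant morphism $\alpha : \E' \to \E^- \times_\E \E^+$. Over the open orbit $\mathfrak U$, each of $\E' \to \E$ and $\E^\pm \to \E$ is an isomorphism, so $\alpha$ restricts to an isomorphism over $\mathfrak U$ and is in particular birational. It is also proper: $\E' \to \E$ is projective as a weighted blow-up, while $\E^- \times_\E \E^+ \to \E$ is separated, so the cancellation lemma for proper morphisms applies.

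Next I would analyse $\alpha$ via colored fans. Since $\E'$ is normal, $\alpha$ factors through the normalization $\widetilde W$ of the unique irreducible component $W$ of $\E^- \times_\E \E^+$ containing the diagonal image of $\mathfrak U$; the scheme $\widetilde W$ is then a spherical embedding of $\mathfrak U$ equipped with two equivariant birational projections $\widetilde W \to \E^\pm$. Knop's criterion (Theorem \ref{knop}), applied to both projections, forces
\[
\mathcal F(\widetilde W) \subseteq \mathcal F^- \cap \mathcal F^+ = \emptyset, \qquad \mathcal C(\widetilde W) \subseteq \mathcal C^- \cap \mathcal C^+,
\]
so $\widetilde W$ is toroidal and $\mathcal C(\widetilde W)$ is generated by elements of $\mathcal V$, hence lies inside $(\mathcal C^- \cap \mathcal C^+) \cap \mathcal V$. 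From the explicit description of $\mathcal V$ and $\mathcal C^\pm$ recalled above---$\mathcal C^- \subseteq \mathcal C^+ = \mathcal C$ because $\rho_{v_{S^+}} \in \mathcal F(\E) \subseteq \mathcal C$, and $\mathcal V = \{x+y \leq 0\}$ cuts the cone $\mathcal C^-$ exactly along the ray generated by $\rho_{v_{D'}}$---this intersection equals $\mathcal C' = \mathbb Q_{\geq 0}\rho_{v_D} + \mathbb Q_{\geq 0}\rho_{v_{D'}}$. The reverse inclusion $\mathcal C' \subseteq \mathcal C(\widetilde W)$ comes from applying Theorem \ref{knop} to the induced morphism $\widetilde\alpha : \E' \to \widetilde W$. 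Hence $\mathfrak F(\widetilde W) = \mathfrak F(\E')$, and Theorem \ref{one to one} gives $\widetilde W \cong \E'$.

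The hard part will be to promote $\widetilde\alpha$ to an isomorphism $\alpha$ itself, i.e.\ to show that the scheme-theoretic fibered product $\E^- \times_\E \E^+$ is already irreducible, reduced and normal. I expect this to follow from the GIT description of Theorem \ref{batyrev2}: realise $\E^- \times_\E \E^+$ as the $(G_0' \times G_a)$-quotient of $U^- \cap U^+ \subseteq H_b$ and exploit that this intersection is an open, and hence normal and irreducible, subset of the normal affine hypersurface $H_b$, so that its good quotient by the reductive group $G_0' \times G_a$ is again normal and irreducible.
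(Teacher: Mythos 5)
Your core argument is the same as the paper's. The paper sets $\E'':=\E^-\times_{\E}\E^+$, asserts that $\E''$ is a simple spherical $SL(2)\times\C^*$-variety with dense orbit $\mathfrak U$, and then runs exactly the colored-cone comparison you propose: Theorem \ref{knop} applied to the two projections and to $\E'\to\E''$ gives $\mathcal F''\subset\mathcal F^-\cap\mathcal F^+=\phi$, hence $\mathcal C''\subset\mathcal V$, and together with $\mathcal C'\subset\mathcal C''\subset\mathcal C^-$ this forces $(\mathcal C'',\mathcal F'')=(\mathcal C',\mathcal F')$, so that Theorem \ref{one to one} concludes. Your verifications that $\mathcal C^-\subset\mathcal C^+=\mathcal C$ and that $\mathcal C^-\cap\mathcal V=\mathbb Q_{\geq 0}\rho_{v_D}+\mathbb Q_{\geq 0}\rho_{v_{D'}}=\mathcal C'$ are correct, and your extra scaffolding (the morphism $\alpha$ from the universal property, its properness, and the passage to the normalization $\widetilde W$ of the main component) is a legitimate refinement of the same idea; it proves $\widetilde W\cong\E'$.

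The genuine gap is the step you yourself flag as the hard part, and the fix you sketch does not work as stated. The identification $\E^-\times_{\E}\E^+\cong(U^-\cap U^+)\git(G_0'\times G_a)$ is not automatic: formation of good quotients does not commute with fibered products, and what comes for free is only a $G_0'\times G_a$-invariant morphism $U^-\cap U^+\to\E^-\times_{\E}\E^+$. Showing that this induces an isomorphism from a good quotient of $U^-\cap U^+$ amounts to showing that $\C[\E^-]\otimes_{\C[\E]}\C[\E^+]$ (suitably localized) is reduced and integrally closed, i.e.\ that the scheme-theoretic fibered product is a normal variety --- which is precisely the point you introduced $\widetilde W$ to avoid, so the proposed reduction is circular as written. (Indeed, such a quotient, when it exists, is a normal spherical embedding of $\mathfrak U$ dominating both $\E^\pm$, so identifying it with the fibered product is essentially the lemma itself.) In fairness, the paper does not address this point either: it simply asserts that $\E''$ is a simple spherical variety --- in particular irreducible, reduced and normal --- before invoking Theorem \ref{one to one}, so your write-up matches the paper's argument and is more candid about where the unproved input lies; but as a complete proof, the normality of $\E^-\times_{\E}\E^+$ still needs an actual argument rather than the GIT shortcut you propose.
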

\begin{proof}
Set $\E''=\E^- \times_{\E} \E^+$. 
Then, $\E''$ is a simple spherical $SL(2) \times \C^*$-variety with a dense orbit isomorphic to $\mathfrak U$. 
Let $(\mathcal C'',\mathcal F'')$ be the colored cone of $\E''$. 
Then we have  $\mathcal C'' \subset \mathcal C^{-}$, 
$\mathcal C' \subset \mathcal C''$, $\mathcal F'' \subset \mathcal F^{+}$, and $\mathcal F'' \subset \mathcal F^{-}$ by Theorem \ref{knop}. This implies that $\mathcal F''=\phi=\mathcal F'$.
Since $\mathcal C''$ is generated by $\varrho(\mathcal F'')$ and finite elements of $\mathcal V$, we obtain $\mathcal C'' \subset \mathcal V$. This yields that $\mathcal C' = \mathcal C''$, and 
hence we have $(\mathcal C'', \mathcal F'')=(\mathcal C', \mathcal F')$. 
Therefore, $\E'' \cong \E'$ by Theorem \ref{one to one}. 
\end{proof}
\begin{lemma}\label{closed embedding}
There are $SL(2) \times \C^*$-equivariant embeddings:
\[
\E^+ \hookrightarrow \E \times \Gr(1, F^{\vee}_{-p,-1}) \cong \E \times \P^1, \quad 
\E^- \hookrightarrow \E \times \Gr(1, F^{\vee}_{q,1}) \cong \E \times \P^1.
\]
\end{lemma}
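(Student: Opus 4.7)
The plan is to construct the closed embedding $\phi^+: \E^+ \hookrightarrow \E \times \Gr(1, F^\vee_{-p,-1})$; the embedding of $\E^-$ will then follow by the symmetric argument exchanging $(X_1, X_2)$ with $(X_3, X_4)$ and swapping the roles of the characters $\chi^+$ and $\chi^-$. The driving observation is that $X_1, X_2 \in \C[H_b]$ are $G_0' \times G_a$-semi-invariants of the same character $(t, \zeta) \mapsto t^{-p}\zeta^{-1}$, and that by definition $U^+ = H_b \setminus V(X_1, X_2)$. Identifying $\Gr(1, F^\vee_{-p,-1}) \cong \P(F_{-p,-1}) \cong \P^1$ via the basis $X_1, X_2$ of $F_{-p,-1}$, the $SL(2) \times \C^*$-equivariant and $G_0' \times G_a$-invariant map
\[
U^+ \to \P^1, \qquad x \mapsto [X_1(x) : X_2(x)]
\]
descends to an $SL(2) \times \C^*$-equivariant morphism $f^+: \E^+ \to \P^1$. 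Combined with the projective birational morphism $\gamma^+: \E^+ \to \E$ of Theorem \ref{flip}, this produces a proper equivariant morphism $\phi^+ := \gamma^+ \times f^+: \E^+ \to \E \times \P^1$.

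To show that $\phi^+$ is a closed embedding, I will verify that it is injective on closed points and on tangent spaces. Over $\mathfrak U$, $\gamma^+$ is an isomorphism, so $\phi^+$ inherits the property there. By $SL(2) \times \C^*$-equivariance and the orbit classification of $\E^+$ via faces of the colored cone $(\mathcal C^+, \mathcal F^+)$ (Theorem \ref{closure}), it suffices to check injectivity on a representative from each remaining orbit, in particular on the exceptional locus of $\gamma^+$ over $\{O\} \subset \E$. On each exceptional $SL(2) \times \C^*$-orbit of positive dimension, $f^+$ is $SL(2)$-equivariant into $\P(F_{-p,-1}) \cong \P(V(1))$, which has no $SL(2)$-fixed point; hence $f^+$ is non-constant there, and a dimension count forces it to realize an $SL(2)$-equivariant isomorphism onto $\P^1$. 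This yields injectivity of $\phi^+$ on closed points.

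Since $\E^+$ is normal (being a GIT quotient of the normal hypersurface $H_b$ by a reductive group) and $\phi^+$ is proper, birational, and bijective on closed points, the induced map $\phi^+: \E^+ \to Z^+ := \phi^+(\E^+)$ is a finite birational morphism, and hence the normalization of $Z^+$; to conclude that $\phi^+$ is an isomorphism onto $Z^+$ (so that $\phi^+$ is a closed embedding), it remains to verify that $d\phi^+$ is injective at each closed point. By equivariance this reduces once more to a finite orbit-wise check. The hard part will be precisely this tangent-space verification at points of the exceptional locus of $\gamma^+$, where $\E^+$ need not be smooth: one must work in local coordinates on $H_b$ adapted to the $G_0' \times G_a$-action and to the semi-invariants $X_1, X_2$, and show that the kernel of $d\gamma^+$ is annihilated by $df^+$; the local description of $\E'$ provided by Theorem \ref{local} (together with the dominating morphism $\E' \to \E^+$) will be used to organize this computation.
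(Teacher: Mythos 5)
Your construction of the morphism is exactly the paper's: descend $U^+ \to \P^1$, $x \mapsto [X_1(x):X_2(x)]$ to $\alpha^+ = \gamma^+ \times f^+ : \E^+ \to \E \times \P^1$. But your verification that this is a closed immersion diverges from the paper's and, more importantly, is not complete. You explicitly defer the tangent-space injectivity check, calling it ``the hard part'' and gesturing at local coordinates and Theorem~\ref{local} without carrying anything out. That deferred step \emph{is} the content of the lemma; as written, the proposal has a genuine gap precisely where the proof needs to land.

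There is also a logical slip in the point-set injectivity argument. You argue that $f^+$ restricted to each positive-dimensional $SL(2)\times\C^*$-orbit is non-constant (true, since $\P(V(1))$ has no $SL(2)$-fixed point) and conclude injectivity of $\phi^+$. But injectivity of $\phi^+=\gamma^+\times f^+$ requires that $f^+$ separate points within each \emph{fiber} of $\gamma^+$, not within each orbit; an orbit need not coincide with a $\gamma^+$-fiber, so the reduction is not automatic. (In the present situation the exceptional fiber of $\gamma^+$ over $O$ does coincide with the closed orbit, so this could be repaired, but you would need to say so explicitly and dispose of the other fibers.)

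The paper avoids all of this. It covers $\P^1$ by the two standard affine charts $\{X_i\neq 0\}$, $i=1,2$, notes that $U^+\cap\{X_i\neq 0\}=H_b\cap\{X_i\neq 0\}$ is affine, and verifies the single ring-theoretic identity
\[
\bigl(\C[H_b]_{X_i}\bigr)^{G_0'\times G_a}=\C[H_b]^{G_0'\times G_a}\left[\tfrac{X_1}{X_i},\tfrac{X_2}{X_i}\right]
\]
inside $\C[H_b]_{X_i}$. This says exactly that the comorphism of $\alpha^+$ is surjective on each affine chart, hence $\alpha^+$ is a closed immersion, with no normality, properness, Zariski Main Theorem, or tangent-space input needed. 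If you want to salvage your geometric route you would have to actually compute the differential of $\phi^+$ at a point of $C^+\subset\gamma^{+,-1}(O)$, which is no easier than the paper's ring identity; I would recommend adopting the coordinate-ring argument instead.
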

\begin{proof} 
We have the following equivariant morphism (this morphism was first constructed in the proof of \cite[Theorem 3.10]{BH}):
\begin{equation*}
U^+ \to \Gr(1, F^{\vee}_{-p,-1}) \cong \P^1, \quad (Y_0, X_1, X_2, X_3, X_4) \mapsto [X_1: X_2].
\end{equation*}
Also, we have an equivariant morphism $U^+ \to \E$ as a composition of the inclusion $U^+ \hookrightarrow H_b$ and the quotient morphism $H_b \to \E$. Therefore, we get a $G_0' \times G_a$-invariant morphism $U^+ \to \E \times \P^1$, which factors through $\E^+$: 
\[
\xymatrix{
U^+ \ar[r] \ar[d] & \E \times \P^1 \\
U^+ \git (G_0' \times G_a) = \E^+ \ar[ru]_{\quad \alpha^+} & 
}
\]
Let $[T_1: T_2]$ be the coordinate of $\P^1$. Then for each $i \in \{1, 2\}$, we have the following commutative diagram:
\[
\xymatrix{
U^+ \cap \{X_i \neq 0\}=H_b \cap \{X_i \neq 0\} \ar[r] \ar[d] & \Spec\left(\C[\E]\left[\frac{T_1}{T_i}, \frac{T_2}{T_i}\right]\right)\\
(H_b \cap \{X_i \neq 0\}) \git (G_0' \times G_a) \ar[ru]_{\quad {\alpha^+}{|_{ \{X_i \neq 0\}}}} & 
}
\]
We see that
\begin{equation*}
(\C[H_b]_{X_i})^{G_0' \times G_a}=\C[H_b]^{G_0' \times G_a}\left[\frac{X_1}{X_i}, \frac{X_2}{X_i}\right] \label{subring}
\end{equation*}
holds as a subring of $\C[H_b]_{X_i}$, and therefore $\alpha^+$ is a closed immersion. 
%
Analogously, we have an equivariant morphism 
\begin{equation*}
U^- \to \Gr(1, F^{\vee}_{q,1}) \cong \P^1, \quad (Y_0, X_1, X_2, X_3, X_4) \mapsto [X_3: X_4],
\end{equation*}
which induces an equivariant morphism 
$\alpha^- : \E^- \to \E \times \mathbb P^1$. 
In a similar way, we see that $\alpha^-$ is a closed immersion. 
\end{proof}
By Lemmas \ref{fibered product} and \ref{closed embedding}, we get the following equivariant closed embedding: 
\[
\xymatrix{
\varphi : \E' \cong \E^- \times _{\E} \E^+ \ar@{^{(}-_>}[r] &  \E \times \P^1 \times \P^1.
}
\] 
\begin{corollary}\label{iota}
We have $\psi(\mathcal H^{main}) \cong \E'$. 
\end{corollary}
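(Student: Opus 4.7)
The plan is to show $\psi(\mathcal H^{main}) = \varphi(\E')$ as closed subsets of $\E \times \P^1 \times \P^1$; the corollary then follows since $\varphi$ is a closed immersion. The strategy is to verify this equality after restricting to the dense orbit $\mathfrak U$ and then use properness to promote it to the whole main component.

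First, I would compute $\psi$ pointwise on $\gamma^{-1}(\mathfrak U)$. By Theorem \ref{idealU}, a closed point of $\gamma^{-1}(\mathfrak U)$ corresponds, via $\gamma$, to the ideal $I_y$ of the orbit closure $\overline{\G \cdot y}$ for some $y=(y_0,\dots,y_4) \in H_{q-p}$ with $y_0 \neq 0$. Because $X_1$ and $X_2$ span the common $\G$-weight space $F_{-p,-1}$, the linear form $y_2 X_1 - y_1 X_2$ vanishes identically on $\G \cdot y$ and thus lies in $I_y$. Unwinding the definition of $\eta_{-p,-1}$ recalled in Section \ref{s-generalhilb} then gives $\eta_{-p,-1}([I_y]) = [y_1 : y_2]$ under the identification $\Gr(1, F_{-p,-1}^{\vee}) \cong \P^1$, and the analogous argument with $F_{q,1}=\langle X_3,X_4\rangle$ yields $\eta_{q,1}([I_y]) = [y_3 : y_4]$. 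These are precisely the $\P^1$-components used to define the closed embeddings $\alpha^+$ and $\alpha^-$ in the proof of Lemma \ref{closed embedding}, so under the natural identifications $\gamma^{-1}(\mathfrak U) \cong \mathfrak U \hookrightarrow \E'$ we obtain
\[
\psi|_{\gamma^{-1}(\mathfrak U)} = \varphi|_{\mathfrak U},
\]
and in particular $\psi(\gamma^{-1}(\mathfrak U)) = \varphi(\mathfrak U)$.

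For the second step, the morphism $\gamma|_{\mathcal H^{main}}$ is projective (Section \ref{s-generalhilb}), hence proper, and $\P^1 \times \P^1$ is complete; therefore $\psi|_{\mathcal H^{main}}$ is proper and its image is closed in $\E \times \P^1 \times \P^1$. Combining this with the definition $\mathcal H^{main} = \overline{\gamma^{-1}(\mathfrak U)}$, the closedness of $\varphi(\E')$, and density of $\mathfrak U$ in the spherical embedding $\E'$, we conclude
\[
\psi(\mathcal H^{main}) = \overline{\psi(\gamma^{-1}(\mathfrak U))} = \overline{\varphi(\mathfrak U)} = \varphi(\E'),
\]
which gives the isomorphism $\psi(\mathcal H^{main}) \cong \E'$.

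The only substantive calculation is the pointwise identification of $\eta_{-p,-1}$ and $\eta_{q,1}$ with the $\P^1$-components of $\alpha^+$ and $\alpha^-$; I expect this to be the only genuine step, and it amounts to matching conventions, since both constructions single out the unique (up to scalar) $\G$-linear relation in $\langle X_1,X_2\rangle$ (respectively $\langle X_3,X_4\rangle$) vanishing at $y$.
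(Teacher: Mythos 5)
Your proposal is correct and follows essentially the same route as the paper: identify $\psi$ with the closed immersion $\varphi$ over the dense orbit and then pass to closures using properness of $\gamma|_{\mathcal H^{main}}$ (hence of $\psi|_{\mathcal H^{main}}$). The only cosmetic difference is that the paper gets the identification over $\mathfrak U$ from equivariance together with the single evaluation $\psi([I_1])=(\pi(x),[1:0],[0:1])$ from \eqref{point} and Theorem \ref{idealU}~(ii), whereas you verify it pointwise via the kernels of $f_{M,Z}$, which amounts to the same computation.
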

\begin{proof}
Let $x=(1,1,0,0,1) \in H_{q-p}$. Then by the construction of $\varphi$, the $SL(2) \times \C^*$-orbit of $(\pi(x), [1:0], [0:1])$ is the dense open orbit in $\varphi(\E')$ isomorphic to $\mathfrak U$. Taking Theorem \ref{idealU} (ii) and \eqref{point} into account, we get $\psi(\mathcal H^{main}) =\overline{\psi(\gamma^{-1}(\mathfrak U))} \cong \E'$. 
\end{proof}
Summarizing, we obtain the following equivariant commutative diagram:
\[
\xymatrix{
\mathcal H^{main} \ar[rr]^{\psi |_{\mathcal H^{main}}\qquad \; \; \;} \ar[rrd]_{\gamma|_{\mathcal H^{main}}}& & \E' \subset \E \times \mathbb P^1 \times \mathbb P^1 \ar[d]\\
& &  \E
}
\]

We have seen in Remark \ref{integralstr} that every toroidal spherical 
variety has an equivariant resolution of singularities given by 
subdividing its fan for toric varieties. 
We apply this to the simple toroidal spherical $SL(2) \times \C^*$-variety $\E'$ and describe the minimal resolution of $\E'$ in terms of its colored fan. 
Firstly, we can take $\{(2,0),\; (m,m)\}$ as a basis of the lattice $\Gamma \subset \mathfrak X(\tilde{B}) \cong \mathbb Z^2$. Let us denote its dual basis by $\{\mbox{\bf u}_1,\; \mbox{\bf u}_2\}$. 
By virtue of \cite[Theorem 2]{Pan} and \cite[Proposition 2.8]{BH}, we see that 
\[
\rho_{v_{D}}=-b\mbox{\bf u}_1+ap\mbox{\bf u}_2, \quad \rho_{v_{{S^-}}}=\mbox{\bf u}_1, \quad \rho_{v_{{S^+}}}=\mbox{\bf u}_1+m\mbox{\bf u}_2, \quad \rho_{v_{D'}}=\mbox{\bf u}_2.
\] 
Therefore, $\E'$ has singularities of an affine toric surface defined by the following cone (see \cite[Remark 3.12]{BH}): 
\[
\sigma:=\mathbb Q_{\geq 0}\mbox{\bf u}_2 +\mathbb Q_{\geq 0}(-b\mbox{\bf u}_1+ap\mbox{\bf u}_2). 
\] 
Let $\alpha$ and $\beta$ be the quotient and the remainder of $mp$ divided by $q-p$, respectively, i.e.,  
\begin{equation}
mp=\alpha(q-p)+\beta, \label{mp}
\end{equation}
and set  
\begin{equation}
t:=\frac{q-p-\beta}{k}=(\alpha+1)b-ap. \label{t}
\end{equation} 
We consider the base change 
\begin{equation*}
\begin{pmatrix}
\mbox{\bf u}_1'\\
\mbox{\bf u}_2'
\end{pmatrix}
:=
\begin{pmatrix}
-1 & \alpha+1 \\
0 & 1
\end{pmatrix}
\begin{pmatrix}
\mbox{\bf u}_1\\
\mbox{\bf u}_2
\end{pmatrix}
\end{equation*}
to make $\sigma$ into the normal form (see \cite[Proposition 10.1.1]{CLS}):
\[
\sigma=\mathbb Q_{\geq 0}\mbox{\bf u}_2'+ \mathbb Q_{\geq 0}(b \mbox{\bf u}_1'-t \mbox{\bf u}_2').
\]
Therefore, the toric variety of the cone $\sigma$ is a cyclic quotient singularity of type $\frac{1}{b}(1, t)$, and it has a minimal resolution described by the Hirzebruch--Jung continued fraction of $b/t$ (see \cite{CLS}, \cite{F}): 
\begin{equation*}
\frac{b}{t}=
c_1 - \cfrac{1}{c_2 -
         \cfrac{1}{\ldots -
         \cfrac{1}{c_r}}}. 
\end{equation*}
Set 
\begin{align*}
 P_0:=0, \quad  &Q_0:=-1,\\
 P_1:=1, \quad  &Q_1:=0. 
 \end{align*}
 For $2 \leq i \leq r+1$, we recursively define 
 \begin{equation}
 P_i:=c_{i-1} P_{i-1}-P_{i-2}, \quad   Q_i:=c_{i-1} Q_{i-1}-Q_{i-2}.\label{piqi}
\end{equation}
\begin{theorem}[{\cite[Proposition 10.2.2]{CLS}}]\label{HJ}
With the above notation, we have:
\begin{itemize}
\item [\rm(i)] $P_0 < P_1 < \dots < P_{r+1},\; Q_0 < Q_1 < \dots < Q_{r+1}$;
\item [\rm(ii)] $P_{i-1} Q_{i} -P_{i} Q_{i-1}=1$ for any $1 \leq i \leq r+1$;
\item [\rm(iii)] $\displaystyle\frac{b}{t}=\frac{P_{r+1}}{Q_{r+1}} < \frac{P_r}{Q_r}\dots < \frac{P_2}{Q_2}$.
\end{itemize}
\end{theorem}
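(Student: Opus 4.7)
The plan is to prove the three parts in the order (ii), (i), (iii), relying only on the recursion \eqref{piqi} and the standing assumption $c_j\geq 2$ inherent in the Hirzebruch--Jung expansion. For (ii), I induct on $i$: the base $i=1$ gives $P_0Q_1 - P_1Q_0 = 0\cdot 0 - 1\cdot(-1) = 1$, and in the inductive step, substitution of the recursions yields
\[
P_{i-1}Q_i - P_iQ_{i-1} = P_{i-1}(c_{i-1}Q_{i-1} - Q_{i-2}) - (c_{i-1}P_{i-1}-P_{i-2})Q_{i-1} = P_{i-2}Q_{i-1} - P_{i-1}Q_{i-2},
\]
which equals $1$ by the inductive hypothesis.

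For (i), the inequality $P_i - P_{i-1} = (c_{i-1}-1)P_{i-1} - P_{i-2} \geq P_{i-1} - P_{i-2}$, combined with $P_1 - P_0 = 1$, propagates strict monotonicity of the $P_i$ by induction; the analogous argument with base $Q_1 - Q_0 = 1$ handles the $Q_i$. In particular $Q_i > 0$ for all $i\geq 2$.

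For (iii), the strict decrease of the ratios is immediate from (ii): for $i \geq 2$ all denominators are positive and
\[
\frac{P_i}{Q_i} - \frac{P_{i+1}}{Q_{i+1}} = \frac{P_iQ_{i+1} - P_{i+1}Q_i}{Q_iQ_{i+1}} = \frac{1}{Q_iQ_{i+1}} > 0.
\]
It remains to identify $P_{r+1}/Q_{r+1}$ with $b/t$. I would prove this via the matrix identity
\[
\begin{pmatrix} c_1 & -1 \\ 1 & 0 \end{pmatrix} \cdots \begin{pmatrix} c_r & -1 \\ 1 & 0 \end{pmatrix} = \begin{pmatrix} P_{r+1} & -P_r \\ Q_{r+1} & -Q_r \end{pmatrix},
\]
established by a short induction on $r$ using \eqref{piqi}. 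Interpreting this product as a M\"obius transformation and evaluating at $\infty$ yields, on one side, the nested fraction $c_1 - 1/(c_2 - 1/(\cdots - 1/c_r)) = b/t$, and on the other, the ratio $P_{r+1}/Q_{r+1}$.

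The main obstacle is this last identification: while monotonicity and the cross-ratio relation are formal consequences of \eqref{piqi}, passing between the nested continued-fraction form and the single ratio $P_{r+1}/Q_{r+1}$ requires a clean bookkeeping device such as the matrix identity above, and one must verify that the boundary conventions $P_0=0,\; P_1=1,\; Q_0=-1,\; Q_1=0$ are the precise ones that make the identity hold at $r=1$ and stay consistent with the M\"obius evaluation at $\infty$.
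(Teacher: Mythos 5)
The paper does not prove this result: it is imported as \cite[Proposition 10.2.2]{CLS} and used as a black box, so there is no in-text argument to compare against. Your proof is correct and self-contained. The one place worth spelling out is in (i): the inequality $Q_i - Q_{i-1} = (c_{i-1}-1)Q_{i-1} - Q_{i-2} \geq Q_{i-1} - Q_{i-2}$ uses $Q_{i-1}\geq 0$, which fails at $i=1$ since $Q_0=-1$ (in contrast to the $P$-sequence, whose initial term already is $0$). This is not a gap, since the recursion only begins at $i=2$ where $Q_1=0$, and once one computes $Q_2-Q_1=1$ directly the positivity of $Q_{i-1}$ propagates for all $i\geq 2$ and the induction closes --- but because the two sequences start asymmetrically ($P_0=0$ vs.\ $Q_0=-1$), the phrase ``analogous argument'' glosses over a step that deserves one extra line. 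The matrix/M\"obius bookkeeping in (iii) is a clean, standard device for the final identification $b/t=P_{r+1}/Q_{r+1}$ and avoids ad hoc manipulation of the nested fraction; with the determinant identity from (ii) it also immediately gives $\gcd(P_{r+1},Q_{r+1})=1$, should you later need $P_{r+1}=b$ and $Q_{r+1}=t$ individually (as the paper implicitly does when it uses $n_i=k(tP_i-bQ_i)$ in Lemma \ref{ni}).
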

Let 
\[
\rho_i:=-P_i\mbox{\bf u}_1+\{(\alpha+1)P_i-Q_i\}\mbox{\bf u}_2 \quad (0 \leq i \leq r+1),
\]
and 
\[
\mathcal C_i:=\mathbb Q_{\geq 0}\rho_{i-1}+\mathbb Q_{\geq 0}\rho_i \quad (1 \leq i \leq r+1).
\]
We denote by $\widetilde{\E'}$ the toroidal spherical $SL(2) \times \C^*$-variety whose colored fan has $(\mathcal C_1, \phi),\; \dots,\; (\mathcal C_{r+1}, \phi)$ as its maximal colored cones. Then, 
\[
\xymatrix{
\widetilde{\E'} \ar[r] & \E'
}
\]
is the minimal resolution. 
The main result of this article is: 
\begin{theorem}\label{main theorem}
The main component $\mathcal H^{main}$ is isomorphic to $\widetilde{\E'}$. 
\end{theorem}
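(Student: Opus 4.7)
The morphism $\psi|_{\mathcal H^{main}}$ of Corollary \ref{iota} already factors $\gamma|_{\mathcal H^{main}}$ through the weighted blow-up $\E'$, but in the non-toric case $\psi$ cannot be an isomorphism: by Theorem \ref{local}, $\E'$ carries a family of cyclic quotient singularities $\C^2/\mu_b$ along its closed orbit $C \cong \P^1$, while $\mathcal H^{main}$ ought to be smooth. My plan is therefore to promote $\psi|_{\mathcal H^{main}}$ to a morphism into $\widetilde{\E'}$, the minimal resolution produced by the Hirzebruch--Jung data of \eqref{piqi} and Theorem \ref{HJ}, and then to show this promotion is an isomorphism by Zariski's Main Theorem.

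To build this morphism I would first embed $\widetilde{\E'}$ as a closed subscheme of a product $\E \times (\P^1)^N$ over $\E$. The toroidal local structure (Remark \ref{integralstr}) together with the Cartier and global-generation criteria of Theorems \ref{cartiercriterion} and \ref{criterion for gg} allow me to exhibit a $\tilde{B}$-stable Cartier divisor on $\widetilde{\E'}$ whose piecewise linear function, determined by the ray generators $\rho_i = -P_i \mathbf{u}_1 + \{(\alpha+1)P_i - Q_i\} \mathbf{u}_2$, is globally generated; its global sections then produce the desired embedding. Reading off the piecewise linear data identifies exactly which $\tilde{B}$-weights (equivalently, which $\G$-weights $(n,d)$) are needed; call the resulting finite set $\mathcal M \subset \Irr(\G)$. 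On the Hilbert scheme side, Becker's procedure recalled in \S\ref{s-generalhilb} gives equivariant morphisms $\eta_{n,d} : \mathcal H \to \Gr(1, F_{n,d}^{\vee}) \cong \P^1$ for every $(n,d) \in \mathcal M$, which together with $\gamma$ assemble into a single equivariant $\Psi : \mathcal H \to \E \times (\P^1)^{|\mathcal M|}$. On $\gamma^{-1}(\mathfrak U)$ the images of $\Psi$ and of the embedded $\widetilde{\E'}$ coincide by Theorem \ref{idealU}(ii), so taking Zariski closures yields $\Psi(\mathcal H^{main}) \subseteq \widetilde{\E'}$ and an equivariant projective birational morphism $\Psi|_{\mathcal H^{main}} : \mathcal H^{main} \to \widetilde{\E'}$.

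Since $\widetilde{\E'}$ is smooth, Zariski's Main Theorem reduces the proof to verifying that $\Psi|_{\mathcal H^{main}}$ is injective. Equivariance under $SL(2) \times \C^*$ allows me to check injectivity one orbit at a time; by Remark \ref{orbit} the orbits of $\widetilde{\E'}$ correspond bijectively to the faces of its colored fan, which are enumerated by the maximal cones $\mathcal C_1, \dots, \mathcal C_{r+1}$ and their faces, so there are only finitely many. For the open orbit the fiber is $\{[I_1]\}$ by Theorem \ref{idealU}(ii); for each lower-dimensional orbit I would pick a $\tilde{B}$-fixed representative $y$ and show that every $[Z] \in \Psi^{-1}(y) \cap \mathcal H^{main}$ has a uniquely determined ideal $I_Z$. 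The Grassmannian factors of $\Psi$ prescribe the generators of $I_Z$ in the lowest-weight spaces of each $F_{n,d}$, and the identity $h \equiv 1$ forces each isotypical component of $A/I_Z$ to be one-dimensional, so higher-weight generators are propagated from the low-weight ones by multiplication with invariants.

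The technical heart of the proof, and my expected main obstacle, is precisely this orbit-wise ideal computation. Unlike the toric case, the weight degrees of the generators of $I_Z$ cannot be written down directly in terms of $p, q, m$: they are encoded in the ray generators $\rho_i$ of the fan of $\widetilde{\E'}$, and the compatibilities among them are governed by the recursion $P_i = c_{i-1} P_{i-1} - P_{i-2}$, $Q_i = c_{i-1} Q_{i-1} - Q_{i-2}$ of \eqref{piqi} together with the inequalities and the determinantal identities in Theorem \ref{HJ}. Packaging these combinatorial relations into uniform statements about generators of the ideals attached to the various orbits, valid across all admissible pairs $(l, m)$, is the step that demands the most work and where the non-toric case genuinely diverges from the toric one treated in \cite{Ku}.
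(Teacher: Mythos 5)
Your overall skeleton---factor $\gamma|_{\mathcal H^{main}}$ through $\E'$, embed $\widetilde{\E'}$ over $\E$ into projective space via a globally generated invariant Cartier divisor, build an equivariant $\Psi$ on $\mathcal H$ from Becker-type Grassmannian morphisms, match images over $\mathfrak U$ and take closures, then prove injectivity orbit-by-orbit and invoke Zariski's Main Theorem---is indeed the paper's strategy. But two of your concrete claims conceal genuine gaps. First, the relevant Grassmannian factors are not $\P^1$'s: the only weights with $2$-dimensional generating modules are $(-p,-1)$ and $(q,1)$, and those give exactly $\psi$, whose image is $\E'$, not $\widetilde{\E'}$. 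To separate points over the closed orbit $C$ one must use the weights $(n_i,0)$, $0\leq i\leq r$, read off from the ray generators $\rho_i$, and their generating modules are $F_{n_i,0}=A(e_i)\otimes B(l_i)\oplus C(n_i)$, of dimension $(e_i+1)(l_i+1)+1$. Moreover, that these modules generate $S_{(n_i,0)}$ over $S^{\G}$ is not a formality: it is Theorem \ref{generator2} (via Proposition \ref{long}), whose proof is precisely the Hirzebruch--Jung combinatorics. So the combinatorial heart is needed already to define the morphisms $\eta_{n_i,0}$, i.e., to construct $\Psi$ at all, not only in the later orbit-wise ideal computation as your last paragraph suggests.

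Second, ``assemble into a single equivariant $\Psi$'' with image inside the embedded $\widetilde{\E'}$ is not automatic, because the two sides live in different ambient spaces: $\widetilde{\E'}$ is embedded in $\E\times\P(V^{\vee})$ with $V$ spanned by translates of the products $f_i=\sigma_0\cdots\sigma_i$, whereas the product of the $\eta_{n_i,0}$ lands in $\E\times\prod_i\P(F_{n_i,0}^{\vee})$. The paper bridges this by the Segre embedding into $\P(V'^{\vee})$ with $V'=\bigotimes_i F_{n_i,0}$, the identification of $V$ with a Clebsch--Gordan submodule $\widetilde V\subset V'$, and the nontrivial verification (Proposition \ref{morphism}) that the linear projection $\P(V'^{\vee})\dashrightarrow\P(\widetilde V^{\vee})$ has no base point on the image of $\mathcal H$---an argument that again uses Theorem \ref{generator2} together with $h\equiv 1$. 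Without such a step your inclusion $\Psi(\mathcal H^{main})\subseteq\widetilde{\E'}$ is unsubstantiated. Finally, in the injectivity step note that the fibers over $\mathfrak D$ and over $C'$ are already handled by $\psi$ alone (Lemma \ref{outside}); the delicate fibers are those over the orbits $Y_i$ and $O_i$ inside $C$, where one must pin down the ideals as $J^i_0$ and $J^i_1$ and needs the dimension bounds $\dim(A/J^i_{\epsilon})_{(n,d)}\leq h(n,d)$ of Theorems \ref{idealO2} and \ref{idealO} to conclude that the prescribed generators already cut out the whole ideal; your ``propagation by multiplication with invariants'' is exactly this point and cannot be waved through.
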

\section{First step towards the proof of Theorem \ref{main theorem}}\label{s-proof}
In this section, we construct an equivariant morphism $\widetilde{\E'} \to \P(V^{\vee})$ defined by a base-point-free $V \subset \Gamma(\widetilde{\E'}, \mathcal O(\delta))$, where $\delta$ is an $SL(2) \times \C^*$-stable Cartier divisor, and show that the natural morphism $\Phi : \widetilde{\E'} \to \E \times \P(V^{\vee})$ is a closed immersion (Proposition \ref{phicl}).

Let $D_i$ be an $SL(2) \times \C^*$-stable prime divisor on $\widetilde{\E'}$ corresponding to the extremal ray $\mathbb Q_{\geq 0} \rho_i$. 
Then, with the notation defined in \S \ref{s-spherical}, we have
\begin{equation*}
\mathcal D(\widetilde{\E'})=\{D_0,\; \dots,\; D_{r+1},\; \widetilde{S^+},\; \widetilde{S^-}\},
\end{equation*}
where 
$\widetilde{S^+}$ (resp. $\widetilde{S^-}$) is a non-$SL(2) \times \C^*$-stable prime divisor on $\widetilde{\E'}$ such that its image under the canonical birational morphism $\widetilde{\E'} \to \E$ is the $\tilde{B}$-stable divisor $S^+$ (resp. $S^-$) on $\E$. 
By definition, we have 
\[
v_{D_i}(f)=\rho_{v_{D_i}}(\chi_f)=\rho_i(\chi_f)
\]
for any $f \in \C(\mathfrak U)^{\tilde{B}}$. 
We remark that we have $\rho_0=\rho_{v_{D'}}$ and  $\rho_{r+1}=\rho_{v_{D}}$. 

For each $0 \leq i \leq r+1$, we define 
\[
e_i:=(\alpha +1 + m)P_i - Q_i, \quad l_i:=(\alpha +1)P_i - Q_i, \quad n_i:=-pe_i+ql_i.
\]
The next lemma is a consequence of Theorem \ref{HJ}. 
\begin{lemma}\label{ni}
We have:
\begin{itemize}
\item [\rm(i)] $n_i=k(tP_i-bQ_i)$ for any $0 \leq i \leq r+1$; 
\item [\rm(ii)] $n_i=c_{i-1}n_{i-1}-n_{i-2}$ for any  $2 \leq i \leq r+1$;
\item [\rm(iii)] $n_0=q-p > n_1=q-p-\beta > n_2 > \dots > n_{r-1} > n_r=k > n_{r+1}=0$.
\end{itemize}
\end{lemma}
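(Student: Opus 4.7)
I would prove (i) by a direct expansion, deduce (ii) as an immediate consequence of (i) together with the recursion \eqref{piqi}, and settle (iii) by computing the boundary values $n_0$, $n_1$, $n_r$, $n_{r+1}$ explicitly and then propagating strict monotonicity backwards via the recursion of (ii).

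For (i), I would expand $n_i = -p e_i + q l_i$ using the definitions of $e_i$ and $l_i$. The coefficient of $Q_i$ collapses to $p-q = -kb$, while the coefficient of $P_i$ is $(q-p)(\alpha+1) - mp$; substituting $mp = \alpha(q-p) + \beta$ from \eqref{mp} reduces this to $(q-p)-\beta = kt$ by \eqref{t}. Hence $n_i = k(tP_i - bQ_i)$. Part (ii) is then immediate: by \eqref{piqi} both $P_i$ and $Q_i$ satisfy the linear recursion $X_i = c_{i-1}X_{i-1}-X_{i-2}$, so any fixed $\Z$-linear combination of them inherits the same recursion.

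For (iii), the endpoint values at $i=0,1$ are immediate from (i), namely $n_0 = k(t\cdot 0 - b\cdot(-1)) = kb = q-p$ and $n_1 = kt = q-p-\beta$. To pin down $n_{r+1}$ and $n_r$ I would first verify that $\gcd(b,t)=1$, so that the equality $b/t = P_{r+1}/Q_{r+1}$ of Theorem \ref{HJ}(iii) forces $P_{r+1}=b$ and $Q_{r+1}=t$ on the nose. Dividing \eqref{mp} by $k$ gives $\beta/k = ap - \alpha b$, hence $\gcd(b,t) = \gcd(b,\beta/k) = \gcd(b,ap) = \gcd(b,p)$, using $\gcd(a,b)=1$ (which holds by construction of $a,b,k$ in \eqref{akb}). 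Since $b$ divides $q-p$ and $\gcd(p,q-p) = \gcd(p,q) = 1$, we conclude $\gcd(b,p)=1$. It follows that $n_{r+1} = k(tb-bt) = 0$, and Theorem \ref{HJ}(ii) at $i=r+1$ yields $tP_r - bQ_r = P_r Q_{r+1} - P_{r+1} Q_r = 1$, so $n_r = k$.

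Finally, the chain of strict inequalities $n_0 > n_1 > \cdots > n_{r+1}$ follows by backwards induction starting from $n_r = k > 0 = n_{r+1}$: assuming $n_i > n_{i+1} \geq 0$, the recursion of (ii) together with the fact that every $c_j \geq 2$ (a standard feature of Hirzebruch--Jung fractions) gives $n_{i-1} = c_{i-1} n_i - n_{i+1} \geq 2n_i - n_{i+1} > n_i$. The only subtlety in the whole argument is the coprimality check $\gcd(b,t)=1$, since without it one would only get $n_{r+1}=0$ and $n_r = k\cdot\gcd(b,t)$, which would not match the precise value asserted in (iii); thankfully the hypothesis that $l=p/q$ is irreducible, combined with the definitions of $k$ and $b$, is exactly enough to push this through.
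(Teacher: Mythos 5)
Your argument is correct and is precisely the computation the paper leaves implicit (the lemma is stated there only as ``a consequence of Theorem \ref{HJ}'', with no written proof): expand $n_i=-pe_i+ql_i$ using \eqref{mp} and \eqref{t} to get (i), let $n_i$ inherit the recursion \eqref{piqi} for (ii), and use $\gcd(b,t)=1$ together with Theorem \ref{HJ} (ii) and (iii) to pin down $P_{r+1}=b$, $Q_{r+1}=t$, hence $n_{r+1}=0$ and $n_r=k$, propagating strict monotonicity backwards via $c_j\geq 2$; your explicit verification of $\gcd(b,t)=1$ is a detail the paper tacitly assumes when it calls the singularity of type $\frac{1}{b}(1,t)$. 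The only blemish is a harmless index slip in the final induction step (it should read $n_{i-1}=c_{i}n_i-n_{i+1}$ rather than $c_{i-1}n_i-n_{i+1}$), which does not affect the inequality since every $c_j\geq 2$.
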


For each $0 \leq i \leq r+1$, set
\begin{equation*}
\sigma_i:=Z^{e_i} W^{l_i}, \quad
f_i:=\prod_{0 \leq j \leq i} \sigma_j. 
\end{equation*}
\begin{lemma}\label{gg}
With the preceding notation, the following properties are true. 
\begin{itemize}
\item [\rm(i)] Let $0 \leq i,\; j \leq r+1$. Then we have 
\[
v_{D_j}(\sigma_i)
\begin{cases}
 > 0 \quad (\mbox{if}\; \; i >j)\\
 = 0 \quad (\mbox{if}\; \; i=j)\\
 < 0 \quad (\mbox{if}\; \; i <j).
\end{cases}
\]
In particular, we have $v_{D_j}(\sigma_{j+1})=1$ and 
$v_{D_j}(\sigma_{j-1})=-1$. 
\item [\rm(ii)] We have $v_{D_i}(f_i)=v_{D_i}(f_{i-1})$.
\end{itemize}
\end{lemma}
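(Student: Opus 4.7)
My plan is to reduce both (i) and (ii) to the single clean formula
\[
v_{D_j}(\sigma_i) \;=\; P_jQ_i - P_iQ_j,
\]
which I would derive by computing $\chi_{\sigma_i} \in \Gamma$ in the given basis $\{(2,0),(m,m)\}$ and then applying the pairing $v_{D_j}(f) = \rho_j(\chi_f)$.

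First I would observe that any $Z^iW^j \in \Gamma$ (so $m \mid i-j$) has $\tilde B$-character $(i+j,\, i-j)$ in the standard coordinates on $\mathfrak X(\tilde B) \cong \mathbb Z^2$, the values $\chi_Z = (1,1)$ and $\chi_W = (1,-1)$ being forced by the convention $(g\cdot f)(x) = f(g^{-1}x)$ applied to the weights of the linear coordinates $X_2, X_4$ on $\C^5$ under $B \times \C^*$. Re-expanding $(i+j,\, i-j)$ in the lattice basis $\{(2,0),(m,m)\}$ gives $\chi_{Z^iW^j} = j\cdot(2,0) + \tfrac{i-j}{m}(m,m)$, and specializing to $\sigma_i = Z^{e_i}W^{l_i}$ together with the identity $e_i - l_i = mP_i$ yields
\[
\chi_{\sigma_i} \;=\; l_i\cdot(2,0) + P_i\cdot(m,m).
\]
Pairing against $\rho_j = -P_j\mathbf{u}_1 + l_j\mathbf{u}_2$ and substituting $l_i = (\alpha+1)P_i - Q_i$, the $(\alpha+1)P_iP_j$ terms cancel and the displayed formula drops out.

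The sign analysis is then routine. The case $i = j$ is immediate, and the two special values $v_{D_j}(\sigma_{j\pm 1}) = \pm 1$ are Theorem~\ref{HJ}(ii) read at $i = j+1$ and $i = j$ respectively. For general $i > j$ with $j \geq 2$, the strict monotonicity $P_i/Q_i < P_j/Q_j$ from Theorem~\ref{HJ}(iii), together with $Q_i, Q_j > 0$ from Theorem~\ref{HJ}(i), gives $P_jQ_i - P_iQ_j > 0$. The boundary cases $j \in \{0,1\}$ collapse pleasantly to $v_{D_0}(\sigma_i) = P_i$ and $v_{D_1}(\sigma_i) = Q_i$ (using $P_0=0,\,Q_0=-1,\,P_1=1,\,Q_1=0$), both strictly positive by Theorem~\ref{HJ}(i). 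The case $i < j$ is then immediate from antisymmetry of the formula.

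Part~(ii) will follow as a one-line corollary: since $f_i = f_{i-1}\sigma_i$ and $v_{D_i}(\sigma_i) = 0$ by the $i=j$ case of~(i), additivity of valuations gives $v_{D_i}(f_i) = v_{D_i}(f_{i-1})$. The only step I expect to demand real care is the initial identification of $\chi_Z$ and $\chi_W$ compatibly with the stated basis of $\Gamma$; one has to distinguish consistently between the character of a function and that of a linear coordinate, otherwise the recursion produces all the right numbers with the wrong signs. Once that convention is pinned down, everything else is driven entirely by the continued-fraction recursion of Theorem~\ref{HJ}.
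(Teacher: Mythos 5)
Your proposal is correct and is essentially the paper's own argument: the paper likewise reduces everything to the identity $v_{D_j}(\sigma_i)=\rho_j(\chi_{\sigma_i})=P_jQ_i-P_iQ_j$ and then reads off the signs from Theorem \ref{HJ}, with (ii) following from $f_i=f_{i-1}\sigma_i$ and the case $i=j$. You simply make explicit the character computation in the basis $\{(2,0),(m,m)\}$ and the case analysis that the paper leaves to the reader, and your pairing is consistent with the paper's formula for $v_{D_j}$ used later in Lemma \ref{well-defined}.
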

\begin{proof}
Since $v_{D_j}(\sigma_i)=\rho_j(\chi_{\sigma_i})=P_jQ_i-P_iQ_j$, 
we get (i) by Theorem \ref{HJ}. 
Item (ii) follows from the definition of $f_i$ and (i). 
\end{proof}
Let $\widetilde{E_i}$ be the simple spherical open subvariety of $\widetilde{\E'}$ corresponding to the colored cone $(\mathcal C_i,\; \phi)$, and $Y_i$ the unique closed orbit in $\widetilde{E_i}$. Then we have 
\begin{equation*}
\mathcal D(\widetilde{E_i})=\{{D_{i-1}}{|_{\widetilde{E_i}}},\; {D_i}{|_{\widetilde{E_i}}},\; \widetilde{S^+}{|_{\widetilde{E_i}}},\; \widetilde{S^-}{|_{\widetilde{E_i}}}\}, \quad 
\mathcal D_{Y_i}(\widetilde{E_i})=\{{D_{i-1}}{|_{\widetilde{E_i}}},\; {D_i}{|_{\widetilde{E_i}}}\}.
\end{equation*}
Let us consider the following $SL(2) \times \C^*$-stable  divisor on $\widetilde{\E'}$:
\begin{equation*}
\delta:=\sum_{1 \leq i \leq r+1} v_{D_i}(f_i^{-1}) D_i.
\end{equation*}
Though the Cartierness of $\delta$ follows immediately from the smoothness of $\widetilde{\E'}$, we check the criterion for a Weil divisor to be Cartier given in Theorem \ref{cartiercriterion} as a preparation for the proof of Lemma \ref{ggc}: 
with the notation used in Theorem \ref{cartiercriterion}, we see by Lemma \ref{gg} (ii) that $f_{Y_i}=f_{i-1}^{-1}$ satisfies the required condition. 
\begin{lemma}\label{ggc}
The Cartier divisor $\delta$ is generated by global sections.
\end{lemma}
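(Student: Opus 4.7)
The plan is to apply Theorem \ref{criterion for gg} directly. Since $\widetilde{\E'}$ is toroidal, $\mathcal D_0(\widetilde{\E'})$ is exactly $\{D_0, D_1, \ldots, D_{r+1}\}$, and the colors $\widetilde{S^+}$ and $\widetilde{S^-}$ both lie in $\mathcal D(\widetilde{\E'}) \setminus \mathcal D_0(\widetilde{\E'})$ with coefficients $n_{\widetilde{S^{\pm}}} = 0$ in $\delta$. Moreover, $l_{\delta}$ is determined by its values on maximal cones, so it suffices to check the three conditions only at the closed orbits $Y_1, \ldots, Y_{r+1}$. The natural candidate, forced by the definition of $\delta$, is
\[
f_{Y_i} := f_{i-1}^{-1} = \prod_{0 \leq j \leq i-1} \sigma_j^{-1},
\]
with the convention $f_{-1} := 1$ when $i = 1$.

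To check condition (i), I evaluate both sides on the extremal rays $\rho_{i-1}$ and $\rho_i$ of $\mathcal C_{Y_i}$: the value at $\rho_{i-1}$ is immediate from the definition of $\delta$, while the value at $\rho_i$ reduces to $v_{D_i}(f_{i-1}) = v_{D_i}(f_i)$, which is precisely Lemma \ref{gg}(ii). For condition (ii), piecewise linearity allows me to work ray by ray, and the required inequality becomes
\[
v_{D_j}(f_{i-1}/f_{j-1}) \geq 0 \qquad (j \neq i).
\]
When $i > j$, the quotient is $\sigma_j \sigma_{j+1} \cdots \sigma_{i-1}$, whose factors all have non-negative $v_{D_j}$ by Lemma \ref{gg}(i); when $i < j$, the reciprocal $\sigma_i \sigma_{i+1} \cdots \sigma_{j-1}$ has strictly negative $v_{D_j}$ on every factor (all indices being less than $j$), so the original quotient is strictly positive.

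For condition (iii), I need $v_{\widetilde{S^{\pm}}}(f_{Y_i}) \leq 0$, or equivalently $v_{\widetilde{S^{\pm}}}(f_{i-1}) \geq 0$, for which it is enough to check that each factor satisfies $v_{\widetilde{S^{\pm}}}(\sigma_j) \geq 0$. Expanding $\chi_{\sigma_j}$ in the basis $\{(2,0),(m,m)\}$ of $\Gamma$ and matching against $v_{D_j}(\sigma_i) = P_j Q_i - P_i Q_j$ from the proof of Lemma \ref{gg} gives $\chi_{\sigma_j} = l_j(2,0) + P_j(m,m)$, so that $v_{\widetilde{S^-}}(\sigma_j) = l_j$ and $v_{\widetilde{S^+}}(\sigma_j) = l_j + m P_j$. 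Everything therefore reduces to the single inequality $l_j \geq 0$: for $j = 0, 1$ this is immediate ($l_0 = 1$, $l_1 = \alpha + 1$), and for $j \geq 2$, Theorem \ref{HJ}(iii) together with $t/b = (\alpha+1) - ap/b < \alpha+1$ yields $Q_j/P_j \leq t/b < \alpha+1$, whence $l_j = (\alpha+1)P_j - Q_j > 0$. I expect this last inequality to be the only delicate point, since it is the unique place where the Hirzebruch--Jung structure of the fan of $\widetilde{\E'}$ genuinely enters; the verifications of (i) and (ii) are pure bookkeeping with Lemma \ref{gg}.
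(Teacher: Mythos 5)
Your proof is correct and takes essentially the same route as the paper: both apply Theorem \ref{criterion for gg} with the candidate $f_{Y_i}=f_{i-1}^{-1}$, verify the cone conditions via Lemma \ref{gg}, and handle the colors by the computation $v_{\widetilde{S^-}}(\sigma_j)=l_j$, $v_{\widetilde{S^+}}(\sigma_j)=l_j+mP_j=e_j$. The only difference is that you make explicit the positivity $l_j>0$ (via Theorem \ref{HJ}), which the paper subsumes under a ``direct calculation.''
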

\begin{proof}
Taking Theorem \ref{criterion for gg} into account,  it is enough to show the following: 
\begin{itemize}
\item [\rm(a)] 
$v_{D_j}(f_{i-1}^{-1}) \leq v_{D_j}(f_{j-1}^{-1})$ and $v_{D_{j-1}}(f_{i-1}^{-1})\leq v_{D_{j-1}}(f_{j-1}^{-1})$ hold for any $1 \leq i ,\;  j \leq r+1$; and 
\item [\rm(b)] $v_{\widetilde{S^+}}(f_{i-1}^{-1})\leq 0$ and $v_{\widetilde{S^-}}(f_{i-1}^{-1}) \leq 0$ hold for any $1 \leq i \leq r+1$. 
\end{itemize}
Condition (a) follows from Lemma \ref{gg}. 
By a direct calculation, we have $v_{\widetilde{S^+}}(f_{i-1})=\sum_{0 \leq j \leq i-1} e_j$ and $v_{\widetilde{S^-}}(f_{i-1})=\sum_{0 \leq j \leq i-1} l_j$. This shows (b).  
\end{proof}
\begin{remark}\label{linearization}
Since $\delta$ is $SL(2) \times \C^*$-stable, there is a linearization of the action of $SL(2) \times \C^*$ with respect to the 
line bundle $\mathcal O(\delta)$ such that the induced action on $\Gamma(\widetilde{\E'},\; \mathcal O(\delta))$ coincides with that 
on the function field $\C(\widetilde{\E'})$ (see \cite{IUJA}). 
\end{remark}
Let 
\[
V:=\langle(SL(2) \times \C^*) \cdot f_i\; :\; 1 \leq i\leq r\rangle,
\]
which is isomorphic to 
$\bigoplus_{1 \leq i \leq r} V(e_0+e_1+ \dots + e_i) \otimes V(l_0+l_1+ \dots + l_i)$. Here, $V(n)$
stands for the irreducible $SL(2)$-representation of highest weight $n$. We can take 
\[
\mathcal A:=\left\{X^{e_0+e_1+ \dots + e_i-e}Z^eY^{l_0+l_1+ \dots + l_i-l}W^l\; :\;
\begin{matrix}
1 \leq i \leq r;\qquad  \qquad \qquad \; \;  \\
0 \leq e \leq e_0+ e_1 +\dots + e_i;\; \\
 0 \leq l \leq l_0+ l_1+\dots + l_i \quad \; \;  
\end{matrix}
\right\}
\]
as a basis of $V$. 
\begin{lemma}
The vector space $V$ is an $SL(2) \times \C^*$-submodule of $\Gamma(\widetilde{\E'}, \mathcal O(\delta))$. 
\end{lemma}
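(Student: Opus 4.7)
The plan is to reduce the claim to verifying that each $f_i$ itself lies in $\Gamma(\widetilde{\E'}, \mathcal O(\delta))$. By construction, $V$ is the $SL(2) \times \C^*$-submodule of $\C(\widetilde{\E'})$ generated by $\{f_1, \ldots, f_r\}$, and by Remark \ref{linearization} the space of global sections is an $SL(2) \times \C^*$-stable subspace of $\C(\widetilde{\E'})$. Hence it is enough to show $f_i \in \Gamma(\widetilde{\E'}, \mathcal O(\delta))$, i.e., $\mathrm{div}(f_i) + \delta \geq 0$. Since $f_i$ is a $\tilde{B}$-eigenfunction on $\mathfrak U$, its divisor on $\widetilde{\E'}$ is supported on $\mathcal D(\widetilde{\E'}) = \{D_0, \ldots, D_{r+1}, \widetilde{S^+}, \widetilde{S^-}\}$, so only these prime divisors need be tested.

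On the $SL(2) \times \C^*$-stable divisors, the verification is routine from Lemma \ref{gg}(i). For $1 \leq j \leq r+1$, the required inequality $v_{D_j}(f_i) \geq v_{D_j}(f_j)$ splits into two cases according to whether $i \leq j$ or $i > j$: in the former, I would write $f_j = f_i \cdot \prod_{i < l \leq j}\sigma_l$ and note every factor has $v_{D_j}(\sigma_l) \leq 0$; in the latter, I would write $f_i = f_j \cdot \prod_{j < l \leq i}\sigma_l$ where every factor has $v_{D_j}(\sigma_l) > 0$. At $D_0$, where $\delta$ has vanishing coefficient, the inequality $v_{D_0}(f_i) \geq 0$ follows in the same manner from $v_{D_0}(\sigma_l) \geq 0$ for all $l \geq 0$.

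For the colors $\widetilde{S^\pm}$, I would use the computation already performed in the proof of Lemma \ref{ggc}, which yields $v_{\widetilde{S^+}}(f_i) = \sum_{0 \leq j \leq i} e_j$ and $v_{\widetilde{S^-}}(f_i) = \sum_{0 \leq j \leq i} l_j$. Non-negativity of both sums reduces, via $e_j = l_j + mP_j$ with $P_j \geq 0$, to showing $l_j = (\alpha+1)P_j - Q_j \geq 0$ for each $0 \leq j \leq i$. This is the main (though short) obstacle. The cases $j = 0, 1$ are immediate from $P_0 = 0,\; Q_0 = -1$ and $P_1 = 1,\; Q_1 = 0$. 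For $j \geq 2$, Theorem \ref{HJ}(iii) gives $P_j/Q_j \geq P_{r+1}/Q_{r+1} = b/t$, and combining with the identity $(\alpha+1)b - t = ap$ from \eqref{t} yields $l_j \geq P_j \cdot ap/b > 0$, since $a, p, b, P_j > 0$. This closes the verification and establishes the lemma.
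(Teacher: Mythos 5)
Your proposal is correct and takes essentially the same route as the paper: reduce the statement to $f_i \in \Gamma(\widetilde{\E'}, \mathcal O(\delta))$ and check $\Div(f_i)+\delta \geq 0$ coefficient by coefficient on the $\tilde{B}$-stable prime divisors, using Lemma \ref{gg} for the divisors $D_j$ and the values $v_{\widetilde{S^{\pm}}}(f_i)$ computed in the proof of Lemma \ref{ggc} for the colors. The only addition is your explicit verification that $e_j,\, l_j \geq 0$ via Theorem \ref{HJ} and \eqref{t}, a point the paper uses implicitly in condition (b) of Lemma \ref{ggc}.
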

\begin{proof}
We show that $f_i \in \Gamma(\widetilde{\E'}, \mathcal O(\delta))$ holds for every $1 \leq i \leq r$.
For any $1 \leq j \leq r+1$, we have 
\[
\Div(f_i){|_{\widetilde{E_j}}}=v_{\widetilde{S^+}}(f_i)
{\widetilde{S^+}}{|_{\widetilde{E_j}}}+v_{\widetilde{S^-}}(f_i){\widetilde{S^-}}{|_{\widetilde{E_j}}} 
+v_{D_{j-1}}(f_i) {D_{j-1}}{|_{\widetilde{E_j}}}+ v_{D_j}(f_i){D_j}{|_{\widetilde{E_j}}}
\]
and 
\[
\delta{|_{\widetilde{E_j}}}=v_{D_{j-1}}(f_{j-1}^{-1}) {D_{j-1}}{|_{\widetilde{E_j}}}+v_{D_j}(f_j^{-1}) {D_j}{|_{\widetilde{E_j}}}.
\]
Thus we get ${\Div(f_i)}{|_{\widetilde{E_j}}}+\delta{|_{\widetilde{E_j}}} \geq 0$ by comparing each coefficient using 
the condition (a) in the proof of Lemma \ref{ggc}.
\end{proof}
Therefore, we obtain a natural equivariant morphism 
\[
\xymatrix{
\Phi : \widetilde{\E'} \ar[r] &  E_{l,m} \times \P(V^{\vee}).
}
\]
We show that $\Phi$ is a closed immersion.  
Recall that $\widetilde{\E'}$ is covered by simple open subembeddings $\widetilde{E_1},\; \dots,\; \widetilde{E_{r+1}}$ and that $\widetilde{E_i}=(SL(2) \times \C^*) {(\widetilde{E_i})}_0$, where
\begin{equation*}
(\widetilde{E_i})_0=\widetilde{E_i} \setminus \bigcup_{D \in \mathcal D(\widetilde{E_i}) \setminus \mathcal D_{Y_i}(\widetilde{E_i})} D=\widetilde{E_i} \setminus ({\widetilde{S^+}}{|_{\widetilde{E_i}}} \cup {\widetilde{S^-}}{|_{\widetilde{E_i}}})
\end{equation*}
following the notation defined in \S \ref{s-spherical}. 
Also, we have 
$(\widetilde{E_i})_1=\mathfrak U \cap \{ZW \neq 0\}$, $(\E)_0=\E$, and $(\E)_1=\mathfrak U$. 
Therefore, it follows from Remark \ref{coordinate ring} that 
\[
\C[(\widetilde{E_i})_0]=\left\{F \in \C[\mathfrak U]_{ZW}\; :\; v_{D_{i-1}}(F) \geq 0,\; v_{D_i}(F) \geq 0\right\}
\]
and
\[
\C[E_{l,m}]=\left\{F \in \C[\mathfrak U]\; :\; v_{D_{r+1}}(F) \geq 0\right\}.
\]
Let $L $ be a subring of $\C(\mathfrak U)$ defined by 
\[
L=\{F \in \C[\mathfrak U]_{ZW}\; :\; v_{D_{r+1}}(F) \geq 0\},
\]
and consider an open subset 
\begin{equation*}
U_i:=\Spec \left(L\left[\frac{f^{\vee}}{f_{i-1}^{\vee}}\; :\; f \in \mathcal A\right]\right)\qquad (1 \leq i \leq r+1)
\end{equation*}
of $E_{l,m} \times \mathbb P(V^{\vee})$, where $f^{\vee}$ denotes the dual basis of $f$. 
Also, consider a homomorphism 
\[
\xymatrix{
\Phi^{\#}_i : L\left[\frac{f^{\vee}}{f^{\vee}_{i-1}}\; :\; f \in \mathcal A\right] \ar[r] & \C[(\widetilde{E_i})_0]
}
\]
defined by sending $F  \frac{f^{\vee}}{f^{\vee}_{i-1}}$, where $F \in L$, to 
$F \frac{f}{f_{i-1}}$. 
\begin{lemma}\label{well-defined}
The homomorphism $\Phi^{\#}_i$ is well-defined. \end{lemma}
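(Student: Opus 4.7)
My plan is to identify $\Phi^{\#}_i$ with the comorphism of the restricted morphism $\Phi|_{(\widetilde{E_i})_0} : (\widetilde{E_i})_0 \to U_i$. The crucial step is to show that this restriction actually lands in $U_i$, i.e., that the section $f_{i-1} \in V \subset \Gamma(\widetilde{\E'}, \mathcal O(\delta))$ has no zeros on $(\widetilde{E_i})_0$. Once this is established, $f_{i-1}$ trivializes $\mathcal O(\delta)$ on this affine open, so that the ratios $f/f_{i-1}$ for other sections $f \in V$ automatically become regular functions on $(\widetilde{E_i})_0$.

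First I would check that $f_{i-1}$ does not vanish on $(\widetilde{E_i})_0 = \widetilde{E_i} \setminus (\widetilde{S^+} \cup \widetilde{S^-})$. On this open set, the only $SL(2) \times \C^*$-stable prime divisors are $D_{i-1}$ and $D_i$, so it suffices to compute the coefficients of the effective divisor $\Div(f_{i-1}) + \delta$ along these two divisors. Along $D_{i-1}$ the coefficient is $v_{D_{i-1}}(f_{i-1}) + v_{D_{i-1}}(f_{i-1}^{-1}) = 0$, while along $D_i$ it equals $v_{D_i}(f_{i-1}) + v_{D_i}(f_i^{-1}) = v_{D_i}(f_{i-1}) - v_{D_i}(f_i)$, which vanishes by Lemma \ref{gg}(ii). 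Therefore $f_{i-1}$ has no zeros on $(\widetilde{E_i})_0$, and $\Phi((\widetilde{E_i})_0)$ lies in the chart of $\mathbb P(V^{\vee})$ on which $f^{\vee}_{i-1}$ is invertible.

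Next, since $f_{i-1}$ is a nowhere-vanishing section of $\mathcal O(\delta)$ on $(\widetilde{E_i})_0$, it furnishes a local trivialization, and for any $f \in V$ the ratio $f/f_{i-1}$ lies in $\C[(\widetilde{E_i})_0]$. Applied to $f \in \mathcal A$, this gives precisely the prescribed values of $\Phi^{\#}_i$ on the variables $f^{\vee}/f^{\vee}_{i-1}$. On the $L$-factor, the map agrees with the pullback $\gamma|_{(\widetilde{E_i})_0}^{*}$ composed with the implicit localization built into the coordinate ring of $U_i$ as an open subset of $\E \times \mathbb P(V^{\vee})$, which again takes values in $\C[(\widetilde{E_i})_0]$ by the morphism structure of $\Phi$.

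The main obstacle will be the non-vanishing verification in the first paragraph: it rests on the precise identity $v_{D_i}(f_i) = v_{D_i}(f_{i-1})$ from Lemma \ref{gg}(ii), without which the coefficient along $D_i$ would not vanish and the factorization through $U_i$ would fail. Once this identity is exploited, the remaining statements are formal consequences of $\Phi$ being a morphism restricting to $(\widetilde{E_i})_0 \to U_i$.
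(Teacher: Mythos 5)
Your first two paragraphs are correct and give a clean geometric packaging of part of the paper's argument: the coefficient of $\mathrm{Div}(f_{i-1})+\delta$ along $D_{i-1}$ is $0$ by definition of $\delta$, and along $D_i$ it vanishes by Lemma~\ref{gg}(ii), so $f_{i-1}$ is nowhere vanishing on $(\widetilde{E_i})_0$ and consequently $f/f_{i-1}\in\C[(\widetilde{E_i})_0]$ for every $f\in\mathcal A$. This matches what the paper deduces ``by condition (a) in the proof of Lemma~\ref{ggc}''; you have just phrased it more invariantly.

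The gap is in your last paragraph. The statement that ``the $L$-factor \dots\ takes values in $\C[(\widetilde{E_i})_0]$ by the morphism structure of $\Phi$'' is circular: it would follow if one already knew $\Phi((\widetilde{E_i})_0)\subset U_i$, but establishing the $\E$-component of that containment is precisely the content of $L\subset\C[(\widetilde{E_i})_0]$. And this containment is genuinely non-automatic. The ring $L$ lives inside $\C[\mathfrak U]_{ZW}$, i.e.\ it allows denominators vanishing along $\widetilde{S^+}$ and $\widetilde{S^-}$. While $(\widetilde{E_i})_0$ is disjoint from $\widetilde{S^+}\cup\widetilde{S^-}$ by construction, it is \emph{not} disjoint from the exceptional divisors $D_{i-1},D_i$ of $\widetilde{\E'}\to\E$, which lie over the point $O\in S^+\cap S^-$. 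So a function with a pole along $S^\pm$ could a priori acquire a pole along $D_{i-1}$ or $D_i$ as well, and one has to rule this out. The paper does exactly this: it writes $F\in L$ as a monomial, uses $G_m$-invariance to produce the integer $c$ with $e'+e-l'-l=mc$, and then computes $v_{D_j}(F)=-P_j(l'+l-d)+\{(\alpha+1)P_j-Q_j\}c$ and deduces $v_{D_j}(F)\geq n_jc/(q-p)\geq 0$ from $v_{D_{r+1}}(F)\geq 0$. That explicit positivity estimate, which uses the specific formulas for the ray generators $\rho_j$ and Lemma~\ref{ni}, is the real content of the lemma, and your proposal replaces it with an appeal to $\Phi$ being a morphism, which does not supply the needed information.
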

\begin{proof}
Let $F=\frac{X^{e'}Y^{l'}Z^eW^l}{(ZW)^d} \in L$, where $e,e', l,l', d \geq 0$. 
Since $F$ is invariant under the action of $G_m$, we have $e'+e-l'-l=mc$ for some $c \in \mathbb Z$. 
Therefore, for any $0 \leq j \leq r+1$, we have 
\[
v_{D_j}(F)=-P_j(l'+l-d)+\{(\alpha+1)P_j-Q_j\}c.
\] 
Taking $j=r+1$, we get 
$\frac{pmc}{q-p}=\frac{apc}{b} \geq l'+l-d$ 
by using the equations \eqref{akb}, \eqref{mp}, and \eqref{t}. 
This implies that $c \geq 0$. 
Therefore, we have 
\[
v_{D_j}(F)\geq -P_j \frac{pmc}{q-p}+\{(\alpha+1)P_j-Q_j\}c=\frac{n_j c}{q-p} \geq 0
\]
concerning Lemma \ref{ni}. 
Thus, $L \subset \C[(\widetilde{E_i})_0]$. 
Also, we have  $f_j/f_{i-1} \in \C[(\widetilde{E_i})_0]$ by the condition (a) in the proof of Lemma \ref{ggc}, and hence $f/f_{i-1} \in \C[(\widetilde{E_i})_0]$ for any $f \in \mathcal A$. 
\end{proof}
\begin{lemma}\label{cl}
The homomorphism $\Phi^{\#}_i$ is surjective.
\end{lemma}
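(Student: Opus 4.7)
The plan is to use the $SL(2)\times\C^*$-equivariance of $\Phi^{\#}_i$ to reduce surjectivity to a check on $\tilde{B}$-eigenfunctions, and then to realise each such eigenfunction as a product of explicit ratios $f/f_{i-1}$ together with an element of $L$. Since $\Phi^{\#}_i$ is $SL(2)\times\C^*$-equivariant, its image is an $SL(2)\times\C^*$-stable subring of $\C[(\widetilde{E_i})_0]$, which decomposes into irreducible submodules each generated by an essentially unique $\tilde{B}$-highest weight vector. Hence it suffices to check that every $\tilde{B}$-eigenfunction of $\C[(\widetilde{E_i})_0]$ lies in the image.

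By sphericity of $\mathfrak U$, a $\tilde{B}$-eigenfunction of given weight $\chi\in\Gamma$ is unique up to scalar (this is the content of the exact sequence recalled in \S \ref{s-spherical}), and the condition of lying in $\C[(\widetilde{E_i})_0]$ is precisely that $\chi$ lies in the sub-semigroup of $\Gamma$ dual to $\mathcal C_i$. The identity $P_{i-1}Q_i-P_iQ_{i-1}=1$ from Theorem \ref{HJ}(ii) says that $\{\rho_{i-1},\rho_i\}$ is a $\mathbb Z$-basis of the lattice dual to $\Gamma$ inside $Q$, and Lemma \ref{gg}(i) identifies $\sigma_i$ and $\sigma_{i-1}^{-1}$ as realising the dual $\mathbb Z$-basis of $\Gamma$, since their $(v_{D_{i-1}},v_{D_i})$-pairs are $(1,0)$ and $(0,1)$. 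Consequently every $\tilde{B}$-eigenfunction of $\C[(\widetilde{E_i})_0]$ is a scalar multiple of $\sigma_i^{a}\sigma_{i-1}^{-b}$ for some integers $a,b\geq 0$.

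It remains to place $\sigma_i^{a}\sigma_{i-1}^{-b}$ inside the image of $\Phi^{\#}_i$. By Lemma \ref{ni}(i) we have $v_{D_{r+1}}(\sigma_{i-1}^{-1})=n_{i-1}/k$, which is non-negative by Lemma \ref{ni}(iii), so $\sigma_{i-1}^{-1}\in L$ and the factor $\sigma_{i-1}^{-b}$ is absorbed into $L$. For the remaining factor $\sigma_i$, whenever $f_i\in V$ (that is, $1\leq i\leq r$) we have $\Phi^{\#}_i(f^{\vee}_i/f^{\vee}_{i-1})=f_i/f_{i-1}=\sigma_i$; at the boundary index $i=r+1$ the computation $v_{D_{r+1}}(\sigma_{r+1})=0$ places $\sigma_{r+1}$ directly in $L$, and an analogous argument handles $i=1$ using $v_{D_{r+1}}(\sigma_0^{-1})=b>0$. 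The main technical obstacle is this boundary bookkeeping, where one of $f_0, f_{r+1}$ lies outside $V$: the resolution is that in precisely those cases the relevant $\sigma$ is already in $L$ by Lemma \ref{ni}(iii), so no inaccessible ratio in $\P(V^{\vee})$ is required.
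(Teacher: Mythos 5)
Your reduction to $\tilde{B}$-eigenfunctions is where the argument breaks down. The chart $(\widetilde{E_i})_0$ (and likewise the chart $U_i$ of $\E \times \P(V^{\vee})$) is only $\tilde{B}$-stable, not $SL(2)\times\C^*$-stable: it is obtained by deleting the colors $\widetilde{S^+}|_{\widetilde{E_i}}$ and $\widetilde{S^-}|_{\widetilde{E_i}}$, which are not $SL(2)$-stable. Consequently $\C[(\widetilde{E_i})_0]$ and $L\left[\frac{f^{\vee}}{f^{\vee}_{i-1}} : f \in \mathcal A\right]$ are not $SL(2)\times\C^*$-modules, and $\Phi^{\#}_i$ is equivariant only for $\tilde{B}$ (it is the restriction of the globally equivariant $\Phi$ to a $\tilde{B}$-stable chart). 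For the solvable group $\tilde{B}$ there is no complete reducibility, so a $\tilde{B}$-stable subalgebra containing every $\tilde{B}$-eigenfunction need not be the whole ring -- already $\langle X_2\rangle \subset \langle X_1, X_2\rangle$ is a $B$-submodule containing all $B$-eigenvectors without being everything. Hence from ``the image contains $L$, the functions $\sigma_i^{a}\sigma_{i-1}^{-b}$, and their products'' you cannot conclude surjectivity. (Your paragraphs two and three are correct as far as they go: $\{\rho_{i-1},\rho_i\}$ is indeed a basis by Theorem \ref{HJ}(ii), the eigenfunctions of $\C[(\widetilde{E_i})_0]$ are the $\sigma_i^{a}\sigma_{i-1}^{-b}$ with $a,b\geq 0$, $\sigma_{i-1}^{-1}\in L$, and $\sigma_i=\Phi^{\#}_i(f_i^{\vee}/f_{i-1}^{\vee})$ for $1\leq i\leq r$ -- but this only settles the eigenfunction case.)

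The substance of the lemma lies precisely in the non-eigenfunction elements: a general element of $\C[(\widetilde{E_i})_0]$ is spanned by monomials $F=\frac{X^{e'}Y^{l'}Z^eW^l}{(ZW)^d}$ with $e',l'>0$, and when $v_{D_{r+1}}(F)<0$ such an $F$ does not lie in $L$, nor is it a product of an element of $L$ with eigenfunctions in any obvious way. One must produce a factorization $F=F''\left(\frac{f_i}{f_{i-1}}\right)^{t}$ with $F''\in L$; the paper does this by a descending induction, dividing by $\sigma_i=f_i/f_{i-1}$ and using the recursion \eqref{piqi} together with Theorem \ref{HJ} and Lemma \ref{gg} to verify that $v_{D_{i-1}}(F)\geq 1$ whenever $v_{D_{r+1}}(F)<0$, so that the quotient $F/\sigma_i$ remains in $\C[(\widetilde{E_i})_0]$ while $v_{D_{r+1}}$ strictly increases. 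That inductive factorization -- the actual work of the proof -- is absent from your proposal, so the argument has a genuine gap rather than being a different valid route.
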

\begin{proof}
Let $F=\frac{X^{e'}Y^{l'}Z^eW^l}{(ZW)^d} \in \C[(\widetilde{E_i})_0]$, where $e, e', l, l', d \geq 0$. 
As in the proof of Lemma \ref{well-defined}, we can write $e'+e=l'+l+mc$ for some $c \in \mathbb Z$. 
By a direct calculation, we see that $v_{D_{r+1}}(F)\geq 0$ if and only if $\frac{apc}{b} \geq l'+l-d$. 
In the following, we assume that 
$v_{D_{r+1}}(F) < 0$, since otherwise we have $F=\Phi^{\#}_i(F)$. 
Set $F'=F/\sigma_i$. Then, as an element of the function filed $\C(\mathfrak U)$, we can write $F$ as 
\begin{equation}
F= 
F' \frac{f_i}{f_{i-1}} \label{F}.
\end{equation}
We claim that the following two conditions hold: 
(a) $v_{D_{r+1}}(F')> v_{D_{r+1}}(F)$; 
(b) $F' \in \C[(\widetilde{E_i})_0]$. 
Indeed, the condition (a) follows from Lemma \ref{gg}.  
Also, in view of Lemma \ref{gg}, it suffices to show that $v_{D_{i-1}}(F) \geq 1$  holds to get (b). 
Suppose otherwise, i.e., 
$v_{D_{i-1}}(F)=0$. Then we have 
\begin{equation}
(l'+l-d)P_{i-1}=c\left\{(\alpha+1)P_{i-1}-Q_{i-1}\right\}. \label{di-1}
\end{equation}
If $i=1$, then the conditions $v_{D_{0}}(F)=0$ and $v_{D_1}(F)\geq 0$ imply that $c=0$ and $0 \geq l'+l-d$, which contradicts to our assumption that $v_{D_{r+1}}(F) <0$.
Let $i \geq 2$. 
By  \eqref{piqi}, we have 
$0 \leq v_{D_{i}}(F)=c_{i-1}v_{D_{i-1}}(F)-v_{D_{i-2}}(F)=- v_{D_{i-2}}(F)$,  
and hence
\begin{equation}
c\left\{(\alpha+1)P_{i-2}-Q_{i-2}\right\} \leq (l'+l-d)P_{i-2}. \label{di-2}
\end{equation}
If $i=2$, then we can show in a similar way that the hypothesis $v_{D_1}(F)=0$ leads to a contradiction. 
If $i>2$, then by \eqref{di-1} and \eqref{di-2} we have 
$c \frac{{Q_{i-1}}}{{P_{i-1}}} \leq c \frac{{Q_{i-2}}}{{P_{i-2}}}$, 
and thus $c=0$ concerning  
Theorem \ref{HJ}. 
In a same manner, we see that this contradicts to the assumption. 
Therefore, we have  $F' \in \C[(\widetilde{E_i})_0]$.  
The conditions 
(a) and (b) and the equation \eqref{F} yield that there is an $F'' \in \C[(\widetilde{E_i})_0]$ with $v_{D_{r+1}}(F'')\geq 0$ such that 
$F=F'' \left(\frac{{f_i}}{{f_{i-1}}}\right)^t$ holds 
for some $t > 0$. 
Thus, we get 
$F=\Phi^{\#}_i\left(F'' \left(\frac{{f_i^{\vee}}}{{{f_{i-1}}^{\vee}}}\right)^t\right)$. 
\end{proof}
As a consequence of Lemmas \ref{well-defined} and \ref{cl}, we obtain: 
\begin{proposition}\label{phicl}
The morphism $\Phi: \widetilde{\E'} \to \E \times \P(V^{\vee})$ is a closed immersion. 
\end{proposition}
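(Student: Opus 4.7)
The plan is to check that $\Phi$ is a closed immersion locally on the target, using the $\tilde{B}$-stable affine opens $(\widetilde{E_i})_0$ of $\widetilde{\E'}$ together with the affine opens $U_i$ of $\E \times \P(V^\vee)$, and then to invoke $(SL(2) \times \C^*)$-equivariance. By the local structure theorem for simple spherical embeddings (Remark \ref{coordinate ring}), we have $\widetilde{E_i} = (SL(2) \times \C^*) \cdot (\widetilde{E_i})_0$, so the translates $g \cdot (\widetilde{E_i})_0$ for $g \in SL(2) \times \C^*$ and $1 \leq i \leq r+1$ form an open cover of $\widetilde{\E'}$. Correspondingly, the translates $g \cdot U_i$ form an open cover of a neighborhood of the image $\Phi(\widetilde{\E'})$ in $\E \times \P(V^\vee)$.

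First I would verify that $\Phi$ carries $(\widetilde{E_i})_0$ into $U_i$. The image lands in the open where $Z$ and $W$ are invertible (i.e.\ in $\Spec L$), because $(\widetilde{E_i})_0$ avoids the divisors $\widetilde{S^+}$ and $\widetilde{S^-}$, which map birationally onto the zero loci of $W$ and $Z$ on $\E$. The image lands in the affine chart $\{f_{i-1}^{\vee} \neq 0\}$ of $\P(V^{\vee})$ because the section $f_{i-1}$ of $\mathcal O(\delta)$ is non-vanishing on $(\widetilde{E_i})_0$: by Lemma \ref{gg}(ii) and the definition of $\delta$, the effective divisor $\Div(f_{i-1}) + \delta$ has multiplicity zero along $D_{i-1}$ and $D_i$ (the only $SL(2) \times \C^*$-stable divisors meeting $(\widetilde{E_i})_0$), and its remaining support lies outside $(\widetilde{E_i})_0$.

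Once the factorization $\Phi((\widetilde{E_i})_0) \subset U_i$ is established, the induced homomorphism of affine coordinate rings is precisely $\Phi_i^{\#}$. Lemma \ref{well-defined} shows that $\Phi_i^{\#}$ is well-defined, and Lemma \ref{cl} shows that it is surjective, so $\Phi|_{(\widetilde{E_i})_0} \colon (\widetilde{E_i})_0 \to U_i$ is a closed immersion of affine schemes. By $(SL(2) \times \C^*)$-equivariance, every translate $g \cdot (\widetilde{E_i})_0 \to g \cdot U_i$ is also a closed immersion. Since being a closed immersion is local on the target, and since $\Phi$ is proper (its projection to $\E$ coincides with $\gamma|_{\widetilde{\E'}}$, which is projective as the composition $\widetilde{\E'} \to \E' \to \E$ of projective morphisms, and $\P(V^{\vee})$ is itself projective), the image $\Phi(\widetilde{\E'})$ is closed in $\E \times \P(V^{\vee})$, and $\Phi$ is a closed immersion. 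The real content of the argument lies in Lemmas \ref{well-defined} and \ref{cl}; the present statement is then a patching exercise using equivariance and properness, with no further genuine obstacle.
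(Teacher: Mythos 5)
Your proposal follows the paper's proof exactly: the paper deduces Proposition \ref{phicl} directly from Lemmas \ref{well-defined} and \ref{cl}, i.e.\ from the well-definedness and surjectivity of the chart homomorphisms $\Phi_i^{\#}$ on the affine opens $(\widetilde{E_i})_0 \to U_i$. What you add (the factorization $\Phi((\widetilde{E_i})_0)\subset U_i$, the equivariant translation of the charts, and the properness/patching step) is precisely the routine bookkeeping the paper leaves implicit, so the two arguments coincide in substance.
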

\section{Generators as a module over the invariant ring}\label{s-gen} 
For each $n \geq 0$, consider the following irreducible $SL(2)$-representations:
\[
A(n):=\Sym^n\langle X_1, X_2\rangle \cong V(n), \quad 
B(n):=\Sym^n\langle X_3, X_4\rangle\cong V(n). 
\]
Also, define $C(n):=\langle X_0^n\rangle \cong V(0)$ for each $n \in \mathbb Z$, and set 
\begin{equation*}
F_{n_0,0}:=A(e_0) \otimes B(l_0),\quad 
F_{n_i, 0}:=A(e_i) \otimes B(l_i) \oplus C(n_i) \quad (1 \leq i \leq r).
\end{equation*}
The goal of this section is to prove the following 
\begin{theorem}\label{generator2}
For any $0 \leq i \leq r$, the weight space $S_{(n_i,0)}$ is generated by $F_{n_i,0}$ as a module over the invariant ring $S^{\G}$. 
\end{theorem}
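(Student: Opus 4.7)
My plan is to argue monomial-by-monomial, reducing each monomial of weight $(n_i, 0)$ to an element of $F_{n_i,0}$ by dividing out a suitable $(G_0 \times G_m)$-invariant. Using the hypersurface relation $X_0^{q-p} = X_1 X_4 - X_2 X_3$ in $S$, every monomial may be assumed to have $X_0$-degree $a$ in the range $0 \le a < q-p$. Writing $B := \mu_1 + \mu_2$ and $D := \mu_3 + \mu_4$ for the total degrees in $\{X_1, X_2\}$ and $\{X_3, X_4\}$ of a monomial $M = X_0^a X_1^{\mu_1} X_2^{\mu_2} X_3^{\mu_3} X_4^{\mu_4}$, the weight conditions for $(n_i, 0)$ read $a - pB + qD = n_i$ and $B - D = mN$ for some $N \in \mathbb Z$. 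Among the monomials satisfying these conditions, the summand $A(e_i) \otimes B(l_i) \subset F_{n_i, 0}$ picks out those with $(a, B, D, N) = (0, e_i, l_i, P_i)$, while (for $i \geq 1$) the summand $C(n_i)$ picks out the single monomial $X_0^{n_i}$ with $(a, B, D, N) = (n_i, 0, 0, 0)$.

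The proof then proceeds by induction on the total degree of $M$. Two easy reductions are available. First, if $a \geq n_i$, one factors out $X_0^{n_i}$ and the cofactor $X_0^{a - n_i} X_1^{\mu_1} \cdots X_4^{\mu_4}$ has weight $(0, 0)$, hence lies in $S^{\G}$. Second, if $B \geq e_i$ and $D \geq l_i$, one can choose exponents $s, t$ so that $X_1^{e_i - s} X_2^{s} X_3^{l_i - t} X_4^{t}$ (which lies in $A(e_i) \otimes B(l_i)$) divides $M$, and the cofactor again has weight $(0, 0)$. The hard case is the intermediate range, where $a < n_i$ and at least one of $B < e_i$, $D < l_i$ holds: here no obvious monomial factor is available, and one must produce a non-trivial invariant of $S^{\G}$ dividing $M$.

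The main obstacle is identifying the right invariant in this intermediate range. Candidates come from the generators of the semigroup $M^+_{l,m}$, and the argument relies on the Hirzebruch--Jung recursion $n_i = c_{i-1} n_{i-1} - n_{i-2}$ from Lemma \ref{ni}(ii), together with the analogous recursions $e_i = c_{i-1} e_{i-1} - e_{i-2}$ and $l_i = c_{i-1} l_{i-1} - l_{i-2}$ that follow from \eqref{piqi}. Geometrically, the pair $(B, D)$ of a non-minimal monomial determines a ray in $Q$ that lies in some cone $\mathcal C_j$, and the continued-fraction data gives the invariants needed to descend from level $j$ to the target level $i$ in finitely many steps --- a process that requires careful case analysis depending on whether $j < i$, $j = i$, or $j > i$, and careful verification that each intermediate cofactor remains in the correct $\G$-weight space. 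The boundary cases require separate treatment: when $i = 0$, one observes that $X_0^{n_0} = X_1 X_4 - X_2 X_3$ already lies in $A(e_0) \otimes B(l_0)$, so no $C(n_0)$ summand is needed; when $i = r$, one exploits that $n_r = k$ is the smallest positive $n_i$ to terminate the induction.
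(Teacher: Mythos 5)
Your setup is sound and matches the paper's reduction: passing to monomials, observing that a monomial cofactor whose $\G$-weight is $(0,0)$ automatically lies in $S^{\G}$, and noting that (after symmetrizing $X_1\leftrightarrow X_2$, $X_3\leftrightarrow X_4$) the problem is controlled by the data $(a,B,D,N)$ is exactly how the paper reduces Theorem \ref{generator2} to Proposition \ref{long}, which asserts that $R_{(n_i,0)}$ is generated over $R^{\G}$ by $X_0^{n_i}$ and $X_1^{e_i}X_3^{l_i}$. Your two ``easy reductions'' and your treatment of $i=0$ are correct. But the proposal stops exactly where the theorem starts to have content. What must be proved is: (a) for every monomial of weight $(n_i,0)$ with $0<N<P_i$ the $X_0$-exponent is automatically at least $n_i$ (equivalently $\w_{(n_i,mN)}\geq 0$, i.e.\ $\Rem[pmN,q-p]+n_i<q-p$), so your ``intermediate range'' is in fact \emph{empty} there and no new invariant is needed; and (b) for $N\geq P_i$ the monomial of minimal $X_0$-exponent satisfies $B\geq e_i$ and $D\geq l_i$, so that $X_1^{e_i}X_3^{l_i}$ divides it with invariant cofactor (and an iterated descent handles the non-minimal ones). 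Statement (a) is precisely the sharp remainder estimate $\Rem[tx,b]\geq t_{i-1}$ (together with the upper bound of Corollary \ref{cor1}) obtained in the paper from the modified Euclidean algorithm via the function $\Theta[x]$ and the long induction of Lemma \ref{keyprop}, Corollaries \ref{elementary}--\ref{cor of elementary}; statement (b) is the minimality-of-$\w$ contradiction argument in the proof of Proposition \ref{long}. None of this is supplied in your proposal: the sentence ``the continued-fraction data gives the invariants needed to descend from level $j$ to the target level $i$ in finitely many steps --- a process that requires careful case analysis'' is a description of the missing work, not an argument, and it is this work that occupies most of \S\ref{s-gen}.

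Two further points. First, your framing of the hard case (``one must produce a non-trivial invariant of $S^{\G}$ dividing $M$'') is misleading for $0<N<P_i$: there one does not hunt for a divisor from $M^+_{l,m}$; one proves the case cannot occur, and this is where the Hirzebruch--Jung recursions $n_i=c_{i-1}n_{i-1}-n_{i-2}$, $t_{i-1}=c_it_i-t_{i+1}$ enter in an essential, quantitative way (for $N>P_i$ a divisor is indeed extracted, namely $X_1^{e_i}X_3^{l_i}$, via Lemma \ref{cor of rem} and Corollary \ref{cor of rem1}). Second, the closing remarks about the boundary case $i=r$ (``$n_r=k$ is the smallest positive $n_i$ \dots to terminate the induction'') do not correspond to any step that is actually needed: the theorem is proved for each $i$ separately, and your induction on total degree terminates for trivial reasons; the genuine difficulty is the base-case analysis above, which remains unproved. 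As it stands the proposal is a correct reduction plus an accurate map of where the difficulty lies, but not a proof.
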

We prepare notations and lemmas that we need for the proof of Theorem \ref{generator2}. Some of them have already appeared in \cite[\S 4]{Ku}. 

Let $R:=\C[X_0, X_1, X_3] \subset A$. The polynomial ring $R$ has  a natural $\Z \times \Z / m\Z$-grading defined by the $\G$-action: $R=\bigoplus_{(n,d) \in \Z \times \Z/m\Z} R_{(n,d)}$. 
Concerning that 
$X_1$ and $X_2$ (resp. $X_3$ and $X_4$) have the same $SL(2) \times \C^* \times \G$-weight, it suffices to determine a subspace of $R_{(n_i,0)}$ that generates $R_{(n_i,0)}$ over the invariant ring $R^{\G}$ in proving Theorem \ref{generator2}. For each $c,n \in \mathbb Z$, we consider the vector subspaces 
\[
R^c:=\langle X_0^{d_0}X_1^{d_1}X_{3}^{d_{3}} \in R\; :\; d_1-d_{3}=c \rangle
\]
and
\[
R_n:=\langle X_0^{d_0}X_1^{d_1}X_{3}^{d_{3}} \in R\; :\; d_0-pd_1+qd_{3}=n \rangle
\] 
of $R$. 
Then we have 
\begin{equation*}
R=\bigoplus_{c \in \mathbb Z} R^c=\bigoplus_{n \in \mathbb Z} R_n.
\end{equation*}
Let $R^c_n:=R^c \cap R_n$. Then, 
the weight space $R_{(n,d)}$ is described as follows: 
\begin{equation*}
R_{(n,d)}=\bigoplus_{c \equiv d\; (mod\; m)} R^c_n. 
\end{equation*}
\begin{remark}\label{invariant ring}
By the proof of {\cite[Theorem 1.6]{BH}}, we see that the invariant ring $R^{\G}=R_{(0,0)}$ is described as follows:  
\[
R^{\G}=\C[X_0^{pu_1-qu_2}X_{1}^{u_1}X_{3}^{u_2}\; :\; (u_1,u_2) \in M^+_{l,m}].
\]
\end{remark}
\begin{example}\label{fex}
Let $l=p/q=1/4$, and $m=2$. 
By using an algorithm described in \cite{Pa} for finding a system of generators of the semigroup $M^+_{l,m}$, we see that $M^+_{\frac{1}{4}, 2}$ is minimally generated by $(2,0),\; (5,1)$, and $(8,2)$. 
Therefore, 
\[
R^{\G}=\C[X_0^2X_1^2,\; X_0X_1^5X_3,\; X_1^8X_3^2]. 
\]
\end{example}
\begin{lemma}[{\cite[Lemma 4.6]{Ku}}]\label{minc}
For any $(n,d) \in \mathbb Z \times \mathbb Z/m \mathbb Z$, the minimum
\[
c_{(n,d)}:=\min\{c \in \mathbb Z\; :\; c \equiv d\; (mod\; m),\; R^c_n \neq 0\}
\]
exists.
\end{lemma}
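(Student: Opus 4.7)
The plan is to show that the set $S_{(n,d)}:=\{c \in \Z : c \equiv d \pmod m,\ R^c_n \neq 0\}$ is both non-empty and bounded below; since it is then a non-empty subset of $\Z$ with a lower bound, the minimum automatically exists.

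For non-emptiness, I would exhibit an explicit monomial. Given $d \in \Z/m\Z$, pick any integer representative $c \equiv d \pmod m$ with $c \geq 0$ large enough that $n + pc \geq 0$ (such $c$ exist in the arithmetic progression $d + m\Z$ because $p \geq 1$). Setting $d_0 := n+pc$, $d_1 := c$, $d_3 := 0$ gives non-negative exponents satisfying $d_1 - d_3 = c$ and $d_0 - pd_1 + qd_3 = n$, so $X_0^{d_0}X_1^{d_1}X_3^{d_3} \in R^c_n$, proving $S_{(n,d)} \neq \phi$.

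For the lower bound, suppose $X_0^{d_0}X_1^{d_1}X_3^{d_3} \in R^c_n$ with $c < 0$. Then $d_3 = d_1 - c = d_1 + |c| \geq |c|$, and from $d_0 = n + pd_1 - qd_3 \geq 0$ together with $p \leq q$ we obtain
\[
n \;\geq\; (q-p)d_1 + q|c| \;\geq\; q|c|.
\]
Thus if $n < 0$ the case $c < 0$ is impossible, forcing $c \geq 0$; and if $n \geq 0$ we obtain $c \geq -n/q$. In either case $c \geq -\max(n,0)/q$, so $S_{(n,d)}$ is bounded below, and the minimum exists.

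The argument is elementary linear-Diophantine bookkeeping; the only subtle point is to notice that $p \leq q$ (which holds because $l = p/q \leq 1$) is precisely what makes the $(q-p)d_1$ term in the inequality for $n$ non-negative, and hence forces the lower bound on $c$. No non-toric hypothesis is needed.
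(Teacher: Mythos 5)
Your proof is correct: the set $\{c \equiv d \pmod m : R^c_n \neq 0\}$ is non-empty (your explicit monomial $X_0^{n+pc}X_1^{c}$ with $c \gg 0$ in the progression works, since $p \geq 1$), and the non-negativity of the exponents together with $p \leq q$ gives the lower bound $c \geq -\max(n,0)/q$, so the minimum exists. Note that the present paper does not prove this statement but quotes it from \cite{Ku}*{Lemma 4.6}, so there is no proof here to compare against; your elementary bookkeeping is exactly the kind of argument that justification requires, and no hypothesis on $(l,m)$ beyond $0<l\leq 1$ is used, as you observe.
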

\begin{example}[{\cite[Example 4.7]{Ku}}]\label{positive}
If $0 \leq n \leq q-p$, then $c_{(n,0)}=0$. 
We have  $R^0_n= \langle X_0^n \rangle$ if $0 \leq n < q-p$, and 
$R^0_{q-p}=\langle X_0^{q-p},\; X_1X_{3} \rangle$. 
\end{example}
We define another grading on $R$ such that each graded component is finite-dimensional, which makes it easier to analyze the structure of the weight space $R_{(n,d)}$. 
For that purpose, consider a $\mathbb Z$-linear map $\mu : \mathbb Z^3 \to \mathbb Z^3$ defined by
\[
(d_0, d_1, d_{3}) \mapsto \mu(d_0, d_1, d_{3}):=(d_0-pd_1+qd_{3},\; d_1-d_{3},\; pd_1-qd_{3}).
\]
We see that $\mu$ is injective. 
Let us denote by $\Lambda$ the image of $\mu|_{\mathbb Z_{\geq 0}^3}$, and define 
\begin{equation*}
R_{\lambda}:=\langle X_0^{d_0}X_1^{d_1}X_{3}^{d_{3}} \in R\; :\; \mu(d_0, d_1, d_{3})=\lambda \rangle
\end{equation*}
for each $\lambda \in \Lambda$. 
Then we have
\begin{equation*}
R=\bigoplus_{\lambda \in \Lambda} R_{\lambda}. 
\end{equation*}
Next, consider the projection $\tilde{\mu} : \mathbb Z^3 \to \mathbb Z^2,\; (n,c,\w) \mapsto (n,c)$ to the first and the second factor. 
Set $\mu':=\tilde{\mu} \circ \mu$, and 
denote by $\Lambda'$ the image of $\mu'|_{\mathbb Z_{\geq 0}^3}$. Then we have
\begin{equation*}
R=\bigoplus_{(n,c) \in \Lambda'} R^c_n, \quad 
R^c_n=\bigoplus_{\lambda \in \tilde{\mu}^{-1}(n,c)} R_{\lambda}. \label{cn}
\end{equation*}
\begin{lemma}[{\cite[Lemmas 4.8 and 4.10]{Ku}}]\label{1dim}
With the preceding notation, the following properties hold. 
\begin{itemize}
\item [\rm(i)] Let $\lambda=(n,c,\w) \in \Lambda$. Then, the vector space $R_{\lambda}$ is spanned by 
\begin{equation*}
f_{\lambda}:=X_0^{n+\w}X_1^{\frac{qc-\w}{q-p}}X_{3}^{\frac{pc -\w}{q-p}}. \label{function}
\end{equation*}
\item [\rm(ii)] For any $\lambda,\; \lambda' \in \Lambda$, we have 
$f_{\lambda} f_{\lambda'}=f_{\lambda+\lambda'}$. 
\item [\rm(iii)] Let $(n,c) \in \Lambda'$. Then we have $\w-\w' \in (q-p)\mathbb Z$ for any $(n,c,\w),\; (n,c,\w') \in \tilde{\mu}^{-1}(n,c)$.  
\end{itemize}
\end{lemma}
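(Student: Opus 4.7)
\emph{Plan.} I would reduce the statement to a lattice combinatorics problem in the grading $\Lambda$ from Lemma~\ref{1dim} and then verify it case-by-case on the sign of the $\omega$-coordinate.

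\emph{Reduction to $R$.} Since $X_2$ and $X_4$ carry the same $\G$-weight as $X_1$ and $X_3$ respectively, and $A(e_i)\otimes B(l_i)$ is the $SL(2)$-span of its $T$-highest-weight vector $X_1^{e_i}X_3^{l_i}\in R$, the $SL(2)$-equivariance of $S^{\G}\cdot F_{n_i,0}$ reduces the claim to checking that $R_{(n_i,0)}=R^{\G}\cdot\langle X_0^{n_i},X_1^{e_i}X_3^{l_i}\rangle$ for $1\le i\le r$, and $R_{(n_0,0)}=R^{\G}\cdot\langle X_1X_3\rangle$ for $i=0$ (the $C(n_0)$-generator being absorbed via the hypersurface relation $X_0^{q-p}=X_1X_4-X_2X_3$). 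By Lemma~\ref{1dim}, both sides decompose into one-dimensional pieces $\C f_\lambda$ indexed by $\lambda\in\tilde\mu^{-1}(n_i,c)\subset\Lambda$ with $c\equiv 0\pmod m$, and by Lemma~\ref{1dim}(ii) multiplication is additive in $\lambda$. A direct computation gives $\mu(X_0^{n_i})=(n_i,0,0)$ and $\mu(X_1^{e_i}X_3^{l_i})=(n_i,mP_i,-n_i)$, so the desired generation is equivalent, using Remark~\ref{invariant ring}, to the following additive lattice claim: for every $\lambda=(n_i,c,\omega)\in\Lambda$ with $c\equiv 0\pmod m$, at least one of $\lambda-(n_i,0,0)$ or $\lambda-(n_i,mP_i,-n_i)$ lies in $\Lambda\cap(\{0\}\times m\Z\times\Z)$.

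\emph{Verification of the lattice claim.} I split on the sign of $\omega$. When $\omega\ge 0$, the subtraction $\lambda-(n_i,0,0)=(0,c,\omega)$ corresponds visibly to a monomial with non-negative exponents, since its $d_1,d_3$-exponents coincide with those of $f_\lambda$. When $\omega<0$, I would show that necessarily $d_1\ge e_i$ and $d_3\ge l_i$: the bound $\omega\ge -n_i$ (forced by $d_0=n_i+\omega\ge 0$) combined with Lemma~\ref{ni}(iii) (giving $n_i<q-p$ for $i\ge 1$) and Lemma~\ref{1dim}(iii) pins $d_3$ into an interval of length $<1$, so for each admissible $c=mh$ there is at most one lattice point with $\omega<0$. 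Using the identities $(qmP_i+n_i)/(q-p)=e_i$ and $(pmP_i+n_i)/(q-p)=l_i$ (which follow from $kt=(\alpha+1)(q-p)-mp$ together with Lemma~\ref{ni}(i)), the condition $\lambda-(n_i,mP_i,-n_i)\in\Lambda$ reduces to $d_1\ge e_i$ and $d_3\ge l_i$, which must then be verified for the unique such point. The base case $i=0$ is handled immediately since $e_0=l_0=1$ and $d_1,d_3\ge 1$ hold automatically whenever $\omega<0$ and $n_0=q-p$.

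\emph{Main obstacle.} The hard part is the case $\omega<0$: ensuring that the unique candidate $(d_0,d_1,d_3)$ compatible with the congruence $d_1\equiv d_3\pmod m$ and the forced interval for $d_3$ simultaneously satisfies $d_1\ge e_i$ and $d_3\ge l_i$, uniformly in $i$. This requires careful bookkeeping with the Hirzebruch--Jung recursion \eqref{piqi} for $P_i,Q_i$ and with Lemma~\ref{ni}(ii) for $n_i$, and is where the non-toric hypothesis (absence of a clean divisibility relation between $m$ and $q-p$) genuinely complicates the argument, as flagged in the introduction.
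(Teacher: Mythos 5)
Your proposal does not prove the statement at hand. The statement is Lemma \ref{1dim} itself: the elementary structural facts that each graded piece $R_{\lambda}$ is spanned by the explicit monomial $f_{\lambda}$, that these monomials multiply additively in $\lambda$, and that the possible values of $\w$ over a fixed $(n,c)\in\Lambda'$ are congruent modulo $q-p$. What you have sketched instead is an argument for Theorem \ref{generator2} (equivalently Proposition \ref{long}): you invoke Lemma \ref{1dim} as a known tool (``the grading $\Lambda$ from Lemma \ref{1dim}'', ``by Lemma \ref{1dim}(ii) multiplication is additive'') and reduce the generation of $S_{(n_i,0)}$ over $S^{\G}$ to a lattice claim. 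So, as a proof of Lemma \ref{1dim}, none of (i), (ii), (iii) is established; the target has been misidentified.

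What is actually required is short and direct. Since $\mu(d_0,d_1,d_3)=(d_0-pd_1+qd_3,\; d_1-d_3,\; pd_1-qd_3)$ is injective and linear, a monomial $X_0^{d_0}X_1^{d_1}X_3^{d_3}$ lies in $R_{\lambda}$ for $\lambda=(n,c,\w)$ if and only if $d_1-d_3=c$ and $pd_1-qd_3=\w$, which forces $d_1=\frac{qc-\w}{q-p}$, $d_3=\frac{pc-\w}{q-p}$, and then $d_0=n+pd_1-qd_3=n+\w$; this gives (i), and (ii) is immediate because exponents add under multiplication and $\mu$ is additive. For (iii), if $(n,c,\w)$ and $(n,c,\w')$ both lie in $\tilde{\mu}^{-1}(n,c)$, the corresponding exponents satisfy $d_1-d_1'=\frac{\w'-\w}{q-p}\in\mathbb Z$, whence $\w-\w'\in(q-p)\mathbb Z$. (The paper itself simply refers to \cite[Lemmas 4.8 and 4.10]{Ku} for these facts.) Separately, even read charitably as a sketch of Theorem \ref{generator2}, your argument leaves the decisive case $\w<0$ --- exactly the part handled in the paper by the Hirzebruch--Jung combinatorics of Lemmas \ref{keyprop}--\ref{cor of elementary} and the final argument of Proposition \ref{long} --- as an acknowledged obstacle rather than a proof, so it would not be complete for that theorem either.
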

We see that 
\[
\w_{(n,c)}^{\max}:=\max\{\w \in \mathbb Z\; :\; (n,c,\w) \in \tilde{\mu}^{-1}(n,c)\}\]
and 
\[
\w_{(n,c)}:=\min\{\w \in \mathbb Z\; :\; (n,c,\w) \in \tilde{\mu}^{-1}(n,c)\}
\]
exist for any $(n,c) \in \Lambda'$, and that  the vector space $R^c_n$ is finite-dimensional.
\begin{lemma}\label{max}
Let $(n,c) \in \Lambda'$. If $c < 0$, then $\w_{(n,c)}^{\max}=qc$. Otherwise, $\w_{(n,c)}^{\max}=pc$. 
\end{lemma}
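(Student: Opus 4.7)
The plan is to parametrize the fiber $\tilde\mu^{-1}(n,c)$ explicitly and then maximize $\w$ directly. By definition, an element $(n,c,\w) \in \tilde\mu^{-1}(n,c)$ arises from some $(d_0, d_1, d_3) \in \mathbb Z_{\geq 0}^3$ satisfying $d_0 - pd_1 + qd_3 = n$, $d_1 - d_3 = c$, and $\w = pd_1 - qd_3$. Substituting $d_1 = d_3 + c$ into the expression for $\w$ yields $\w = pc - (q-p)d_3$, so maximizing $\w$ over admissible triples is equivalent to minimizing the non-negative integer $d_3$ subject to the two remaining constraints $d_1 = d_3 + c \geq 0$ and $d_0 = n + \w \geq 0$.

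I would then split into two cases according to the sign of $c$. When $c \geq 0$, the constraint $d_1 \geq 0$ is automatic, so the natural candidate for the minimum is $d_3 = 0$, which produces $d_1 = c$ and $\w = pc$. To verify the remaining constraint $d_0 = n+pc \geq 0$, I pick any auxiliary admissible triple $(d_0', d_1', d_3') \in \mathbb Z_{\geq 0}^3$ with $\mu'(d_0', d_1', d_3') = (n,c)$, which exists by the hypothesis $(n,c) \in \Lambda'$; writing $d_1' = d_3' + c$ in $n = d_0' - pd_1' + qd_3'$ gives the identity $n + pc = d_0' + (q-p)d_3' \geq 0$. Hence $\w_{(n,c)}^{\max} = pc$, as claimed.

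When $c < 0$, the constraint $d_1 \geq 0$ forces $d_3 \geq -c > 0$, so the minimum admissible value is $d_3 = -c$, yielding $d_1 = 0$ and $\w = pc - (q-p)(-c) = qc$. The parallel identity $n + qc = d_0' + (q-p)d_1' \geq 0$, obtained this time by substituting $d_3' = d_1' - c$ into $n = d_0' - pd_1' + qd_3'$, confirms the compatibility of this extremal choice with the $d_0 \geq 0$ constraint, and therefore $\w_{(n,c)}^{\max} = qc$. The argument is essentially a routine optimization over a one-parameter family; the only step requiring a little care is precisely this verification that the minimal value of $d_3$ remains consistent with $d_0 \geq 0$, and this is where the hypothesis $(n,c) \in \Lambda'$ is used in an essential way.
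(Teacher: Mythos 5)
Your proof is correct. The paper's own argument is close in spirit but packaged differently: rather than parametrizing the fiber and minimizing $d_3$ outright, it takes a preimage $(d_0,d_1,d_3)$ of the (already known to exist) maximizer and argues by contradiction that $\min(d_1,d_3)=0$, since otherwise replacing $(d_0,d_1,d_3)$ with $(d_0+q-p,\,d_1-1,\,d_3-1)$ would increase $\omega$ by $q-p$. Starting from the maximizer means the paper never has to confirm $d_0\geq 0$ at the optimum; your direct optimization does require that feasibility check, and your verification --- using an auxiliary triple from the hypothesis $(n,c)\in\Lambda'$ to deduce $n+pc=d_0'+(q-p)d_3'\geq 0$ (resp.\ $n+qc=d_0'+(q-p)d_1'\geq 0$) --- is exactly the right way to supply it. Your version is a bit longer but has the minor advantage of being constructive and of treating both sign cases explicitly; the paper only spells out $c<0$ and leaves $c\geq 0$ to the reader.
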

\begin{proof}
Let $\mu(d_0, d_1, d_{3})=(n,c,\w_{(n,c)}^{\max})$. 
We claim that either $d_1=0$ or $d_{3}=0$ holds. 
Indeed, if $d_1> 0$ and $d_3 >0$, then we have 
\[
\mu(d_0+q-p, d_1-1, d_{3}-1)=(n,c,\w_{(n,c)}^{\max}+q-p) \in \tilde{\mu}^{-1}(n,c),
\]
which contradicts to the maximality of $\w_{(n,c)}^{\max}$. 
Thus, if $c<0$ then we see that $d_1=0$, and therefore $\w^{\max}_{(n,c)}=qc$. 
\end{proof}
\begin{lemma}[{\cite[Lemma 4.11]{Ku}}]\label{min}
Let $(n,c,\w) \in \Lambda$. Then, 
we have $n+\w < q-p$ if and only if $\w=\w_{(n,c)}$. 
\end{lemma}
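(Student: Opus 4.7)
The plan is to exploit the injectivity of $\mu$ (already noted in the text) together with the congruence statement Lemma \ref{1dim}(iii) to reduce the equivalence to a single arithmetic condition on the coordinate $d_0$ of the preimage.

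First, I would fix $(n,c,\w) \in \Lambda$ and let $(d_0, d_1, d_3) \in \mathbb Z_{\geq 0}^3$ be the unique triple with $\mu(d_0, d_1, d_3) = (n,c,\w)$ (unique by injectivity of $\mu$). Combining the first and the third defining equations of $\mu$ gives $d_0 - pd_1 + qd_3 = n$ and $pd_1 - qd_3 = \w$, whence $d_0 = n + \w$. Consequently the condition $n + \w < q-p$ is the same as $d_0 < q - p$, and this is the reformulation I intend to verify.

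Next, by Lemma \ref{1dim}(iii), every $(n,c,\w') \in \Lambda$ satisfies $\w' - \w \in (q-p)\mathbb Z$; therefore $\w = \w_{(n,c)}$ if and only if $(n,c,\w-(q-p)) \notin \Lambda$. A direct calculation verifies that $\mu(d_0 - (q-p),\, d_1+1,\, d_3+1) = (n,\, c,\, \w-(q-p))$, and by injectivity of $\mu$ this is the only possible preimage. Hence $(n,c,\w-(q-p)) \in \Lambda$ is equivalent to the triple $(d_0-(q-p),\, d_1+1,\, d_3+1)$ lying in $\mathbb Z_{\geq 0}^3$; since $d_1+1$ and $d_3+1$ are automatically positive, the only real constraint is $d_0 \geq q-p$.

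Combining the two steps, $\w = \w_{(n,c)} \iff d_0 < q-p \iff n + \w < q-p$, which is the asserted equivalence. I do not expect any genuine obstacle here: the argument is entirely bookkeeping on the lattice map $\mu$, and its only substantive inputs are the injectivity of $\mu$ and the congruence statement Lemma \ref{1dim}(iii). If anything, the only point that requires a moment's care is the observation that the shift $(d_0,d_1,d_3) \mapsto (d_0-(q-p),\, d_1+1,\, d_3+1)$ is the right one to decrease $\w$ (rather than to increase it), but this is dictated by the sign of the $pd_1 - qd_3$ term in the definition of $\mu$.
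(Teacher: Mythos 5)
Your overall strategy --- reduce everything to the coordinates of the unique $\mu$-preimage, observe $d_0=n+\w$, and test membership of $(n,c,\w-(q-p))$ in $\Lambda$ via the shifted triple $(d_0-(q-p),\,d_1+1,\,d_3+1)$ --- is the natural one and in substance it works; note that the paper does not reprove this lemma (it is quoted from \cite[Lemma 4.11]{Ku}), so your argument can only be judged on its own terms. One direction of your chain is complete as written: if $\w=\w_{(n,c)}$, then trivially $(n,c,\w-(q-p))\notin\Lambda$, hence by your computation $d_0<q-p$, i.e.\ $n+\w<q-p$.

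The one real jump is the assertion that Lemma \ref{1dim}(iii) alone yields ``$\w=\w_{(n,c)}$ if and only if $(n,c,\w-(q-p))\notin\Lambda$''. The congruence statement only says that all $\w$-values attained for fixed $(n,c)$ lie in one residue class modulo $q-p$; by itself it does not exclude the possibility that the attained set skips $\w-(q-p)$ while containing some smaller value $\w-s(q-p)$ with $s\ge 2$, in which case $\w$ would fail to be minimal even though $(n,c,\w-(q-p))\notin\Lambda$, and your final equivalence would break in the direction ``$n+\w<q-p \Rightarrow \w=\w_{(n,c)}$''. The gap closes with one more line of exactly your own bookkeeping: if some $\w'<\w$ is attained, then the first coordinate of its $\mu$-preimage is $n+\w'\ge 0$, and since $\w-\w'\in(q-p)\Z_{>0}$ this forces $n+\w\ge q-p$, i.e.\ $d_0\ge q-p$; by your criterion the shifted triple then lies in $\Z_{\ge 0}^3$, so $(n,c,\w-(q-p))\in\Lambda$ after all. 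In other words, the attained $\w$-values are downward closed in steps of $q-p$ down to the minimum; once you state this, the proof is complete and is the expected elementary argument.
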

\begin{corollary}\label{remainder1}
Let $(n,c),\; (n', c') \in \Lambda'$. Then the following properties are true. 
\begin{itemize}
\item [\rm(i)] If $n=0$, then $0 \leq \w_{(0,c)} < q-p$.
\item [\rm(ii)]  
We have $\w_{(n+n', c+c')}=\w_{(n,c)}+\w_{(n', c')}$ if and only if $\w_{(n,c)}+\w_{(n', c')}+n+n' <q-p$. 
\end{itemize}
\end{corollary}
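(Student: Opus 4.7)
The plan is to derive both parts directly from Lemmas \ref{1dim} and \ref{min}, with essentially no new ideas beyond unpacking the definitions of $\w_{(n,c)}$ and $\Lambda$.

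For part (i), I would argue as follows. By definition, $\w_{(0,c)}$ is the minimum among all $\w$ such that $(0,c,\w) \in \Lambda = \mu(\mathbb Z_{\geq 0}^3)$, so there exists $(d_0, d_1, d_3) \in \mathbb Z_{\geq 0}^3$ with $\mu(d_0, d_1, d_3) = (0, c, \w_{(0,c)})$. The first coordinate of this equation reads $d_0 - p d_1 + q d_3 = 0$, so $\w_{(0,c)} = p d_1 - q d_3 = d_0 \geq 0$. For the upper bound, the minimality of $\w_{(0,c)}$ combined with Lemma \ref{min} applied to $(0, c, \w_{(0,c)}) \in \Lambda$ yields $0 + \w_{(0,c)} < q-p$.

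For part (ii), both directions hinge on Lemma \ref{min} together with the fact that $\Lambda$ is closed under addition (which follows from the linearity of $\mu$ and the semigroup structure of $\mathbb Z_{\geq 0}^3$, or equivalently from Lemma \ref{1dim}(ii), where $f_\lambda f_{\lambda'} = f_{\lambda+\lambda'}$). For the ``if'' direction, I would take $\lambda := (n, c, \w_{(n,c)})$ and $\lambda' := (n', c', \w_{(n',c')})$; both lie in $\Lambda$, and hence so does
\[
\lambda + \lambda' = (n+n',\; c+c',\; \w_{(n,c)} + \w_{(n',c')}).
\]
Under the hypothesis $(n+n') + (\w_{(n,c)} + \w_{(n',c')}) < q-p$, Lemma \ref{min} then identifies this third coordinate with $\w_{(n+n', c+c')}$, giving the desired equality. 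Conversely, if $\w_{(n+n', c+c')} = \w_{(n,c)} + \w_{(n',c')}$, then applying Lemma \ref{min} to the element $(n+n', c+c', \w_{(n+n',c+c')}) \in \Lambda$ immediately yields $(n+n') + \w_{(n,c)} + \w_{(n',c')} < q-p$.

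There is no real obstacle here; the statement is essentially a bookkeeping corollary of Lemma \ref{min}. The only point worth being explicit about is the closure of $\Lambda$ under addition, needed to ensure that $(n+n', c+c', \w_{(n,c)} + \w_{(n',c')})$ is an admissible element of $\Lambda$ to which Lemma \ref{min} can be applied in the ``if'' direction.
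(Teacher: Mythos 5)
Your proof is correct and follows essentially the same route as the paper: part (i) via the identity $d_0=n+\w_{(n,c)}$ (the content of Lemma \ref{1dim}(i)) together with Lemma \ref{min}, and part (ii) by observing that $(n+n',c+c',\w_{(n,c)}+\w_{(n',c')})$ lies in $\tilde{\mu}^{-1}(n+n',c+c')$ and then applying Lemma \ref{min} in both directions. No gaps; your explicit remark about $\Lambda$ being closed under addition is exactly the fact the paper invokes implicitly.
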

\begin{proof}
First, we have $\w_{(0,c)} <q-p$ by Lemma \ref{min}. Let $(d_0, d_1,d_3) \in \mathbb Z^3$ be such that $\mu(d_0, d_1, d_{3})=(n,c,\w_{(n,c)})$. 
Then we have $d_0=n+\w_{(n,c)}$ by Lemma \ref{1dim} (i), and therefore we have $\w_{(0,c)} =d_0 \geq 0$ if $n=0$. 
Item (ii) follows from the fact that $(n+n', c+c', \w_{(n,c)}+\w_{(n', c')}) \in \tilde{\mu}^{-1}(n+n', c+c')$ and Lemma \ref{min}. 
\end{proof}
\begin{example}\label{zi}
We have $\w^{\max}_{(n_i,mP_i)}=pmP_i$ by Lemma \ref{max}. By a direct calculation, we obtain the following: 
\[
\w^{\max}_{(n_i,mP_i)}+n_i=\{\alpha(q-p)+\beta\}P_i+(q-p-\beta)P_i-(q-p)Q_i=\{(\alpha+1)P_i-Q_i\}(q-p).
\] 
Therefore, we see that $(n_i, mP_i, -n_i) \in \tilde{u}^{-1}(n_i, mP_i)$. It follows that $\w_{(n_i, mP_i)}=-n_i$, since $n_i+(-n_i)<q-p$. 
Also, we can calculate that $f_{(n_i, mP_i, \w_{(n_i, mP_i)})}=X_1^{e_i}X_{3}^{l_i}$ by using Lemma \ref{1dim} (i).  
\end{example}
\begin{definition}
For any positive integers $m_1$ and $m_2$, we denote by 
$\Rem[m_1, m_2]$ the remainder of $m_1$ divided by $m_2$. 
\end{definition}
\begin{corollary}\label{preremainder}
Let $(n,c) \in \Lambda'$, and suppose that $n\geq 0$ and that $c>0$.  
Then, we have $\w_{(n,c)} \geq 0$ if and only if $\Rem[pc, q-p]+n < q-p$. 
\end{corollary}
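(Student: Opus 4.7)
The plan is to convert the condition $\w_{(n,c)} \geq 0$ into an elementary statement about a single residue class modulo $q-p$, using the explicit description of $\tilde{\mu}^{-1}(n,c)$ provided by the preceding lemmas.

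First I would spell out $\tilde{\mu}^{-1}(n,c)$ concretely. Solving the linear system $\mu(d_0,d_1,d_3)=(n,c,\w)$ gives $d_0 = n+\w$, $d_1 = (qc-\w)/(q-p)$, $d_3 = (pc-\w)/(q-p)$, so that $(n,c,\w) \in \Lambda$ if and only if $\w \equiv pc \pmod{q-p}$, $n+\w \geq 0$, and $\w \leq pc$ (the additional bound $\w \leq qc$ being automatic from $c>0$ and $p \leq q$, and corresponding to $\w^{\max}_{(n,c)}=pc$ given by Lemma \ref{max}). In view of Lemma \ref{1dim}(iii), this describes $\tilde{\mu}^{-1}(n,c)$ as the finite arithmetic progression of step $q-p$ running from $pc$ downwards to the smallest admissible value, namely $\w_{(n,c)}$.

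The next step is to invoke Lemma \ref{min}, which singles out $\w_{(n,c)}$ as the unique element of $\tilde{\mu}^{-1}(n,c)$ with $n+\w_{(n,c)} < q-p$. Since every element of the progression is congruent to $pc \pmod{q-p}$, the distinguished representative lies at distance less than $q-p$ from $-n$, and its non-negativity is governed by whether the unique element of $[0,q-p)$ congruent to $pc$, that is $\Rem[pc,q-p]$, falls inside the subinterval $[0, q-p-n)$.

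More concretely, if $\Rem[pc,q-p] + n < q-p$ I would set $\w := \Rem[pc,q-p]$ and check that it is a non-negative element of $\tilde{\mu}^{-1}(n,c)$ (the bound $\w \leq pc$ is immediate, being either $\w = pc$ itself or $\w < q-p \leq pc$) that satisfies $n+\w < q-p$, so Lemma \ref{min} forces $\w_{(n,c)} = \w \geq 0$. For the converse, if $\w_{(n,c)} \geq 0$ then $0 \leq \w_{(n,c)} < q-p$ using $n \geq 0$ and Lemma \ref{min}, hence $\w_{(n,c)} = \Rem[pc,q-p]$ and consequently $\Rem[pc,q-p] + n < q-p$. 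I do not anticipate a serious obstacle: the whole argument is just an unpacking of the definitions together with Lemmas \ref{max}, \ref{1dim}, and \ref{min}, and the only mildly delicate point is keeping track of which of the two divisibility bounds $\w \leq pc$ and $\w \leq qc$ is binding under the sign hypothesis on $c$.
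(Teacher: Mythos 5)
Your proof is correct and follows essentially the same route as the paper: both arguments rest on the description of $\tilde{\mu}^{-1}(n,c)$ via Lemmas \ref{1dim} and \ref{max} (so that $\w_{(n,c)}\equiv pc \pmod{q-p}$ with $\w^{\max}_{(n,c)}=pc$) together with the characterization of $\w_{(n,c)}$ in Lemma \ref{min}. The only cosmetic difference is that the paper handles the ``if'' direction by contraposition, using $n+\w_{(n,c)}\geq 0$ to show $\w_{(n,c)}<0$ forces $\Rem[pc,q-p]+n\geq q-p$, whereas you exhibit $\Rem[pc,q-p]$ directly as an element of the fiber and invoke the uniqueness in Lemma \ref{min}; both are sound.
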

\begin{proof}
By Lemmas \ref{1dim} (iii) and \ref{max}, we have $pc=x(q-p)+\w_{(n,c)}$ for some $x  \geq 0$. 
If $\w_{(n,c)} \geq 0$, then we get 
$\Rem[pc, q-p]+n=\w_{(n,c)}+n <q-p$ by Lemma \ref{min}. 
Otherwise, we have $
\Rem[pc, q-p]=x'(q-p)+\w_{(n,c)}$ for some $x' > 0$. 
Therefore, $\Rem[pc, q-p]+n=x'(q-p)+\w_{(n,c)}+n \geq q-p$,
since $\w_{(n,c)}+n \geq 0$ concerning Lemma \ref{1dim} (i). 
\end{proof}
\begin{definition}[{\cite[Definition 4.12]{Ku}}]
For each $(n,d) \in \mathbb Z \times \mathbb Z/m \mathbb Z$, we define:
\begin{itemize}
\item [\rm(i)] $\Lambda_{(n,d)}:=\{(n,c,\w) \in \Lambda \; :\; c \equiv d\; (mod\; m)\}$;
\item [\rm(ii)] $\lambda_{(n,d)}:=(n,c_{(n,d)}, \w_{(n,c_{(n,d)})}) \in \Lambda_{(n,d)}$.
\end{itemize}
\end{definition}
Using the notation defined above, we obtain different ways of expressing the weight space $R_{(n,d)}$: 
\begin{equation*}
R_{(n,d)}
=\bigoplus_{\substack{
c \equiv d\; (mod\; m) \\
c \geq c_{(n,d)}}} R^c_n 
 =\bigoplus_{\substack{
c \equiv d\; (mod\; m)\\
c \geq c_{(n,d)}}}\left(\bigoplus_{\lambda \in \tilde{\mu}^{-1}(n,c)} R_{\lambda}\right) 
 =\bigoplus_{\lambda \in \Lambda_{(n,d)}} R_{\lambda}. \label{relation1}
 \end{equation*}

Now since we have $R_{(n_i,0,\w_{(n_i,0)})}=\langle X_0^{n_i} \rangle$ and $R_{(n_i,mP_i,\w_{(n_i,mP_i)})}=\langle X_1^{e_i}X_3^{l_i} \rangle$, we see that Theorem \ref{generator2} follows as a consequence of the next
 \begin{proposition}\label{long}
For any $0 \leq i \leq r$, 
the weight space $R_{(n_i, 0)}$ is generated by $R^0_{n_i}$ and $R^{mP_i}_{n_i}$ as a module over the invariant ring $R^{\G}$.
\end{proposition}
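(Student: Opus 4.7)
The plan is to show that every monomial in the weight space $R_{(n_i, 0)}$ admits a factorization as the product of an element of $R^{\G}$ and an element of $R^0_{n_i}$ or $R^{mP_i}_{n_i}$. Since $X_0^{n_i} \in R^0_{n_i}$ and the relation $d_0 + (q-p)d_1 + q|c| = n_i$ forces $R^c_{n_i} = 0$ whenever $c < 0$ (using $n_i \leq n_0 = q - p < qm$), one has $c_{(n_i, 0)} = 0$, and the decomposition $R_{(n_i, 0)} = \bigoplus_{c' \geq 0} R^{mc'}_{n_i}$ reduces the claim to factoring a generic monomial $X_0^{d_0}X_1^{d_1}X_3^{d_3}$ with $d_1 - d_3 = mc'$ and $d_0 - pd_1 + qd_3 = n_i$.

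For $c' \geq P_i$, I would set $k := \max(0, l_i - d_3)$ (the apparently missing inequality $k \geq e_i - d_1$ is automatic from $c' \geq P_i$), and propose the factorization
\[
X_0^{d_0}X_1^{d_1}X_3^{d_3} = \bigl(X_0^{pu_1 - qu_2}X_1^{u_1}X_3^{u_2}\bigr) \cdot \bigl(X_0^{(q-p)k}X_1^{e_i - k}X_3^{l_i - k}\bigr),
\]
with $u_1 = d_1 - e_i + k$ and $u_2 = d_3 - l_i + k$. The second factor lies in $R^{mP_i}_{n_i}$, and to see that the first lies in $R^{\G}$ one has to verify $(u_1, u_2) \in M^+_{l,m}$; every non-negativity condition reduces to the identities
\[
n_i + pmP_i = (q-p)l_i,\quad (q-p)e_i - n_i = qmP_i,\quad (q-p)l_i - n_i = pmP_i,
\]
all of which follow at once from $n_i = -pe_i + ql_i$ and the defining formulas for $e_i, l_i$. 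For $0 \leq c' < P_i$, I would use instead the simpler factorization $X_0^{d_0}X_1^{d_1}X_3^{d_3} = (X_0^{d_0 - n_i}X_1^{d_1}X_3^{d_3}) \cdot X_0^{n_i}$: the second factor is in $R^0_{n_i}$, and the first lies in $R^{\G}$ as soon as $d_0 \geq n_i$, because then $pd_1 - qd_3 = d_0 - n_i \geq 0$ gives $(d_1, d_3) \in M^+_{l,m}$, while $m \mid d_1 - d_3 = mc'$ is automatic.

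Everything therefore reduces to the following combinatorial \emph{Key Lemma}: for every $0 < c' < P_i$, one has $\w_{(n_i, mc')} \geq 0$, equivalently by Corollary \ref{preremainder}, $\Rem[pmc', q-p] + n_i < q-p$. Using $pm = \alpha(q-p) + \beta$, $q - p = kb$, $\beta = k(b-t)$, and $\gcd(b, t) = 1$ (which follows from Theorem \ref{HJ}(ii) applied at $i = r+1$, yielding $tP_r - bQ_r = 1$), this is equivalent to $\Rem[tc', b] > tP_i - bQ_i = n_i/k$ for $0 < c' < P_i$. I would prove this by induction on $i$, using the recursions $P_{i+1} = c_iP_i - P_{i-1}$ and $n_{i+1} = c_in_i - n_{i-1}$: the cases $i \leq 1$ are vacuous, and for the inductive step I would split $0 < c' < P_{i+1}$ into $c' \leq P_i$ (handled by the inductive hypothesis combined with the direct evaluation $\Rem[tP_i, b] = n_i/k$) and $P_i < c' < P_{i+1}$ (writing $c' = P_i + c''$ with $0 < c'' < P_{i+1} - P_i$, expanding $tc' - bQ = n_i/k + tc'' + b(Q_i - Q)$ for $Q = \lfloor tc'/b \rfloor$, and analyzing the sub-cases $Q \leq Q_i$ versus $Q > Q_i$ by invoking the Bezout identity $P_iQ_{i+1} - P_{i+1}Q_i = -1$ together with the inductive bound applied to $c''$). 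This second sub-case is where I expect the main difficulty: it amounts to the best-approximation property of the Hirzebruch--Jung convergents, and carefully orchestrating the three recursions for $P_i, Q_i, n_i$ to preclude small values of $\Rem[tc', b]$ throughout the window $0 < c' < P_i$ is precisely the intricate combinatorial core promised in the introduction.
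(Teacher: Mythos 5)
Your reduction of the proposition to a single combinatorial inequality is correct, and where it deviates from the paper it is sound: the vanishing of $R^{c}_{n_i}$ for $c<0$ is right; for $c'\geq P_i$ your explicit factorization with $\kappa=\max(0,\,l_i-d_3)$ does exhibit every such monomial as an invariant monomial (the identities $(q-p)e_i-n_i=qmP_i$, $(q-p)l_i-n_i=pmP_i$ are what make the $X_0$-exponents match) times an element of $R^{mP_i}_{n_i}$, which handles minimal and non-minimal $\w$ uniformly, whereas the paper argues that $d_1\geq e_i$, $d_3\geq l_i$ by contradicting the minimality of $\w_{(n_i,c)}$ and treats the non-minimal $\w$ separately via $X_0^{n_i}$; and your reformulation of the remaining case $0<c'<P_i$ as $\Rem[tc',b]>n_i/k=t_i$ is equivalent (via Corollary \ref{preremainder} and Lemma \ref{ni}) to the inequality $\Rem[t(P_i-x),b]\geq t_i$ that the paper uses at the same spot.

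The genuine gap is that this Key Lemma is never proved, and it is precisely the hard part: it is the content of Corollaries \ref{cor1} and \ref{cor of elementary}, which the paper establishes through Lemmas \ref{generalxj}, \ref{keyprop} and \ref{j=varpi} by a lengthy double induction on the Hirzebruch--Jung data, tracking $\Theta[x]$ and the maxima $M_j$ with case distinctions according to which partial quotients $c_j$ exceed $2$. Your sketched induction on $i$ does not close as stated. In the sub-case $P_i<c'<P_{i+1}$ you set $c''=c'-P_i$ and invoke ``the inductive bound applied to $c''$''; but $c''$ ranges up to $P_{i+1}-P_i-1$, which is $\geq P_i$ whenever $c_i\geq 3$, so the inductive hypothesis (for index $i$ or any earlier index) simply does not cover $c''$. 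Moreover, even where it does, knowing only the lower bound $\Rem[tc'',b]>t_i$ is insufficient: writing $\Rem[tc',b]=\Rem[t_i+\Rem[tc'',b],\,b]$, the wrap-around case $t_i+\Rem[tc'',b]\geq b$ requires the \emph{upper} bound $\Rem[tc'',b]>b-t_i+t_{i+1}$, i.e.\ a gap statement of the type of Corollary \ref{cor1}; so the induction must be strengthened to a two-sided control of the remainders over the whole window, which is exactly the intricate argument of Lemma \ref{keyprop} that you defer. Since you yourself flag this sub-case as ``the main difficulty'' and leave it as a plan, the combinatorial core of Proposition \ref{long} is missing from your proposal.
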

The rest of this section is devoted mostly to the proof of Proposition \ref{long}. 
Recall that we have considered the Hirzebruch--Jung continued fraction of $b/t$ in \S \ref{s-flat}. 
Set $t_1:=t$. Then we have the following equations that arise from the \emph{modified Euclidean algorithm} (see \cite[\S 10]{CLS} for more details):
\begin{align}
& b=c_1t_1-t_2, \notag \\
& t_1=c_2t_2-t_3, \notag \\
& \dots \label{relation}\\
& t_{i-1}=c_it_i-t_{i+1},\notag \\
& \dots \notag \\
& t_{r-1}=c_rt_r. \notag
\end{align}
We can easily see that the following equation holds for any $2 \leq i \leq r$:
\begin{equation}
b-t_1=(c_1-2)t_1+(c_2-2)t_2+ \dots + (c_{i-1}-2)t_{i-1}+t_{i-1}-t_i. \label{b}
\end{equation}
Since $b=n_0/k$ and $t_1=t=n_1/k$,  Lemma \ref{ni} and \eqref{relation} yield that 
$t_i=n_i/k$ holds for any $1 \leq i \leq r$. 

Now, let us consider the following two conditions: 
\begin{itemize}
\item [\rm{(C1)}] $1 < i \leq r+1$;
\item [\rm{(C2)}] $1 \leq \exists l \leq i-1$ such that $c_l >2$, $c_{l+1}=\dots=c_{i-1}=2$. 
\end{itemize}
Assume that conditions (C1) and (C2) hold, and let $x$ be any integer such that $0 \leq x < P_i-P_{i-1}$. 
The quotient of $t_1(P_{i-1}+x)$ divided by $b$ is always not less that $Q_{i-1}$, since we have $t_1P_{i-1}=bQ_{i-1}+t_{i-1}$ with $0 \leq t_{i-1} <b$ by Lemma \ref{ni}. Keeping this in mind,   
let $Q_{i-1}+\theta_x$ (resp. $\Theta[x]$) be the quotient (resp. the remainder) of $t_1(P_{i-1}+x)$ divided by $b$, namely
\[
t_1(P_{i-1}+x)=b(Q_{i-1}+\theta_x)+\Theta[x], \quad 
\Theta[x]=\Rem[t_1(P_{i-1}+x), b].
\]
Then we have $\Theta[x]=t_{i-1}+t_1x-b\theta_x$. 
\begin{remark} \label{e}
With the above notation and assumption, we have the following. 
\begin{itemize}
\item [\rm(i)] Since $t_1 < b$, we see that $\theta_x-\theta_{x-1} \in \{0,1\}$. Furthermore, the following properties are true.
\begin{itemize}
\item [$\bullet$] We have $\theta_x-\theta_{x-1}=0$ if and only if $\Theta[{x-1}]+t_1-b < 0$. In this case, $\Theta[x]=\Theta[{x-1}]+t_1$. 
\item [$\bullet$] We have $\theta_x-\theta_{x-1}=1$ if and only if $\Theta[x-1]+t_1-b \geq 0$. In this case, $\Theta[x]=\Theta[x-1]+t_1-b$. 
\end{itemize}
\item [\rm(ii)] Since $P_i-P_{i-1} <b$, we have $\Theta[x]=\Theta[{x'}]$ if and only if $x=x'$. 
\item [\rm(iii)] We have $\Theta[0]=t_{i-1}$. 
\item [\rm(iv)] By the assumption (C2), we have $P_i-P_{i-1}=(c_1-1)P_1+(c_2-2)P_2+\dots + (c_{l}-2)P_{l}=P_{l+1}-P_{l}$. 
\end{itemize}
\end{remark}
\begin{lemma}\label{generalxj}
Assume that the conditions (C1) and (C2) hold. 
If $\Theta[x]=t_{i-1}+(c_1-2)t_1+ \dots +(c_{j-1}-2)t_{j-1}+(c_{j}-1)t_{j}$ 
holds for some $1 \leq j \leq l$, then we have $\Theta[{x+1}]=t_{i-1}+t_{j+1}$. 
\end{lemma}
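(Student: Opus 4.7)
The plan is a direct computation via the dichotomy in Remark \ref{e}(i): I will show that $\Theta[x]+t_1-b$ is non-negative and equals exactly $t_{i-1}+t_{j+1}$, so that the second bullet of Remark \ref{e}(i) applies and yields the claimed formula for $\Theta[x+1]$.

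First I would substitute the hypothesis into $\Theta[x]+t_1$ to obtain
\[
\Theta[x]+t_1=t_{i-1}+(c_1-1)t_1+(c_2-2)t_2+\dots+(c_{j-1}-2)t_{j-1}+(c_j-1)t_j.
\]
The key algebraic identity I need is
\[
(c_1-1)t_1+(c_2-2)t_2+\dots+(c_{j-1}-2)t_{j-1}+(c_j-1)t_j=b+t_{j+1},
\]
which is just formula \eqref{b} applied with $j+1$ in place of $i$ (after rearranging the $t_j-t_{j+1}$ term and adding $t_1$ to both sides). Combining these two displayed equations gives
\[
\Theta[x]+t_1-b=t_{i-1}+t_{j+1}.
\]

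Next I would observe that the right-hand side is non-negative: since $1\le j\le l\le i-1\le r$, both $t_{i-1}$ and $t_{j+1}$ lie among the positive terms $t_0,t_1,\dots,t_r$ of the modified Euclidean algorithm (with the understanding that $t_{r+1}=0$ is the only possibly vanishing one). Hence $\Theta[x]+t_1-b\ge 0$, so by the second bullet of Remark \ref{e}(i) we have $\theta_{x+1}-\theta_x=1$ and
\[
\Theta[x+1]=\Theta[x]+t_1-b=t_{i-1}+t_{j+1},
\]
which is exactly the conclusion.

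The argument is essentially a one-line telescoping calculation, so the main (and very mild) obstacle is only to correctly package the identity \eqref{b} in the right form and to confirm that the right indices remain in the admissible range $0\le j+1\le r+1$ guaranteed by hypotheses (C1) and (C2); no deeper input is needed.
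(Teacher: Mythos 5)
Your proposal is correct and follows essentially the same route as the paper: the paper's proof is exactly the one-line computation $\Theta[x]+t_1-b=t_{i-1}+t_{j+1}\geq 0$ obtained from the identity \eqref{b} (with $j+1$ in place of the index there), followed by the second alternative of Remark \ref{e}(i). Your added remarks on the sign of $t_{i-1}+t_{j+1}$ and on the admissible index range only make explicit what the paper leaves as a "direct calculation."
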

\begin{proof}
By a direct calculation using \eqref{b}, we have $\Theta[x]+t_1-b=t_{i-1}+t_{j+1} > 0$, and therefore $\Theta[{x+1}]=\Theta[x]+t_1-b=t_{i-1}+t_{j+1}$ by Remark \ref{e} (i). 
\end{proof}
The next lemma is the key in proving Proposition \ref{long}. 
\begin{lemma}\label{keyprop}
Assume that the conditions (C1) and (C2) hold. 
Then, the following properties are true for any $1 \leq j < l$. 
\begin{itemize}
\item [\rm(i)] Let $P_j \leq x < P_{j+1}$, and denote by $\kappa$ (resp. $\varepsilon$) the quotient (resp. the remainder) of $x$ divided by $P_j$, i.e., $x=\kappa P_j+\varepsilon$. Then, we have $\Theta[x]=\Theta[\varepsilon]+\kappa t_j$. 
In particular, we have $\Theta[x] > t_{i-1}$. 
\item [\rm(ii)] Let $M_j:=\max\{\Theta[x]\; :\; 0 \leq x < P_{j+1}\}$. 
Then, 
$M_j=\Theta[{P_{j+1}-P_j}]=t_{i-1}+b-t_j+t_{j+1}$.
\end{itemize}
\end{lemma}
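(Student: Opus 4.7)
The plan is to prove (i) and (ii) simultaneously by induction on $j$, running from $1$ up to $l-1$.

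\textbf{Base case $j=1$.} Since $P_1=1$ and $P_2=c_1$, any $x\in[P_1,P_2)$ gives $\kappa=x$ and $\varepsilon=0$, so (i) reduces to $\Theta[x]=t_{i-1}+xt_1$ for $0\leq x<c_1$. By Remark \ref{e}(i), this amounts to verifying $\Theta[x-1]+t_1<b$ throughout this range; the tightest instance, at $x=c_1-1$, simplifies via $b=c_1t_1-t_2$ to the inequality $t_{i-1}+t_2<t_1$. I would establish this by iterating the recursion $t_{k-1}=c_kt_k-t_{k+1}$ through the indices $k\leq l$ at which $c_k=2$, reducing to $t_{i-1}+t_{k_0}<t_{k_0-1}$ where $k_0\leq l$ is the smallest index with $c_{k_0}>2$. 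This last inequality then follows from $c_{k_0}\geq 3$ together with the bound $t_{i-1}\leq t_l\leq t_{k_0}$, which itself uses the arithmetic-progression structure of $t_l,t_{l+1},\dots,t_{i-1}$ forced by (C2). Assertion (ii) for $j=1$ is then immediate: $M_1=\Theta[c_1-1]=t_{i-1}+(c_1-1)t_1=t_{i-1}+b-t_1+t_2$.

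\textbf{Inductive step.} Assuming (i) and (ii) for $j-1$, the central ingredient is the congruence $t_1P_j=bQ_j+t_j$ from Lemma \ref{ni}(i), which says $\Theta[x+P_j]\equiv\Theta[x]+t_j\pmod{b}$; iterating, one obtains $\Theta[\varepsilon+\kappa P_j]=\Theta[\varepsilon]+\kappa t_j$ whenever the right-hand side remains below $b$. The heart of the proof is to verify this no-wrap-around condition throughout the admissible range $\kappa P_j+\varepsilon<P_{j+1}$. For $\kappa\leq c_j-2$ one has $\varepsilon\in[0,P_j)$, and the inequality $\Theta[\varepsilon]+\kappa t_j<b$ follows by combining the inductive value $M_{j-1}=t_{i-1}+b-t_{j-1}+t_j$ with the recursion $t_{j-1}=c_jt_j-t_{j+1}$, reducing it to a bound of the shape $t_{i-1}+t_{j+1}<t_j$ handled as in the base case. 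For the boundary $\kappa=c_j-1$ the constraint $x<P_{j+1}$ truncates $\varepsilon$ to $[0,P_j-P_{j-1})$, on which the maximum of $\Theta$ is strictly smaller than $M_{j-1}$; this slack is precisely what keeps the sum below $b$. Once (i) for $j$ is in hand, (ii) follows by locating the maximum at $x=P_{j+1}-P_j$: if $c_j=2$ one gets $M_j=M_{j-1}$, and if $c_j\geq 3$ one decomposes $P_{j+1}-P_j=(c_j-2)P_j+(P_j-P_{j-1})$ and uses (i) for $j$ together with (ii) for $j-1$ to arrive at $M_j=M_{j-1}+(c_j-2)t_j=t_{i-1}+b-t_j+t_{j+1}$.

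\textbf{Main obstacle.} The principal difficulty I anticipate is the combinatorial bookkeeping at the truncated boundary $\kappa=c_j-1$, $\varepsilon<P_j-P_{j-1}$, where one needs a refined bound for $\Theta$ on a proper sub-range of $[0,P_j)$ rather than the full block. Controlling this bound requires a further recursive descent through the lower levels $j'<j$, with repeated applications of (i) and (ii) at each stage. The hypothesis $j<l$ is indispensable here: it is precisely what guarantees, via the arithmetic-progression structure of $t_l,\dots,t_{i-1}$ imposed by (C2), that $t_{i-1}$ is small enough for the no-wrap-around inequalities of the form $t_{i-1}+t_{j+1}<t_j$ to hold at every level. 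Once $j=l$ this slack vanishes, and the clean pattern asserted by the lemma breaks down.
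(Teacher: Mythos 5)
Your plan is correct and follows essentially the same route as the paper's proof: induction on $j$, with the no-wrap-around conditions $\Theta[\varepsilon]+\kappa t_j<b$ reduced via the inductive value $M_{j-1}$ and the recursions $t_{j-1}=c_jt_j-t_{j+1}$ to inequalities of the form $t_{i-1}+t_{j+1}<t_j$ (valid precisely because $j<l$, $c_l>2$ and $c_{l+1}=\dots=c_{i-1}=2$), and the truncated boundary $\kappa=c_j-1$, $\varepsilon<P_j-P_{j-1}$ handled by a recursive descent through the lower levels using (i) and (ii) — which is exactly the paper's Step 2, where that maximum is computed to be $(c_u-3)t_u+M_{u-1}$ with $u$ the largest index below $j$ having $c_u>2$. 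The only differences are cosmetic: your single congruence $\Theta[x+P_j]\equiv\Theta[x]+t_j\ (\mathrm{mod}\ b)$ streamlines the paper's step-by-step use of Remark \ref{e} and Lemma \ref{generalxj}, while the boundary computation you flag as the main obstacle is left unexecuted, but the method you indicate is the one the paper carries out and it does close.
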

\begin{proof}
First we remark that the following holds: 
\[
t_{i-1}+b-t_j+t_{j+1}=t_{i-1}+(c_1-1)t_1+(c_2-2)t_2+ \dots+(c_j-2)t_j. 
\]

We proceed by induction on $j$. 
Suppose that $j=1$. 
Firstly, we have  
\begin{align*}
\Theta[0]+t_1-b
& =t_{i-1}+t_1-b \\
& =t_i-\{(c_1-2)t_1+ \dots + (c_l-2)t_l+(c_{l+1}-2)t_{l+1}+\dots +(c_{i-1}-2)t_{i-1}\} \\ 
& =t_i-\{(c_1-2)t_1+ \dots + (c_{l}-2)t_{l}\} \leq t_i-t_{l} <0 .
\end{align*}
Therefore, we have $\Theta[{P_1}]=\Theta[1]=t_{i-1}+t_1$ by Remark \ref{e}.  
 In the same way, we see that 
$\Theta[{x-1}]+t_1-b \leq t_i-t_{l} <0$ holds for any $P_1 < x<P_2$, and thus we get $\Theta[x]=t_{i-1}+xt_1$. 
Further, this yields that $M_1=\Theta[P_2-P_1]$. 

Next, suppose that $j >1$. 
Notice that we have $P_{j+1}=(c_j-1)P_j+(P_j-P_{j-1})$.  
In the following, we divide the proof into three steps. 

\emph{Step 1.} We show by induction on $\kappa$ that 
\[
\Theta[{\kappa P_j}]=t_{i-1}+\kappa t_j
\]  
holds for any $1 \leq \kappa \leq c_j-1$. 
Let $\kappa=1$. 
By the induction hypothesis for (i), we see that
\[
\Theta[P_j-1]=\Theta[P_{j-1}-P_{j-2}-1]+(c_{j-1}-1)t_{j-1}.
\]
Taking Remark \ref{e} into account, either 
\[
\Theta[P_{j-1}-P_{j-2}]=\Theta[P_{j-1}-P_{j-2}-1]+t_1
\]
or 
\[
\Theta[P_{j-1}-P_{j-2}]=\Theta[P_{j-1}-P_{j-2}-1]+t_1-b
\]
holds. 
On the other hand, we have $\Theta[P_{j-1}-P_{j-2}]=M_{j-2}$ by the induction hypothesis for (ii), and therefore 
\[
\Theta[P_{j-1}-P_{j-2}]-t_1=t_{i-1}+(c_1-2)t_1+\dots+(c_{j-2}-2)t_{j-2} >0.
\] 
Since $\Theta[P_{j-1}-P_{j-2}-1] <b$, it follows that $\Theta[P_{j-1}-P_{j-2}]=\Theta[P_{j-1}-P_{j-2}-1]+t_1$. 
Therefore, we have 
\[
\Theta[P_j-1]=t_{i-1}+(c_1-2)t_1+\dots+(c_{j-2}-2)t_{j-2}+(c_{j-1}-1)t_{j-1},
\]
and hence $\Theta[P_j]=t_{i-1}+t_j$ by Lemma \ref{generalxj}.  
Next, let $\kappa >1$. We first show that 
$\Theta[(\kappa-1)P_j+{\varepsilon}]=\Theta[{\varepsilon}]+(\kappa-1)t_j$ holds for any $1 \leq \varepsilon < P_j$. 
Since we have $t_{i-1}+(\kappa-1)t_j=\Theta[{(\kappa-1)P_j}]$ by the induction hypothesis for Step 1, it suffices to check that 
$\Theta[{\varepsilon}]+(\kappa-1)t_j < b$ holds concerning Remark \ref{e}.  Indeed, we have 
\begin{align*}
b-\{\Theta[{\varepsilon}]+(\kappa-1)t_j\} &\geq  b-\{M_{j-1} + (c_j-2)t_j\}\\
&=(c_{j+1}-2)t_{j+1}+ \dots + (c_{l}-2)t_{l}+(t_{l}-t_{l+1})-t_{i-1}\\
& \geq t_{l}+(t_{l}-t_{l+1})-t_{i-1}
 > 0. 
\end{align*}
Taking $\varepsilon=P_j-1$, we obtain  
$\Theta[{\kappa P_j-1}]=\Theta[{P_j-1}]+(\kappa-1)t_j$. 
Therefore, we have $\Theta[\kappa P_j-1]+t_1-b=t_{i-1}+\kappa t_j >0$,  and hence $\Theta[\kappa P_j]=t_{i-1}+\kappa t_j$.  

\emph{Step 2.} In this step, we prove that 
\[
\Theta[{(c_j-1)P_j+\varepsilon}]=\Theta[{\varepsilon}]+(c_j-1)t_j
\]  
holds for any $0 < \varepsilon < P_j-P_{j-1}$, which completes the proof of (i). 
If $c_1=\dots =c_{j-1}=2$, then we have $P_j-P_{j-1}=1$, and there is nothing to prove. Suppose otherwise. 
Then, as in Step1, it is enough to show that 
\[
\max\{\Theta[{\varepsilon}]\; :\; 0 < \varepsilon < P_j-P_{j-1}\}+(c_j-1)t_j < b
\]
holds. 
Let 
$u=\max\{j'\; :\; 1 \leq j'\leq j-1,\; c_{j'} >2\}$.  
Then we have $ P_j-P_{j-1}=P_{u+1}-P_{u}=(c_u-2)P_u+(P_u-P_{u-1})$, and 
we see that 
\begin{align*}
&\max\left\{\Theta[\varepsilon] : 0 \leq \varepsilon < P_{u+1}-P_u\right\}\\
&\qquad \quad  =\max\left\{(c_u-3)t_u+M_{u-1},\; (c_u-2)t_u+\max\left\{\Theta[\varepsilon]: 0 \leq \varepsilon<P_u-P_{u-1}\right\}\right\}.
\end{align*}
Notice that 
\begin{align*}
& \max\left\{\Theta[\varepsilon]: 0 \leq \varepsilon<P_u-P_{u-1}\right\}\\
& \; =\max\left\{(c_{u-1}-3)t_{u-1}+M_{u-2},\; (c_{u-1}-2)t_{u-1}+\max\left\{\Theta[\varepsilon] : 0 \leq \varepsilon < P_{u-1}-P_{u-2}\right\}\right\},
\end{align*}
and that 
\[
(c_u-3)t_u+M_{u-1} -\{(c_u-2)t_u+(c_{u-1}-3)t_{u-1}+M_{u-2}\}=t_{u-1}-t_u>0.
\] 
These yield that 
\begin{align*}
& \max\left\{\Theta[\varepsilon] : 0 \leq \varepsilon < P_{u+1}-P_u\right\}\\
&\qquad \; \quad = \max\left\{(c_u-3)t_u+M_{u-1},\; (c_u-2)t_u+\dots+(c_2-2)t_2+(c_1-2)t_1+t_{i-1}\right\} \\
& \qquad \; \quad =(c_u-3)t_u+M_{u-1}. 
\end{align*}  
Therefore, 
\begin{align*}
& b-\left\{\max\{\Theta[\varepsilon]: 0 \leq \varepsilon <P_j-P_{j-1}\}+(c_j-1)t_j\right\} \\
& \quad =b-\{(c_u-3)t_u+M_{u-1}+(c_j-1)t_j\} \\
&\quad =t_u+(c_{u+1}-2)t_{u+1}+\dots+(c_j-2)t_j+\dots+(c_l-2)t_l+t_l-t_{l+1}-t_{i-1}-(c_j-1)t_j\\
& \quad \geq t_u+t_l+t_l-t_{l+1}-t_j-t_{i-1} >0. 
\end{align*}

This completes the proof of (i). 

\emph{Step 3.} In this last step, we give the proof of (ii). 
First, we show that $M_j=t_{i-1}+b-t_j+t_{j+1}$.  
Note that we have 
\[
M_j=\max\{M_{j-1},\; \max\{\Theta[x]\; :\; P_j \leq x < P_{j+1}\}\}.
\] 
Set 
\[
M_A=\max\{\Theta[x]\; :\; P_j \leq x< (c_j-1)P_j\},
\] 
and 
\[
M_B=\max\{\Theta[x]\; :\; (c_j-1)P_j \leq x< P_{j+1}\}.
\]  
Then we see that 
\[
M_A=(c_j-2)t_j+M_{j-1}=t_{i-1}+b-t_j+t_{j+1}
\] 
and that 
\[
M_B= (c_j-1)t_j+\max\{\Theta[\varepsilon] : 0 \leq \varepsilon < P_j-P_{j-1}\}.
\]  
Therefore it follows that $M_j=\max\{M_{j-1},\; \max\{M_A, \; M_B\}\}=\max\{M_A,\; M_B\}$. 
If $c_1=\dots=c_{j-1}=2$, then we have $M_B=(c_j-1)t_j+t_{i-1}$, and hence 
$M_A-M_B=b-t_{j-1}>0$. Thus, we get $M_j=M_A$. 
Suppose that $c_{j'}>2$ holds for some $1 \leq j' \leq j-1$, and take $u$ as in Step 2. Then, we have $M_B=(c_j-1)t_j+(c_u-3)t_u+M_{u-1}$. In a similar manner as above we see that $M_A-M_B \geq t_u-t_j>0$, and therefore $M_j=M_A$. 
Finally, we show $M_j=\Theta[P_{j+1}-P_j]$.  Since $t_{i-1}+(c_j-2)t_j=\Theta[(c_j-2)P_j]$ and $(c_j-2)t_j+\Theta[P_j-P_{j-1}]=(c_j-2)t_j+M_{j-1}=M_j <b$, it follows that
 \[
(c_j-2)t_j+\Theta[{P_j-P_{j-1}}]=\Theta[(c_j-2)P_j+P_j-P_{j-1}]=\Theta[{P_{j+1}-P_j}].
\] 
This completes the proof of the lemma. 
\end{proof}
We can show the next lemma by following a similar way as in Lemma \ref{keyprop}.  
\begin{lemma}\label{j=varpi}
Let $P_{l} \leq x <  P_{l+1}-P_{l}=P_i-P_{i-1}$, and 
denote by $\kappa$ (resp. $\varepsilon$) the quotient (resp. the remainder) of $x$ divided by $P_l$, i.e., $x=\kappa P_l+\varepsilon$. Then, we have $\Theta[x]=\Theta[\varepsilon]+\kappa t_l$.  In particular, we have $\Theta[x]> t_{i-1}$. 
\end{lemma}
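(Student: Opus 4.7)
The plan is to mirror the nested-induction strategy of Lemma \ref{keyprop}, now adapted to the case $j = l$. The only genuine difference is that the allowable range for $x$ is shortened from $[P_l, P_{l+1})$ to $[P_l, P_i - P_{i-1}) = [P_l, P_{l+1} - P_l)$ by Remark \ref{e}(iv). Writing $P_{l+1} - P_l = (c_l - 1)P_l - P_{l-1}$ and decomposing $x = \kappa P_l + \varepsilon$ with $0 \leq \varepsilon < P_l$, this forces $1 \leq \kappa \leq c_l - 2$, and in the boundary case $\kappa = c_l - 2$ the remainder is further constrained to $\varepsilon < P_l - P_{l-1}$.

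The first step is to establish $\Theta[\kappa P_l] = t_{i-1} + \kappa t_l$ for all admissible $\kappa \geq 1$. The base case $\kappa = 1$ reads off the value of $\Theta[P_l - 1] = t_{i-1} + (c_1 - 2)t_1 + \dots + (c_{l-2} - 2)t_{l-2} + (c_{l-1} - 1)t_{l-1}$, which already appeared in the inductive step of Lemma \ref{keyprop}(i) with $j = l-1$, and then applies Lemma \ref{generalxj}. The inductive step on $\kappa$ proceeds exactly as in Step 1 of Lemma \ref{keyprop}: one first shows $\Theta[(\kappa-1)P_l + \varepsilon] = \Theta[\varepsilon] + (\kappa - 1)t_l$ for $0 \leq \varepsilon < P_l$ via Remark \ref{e}(i), then evaluates at $\varepsilon = P_l - 1$ to obtain $\Theta[\kappa P_l - 1]$ in the form required to invoke Lemma \ref{generalxj} one more time.

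The second step extends the formula from multiples of $P_l$ to arbitrary $x$ in the allowed range by iterating Remark \ref{e}(i) starting from $\Theta[\kappa P_l]$. The main obstacle is verifying that the "no-carry" regime $\theta_x - \theta_{x-1} = 0$ persists at every intermediate step, i.e.\ that $\Theta[\varepsilon] + \kappa t_l < b$ throughout the range. For $1 \leq \kappa \leq c_l - 3$ with $\varepsilon < P_l$, the bound $\Theta[\varepsilon] \leq M_{l-1} = t_{i-1} + b - t_{l-1} + t_l$ from Lemma \ref{keyprop}(ii) together with the recursion $t_{l-1} = c_l t_l - t_{l+1}$ reduces the inequality to $t_{i-1} + t_{l+1} < (c_l - 1 - \kappa)t_l$, which holds because the hypothesis $c_{l+1} = \dots = c_{i-1} = 2$ forces the strict chain $t_l > t_{l+1} > \dots > t_{i-1} > 0$. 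For the boundary case $\kappa = c_l - 2$, one needs the sharper bound on $\max\{\Theta[\varepsilon] : 0 \leq \varepsilon < P_l - P_{l-1}\}$, obtained exactly as in Step 2 of Lemma \ref{keyprop} by descending to the largest index $u' < l$ with $c_{u'} > 2$ and writing $P_l - P_{l-1} = P_{u'+1} - P_{u'}$. Finally, the closing assertion $\Theta[x] > t_{i-1}$ follows immediately from $\kappa \geq 1$, $\Theta[\varepsilon] \geq 0$, and $t_l > 0$.
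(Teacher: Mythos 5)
Your proposal is correct in substance and follows the route the paper itself indicates (the paper's ``proof'' of this lemma is literally the one sentence ``follow the same way as in Lemma~\ref{keyprop}''). You have correctly identified that the shortened range $[P_l, P_{l+1}-P_l)$ forces $1 \leq \kappa \leq c_l - 2$, with the boundary case $\kappa = c_l - 2$ further restricting $\varepsilon < P_l - P_{l-1}$, and you have correctly reduced the ``no-carry'' condition $\Theta[\varepsilon] + \kappa t_l < b$ to $t_{i-1} + t_{l+1} < (c_l - 1 - \kappa)t_l$ via $M_{l-1}$ and the recursion $t_{l-1} = c_l t_l - t_{l+1}$, splitting off $\kappa = c_l - 2$ to use the sharper maximum $(c_u - 3)t_u + M_{u-1}$ from Step~2 of keyprop. (A quick check: $b - \{(c_u - 3)t_u + M_{u-1} + (c_l - 2)t_l\} = t_u + t_l - t_{l+1} - t_{i-1} > 0$, so that bound does close.)

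One small oversight is in your final sentence. You deduce $\Theta[x] > t_{i-1}$ from ``$\kappa \geq 1$, $\Theta[\varepsilon] \geq 0$, and $t_l > 0$,'' but this only yields $\Theta[x] \geq t_l$, which is strictly greater than $t_{i-1}$ only when $l < i - 1$. In the case $l = i-1$ (i.e., $c_{i-1} > 2$, which condition (C2) allows) one has $t_l = t_{i-1}$, and the bound $\Theta[\varepsilon] \geq 0$ is not enough. The correct observation is $\Theta[\varepsilon] \geq t_{i-1}$, which holds for all $0 \leq \varepsilon < P_l$ (equality at $\varepsilon = 0$, strict inequality for $\varepsilon > 0$ by Lemma~\ref{keyprop}(i) applied with the appropriate $j < l$), so that $\Theta[x] = \Theta[\varepsilon] + \kappa t_l \geq t_{i-1} + t_l > t_{i-1}$. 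With this adjustment the argument is complete and matches the paper's intent.
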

As an immediate consequence of Lemmas \ref{keyprop} and  \ref{j=varpi}, we get the following. 
\begin{corollary}\label{elementary}
With the assumptions (C1) and (C2), we have $\Theta[x] \geq  t_{i-1}$ for any $0 \leq x < P_i-P_{i-1}$. 
Moreover, we have $\Theta[x]=t_{i-1}$ if and only if $x=0$. 
\end{corollary}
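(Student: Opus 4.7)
The plan is to deduce Corollary \ref{elementary} by a straightforward partition of the range $0 \leq x < P_i - P_{i-1}$ and then applying the preceding lemmas on each piece. Since all three of Remark \ref{e} (iii), Lemma \ref{keyprop}, and Lemma \ref{j=varpi} are already in hand, the work will be essentially bookkeeping; the only subtlety is ensuring that the case-division is exhaustive and that the boundary case $l=1$ is treated correctly.

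First I would split off the trivial case $x=0$, where Remark \ref{e} (iii) gives $\Theta[0]=t_{i-1}$ immediately, handling the equality clause of the statement. Next, observe that by Remark \ref{e} (iv) we have $P_i-P_{i-1}=P_{l+1}-P_l$, so the remaining range $1 \leq x < P_i-P_{i-1}$ decomposes as the disjoint union of the slabs $[P_j, P_{j+1})$ for $1 \leq j \leq l-1$ together with the top slab $[P_l, P_{l+1}-P_l)$ (the first collection being empty when $l=1$, in which case only the top slab remains). For each slab of the first type I would invoke Lemma \ref{keyprop} (i), which furnishes the decomposition $\Theta[x]=\Theta[\varepsilon]+\kappa t_j$ with $\kappa \geq 1$ and in particular $\Theta[x]>t_{i-1}$. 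On the top slab I would instead invoke Lemma \ref{j=varpi}, which gives the analogous conclusion $\Theta[x]>t_{i-1}$.

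Taken together this gives both parts of the corollary: the inequality $\Theta[x]\geq t_{i-1}$ holds throughout, with equality occurring only when $x=0$. I do not foresee a genuine obstacle in this argument; the only point requiring minor care is the degenerate case $l=1$ (where the intermediate slabs disappear), but this is handled automatically by the fact that Lemma \ref{j=varpi} covers the range $P_1 \leq x < P_2-P_1$ directly and the $x=0$ case is independent of $l$.
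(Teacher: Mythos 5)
Your proof is correct and follows essentially the same route as the paper, which states the corollary as an immediate consequence of Lemmas \ref{keyprop} and \ref{j=varpi}. You have simply made explicit the partition $\{0\} \cup \bigcup_{j=1}^{l-1}[P_j, P_{j+1}) \cup [P_l, P_{l+1}-P_l)$ and the appeal to Remark \ref{e}\,(iii) and (iv) that the paper leaves implicit.
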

\begin{corollary}\label{cor1}
Let $1 \leq i \leq r+1$. Then we have $\Rem[tx, b] \leq b+t_i-t_{i-1}$ for any $0 < x< P_i$. 
\end{corollary}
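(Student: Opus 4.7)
My plan is to prove the corollary by first establishing a dual \emph{lower} bound on $\Rem[ty, b]$ and then converting it into the desired upper bound via the pairing $x \leftrightarrow P_i - x$. The key identity is $tP_i = bQ_i + t_i$, which follows from Lemma \ref{ni}(i) together with $n_i = kt_i$; hence for any $0 < x < P_i$ one has $t(P_i - x) \equiv t_i - tx \pmod{b}$. Setting $r_x := \Rem[tx, b]$, this identity gives $\Rem[t(P_i - x), b] = b + t_i - r_x$ when $r_x > t_i$, and $\Rem[t(P_i - x), b] = t_i - r_x$ otherwise. In the second case $r_x \leq t_i \leq b + t_i - t_{i-1}$ follows from $t_{i-1} \leq b$; in the first case, the required upper bound on $r_x$ becomes equivalent to a lower bound on $\Rem[t(P_i - x), b]$, with $P_i - x \in (0, P_i)$.

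It therefore suffices to prove the following \emph{Key Lemma}: for $1 \leq i \leq r+1$, we have $\Rem[ty, b] \geq t_{i-1}$ for every $0 < y < P_i$ (with the convention $t_0 := b$, consistent with $n_0 = kb$ from Lemma \ref{ni}(iii)). I will prove the Key Lemma by induction on $i$. The base case $i = 1$ is vacuous since $P_1 = 1$. For the inductive step from $i$ to $i+1$, I split $0 < y < P_{i+1}$ into three parts: for $0 < y < P_i$ the induction hypothesis gives $\Rem[ty, b] \geq t_{i-1} > t_i$, using that the sequence $t_j$ is strictly decreasing by Lemma \ref{ni}(iii); for $y = P_i$, $\Rem[tP_i, b] = t_i$ directly.

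The main case is $P_i < y < P_{i+1}$, which I handle by invoking Corollary \ref{elementary} with its parameter shifted up by one. Writing $y = P_i + x$ with $0 < x < P_{i+1} - P_i$: if $c_1 = \cdots = c_i = 2$ then $P_j - P_{j-1} = 1$ for all $j \leq i+1$, so this range is empty; otherwise, setting $l := \max\{j : 1 \leq j \leq i,\; c_j > 2\}$, the choice of $l$ forces $c_{l+1} = \cdots = c_i = 2$, so conditions (C1) and (C2) hold when the parameter $i$ of Lemma \ref{keyprop} is replaced by $i+1$. Corollary \ref{elementary}, applied with this shifted parameter, then yields $\Theta[x] = \Rem[t(P_i + x), b] \geq t_i$ for every $0 \leq x < P_{i+1} - P_i$, which completes the induction. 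The main technical nuisance, I expect, will be keeping the index shift straight throughout: the ``$i$'' of the current induction corresponds to ``$i-1$'' in the index set of Corollary \ref{elementary}, and the two sub-cases (all $c_j$ equal $2$, or existence of some $c_l > 2$) must be verified to exhaust all possibilities.
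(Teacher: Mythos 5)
Your proof is correct, and it takes a genuinely different route from the paper's. The paper proves the bound directly: it locates $x$ in an interval $[P_{j-1}, P_j)$, and in the nontrivial case it invokes the explicit formula $\max\{\Theta[y] : 0 \leq y < P_j - P_{j-1}\} = (c_l-3)t_l + M_{l-1}$ extracted from the proof of Lemma~\ref{keyprop}, then checks $b + t_i - t_{i-1} - \Rem[tx,b] \geq (c_{i-1}-2)t_{i-1} - (c_l-3)t_l > 0$. You instead dualize via the involution $x \mapsto P_i - x$, using $tP_i \equiv t_i \pmod{b}$ to convert the upper bound into the lower bound $\Rem[ty, b] \geq t_{i-1}$ for $0 < y < P_i$; this lower bound is precisely the second assertion of the paper's Corollary~\ref{cor of elementary} (stated there immediately \emph{after} the present corollary), which you reprove by induction on $i$ using Corollary~\ref{elementary} with the index shifted from $i$ to $i+1$. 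Your approach buys a cleaner argument: it avoids the explicit maximum computation entirely and reveals that the two corollaries are equivalent under the pairing $x \leftrightarrow P_i - x$, unifying them. The paper's approach keeps the two corollaries independent but requires the sharper quantitative information about $\max\Theta$. One small point worth spelling out in a final writeup: for $0 < x < P_i$ one indeed has $\Rem[tx,b] \neq t_i$ (since $\gcd(t,b)=1$ and $0 < P_i - x < b$), so the case $r_x \leq t_i$ in your split is really $r_x < t_i$, though your argument handles either reading correctly.
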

\begin{proof}
We have $P_{j-1} \leq x < P_j$ for some $1 < j<i$. 
If $c_1=\dots =c_{j-1}=2$, then $P_{j'}=j'$, $Q_{j'}=j'-1$, and $t_{j'}=t{j'}-({j'}-1)b$ hold for any $1\leq j' \leq j$.  It follows that $x=j-1$ and $\Rem[t(j-1),b]=t_{j-1} \leq t=b+t_i-t_{i-1}$. 
Next, suppose that we have $c_l >2$ and $c_{l+1}=\dots =c_{j-1}=2$ for some $1 \leq l \leq j-1$. 
Then we have $\Rem[tx,b]=\Theta[x-P_{j-1}]$. 
By the proof of Lemma \ref{keyprop}, we see that the following holds: 
\[
\max\{\Theta[y]\; :\; 0 \leq y < P_j-P_{j-1}=P_{l+1}-P_l\}=
(c_l-3)t_l+M_{l-1}.
\]
Therefore, 
\begin{align*}
b+t_i-t_{i-1}-\Rem[tx,b] & \geq b+t_i-t_{i-1}-\{(c_l-3)t_l+M_{l-1}\}\\
& \geq (c_{i-1}-2)t_{i-1}-(c_l-3)t_l >0.
\end{align*}
\end{proof}
\begin{corollary}\label{cor of elementary}
Let $1 \leq i \leq r+1$. Then, we have $\Rem[tP_i, b]=t_i$. 
Moreover, we have $\Rem[tx, b] \geq t_{i-1}$ for any $0 < x< P_i$. 
\end{corollary}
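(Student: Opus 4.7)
My plan is to reduce both claims to the identity
\[
tP_i - bQ_i = t_i \qquad (0 \leq i \leq r+1),
\]
which comes from Lemma \ref{ni}(i) after dividing by $k$ and using the already-noted equality $n_j/k = t_j$ arising from the shared recursion and initial values.

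For the first assertion, the identity reads $tP_i = bQ_i + t_i$ with $0 \leq t_i < b$: non-negativity is Lemma \ref{ni}(iii), and the strict inequality $t_i \leq t_1 < t_0 = b$ holds for every $i \geq 1$. Hence $\Rem[tP_i, b] = t_i$ directly.

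For the second assertion, I would localize $x$ between consecutive terms of $P_0 < P_1 < \cdots < P_i$: there is a unique $1 \leq j \leq i$ with $P_{j-1} < x \leq P_j$. If $x = P_j$, then necessarily $j \leq i-1$ (since $x < P_i$), so by the first assertion and the monotonicity in Lemma \ref{ni}(iii) we get $\Rem[tx, b] = t_j \geq t_{i-1}$. If instead $P_{j-1} < x < P_j$, I would write $x = P_{j-1} + y$ with $0 < y < P_j - P_{j-1}$ and invoke Corollary \ref{elementary} with $j$ playing the role of $i$ to get $\Rem[tx, b] = \Theta[y] \geq t_{j-1} \geq t_{i-1}$. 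The boundary situations $j = 1$, or $c_1 = \cdots = c_{j-1} = 2$, collapse the $y$-range to empty (since then $P_j - P_{j-1} = 1$); in every remaining case, taking $l$ to be the largest index $\leq j-1$ with $c_l > 2$ supplies precisely the hypothesis (C2) needed to apply Corollary \ref{elementary} at the index $j$.

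The first assertion is pure bookkeeping from Lemma \ref{ni}, so all the substance lies in the second. The only real obstacle I foresee is organizing the case split so that, whenever the interior range $P_{j-1} < x < P_j$ is nonempty, the relevant instance of condition (C2) is automatic; once this is verified, Corollary \ref{elementary} does all of the heavy lifting without any further combinatorial work.
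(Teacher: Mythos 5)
Your proof is correct and follows essentially the same route as the paper: the first assertion comes from the identity $tP_i=bQ_i+t_i$ with $0\leq t_i<b$ supplied by Lemma \ref{ni} (the paper simply refers back to this as "already seen"), and the second comes from localizing $x$ between consecutive $P_j$'s and applying Corollary \ref{elementary} at the index $j$, with the degenerate case $c_1=\dots=c_{j-1}=2$ (where the interval has no interior points) handled by the exact remainder formula and the monotonicity $t_{j-1}\geq t_{i-1}$. The only differences are cosmetic — your half-open convention $(P_{j-1},P_j]$ versus the paper's $[P_{j-1},P_j)$, and your more explicit check that conditions (C1) and (C2) hold at $j$ — so no further comment is needed.
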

\begin{proof}
We have seen that $\Rem[tP_i, b]=t_i$ holds for any $i$. 
Let $i >1$. 
As in the proof of Corollary \ref{cor1}, we have $P_{j-1} \leq x <P_j$ for some $1 < j < i$. 
If $c_1= \dots =c_{j-1}=2$, then we have $P_j=j$, and hence $\Rem[tx,b]=\Rem[tP_{j-1}, b]=t_{j-1} \geq t_{i-1}$.  
Otherwise, we have 
$\Rem[tx, b]=\Theta[x-P_{j-1}] \geq t_{j-1}$ by Corollary \ref{elementary}.  
\end{proof}
\begin{proof}[Proof of Proposition \ref{long}]
Let $\lambda=(n_i,c,\w) \in \Lambda_{(n_i,0)}$, and write $f_{\lambda}=X_0^{d_0}X_1^{d_1}X_3^{d_3}$. 
First, suppose that  $i=0$. 
By Example \ref{positive}, we have 
$R_{(n_0,0)}=R^0_{n_0} \oplus \left(\bigoplus_{c >0} R^c_{n_0}\right)$ and $R^0_{n_0}=\langle X_0^{n_0},\; X_1X_3 \rangle$. 
Therefore, it suffices to show that $f_{\lambda}$ is contained in the ideal $(X_0^{n_0},\; X_1X_3)$. 
Notice that $R^c_{n_0}$ decomposes as 
\[
R^c_{n_0}=R_{(n_0,c,\w_{(n_0,c)})} \oplus \left(\bigoplus_{\w > \w_{(n_0,c)}} R_{(n_0,c,\w)}\right).
\]  
If $\w > \w_{(n_0,c)}$, then we have $f_{\lambda} \in (X_0^{n_0})$ by Lemma \ref{min}, since $d_0=n_0+\w$. 
Suppose that $\w=\w_{(n_0,c)}$.  
Then we have $q-p+pd_1-qd_3=d_0=n_0+\w < q-p$, and thus $d_1 >0$ and $d_3>0$. 
Therefore,  $f_{\lambda} \in (X_1X_3)$. 

Next, suppose that $1 \leq i \leq r+1$. By Example \ref{positive}, we have 
\[
R_{(n_i, 0)}=R^0_{n_i} \oplus \left(\bigoplus_{
\substack{c=mx \\ 0 <x< P_i}} R^c_{n_i}\right) 
\oplus R^{mP_i}_{n_i} \oplus \left(\bigoplus_{
\substack{c=mx \\
P_i < x}} R^c_{n_i} \right), \quad R^0_{n_i}=R_{(n_i,0,\w_{(n_i,0)})}=\langle X_0^{n_i} \rangle.
\] 
We may assume that neither $c=0$ nor $c=mP_i$. 
If $\w > \w_{(n_i, c)}$, then we have  $f_{\lambda} \in (X_0^{n_i})$ as above. 
Thus, concerning that  $R_{(n_i,mP_i,\w_{(n_i,mP_i)})}=\langle X_1^{e_i}X_3^{l_i} \rangle$, we are left to show that if $\w=\w_{(n_i,c)}$ then $f_{\lambda}$ is contained in the ideal $(X_0^{n_i}, X_1^{e_i}X_3^{l_i})$. 
We first consider the case when $0 < x < P_i$ and show that $\w_{(n_i, c)} \geq 0$, which implies that $f_{\lambda} \in (X_0^{n_i})$. 
By Corollary \ref{preremainder}, we have 
$\w_{(n_i, c)} \geq 0$ if and only if $\Rem[pc, q-p]+n_i < q-p$. 
Note that we have 
\begin{align}
\Rem[pc, q-p]+n_i < q-p 
& \Leftrightarrow \Rem[pc+n_i, q-p] \geq n_i  \notag \\
&\Leftrightarrow \Rem\left[\frac{pc+n_i}{k}, b\right] \geq t_i. \label{equiv1}
\end{align}
We also have 
\begin{equation}
\Rem\left[\frac{pc+n_i}{k}, b \right]=\Rem\left[t(P_i-x), b\right], \label{equiv2}
\end{equation}
since we see by using equations \eqref{mp} and \eqref{t} that 
\begin{equation*}
\frac{pc+n_i}{k}=x \left\{(\alpha+1)b-t\right\}+(tP_i-bQ_i)
 \equiv t(P_i-x)\quad (mod\; b).
\end{equation*}
Therefore it follows from Corollary \ref{cor of elementary} that $\w_{(n_i,c)} \geq 0$. 
Next, we consider the case when $x > P_i$ and 
show that $f_{\lambda} \in (X_1^{e_i}X_{3}^{l_i})$. 
Set $\w'=-n_i+q(c-mP_i)$, and $\w''=-n_i+p(c-mP_i)$. 
First, suppose that $d_1 < e_i$. 
Then we have 
$qc-\w_{(n_i, c)} < (q-p)e_i=n_i+qmP_i$, 
and hence 
$\w_{(n_i, c)} > \w'$. 
It follows that $0 \leq pc-\w_{(n_i,c)} < pc-\w'=n_i+qmP_i-c(q-p)$. Therefore, all of the following are positive integers: 
\[
n_i+\w'=q(c-mP_i), \quad
\frac{qc-\w'}{q-p}=\frac{n_i+qmP_i}{q-p}, \quad 
\frac{pc-\w'}{q-p}=\frac{n_i+qmP_i}{q-p}-c. 
\]
This implies that $(n_i, c, \w')\in \tilde{\mu}^{-1}(n_i, c)$, which contradicts to the minimality of $\w_{(n_i, c)}$. 
Next, suppose that $d_{3} < l_i$.  
Then we have 
$pc -\w_{(n_i, c)} < (q-p)l_i=n_i+pmP_i$, and hence $\w_{(n_i, c)} > \w''$.  
In a similar manner, we see that this implies $(n_i, c, \w'') \in \tilde{\mu}^{-1}(n_i, c)$, which is a contradiction. Therefore, $d_1 \geq e_i$ and $d_3 \geq l_i$. 
\end{proof}
\begin{corollary}\label{cor2}
We have $\Rem[pmx+n_i, q-p]=n_i+\Rem[pmx, q-p]$ for any $ 0 < x< P_i$.  
\end{corollary}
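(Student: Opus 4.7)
The plan is to read off the corollary from facts already established in the proof of Proposition \ref{long} together with the elementary observation that if two nonnegative integers sum to something less than $q-p$, then the remainder of that sum mod $q-p$ is just the sum itself.

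More precisely, set $c:=mx$ with $0<x<P_i$. Then $(n_i,c)\in\Lambda'$ and the argument used in the case ``$0<x<P_i$'' of the proof of Proposition \ref{long} has already shown that $\w_{(n_i,c)}\geq 0$. The key chain of equivalences there was
\begin{align*}
\w_{(n_i,c)}\geq 0
&\Longleftrightarrow \Rem[pc,q-p]+n_i<q-p\\
&\Longleftrightarrow \Rem\bigl[\tfrac{pc+n_i}{k},\,b\bigr]\geq t_i\\
&\Longleftrightarrow \Rem[t(P_i-x),\,b]\geq t_i,
\end{align*}
the first equivalence coming from Corollary \ref{preremainder}, the second from a direct reduction, and the third from the congruence $\tfrac{pc+n_i}{k}\equiv t(P_i-x)\pmod{b}$. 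The last inequality holds by Corollary \ref{cor of elementary} applied to the argument $P_i-x$, which lies strictly between $0$ and $P_i$.

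From $\Rem[pmx,q-p]+n_i<q-p$ the corollary is then immediate: both $\Rem[pmx,q-p]$ and $n_i$ are nonnegative, their sum is nonnegative, strictly less than $q-p$, and congruent to $pmx+n_i$ modulo $q-p$. Hence this sum is by definition $\Rem[pmx+n_i,q-p]$, which is exactly the asserted equality $\Rem[pmx+n_i,q-p]=n_i+\Rem[pmx,q-p]$.

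There is no real obstacle here; the corollary is essentially the number-theoretic reformulation of the positivity statement $\w_{(n_i,mx)}\geq 0$ that was proved along the way in Proposition \ref{long}. The only care required is the range condition $0<P_i-x<P_i$ so that Corollary \ref{cor of elementary} applies to give the lower bound $\Rem[t(P_i-x),b]\geq t_{i-1}\geq t_i$, but this is automatic from the hypothesis $0<x<P_i$.
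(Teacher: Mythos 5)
Your proof is correct and takes essentially the same route as the paper: both read the statement off the computation in the proof of Proposition \ref{long} (the equivalences \eqref{equiv1}--\eqref{equiv2} together with Corollary \ref{cor of elementary}) and then finish with elementary arithmetic modulo $q-p$. The only difference is cosmetic: the paper records the lower bound $\Rem[pmx+n_i,q-p]\geq n_{i-1}$ and excludes the wrap-around case by a short contradiction, whereas you record the strict inequality $n_i+\Rem[pmx,q-p]<q-p$ and read off the identity directly.
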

\begin{proof}
By the proof of Proposition \ref{long}, we see that $\Rem[pmx+n_i, q-p] \geq n_{i-1}$. 
On the other hand, we have 
\[
\Rem[pmx+n_i, q-p]=
\begin{cases}
n_i+\Rem[pmx, q-p] \quad (\mbox{if}\; n_i+\Rem[pmx, q-p] < q-p)\\
n_i+\Rem[pmx, q-p]-q+p \quad (\mbox{otherwise}). 
\end{cases}
\]
Therefore we deduce that $\Rem[pmx+n_i, q-p]=n_i+\Rem[pmx, q-p]$, since otherwise we have 
$n_{i-1} \leq n_i+\Rem[pmx, q-p]-q+p < n_i < n_{i-1}$.  
\end{proof}
\section{Second step towards the proof of Theorem \ref{main theorem}}\label{s-second}
In this section, we construct an equivariant morphism 
\[
\xymatrix{
\Psi : \mathcal H \ar[r] & \E \times \P(V^{\vee})
}
\]
that satisfies $\Psi(\mathcal H^{main})=\Phi(\widetilde{\E'}) \cong \widetilde{\E'}$ (Proposition \ref{image}). 
First, we see by Theorem \ref{generator2} that we can construct an equivariant morphism 
\[
\xymatrix{
\eta_{n_i,0} : \mathcal H \ar[r] & \Gr(1, F_{n_i,0}^{\vee}) \cong \P(F_{n_i,0}^{\vee})
}
\]
for each $0 \leq i \leq r$. 
Set  
\[
\xymatrix{
\Delta:=\gamma \times \prod_{0 \leq i \leq r} \eta_{n_i, 0} : \mathcal H \ar[r] &  E_{l,m} \times \prod_{0 \leq i \leq r} \P(F_{n_i,0}^{\vee}),
}
\]
and let
\[
\xymatrix{
\iota : \prod_{0 \leq i \leq r} \P(F_{n_i,0}^{\vee}) \ar@{^{(}-_>}[r] & \P(V'^{\vee})
}
\]
be the Segre embedding, where 
$V':=F_{n_0,0} \otimes F_{n_1,0} \otimes \dots \otimes F_{n_r,0}$. 
We see that $V'$ coincides with 
\[
\bigoplus A(e_0) \otimes B(l_0) \otimes A(e_{i_1}) \otimes B(l_{i_1}) \otimes \dots A(e_{i_s}) \otimes B(l_{i_s}) \otimes C(n_{j_1}) \otimes \dots \otimes C(n_{j_u}),
 \]
where the sum runs over $\{i_1,\; \dots ,\; i_s,\; j_1,\; \dots ,\; j_u\}
=\{1,\; \dots ,\; r\}$ such that $i_1 < \dots < i_s$ and $j_1 < \dots < j_u$. 
\begin{remark}\label{gordan}
As in Remark \ref{explicit}, we denote by $V(n)$ the irreducible $SL(2)$-representation of 
highest weight $n$. 
For any partition $n=\mu_1+ \dots + \mu_s$, 
 the tensor representation $V(\mu_1) \otimes \dots \otimes V(\mu_s)$ contains an irreducible representation $V(\mu_1,\; \dots,\; \mu_s)$ isomorphic to $V(n)$ by the Clebsch--Gordan theorem. 
For each $0 \leq i \leq n$, set 
\[
\phi_i:=\frac{1}{\begin{pmatrix}
n \\
i
\end{pmatrix}}
\sum_{\substack{
i_1 + \dots + i_s=i \\
0 \leq i_1 \leq \mu_1\\
\dots \\
0 \leq i_s \leq \mu_s
}} 
\begin{pmatrix}
\mu_1\\
i_1
\end{pmatrix}
 \dots 
\begin{pmatrix}
\mu_s \\
i_s
\end{pmatrix}
X^{\mu_1-i_1}Y^{i_1} \otimes \dots \otimes X^{\mu_s-i_s}Y^{i_s}
\in V(\mu_1) \otimes \dots \otimes V(\mu_s).
\]
Then, $\{\phi_0,\; \dots ,\; \phi_n\}$ forms a basis of $V(\mu_1,\; \dots,\; \mu_s)$. 
On the other hand, we can take $\{X^{n-i}Y^i\; :\; 0 \leq i \leq n\}$ as a basis of $V(n)$, and the linear map
\[
V(n) \to V(\mu_1,\; \dots,\; \mu_s),\quad X^{n-i}Y^i \mapsto \phi_i
\]
is an $SL(2)$-equivariant isomorphism.
\end{remark}
Let us consider the submodule 
\[
\widetilde{V}:=\bigoplus_{1 \leq i \leq r}
A(e_0,\; e_{1},\; \dots,\; e_{i}) \otimes B(l_0,\; l_{1},\;  \dots,\; l_{i}) \otimes C(n_{i+1}) \otimes \dots \otimes C(n_{r})
\]
of $V'$, where $A(e_0, e_1, \dots, e_i) \cong V(e_0, e_1,  \dots, e_i)$ (resp. $B(l_0, l_{1}, \dots, l_{i}) \cong V(l_0, l_{1}, \dots, l_{i})$) stands for the irreducible representation of highest weight $e_0+e_1+ \dots+ e_i$ (resp. $l_0+l_{1}+ \dots+ l_{i}$ ) contained in $A(e_0) \otimes A(e_1) \otimes \dots \otimes A(e_i)$ (resp. $B(l_0) \otimes B(l_{1}) \otimes \dots \otimes B(l_{i})$) in the sence of Remark \ref{gordan}. 
Since $V \subset \Gamma(\widetilde{\E'}, \mathcal O(\delta))$ coincides with
\[
\bigoplus_{1 \leq i \leq r} A(e_0+e_{1}+ \dots+e_{i}) \otimes B(l_0+l_{1}+ \dots + l_{i}) \otimes C(-(n_0+n_{1} + \dots +n_{i})), 
\] 
we see that $V \cong \widetilde{V}$, where 
 the isomorphism 
\[
C(-(n_0+n_{1}+ \dots +n_{i})) \cong C(n_{i+1} + \dots + n_{r}) \cong C(n_{i+1}) \otimes \dots \otimes C(n_{r})
\] 
is given by multiplying $X_0^{n_0+n_1 + \dots + n_r}$. 
\begin{example}\label{formerex}
Let $l=p/q=1/4$, and $m=2$ as in Example \ref{fex}. 
Then, we have $k=1$, $a=2$, $b=3$, $\alpha=0$, $\beta=2$, and $t=1$. 
Therefore, the Hirzebruch--Jung continued fraction of $b/t$ is $b/t=c_1=3$, and we have 
$P_0=0$, $Q_0=-1$, $P_1=1$, $Q_1=0$, 
$P_2=c_1=3$, and $Q_2=1$. 
Thus, we get $\rho_0=\mbox{\bf u}_2$, $\rho_1=-\mbox{\bf u}_1+\mbox{\bf u}_2$, and $\rho_2=-3\mbox{\bf u}_1+2\mbox{\bf u}_2$, and the  
maximal cones of the colored fan of $\widetilde{E_{\frac{1}{4},2}'}$ are the following: 
\[
\mathcal C_1=\mathbb Q_{\geq 0} \rho_0 + \mathbb Q_{\geq 0} \rho_1, \quad \mathcal C_2=\mathbb Q_{\geq 0} \rho_1 + \mathbb Q_{\geq 0} \rho_2.
\] 
Also, we have $(e_0, l_0, n_0)=(1,1,3)$, $(e_1, l_1, n_1)=(3,1,1)$, and $(e_2, l_2, n_2)=(8,2,0)$. Thus we get $f_0=ZW$, $f_1=Z^4W^2$, and $f_2=Z^{12}W^4$ by definition, and therefore 
\begin{align*}
V&=\langle (SL(2) \times \C^*) \cdot ZW\rangle \oplus \langle (SL(2) \times \C^*) \cdot Z^4W^2\rangle\\
&\cong \langle X, Z\rangle  \otimes \langle Y,W\rangle \oplus \langle X^4, X^3Z, X^2Z^2, XZ^3, Z^4\rangle  \otimes \langle Y^2, YW, W^2\rangle \\
& \cong V(1) \otimes V(1) \oplus V(4) \otimes V(2). 
\end{align*}
We have $V'=F_{n_0, 0} \otimes F_{n_1, 0}$, where 
\[
F_{n_0,0}=A(1) \otimes B(1)=\langle X_1, X_2\rangle  \otimes \langle X_3, X_4\rangle
\]
and 
\[
F_{n_1, 0}=A(3) \otimes B(1) \oplus C(1)=\langle X_1^3, X_1^2X_2, X_1X_2^2, X_2^3\rangle  \otimes \langle X_3, X_4\rangle \oplus \langle X_0\rangle.
\]
Furthermore, we have 
$\widetilde{V}=A(1,3) \otimes B(1,1) \oplus 
A(1) \otimes B(1)$, 
where $A(1,3)$ is a subrepresentation of $A(1) \otimes A(3)$ spanned by the following vectors:
\begin{align*}
&X_1 \otimes X_1^3, \quad \frac{1}{4}(X_2 \otimes X_1^3+3X_1 \otimes X_1^2X_2) ,\quad \frac{1}{2}(X_2 \otimes X_1^2X_2+X_1 \otimes X_1X_2^2),\\
& \frac{1}{4}(3X_2 \otimes X_1X_2^2+X_1\otimes X_2^3),\quad 
X_2 \otimes X_2^3.
\end{align*}
Also, $B(1,1)$ is a subrepresentation of $B(1) \otimes B(1)$ spanned by the following vectors:
\[
X_3 \otimes X_3, \quad \frac{1}{2}(X_3 \otimes X_4+X_4 \otimes X_3),\quad X_4 \otimes X_4.
\]
\end{example}
Now, set  
\[
\xymatrix{
{\Psi}' :=(\id_{\E} \times \iota) \circ \Delta:  \mathcal H \ar[r] & \E \times \P(V'^{\vee}),
}
\]
and consider the projection
\[
\xymatrix{
\pr : \E \times \P(V'^{\vee}) \ar@{.>}[r] &\E \times \P(\widetilde{V}^{\vee}).
}
\]
\begin{proposition}\label{morphism}
The restriction
$\pr |_{{\Psi}'(\mathcal H)}$ of the rational map $\pr$ to the image of ${\Psi}'$ is a morphism. 
\end{proposition}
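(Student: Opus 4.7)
The aim is to show that $\Psi'(\mathcal H)$ is disjoint from the indeterminacy locus $\E \times \P(\widetilde V^{\perp})$ of $\pr$, where $\widetilde V^{\perp}\subset V'^{\vee}$ is the annihilator of $\widetilde V$. For a closed point $[Z]\in\mathcal H$ with associated $\G$-stable ideal $I_Z\subset S$, the projective factor of $\Psi'([Z])$ is the Segre image $[v_0\otimes v_1\otimes\cdots\otimes v_r]\in\P(V'^{\vee})$, where $v_k\in F_{n_k,0}^{\vee}$ has kernel $F_{n_k,0}\cap I_Z$. Hence the task reduces to showing that $v_0\otimes\cdots\otimes v_r$ has nonzero restriction to $\widetilde V$.

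I will translate the two relevant vanishing conditions into ideal-membership. Namely, $v_k(X_0^{n_k})\neq 0$ iff $X_0^{n_k}\notin I_Z$, and $v_k|_{A(e_k)\otimes B(l_k)}\neq 0$ iff $A(e_k)\otimes B(l_k)\not\subset I_Z$. Introduce
\[
T := \{k\in\{1,\ldots,r\} : X_0^{n_k}\in I_Z\}, \qquad U := \{k\in\{0,1,\ldots,r\} : A(e_k)\otimes B(l_k)\subset I_Z\}.
\]
Observe $0\notin U$: by Theorem \ref{generator2}, $F_{n_0,0}=A(e_0)\otimes B(l_0)$ generates $S_{(n_0,0)}$ over $S^{\G}$ and must therefore surject onto the one-dimensional space $\C[Z]_{(n_0,0)}$.

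The crux is the combinatorial inequality $\max T<\min U$, with the convention $\max\emptyset:=-\infty$ and $\min\emptyset:=+\infty$. The proof is by contradiction: assume $k'\in U$ and $k\in T$ satisfy $k\geq k'$. If $k=k'$, then $F_{n_k,0}\subset I_Z$, forcing $\C[Z]_{(n_k,0)}=0$ and contradicting $h(n_k,0)=1$. If $k>k'$, Lemma \ref{ni} gives $0<n_k<n_{k'}$, so
\[
X_0^{n_{k'}}=X_0^{n_k}\cdot X_0^{n_{k'}-n_k}\in I_Z;
\]
combined with $A(e_{k'})\otimes B(l_{k'})\subset I_Z$, this yields $F_{n_{k'},0}\subset I_Z$ and the same contradiction.

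Having established $\max T<\min U$, pick $i\in\{0,\ldots,r\}$ with $\max T\leq i<\min U$. Then $A(e_k)\otimes B(l_k)\not\subset I_Z$ for $0\leq k\leq i$ and $X_0^{n_k}\notin I_Z$ for $i<k\leq r$. Consider the summand $\Xi_i:=A(e_0,\ldots,e_i)\otimes B(l_0,\ldots,l_i)\otimes C(n_{i+1})\otimes\cdots\otimes C(n_r)$ of $\widetilde V$: the $C(n_k)$-factors with $k>i$ contribute $v_k(X_0^{n_k})\neq 0$. For the Clebsch--Gordan factor, using the polynomial description of Remark \ref{gordan}, the multiplication isomorphisms $A(e_0,\ldots,e_i)\cong A(e_0+\cdots+e_i)$ and $B(l_0,\ldots,l_i)\cong B(l_0+\cdots+l_i)$ translate each $v_k|_{A(e_k)\otimes B(l_k)}$ into a nonzero biform in dual variables, so the restriction of $\bigotimes_{k=0}^{i}v_k|_{A(e_k)\otimes B(l_k)}$ to this subspace is the product of these biforms, which is nonzero as polynomials form an integral domain. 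Therefore $(v_0\otimes\cdots\otimes v_r)|_{\Xi_i}\neq 0$, so $\Psi'([Z])$ avoids the indeterminacy locus. The main obstacle is the combinatorial claim $\max T<\min U$, whose argument essentially exploits Lemma \ref{ni}'s strictly decreasing positive sequence $(n_k)$ to propagate $X_0^{n_k}\in I_Z$ from larger to smaller indices via polynomial multiplication; the concluding Clebsch--Gordan step is then routine.
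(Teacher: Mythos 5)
Your proof is correct, and it takes a genuinely different route from the paper's. The paper argues by contradiction: it normalizes $X_2\in I$ using $SL(2)$-equivariance, pairs the annihilation hypothesis against the highest-weight vector of the $i=r$ summand to extract the minimal $j$ with $X_1^{e_j}X_3^{l_j}\in I$, then runs a case analysis on $s_3X_3+s_4X_4\in I$ (only case (a) is written out) together with an explicit multiplication argument to force $A(e_j)\otimes B(l_j)\subset I$, and finishes with a second highest-weight pairing yielding $X_0^{n_j}\in I$ and hence $F_{n_j,0}\subset I$. You avoid the gauge-fixing, the case split, and the multiplication step by working intrinsically with the sets $T$ and $U$: the strict monotonicity $n_0>n_1>\cdots>n_{r+1}$ of Lemma~\ref{ni} gives $\max T<\min U$ in one line, and you then exhibit a single summand $\Xi_i$ on which the annihilation fails. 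The price of this cleaner structure is the Clebsch--Gordan step: you should spell out why the restriction of $\bigotimes_{k=0}^{i}v_k|_{A(e_k)\otimes B(l_k)}$ to the Cartan component $A(e_0,\dots,e_i)\otimes B(l_0,\dots,l_i)$ is, under self-duality of irreducible $SL(2)$-modules and uniqueness (up to scalar) of the equivariant projection, identified with the product of the corresponding biforms. This is true and standard, but it is the one non-routine step and deserves an explicit sentence. One further subtlety worth flagging: when $T=\emptyset$ and $\min U=1$ (this occurs, e.g., for $[J^1_0]$ and for $[I_0]$), you are forced to take $i=0$, so you are implicitly using the summand $\Xi_0=A(e_0)\otimes B(l_0)\otimes C(n_1)\otimes\cdots\otimes C(n_r)$ of $\widetilde V$. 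The displayed definition of $\widetilde V$ ranges only over $1\le i\le r$, but the paper's own later pairings (with $g\cdot f_0$ in the proof of Proposition~\ref{inj}) make clear that the $i=0$ summand is intended to be included, so your reading is the correct one; it just merits a remark so that a reader checking the literal definition does not stumble.
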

\begin{proof}
Let
\[
[({X_2X_4})^{\vee} : ({X_1X_4})^{\vee}: (X_2X_3)^{\vee}: (X_1X_3)^{\vee}]
\]
and  
\[
[({X_0^{n_i}})^{\vee} : ({X_2^{e_i}X_4^{l_i}})^{\vee} : \dots : (X_2^{e_i-e}X_1^eX_4^{l_i-l}X_3^l)^{\vee}: \dots : ({X_1^{e_i}X_3^{l_i}})^{\vee}] \quad (1 \leq i \leq r)
\]
be the coordinate of $\P(F_{n_0,0}^{\vee})$ and $\P(F_{n_i,0}^{\vee})$, respectively. 
Suppose that there is a point $[I] \in \mathcal H$
 such that $\pr$ is not defined at $\Psi'([I])$. 
Let 
\[
\eta_{n_0,0}([I])=[t^{(0)}_{0,0}: t^{(0)}_{e_0,0}: t^{(0)}_{0,l_0}: t^{(0)}_{e_0,l_0}]
\]
and 
\[
\eta_{n_i,0}([I])=[u^{(i)}: t^{(i)}_{0,0}: \dots : t^{(i)}_{e,l}: \dots: t^{(i)}_{e_i, l_i}] \quad  (1 \leq i \leq r).
\]
By \eqref{s1s2}, we have $s_1X_1+s_2X_2 \in I$ for some $(s_1,s_2) \neq (0,0)$.  
Since ${\Psi}'$ is $SL(2)$-equivariant, we may assume that $X_2 \in I$. 
The subrepresentation 
$A(e_0, e_1, \dots, e_r) \otimes B(l_0, l_1, \dots, l_r) \subset \widetilde{V}$ contains 
$X_1^{e_0} \otimes X_1^{e_1} \otimes \dots \otimes X_1^{e_r} \otimes X_3^{l_0} \otimes X_3^{l_1} \otimes \dots \otimes X_3^{l_r}$, 
and therefore we have  
$t^{(0)}_{e_0,l_0} t^{(1)}_{e_1, l_1} \cdots t^{(r)}_{e_r, l_r}=0$ by the assumption on the ideal $I$. 
Let $j=\min\{i \; :\; t^{(i)}_{e_i,l_i}=0, \; 0 \leq i \leq r\}$. 
Then we have 
$X_1^{e_j}X_3^{l_j} \in I$ 
by the construction of $\eta_{n_j,0}$, and hence 
$X_1^{e_i}X_3^{l_i} \in I$  for every $i \geq j$. 
Next we have 
$s_3X_3+s_4X_4 \in I$ for some $(s_3, s_4) \neq (0,0)$ by \eqref{s3s4}. 
Namely, one of the following holds: 
(a) $s_3 \neq 0,\; s_4 \neq 0$; 
(b) $s_3=0,\;  s_4 \neq 0$; 
(c) $s_3 \neq 0,\; s_4=0$. 
Suppose that we are in the case (a). Then, by multiplying $X_1^{e_j}X_3^{l_j-1}$ to $s_3X_3+s_4X_4$, we get $X_1^{e_j}X_3^{l_j-1}X_4 \in I$. By continuing in this way, we finally obtain 
\begin{equation*}
X_1^{e_j-e}X_2^eX_3^{l_j-l}X_4^l \in I  \quad (0 \leq \forall e \leq e_j,\quad 0 \leq  \forall l \leq l_j)\label{cont1}
\end{equation*}
concerning $X_2 \in I$. 
Lastly, we pay attention to the vector
\[
X_1^{e_0} \otimes X_1^{e_1} \otimes \dots \otimes X_1^{e_{j-1}} \otimes X_3^{l_0} \otimes X_3^{l_1} \otimes \dots \otimes X_3^{l_{j-1}} \otimes X_0^{n_j} \otimes \dots \otimes X_0^{n_r}
\]
contained in the following subrepresentation of $\widetilde{V}$: 
\[ 
A(e_0, e_1, \dots, e_{j-1}) \otimes B(l_0,  l_1, \dots,  l_{j-1}) \otimes C(n_j) \otimes \dots \otimes C(n_r).
\]
Likewise, we have 
$t^{(0)}_{e_0,l_0} t^{(1)}_{e_1, l_1} \cdots t^{(j-1)}_{e_{j-1}, l_{j-1}} u^{(j)} \cdots u^{(r)}=0$ by the assumption on $I$. 
This implies that $u^{(j)} \cdots u^{(r)}=0$ by the minimality of $j$, and 
therefore we have $X_0^{n_j} \in I$. 
Thus, we get $F_{n_j,0} \subset I$. Then it follows from Theorem \ref{generator2} that $\dim (\C[H_{q-p}]/I)_{(n_j, 0)}=0$, which 
 contradicts to $[I] \in \mathcal H$. 
\end{proof}
Combining the above discussion, we obtain the following equivariant morphism: 
\[
\xymatrix{\Psi : 
\mathcal H \ar[r]^{{\Psi}'\quad \; \;} & \E \times \P(V'^{\vee}) \ar@{.>}[r]^{\pr} &  \E \times \P(\widetilde{V}^{\vee}) \ar[r]^{\sim\; \;} & \E \times \P(V^{\vee}).
}
\]
\begin{proposition}\label{image}
We have 
$\Psi(\mathcal H^{main}) =\Phi(\widetilde{\E'})$.
\end{proposition}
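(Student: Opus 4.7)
The plan is to prove both inclusions $\Psi(\mathcal H^{main}) \subseteq \Phi(\widetilde{\E'})$ and $\Phi(\widetilde{\E'}) \subseteq \Psi(\mathcal H^{main})$. Two structural facts will drive the argument: $\Phi$ is a closed immersion (Proposition \ref{phicl}), and $\Psi|_{\mathcal H^{main}}$ is projective, since its first factor $\gamma|_{\mathcal H^{main}}$ is projective and the second factor lands in a projective space.

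For the first inclusion, the plan is to identify $\Psi$ and $\Phi$ on the dense open subset corresponding to $\mathfrak U$. Via the isomorphisms $\gamma^{-1}(\mathfrak U) \xrightarrow{\sim} \mathfrak U$ (Theorem \ref{idealU}) and $\mathfrak U \hookrightarrow \widetilde{\E'}$, both $\Psi|_{\gamma^{-1}(\mathfrak U)}$ and $\Phi|_{\mathfrak U}$ define $SL(2) \times \C^*$-equivariant morphisms $\mathfrak U \to \E \times \P(V^{\vee})$ commuting with the projection to $\E$. Since $\mathfrak U \cong SL(2)/C_m$ is a single $SL(2)$-orbit, hence a single $SL(2) \times \C^*$-orbit, equivariance forces agreement once it is verified at a single point. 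I would check this at $[I_1] \in \gamma^{-1}(\mathfrak U)$: modulo $I_1$, the only basis vectors of $F_{n_i,0}$ surviving in the one-dimensional quotient $(S/I_1)_{(n_i,0)}$ are $X_1^{e_i}X_4^{l_i}$ (and also $X_0^{n_i}$ for $1 \leq i \leq r$), and they are identified with one another using $X_0^{q-p}=X_1X_4$, $X_0^{mp}X_1^m=1$, and Lemma \ref{ni}(i). The resulting image of $\Psi([I_1])$ in $\P(V^{\vee})$, obtained after the Segre embedding and the projection $\pr$, should match the image of the base point of $\mathfrak U \subset \widetilde{\E'}$ under $\Phi$, which is read from the values of $g \cdot f_i$ at $\pi(x)$ in the coordinates $X, Y, Z, W$ (Remark \ref{monomial}) and transported to $V$ via the Clebsch--Gordan identification $\widetilde{V} \cong V$ of Remark \ref{gordan} together with multiplication by $X_0^{n_0+\cdots+n_r}$. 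Once this equality is verified, taking closures and using that $\Phi(\widetilde{\E'})$ is closed in $\E \times \P(V^{\vee})$ gives $\Psi(\mathcal H^{main}) \subseteq \Phi(\widetilde{\E'})$.

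For the reverse inclusion, projectivity of $\Psi|_{\mathcal H^{main}}$ forces $\Psi(\mathcal H^{main})$ to be closed in $\E \times \P(V^{\vee})$. This closed set contains $\Psi(\gamma^{-1}(\mathfrak U)) = \Phi(\mathfrak U)$ by the previous step, and $\Phi(\mathfrak U)$ is dense in $\Phi(\widetilde{\E'})$ because $\mathfrak U \subset \widetilde{\E'}$ is dense and $\Phi$ is a closed immersion; hence $\Phi(\widetilde{\E'}) \subseteq \Psi(\mathcal H^{main})$. The main obstacle is the explicit base-point computation in the first step: carefully tracking the projective coordinates of $\Psi([I_1])$ through $\prod_{0 \leq i \leq r} \P(F_{n_i,0}^{\vee})$, the Segre embedding into $\P(V'^{\vee})$, and the projection to $\P(\widetilde{V}^{\vee}) \cong \P(V^{\vee})$, and matching them summand by summand against the evaluation of $V$ at the distinguished point of $\mathfrak U \subset \widetilde{\E'}$. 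This is largely bookkeeping, but the matching of highest-weight vectors in the Clebsch--Gordan decomposition is where the definitions of $e_i$, $l_i$, $n_i$ need to line up precisely with the coordinates defining $\eta_{n_i,0}$.
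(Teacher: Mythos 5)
Your proposal is correct and follows essentially the same route as the paper: the paper's proof consists precisely of computing $\Phi(y)$ at the distinguished point of $\mathfrak U$ via Remark \ref{monomial}, computing $\Psi([I_1])$ from the definition of $I_1$ and the construction of the $\eta_{n_i,0}$ (obtaining $\eta_{n_0,0}([I_1])=\langle (X_1X_4)^{\vee}\rangle$ and $\eta_{n_i,0}([I_1])=\langle (X_0^{n_i})^{\vee}+(X_1^{e_i}X_4^{l_i})^{\vee}\rangle$, exactly the identification you describe via $X_0^{q-p}=X_1X_4$, $X_0^{mp}X_1^m=1$ and Lemma \ref{ni}), checking the two points coincide, and concluding by equivariance and closures. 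The only difference is that the paper leaves the properness/closedness bookkeeping for the two inclusions implicit, whereas you spell it out.
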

\begin{proof}
Let $y \in \widetilde{\E'}$ be the fiber of 
 $\pi(x) \in \mathfrak U \subset \E$ under the canonical birational morphism $\widetilde{\E'} \to \E$, where $x=(1,1,0,0,1) \in H_{q-p}$.  
Then, concerning Remark \ref{monomial}, we have $\Phi(y)=(\pi(x), v)$, where $v$ is a point in $\P(V^{\vee})$ whose coordinates are all $0$ except for the ones corresponding to the bases
\[
(X^{e_0+e_1+ \dots+ e_i}W^{l_0+l_1+ \dots +l_i})^{\vee} \quad (1 \leq i \leq r).
\]
On the other hand, it follows from the definition of $I_1$ and the construction of $\eta_{n_i,0}$ that  
\[
\eta_{n_0,0}([I_1])=\langle (X_1X_4)^{\vee}\rangle, \qquad \eta_{n_i,0}([I_1])=\langle {(X_0^{n_i})}^{\vee}+(X_1^{e_i}X_4^{l_i})^{\vee}\rangle \quad (1 \leq i \leq r).
\]
Therefore, we get $\Psi([I_1])=\Phi(y)$, and hence the proposition. 
\end{proof}
Summarizing, we get the following equivariant commutative diagram:
\[
\xymatrix@C=36pt@R=-4pt{
\mathcal H \ar[rrrr]^{\; \; \; \Psi\qquad} & &&  &\E \times \P(V^{\vee}) \\
\rotatebox{90}{$\subset$} &&  & &\rotatebox{90}{$\subset$} \\
\mathcal H^{main} \ar[rrdddddddddddddddddddd]_{\gamma|_{\mathcal H^{main}}} \ar[rrddddd]_{\; \; \; \;  \; \; \; \; \psi |_{\mathcal H^{main}}} \ar[rrrr]^{\; \; \; \;\;\Psi|_{\mathcal H^{main}}} &  & & & \Phi(\widetilde{\E'}) \cong \widetilde{\E'}\ar[llddddd]\ar[lldddddddddddddddddddd] \\
& &  && \\
& & & &\\
& & & &\\
& & & &\\
& & \E' \ar[ddddddddddddddd]& &\\
& & & &\\
& & & &\\
& && &\\
&&  &&\\
& && &\\
& && &\\
& && &\\
& &&&\\
& &&&\\
&&&&\\
& &&&\\
&&&&\\
& &&&\\
&&&&\\
& & \E & &
}
\]

\section{Calculation of ideals}\label{s-idealo}
For each $1 \leq i \leq r$, we consider the ideals 
\[
J_1^i:=(X_0^{n_{i-1}},\; X_2,\;  X_4,\; X_0^{n_i}-X_1^{e_i}X_3^{l_i})+K
\]
and 
\[
J_0^i:=(X_0^{n_{i-1}},\; X_2,\; X_4,\; X_1^{e_i}X_3^{l_i})+K
\]
of $A=\C[X_0, X_1, X_2, X_3, X_4]$, 
where $K$ is the ideal generated by elements of the form:
\[
X_0^{pu_1-qu_2}X_1^{u_1}X_3^{u_2}, \quad (u_1, u_2) \in M^+_{l,m} \setminus \{(0,0)\}. 
\]
Also, we define 
\[
J_1^{r+1}:=(X_0^{n_r},\; X_2,\; X_4,\; X_0^{n_{r+1}}-X_1^{e_{r+1}}X_3^{l_{r+1}})=(X_0^k,\; X_2,\; X_4, \;1-X_1^{aq}X_3^{ap}),
\]
and 
\[
J_0^{r+1}:=(X_0^{n_r}, \; X_2,\; X_4,\; X_1^{e_{r+1}}X_3^{l_{r+1}})=(X_0^k,\; X_2,\; X_4,\; X_1^{aq}X_3^{ap}).
\]
We will see in \S \ref{final} that every ideal of a closed point in $\mathcal H^{main}$ can be described as an $SL(2)$-translate of 
$I_1$, $I_0$, $J^i_1$, $J^i_0$, $J^{r+1}_1$, or  $J^{r+1}_0$.
\begin{remark}\label{another}
Let us define $F_j=f_{(0,mj,\w_{(0,mj)})}$ for each $1 \leq j \leq b-1$. Then, 
$J^i_1$ and $J^i_0$ coincide with 
\[
(X_0^{n_{i-1}},\; X_2, \;X_4, \;X_0^{n_i}-X_1^{e_i}X_3^{l_i}, \;F_1,\; \dots,\; F_{b-1})
\]
and 
\[
(X_0^{n_{i-1}},\; X_2, \;X_4, \;X_1^{e_i}X_3^{l_i}, \;F_1, \;\dots, \;F_{b-1}),
\]
respectively.   
\end{remark}
\begin{example}\label{example1}
Let $l=p/q=1/4$, and $m=2$ as in Examples \ref{fex} and  \ref{formerex}.  
Then we have $F_1=X_0^2X_1^2$ and $F_2=X_0X_1^5X_3$, and 
the ideals in consideration are described as follows:
\begin{align*}
& I_1=(X_0^3-X_1X_4,\; X_2,  \;1-X_0^2X_1^2);\\
& I_0=(X_0^3-X_1X_4,\; X_2,  \;X_0^2X_1^2); \\ 
& J^1_1=(X_0^3,\; X_2,\; X_4, \;X_0-X_1^3X_3, \;X_0^2X_1^2, \;X_0X_1^5X_3);\\
& J^1_0=(X_0^3, \;X_2, \;X_4, \;X_1^3X_3, \;X_0^2X_1^2, \;X_0X_1^5X_3); \\
& J^2_1=(X_0, \;X_2, \;X_4, \;1-X_1^8X_3^2); \\ 
& J^2_0=(X_0,\; X_2, \;X_4,\; X_1^8X_3^2). 
\end{align*}
\end{example}
Set $\tilde{K}:=K/(X_2, X_4)$. 
For each $1 \leq i \leq r+1$, we define 
\[
\tilde{J}^i_0:=J^i_0/(X_2, X_4)=(X_0^{n_{i-1}}, \;X_1^{e_i}X_3^{l_i})+\tilde{K} \subset R
\]
and 
\[
\tilde{J}^i_1:=J^i_1/(X_2, X_4)=(X_0^{n_{i-1}}, \;X_0^{n_i}-X_1^{e_i}X_3^{l_i})+\tilde{K} \subset R.
\]  
\begin{theorem}\label{idealO2}
Let $1 \leq i \leq r+1$. Then,  $\dim (A/J^i_0)_{(n,d)}=\dim (R/\tilde{J}^i_0)_{(n,d)} \leq h(n,d)$ holds for any weight 
$(n,d) \in \mathbb Z \times \mathbb Z/m \mathbb Z$. 
\end{theorem}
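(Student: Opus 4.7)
\emph{Plan of proof.} The equality $\dim(A/J^i_0)_{(n,d)} = \dim(R/\tilde J^i_0)_{(n,d)}$ is immediate: since $X_2$ and $X_4$ are $\G$-weight vectors lying in $J^i_0$, the quotient map $A \twoheadrightarrow R$ descends to a $\G$-graded isomorphism $A/J^i_0 \cong R/\tilde J^i_0$, so it suffices to prove $\dim(R/\tilde J^i_0)_{(n,d)} \leq 1$ for every weight $(n,d)$. Since every generator of $\tilde J^i_0$ is a monomial of $R$, a $\C$-basis of $R/\tilde J^i_0$ is given by the monomials of $R$ not divisible by any generator of $\tilde J^i_0$; I show that at most one such monomial has weight $(n,d)$.

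Using the parameterization $\lambda = (n,c,\omega) \mapsto f_\lambda$ from Lemma~\ref{1dim}(i), a first reduction disposes of all $f_{(n,c,\omega)}$ with $\omega > \omega_{(n,c)}$: by Lemma~\ref{1dim}(iii) one has $\omega \geq \omega_{(n,c)} + (q-p)$, and since $n + \omega_{(n,c)} \geq 0$ by Lemma~\ref{1dim}(i), the $X_0$-exponent $n + \omega$ satisfies $n + \omega \geq q-p \geq n_{i-1}$, placing $f_{(n,c,\omega)} \in (X_0^{n_{i-1}}) \subset \tilde J^i_0$. Only the ``minimal'' monomials $f_{\lambda(c)} = X_0^{a(c)} X_1^{b(c)} X_3^{d'(c)}$ with $\lambda(c) := (n, c, \omega_{(n,c)})$ and $c \equiv d \pmod m$ may therefore contribute, and for each such $c$ the possible divisors in $\tilde J^i_0$ are: (I) $X_0^{n_{i-1}}$ when $a(c) \geq n_{i-1}$; (II) $X_1^{e_i} X_3^{l_i}$ when $b(c) \geq e_i$ and $d'(c) \geq l_i$; or (III) a monomial generator $X_0^{pu_1 - qu_2} X_1^{u_1} X_3^{u_2}$ of $\tilde K$ coming from $(u_1,u_2) \in M^+_{l,m} \setminus \{(0,0)\}$. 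For $i = r+1$, $\tilde K$ is absent from $\tilde J^{r+1}_0$ but $X_1^{e_{r+1}} X_3^{l_{r+1}} = X_1^{aq} X_3^{ap}$ is itself a would-be generator of $\tilde K$, so the same analysis applies.

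The combinatorial heart of the proof is then to show that, as $c$ runs through $\{c_{(n,d)} + jm : j \geq 0\}$, all but at most one value activates (I), (II), or (III). The mechanism mirrors Proposition~\ref{long}: $a(c)$ decreases and $(b(c), d'(c))$ grows with $c$, so (I) holds for small $c$ and (II) for large $c$; in the intermediate range, an invariant monomial from $M^+_{l,m}$ chosen via the Hirzebruch--Jung recursion~\eqref{piqi} and the modified Euclidean algorithm relations~\eqref{relation} divides $f_{\lambda(c)}$ except at a single ``anchor'' value that may or may not exist for a given $(n,d)$. The principal obstacle is to implement (III) uniformly in $(n,d)$: whereas Proposition~\ref{long} only needed the anchors $c = 0$ and $c = mP_i$ at the single weight $(n_i, 0)$, the general case demands a more flexible bookkeeping via Corollaries~\ref{cor1}, \ref{cor of elementary} and~\ref{cor2} to guarantee exactly one gap per weight, yielding $\dim(R/\tilde J^i_0)_{(n,d)} \leq 1 = h(n,d)$.
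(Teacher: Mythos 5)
Your setup coincides with the paper's: the reduction to $R=\C[X_0,X_1,X_3]$ via $X_2,X_4\in J^i_0$, the observation that $\tilde J^i_0$ is a monomial ideal so only monomials matter, and the elimination of all $f_{(n,c,\w)}$ with $\w>\w_{(n,c)}$ (your argument for this matches Lemma \ref{q-p}). But from that point on the proposal only names the remaining task instead of carrying it out. You write that ``the combinatorial heart of the proof is then to show that, as $c$ runs through $\{c_{(n,d)}+jm\}$, all but at most one value activates (I), (II), or (III)'' and that ``the general case demands a more flexible bookkeeping \dots to guarantee exactly one gap per weight'' --- this is exactly the statement to be proved, and no argument is given for it. The paper's proof fills this gap with two concrete steps that are absent here: (a) a reduction of an arbitrary weight $(n,d)$ to a weight $(n',0)$ with $0\leq n'<q-p$ by factoring every minimal monomial as $f_{\lambda}=f_{\lambda'}f_{\lambda''}$ with $f_{\lambda''}$ a fixed monomial depending only on $(n,d)$ (Step 2 of the paper's proof, using Lemma \ref{min} and Corollary \ref{remainder1}); and (b) for $d=0$ and $n_{i-1}\leq n<q-p$, a greedy decomposition $n_{j_1}\leq n<n_{j_1-1}$, $n_{j_2}\leq n-n_{j_1}<n_{j_2-1},\dots$ with induction on the length $u_n$, which pins down the unique surviving value $c=m(P_{j_1}+\dots+P_{j_{u_n}})$ and kills every other minimal monomial via Lemma \ref{contain} and Lemma \ref{cor of bigger}. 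Without some substitute for (a) and (b), the inequality $\dim(R/\tilde J^i_0)_{(n,d)}\leq 1$ is not established.

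Two further points. First, the heuristic you offer in place of the missing argument is not accurate: the $X_0$-exponent $n+\w_{(n,c)}$ is not monotone decreasing in $c$ (by Lemma \ref{remainder} (iii)--(iv), $\w_{(0,c+m)}$ can exceed $\w_{(0,c)}$), so ``(I) holds for small $c$'' is false as a mechanism; in the paper, for $n\geq n_{i-1}$ only $c=0$ is handled by $X_0^{n_{i-1}}$, and all other non-surviving values are killed through $\tilde K$ or $X_1^{e_i}X_3^{l_i}$. Second, your treatment of $i=r+1$ is backwards: the issue is not that $X_1^{aq}X_3^{ap}$ is a generator of $\tilde K$, but that $J^{r+1}_0$ is defined without $K$, so one must check $\tilde K\subset(X_0^{k},X_1^{aq}X_3^{ap})$ (which does hold, since $pu_1-qu_2$ is a nonnegative multiple of $k$ for $(u_1,u_2)\in M^+_{l,m}$ and vanishes only on multiples of $(aq,ap)$) before arguments that place monomials in $\tilde K$ can be used for $\tilde J^{r+1}_0$.
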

\begin{theorem}\label{idealO}
Let $1 \leq i \leq r+1$. Then,  $\dim (A/J^i_1)_{(n,d)}=\dim (R/\tilde{J}^i_1)_{(n,d)} \leq h(n,d)$ holds for any 
weight $(n,d) \in \mathbb Z \times \mathbb Z/m \mathbb Z$. 
\end{theorem}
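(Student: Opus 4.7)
The plan is first to observe that the equality $\dim(A/J^i_1)_{(n,d)} = \dim(R/\tilde{J}^i_1)_{(n,d)}$ is immediate: since $(X_2, X_4) \subset J^i_1$, quotienting by these two elements identifies $A$ with $R = \C[X_0, X_1, X_3]$ and $J^i_1$ with $\tilde{J}^i_1$, yielding a $\G$-equivariant (hence weight-preserving) isomorphism $A/J^i_1 \cong R/\tilde{J}^i_1$. The substance of the theorem is therefore the bound $\dim (R/\tilde{J}^i_1)_{(n,d)} \leq 1$.

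To prove this I would pass to the finer grading $R = \bigoplus_{\lambda \in \Lambda} R_\lambda$ of Lemma \ref{1dim}, in which each component $R_\lambda = \C f_\lambda$ is one-dimensional, and decompose $R_{(n,d)} = \bigoplus_{\lambda \in \Lambda_{(n,d)}} \C f_\lambda$. Three types of reduction modulo $\tilde{J}^i_1$ are then available: (a) any $f_\lambda = X_0^{n+\omega}X_1^{(qc-\omega)/(q-p)}X_3^{(pc-\omega)/(q-p)}$ with $n + \omega \geq n_{i-1}$ is divisible by $X_0^{n_{i-1}}$ and hence vanishes; (b) for $i \leq r$, any $f_\lambda$ divisible by a non-constant invariant monomial $X_0^{pu_1-qu_2} X_1^{u_1} X_3^{u_2}$ with $(u_1, u_2) \in M^+_{l,m} \setminus \{(0,0)\}$ lies in $\tilde{K}$; (c) the congruence $X_0^{n_i} \equiv X_1^{e_i} X_3^{l_i} \pmod{\tilde{J}^i_1}$ permits trading an $X_0^{n_i}$-factor for an $X_1^{e_i} X_3^{l_i}$-factor inside any product. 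The goal is to exhibit, for each $(n,d)$, a distinguished $\lambda^{\star} \in \Lambda_{(n,d)}$ whose residue spans $R_{(n,d)}$ modulo $\tilde{J}^i_1$, with every other $\lambda$ either annihilated by (a) or (b), or reduced to a scalar multiple of $f_{\lambda^{\star}}$ by iterated applications of (c).

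The natural candidate for $\lambda^{\star}$ is suggested by the two-cluster description of $R_{(n_i, 0)}$ in Proposition \ref{long}: one should take $\omega = \omega_{(n,c)}$ so that the $X_0$-exponent is minimal (avoiding (a)), $c = c_{(n,d)}$, and with $X_1$- and $X_3$-exponents strictly less than $e_i$ and $l_i$ respectively (so that (c) does not further reduce $f_{\lambda^{\star}}$). The main obstacle, and where the machinery of \S\ref{s-gen} becomes indispensable, is the combinatorial bookkeeping needed to verify that every other $\lambda \in \Lambda_{(n,d)}$ indeed collapses to $\lambda^{\star}$. The key inputs are the equivalence \eqref{equiv1}--\eqref{equiv2} identifying $\omega_{(n,c)}$ with $\Rem[t(P_i - x), b]$, the estimates of Corollaries \ref{cor1}, \ref{cor of elementary}, and \ref{cor2} on how these remainders grow with $c$, and the recursion $n_j = c_{j-1} n_{j-1} - n_{j-2}$ of Lemma \ref{ni}(ii); together these should force the higher-$c$ components (those with $c \geq c_{(n,d)} + mP_i$) to reduce either via (b) or through a cascade of (c)-trades back into $f_{\lambda^{\star}}$, in close parallel to the case analysis used to prove Proposition \ref{long}. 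The endpoint $i = r+1$ admits a simpler direct treatment: since $n_{r+1} = 0$, in $R/\tilde{J}^{r+1}_1$ one has $X_0^k = 0$ and $X_1^{aq} X_3^{ap} = 1$, which reduces the statement to a straightforward weight-by-weight verification.
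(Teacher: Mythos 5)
Your preliminary reductions are fine and coincide with the paper's set-up: the equality $\dim(A/J^i_1)_{(n,d)}=\dim(R/\tilde{J}^i_1)_{(n,d)}$ is indeed immediate from $\tilde{J}^i_1=J^i_1/(X_2,X_4)$, and discarding every component $R_{(n,c,\omega)}$ with $\omega>\omega_{(n,c)}$ (then $n+\omega\geq q-p\geq n_{i-1}$, so $f_\lambda\in(X_0^{n_{i-1}})$) is exactly how the paper begins. The gap lies in the core, and part of your plan is not merely unfinished but wrong as stated: you take the distinguished component $\lambda^{\star}$ at $c=c_{(n,d)}$ with $\omega=\omega_{(n,c)}$ and assert this ``avoids (a)'', but minimality of $\omega$ only gives $n+\omega_{(n,c)}<q-p$, not $n+\omega_{(n,c)}<n_{i-1}$. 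Concretely, for $i\geq 2$, $d=0$ and $n_{i-1}\leq n<q-p$ one has $c_{(n,0)}=0$ and $f_{\lambda^{\star}}=X_0^{n}\in(X_0^{n_{i-1}})\subset\tilde{J}^i_1$ (see Example \ref{positive}), so your proposed spanning element is itself zero in the quotient; since the quotient in that weight is in fact one-dimensional (Corollary \ref{realhilb}), the claim that every other component collapses onto a multiple of $f_{\lambda^{\star}}$ is false and cannot be established. The class that actually survives is represented at $c=m(P_{j_1}+\dots+P_{j_u})$, where $n=n_{j_1}+\dots+n_{j_u}+(\mbox{remainder}<n_{i-1})$ is the greedy decomposition used in Case 2 of the proof of Theorem \ref{idealO2} (cf.\ Remark \ref{notin}); so the identification of the surviving component has to be corrected weight by weight before any collapsing argument can run.

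Beyond that, the combinatorial verification---which is the substance of the theorem---is deferred to an analogy with Proposition \ref{long}, but that proposition treats only the special weights $(n_i,0)$ and does not contain the mechanisms the paper actually uses: the factorization $f_\lambda=f_{\lambda'}f_{\lambda_{mP}}$ with induction on the length $u_n$ of the greedy decomposition (Case 3 of Step 1 and Step 2 of the proofs of Theorems \ref{idealO2} and \ref{idealO}), Lemma \ref{cor of bigger} to dispose of the range $mP_{j}<c<mP_i$, and above all the $\tilde{J}^i_1$-specific items (v) and (vi) of Lemma \ref{contain}, which decide when $f_\lambda\in\tilde{J}^i_1$ for $x>P_i$ and, when $P_i$ divides $x$, identify the components at $c$ and $c-mP_i$ via the trade $X_0^{n_i}\equiv X_1^{e_i}X_3^{l_i}$. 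None of this is supplied or sketched in checkable form, so the proof is not there yet. Your observation for the endpoint $i=r+1$ (work in $R/(X_0^{k},\,1-X_1^{aq}X_3^{ap})$ and verify weight spaces directly) is correct and does give a shortcut for that single case, but it does not help with $1\leq i\leq r$.
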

The proof of Theorems \ref{idealO2} and \ref{idealO} will be given after preparing a few lemmas. 
\begin{lemma}\label{invring}
Let  $\lambda=(n,c,\w) \in \Lambda_{(n,0)}$. 
If $n\geq 0$, $\w \geq 0$ and $c > 0$, then $f_{\lambda} \in \tilde{K}$.
\end{lemma}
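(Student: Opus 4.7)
The plan is to produce a single generator of $\tilde{K}$ that divides $f_{\lambda}$, with the quotient being a polynomial in $R$. By Lemma \ref{1dim}(i), write
\[
f_{\lambda}=X_0^{n+\w}X_1^{d_1}X_3^{d_3},\qquad d_1=\frac{qc-\w}{q-p},\quad d_3=\frac{pc-\w}{q-p},
\]
where $d_1,d_3\in\mathbb Z_{\geq 0}$. The natural candidate is the pair $(u_1,u_2)=(d_1,d_3)$ itself; the goal is then to verify that $(d_1,d_3)\in M^+_{l,m}\setminus\{(0,0)\}$ and that the corresponding generator of $\tilde{K}$ divides $f_{\lambda}$.

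First I would check the three defining conditions of $M^+_{l,m}$. Non-negativity of $d_1,d_3$ is automatic. The inequality $d_3\leq (p/q)d_1$ rearranges to $pd_1-qd_3\geq 0$, and since the definition of $\mu$ gives $pd_1-qd_3=\w$, this is exactly the hypothesis $\w\geq 0$. The divisibility $m\mid (d_1-d_3)$ is equivalent to $m\mid c$, which holds because $\lambda\in\Lambda_{(n,0)}$. Finally, $c>0$ forces $d_1>d_3\geq 0$, so $(d_1,d_3)\neq (0,0)$.

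Having established $(d_1,d_3)\in M^+_{l,m}\setminus\{(0,0)\}$, the element $X_0^{pd_1-qd_3}X_1^{d_1}X_3^{d_3}=X_0^{\w}X_1^{d_1}X_3^{d_3}$ is one of the generators of $\tilde{K}$ listed in Remark \ref{invariant ring}. Since $n\geq 0$, the identity
\[
f_{\lambda}=X_0^n\cdot\bigl(X_0^{\w}X_1^{d_1}X_3^{d_3}\bigr)
\]
expresses $f_{\lambda}$ as an element of $R$ times a generator of $\tilde{K}$, so $f_{\lambda}\in\tilde{K}$. There is no substantive obstacle here: the only point requiring care is keeping track of the sign conventions in $\mu$ so that $pd_1-qd_3$ is correctly identified with $\w$, after which everything reduces to matching the conditions on $(d_1,d_3)$ against the defining conditions of $M^+_{l,m}$.
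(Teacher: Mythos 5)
Your proof is correct and follows essentially the same route as the paper: both factor $f_{\lambda}=X_0^n\cdot\bigl(X_0^{\w}X_1^{d_1}X_3^{d_3}\bigr)$ using $d_0=n+\w$ and recognize the second factor as a generator of $\tilde{K}$, the only cosmetic difference being that the paper phrases this via $\G$-invariance of that factor (hence membership in $R^{\G}$, with $c=d_1-d_3>0$ ruling out a constant), whereas you unwind the same invariance conditions into the explicit defining inequalities of $M^+_{l,m}$.
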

\begin{proof}
Write $f_{\lambda}=X_0^{d_0}X_1^{d_1}X_3^{d_3}$.  
Then we have $f_{\lambda}=X_0^n(X_0^{d_0-n}X_1^{d_1}X_3^{d_3})$ concerning $d_0=n+\w$. 
The conditions $f_{\lambda} \in R_{(n,0)}$ and $X_0^n \in R_{(n,0)}$ imply that $X_0^{d_0-n}X_1^{d_1}X_3^{d_3} \in R^{\G}$. 
Since $0 <c=d_1-d_3$, it follows that $X_0^{d_0-n}X_1^{d_1}X_3^{d_3} \in \tilde{K}$. 
\end{proof}
\begin{lemma}\label{remainder}
Let $(n,c) \in \Lambda'$. 
Assume that $0 \leq n < q-p$, and that $c \geq 0$. Then the following properties are true. 
\begin{itemize}
\item [\rm(i)] We have $\w_{(0, c)}+n < q-p$ if and only if $\w_{(0,c)}=\w_{(n,c)}$. 
\item [\rm(ii)] We have $\w_{(0,c)}+n \geq q-p$ if and only if  $\w_{(0,c)}=\w_{(n,c)}+q-p$. 
\item [\rm(iii)] We have $\w_{(0,c)} \geq q-p-\beta$ if and only if  $\w_{(0,c+m)}=\w_{(0,c)}-q+p+\beta$.  
\item [\rm(iv)] We have $\w_{(0,c)} < q-p-\beta$ if and only if $\w_{(0,c+m)}=\w_{(0,c)}+\beta$.
\end{itemize}
\end{lemma}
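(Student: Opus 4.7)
The plan is to handle the two pairs of equivalences separately: (i)--(ii) are comparisons between $\w_{(0,c)}$ and $\w_{(n,c)}$, while (iii)--(iv) are comparisons between $\w_{(0,c)}$ and $\w_{(0,c+m)}$, and they rest on different ingredients.

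For (i) and (ii), I would start by observing that $(n, c, \w_{(0,c)}) \in \tilde{\mu}^{-1}(n,c)$. Indeed, by Lemma \ref{1dim}(i) the lattice element $(0, c, \w_{(0,c)})$ has a nonzero monomial representative $f_{(0,c,\w_{(0,c)})} = X_0^{\w_{(0,c)}} X_1^{(qc - \w_{(0,c)})/(q-p)} X_3^{(pc - \w_{(0,c)})/(q-p)}$ with nonnegative integer exponents; raising the exponent of $X_0$ by $n$ still yields nonnegative integer exponents (using $\w_{(0,c)} \geq 0$ from Corollary \ref{remainder1}(i)), which places $(n, c, \w_{(0,c)})$ in $\tilde{\mu}^{-1}(n,c)$. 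Hence $\w_{(0,c)} \geq \w_{(n,c)}$, and by Lemma \ref{1dim}(iii) their difference lies in $(q-p)\mathbb{Z}_{\geq 0}$. Next I would bound this difference: $\w_{(0,c)} < q-p$ by Corollary \ref{remainder1}(i), and $\w_{(n,c)} \geq -n$ (since $n + \w_{(n,c)} \geq 0$ by Lemma \ref{1dim}(i)), so $\w_{(0,c)} - \w_{(n,c)} < 2(q-p) - n < 2(q-p)$. Consequently $\w_{(0,c)} - \w_{(n,c)} \in \{0, q-p\}$, and Lemma \ref{min} separates the two cases by whether $n + \w_{(0,c)} < q-p$ or $n + \w_{(0,c)} \geq q-p$. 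This yields (i) and (ii) simultaneously.

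For (iii) and (iv), the core claim to establish first is that
\[
\w_{(0,c)} = \Rem[pc, q-p] \quad \text{for any } c \geq 0 \text{ with } (0,c) \in \Lambda'.
\]
This follows from Lemma \ref{1dim}(i): the condition $(0,c,\w) \in \tilde{\mu}^{-1}(0,c)$ forces $\w \geq 0$, $\w \leq pc$, and $\w \equiv pc \pmod{q-p}$, and the minimum such $\w$ is precisely $\Rem[pc, q-p]$. Then I would apply this to $c + m$ and use \eqref{mp}:
\[
\w_{(0, c+m)} = \Rem[pc + pm,\, q-p] = \Rem[pc + \beta,\, q-p].
\]
Splitting on whether $\Rem[pc, q-p] + \beta < q-p$ or not (note both summands are less than $q-p$, so the sum is less than $2(q-p)$), the right side equals either $\w_{(0,c)} + \beta$ or $\w_{(0,c)} + \beta - (q-p)$, which gives precisely (iv) and (iii).

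No step looks particularly technical; the only mildly delicate point is pinning down the two-case dichotomy in (i)--(ii), which requires combining the upper bound $\w_{(0,c)} < q-p$ from Corollary \ref{remainder1}(i) with the lower bound $\w_{(n,c)} \geq -n$ coming from nonnegativity of the $X_0$-exponent, together with the hypothesis $n < q-p$. Everything else is a direct translation of the defining constraints through the arithmetic $pm = \alpha(q-p) + \beta$.
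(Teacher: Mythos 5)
Your proposal is correct and follows essentially the same route as the paper: for (i)--(ii) both arguments place $(n,c,\w_{(0,c)})$ in $\tilde{\mu}^{-1}(n,c)$ and combine Lemma \ref{1dim} with Lemma \ref{min} and the bounds $0\le \w_{(0,c)}<q-p$, $n+\w_{(n,c)}\ge 0$ to force $\w_{(0,c)}-\w_{(n,c)}\in\{0,\,q-p\}$, while for (iii)--(iv) your identity $\w_{(0,c)}=\Rem[pc,q-p]$ together with $pm=\alpha(q-p)+\beta$ is exactly the content of the paper's direct verification that $(0,c+m,\w)\in\tilde{\mu}^{-1}(0,c+m)$ with $0\le \w<q-p$. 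The only blemish is the intermediate bound, which should read $\w_{(0,c)}-\w_{(n,c)}<(q-p)+n<2(q-p)$ rather than $2(q-p)-n$; this does not affect the conclusion.
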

\begin{proof}
First of all, concerning Lemmas \ref{1dim} and \ref{min}, we see that $0 \leq \w_{(0,c)} < q-p$ holds. Since we have 
$(n,c,\w_{(0,c)})=\mu\left(n+\w_{(0,c)}, \frac{qc-\w_{(0,c)}}{q-p}, \frac{pc-\w_{(0,c)}}{q-p}\right) \in \Lambda$, it follows that $(n,c,\w_{(0,c)}) \in \tilde{\mu}^{-1}(n,c)$. 
The if part is easy to check, so we prove the only if part. 

(i) follows from Lemma \ref{min}. 

(ii) If $n + \w_{(0,c)} \geq q-p$, then we have $\w_{(0,c)}-\w_{(n,c)}=x(q-p)$ for some $x \geq 1$.    
If $x >1$, then we have  
$\w_{(0,c)} > q-p$, which is a contradiction. 

(iii) Set $\w=\w_{(0,c)}-q+p+\beta$. 
Then we get $0 \leq \w < q-p$. 
Since we have 
$\frac{q(c+m)-\w}{q-p}=\frac{qc-\w_{(0,c)}}{q-p}+\alpha+m+1 >0$, 
$\frac{p(c+m)-\w}{q-p}=\frac{pc-\w_{(0,c)}}{q-p}+\alpha+1 >0$, and 
\[
(0,c+m,\w) =\mu\left(\w, \frac{q(c+m)-\w}{q-p}, \frac{p(c+m)-\w}{q-p}\right),
\]
it follows that $(0,c+m,\w) \in \tilde{\mu}^{-1}(0,c+m)$.  
Therefore, we have $\w=\w_{(0,c+m)}$. 

(iv) Set $\w'=\w_{(0,c)}+\beta$. In a similar way we see that $(0,c+m, \w') \in \tilde{\mu}^{-1}(0,c+m)$, and therefore we have $\w'=\w_{(0,c+m)}$. 
\end{proof}
The next lemma follows from Lemmas \ref{ni} and \ref{min}. 
\begin{lemma}\label{q-p}
Let $\lambda=(n,c,\w) \in \Lambda$. 
If $\w > \w_{(n,c)}$, then we have $f_{\lambda} \in (X_0^{n_{i-1}})$ for any $1 \leq i \leq r+1$. 
\end{lemma}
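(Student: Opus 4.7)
The plan is to translate the claimed membership $f_\lambda \in (X_0^{n_{i-1}})$ into a single numerical inequality on the $X_0$-exponent of $f_\lambda$, and then deduce that inequality by chaining the ``iff'' criterion of Lemma \ref{min} with the uniform bound $n_{i-1} \leq n_0 = q-p$ coming from Lemma \ref{ni}(iii).

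First I would invoke the explicit formula in Lemma \ref{1dim}(i): since $\lambda = (n, c, \w) \in \Lambda$, one has $f_\lambda = X_0^{n+\w}\, X_1^{(qc-\w)/(q-p)}\, X_3^{(pc-\w)/(q-p)}$. Consequently, the $X_0$-exponent of $f_\lambda$ is literally $n + \w$, and $f_\lambda \in (X_0^{n_{i-1}})$ reduces to the single inequality $n + \w \geq n_{i-1}$.

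Next I would use the hypothesis $\w > \w_{(n,c)}$ in conjunction with Lemma \ref{min}, which states that $n + \w < q - p$ if and only if $\w = \w_{(n,c)}$. Taking the contrapositive, the strict inequality $\w > \w_{(n,c)}$ forces $n + \w \geq q - p$. Combining this with Lemma \ref{ni}(iii), which gives $n_{i-1} \leq n_0 = q - p$ for every $1 \leq i \leq r+1$, yields $n + \w \geq q - p \geq n_{i-1}$, completing the argument uniformly in $i$.

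Because the proof collapses to a direct composition of two equivalences, there is essentially no obstacle; the only care required is reading off the $X_0$-exponent correctly from Lemma \ref{1dim}(i) and recalling that $n_0 = q-p$ is the maximum of the strictly decreasing sequence $n_0 > n_1 > \dots > n_{r+1}$ supplied by Lemma \ref{ni}(iii). I anticipate this lemma will be invoked repeatedly in the proofs of Theorems \ref{idealO2} and \ref{idealO} in order to discard those $R_\lambda$-components of $R_{(n,d)}$ on which $\w$ fails to be minimal, reducing the remaining bookkeeping to the ``boundary'' case $\w = \w_{(n,c)}$ handled by Lemma \ref{remainder}.
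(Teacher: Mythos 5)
Your proof is correct and matches the paper's intended argument: the paper gives no details beyond citing Lemmas \ref{ni} and \ref{min}, and your chain (the $X_0$-exponent of $f_\lambda$ is $n+\w$ by Lemma \ref{1dim}(i), the hypothesis $\w>\w_{(n,c)}$ forces $n+\w\geq q-p$ by Lemma \ref{min}, and $q-p=n_0\geq n_{i-1}$ by Lemma \ref{ni}(iii)) is exactly the computation those citations encode.
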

\begin{lemma}\label{bigger}
Let $(0,c) \in \Lambda'$ with $c=mx$, and suppose that we have $0 < x < P_i$ for some $1 \leq i \leq r+1$. Then, 
$n_{i-1}-n_i \leq  \w_{(0,c)} \leq q-p-n_{i-1}$. 
\end{lemma}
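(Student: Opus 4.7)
The plan is to derive an explicit formula for $\w_{(0,c)}$ as a residue modulo $q-p$, translate this residue into one modulo $b$ via the change of variable in equation \eqref{t}, and then apply Corollaries \ref{cor1} and \ref{cor of elementary} directly. Those corollaries already encode the combinatorial content of the Hirzebruch--Jung continued fraction, so the only real work is to bridge between the abstract minimum defining $\w_{(0,c)}$ and the concrete quantity $\Rem[tx,\, b]$ that appears in them.

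First I would show that
\[
\w_{(0,c)} = \Rem[pc,\, q-p].
\]
Indeed, by Lemma \ref{max} (applied with $n=0$ and $c>0$) the fiber $\tilde{\mu}^{-1}(0,c)$ contains $\w^{\max}_{(0,c)} = pc$; by Lemma \ref{1dim} (iii) any two elements of this fiber differ by an integer multiple of $q-p$; and by Corollary \ref{remainder1} (i) we have $0\leq \w_{(0,c)} < q-p$. These three facts pin $\w_{(0,c)}$ down as claimed. Since $c = mx = kax$ and both $pm$ and $q-p$ are divisible by $k$, this gives $\w_{(0,c)} = k\,\Rem[pax,\, b]$. Using the relation $pa = (\alpha+1)b - t$ from \eqref{t}, one has $pax \equiv -tx \pmod{b}$, and therefore
\[
\w_{(0,c)} = k\,\Rem[-tx,\, b].
\]

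The case $i=1$ is vacuous, since $P_1 = 1$ admits no integer $x$ with $0 < x < P_1$. For $i \geq 2$, Corollary \ref{cor of elementary} gives $\Rem[tx,\, b] \geq t_{i-1} \geq t_r = 1$, the last inequality coming from $n_r = k$ in Lemma \ref{ni} (iii). In particular $\Rem[tx,\, b] > 0$, so $\Rem[-tx,\, b] = b - \Rem[tx,\, b]$ and
\[
\w_{(0,c)} = (q-p) - k\,\Rem[tx,\, b].
\]
The lower bound $\Rem[tx,\,b] \geq t_{i-1} = n_{i-1}/k$ from Corollary \ref{cor of elementary} now gives the upper bound $\w_{(0,c)} \leq (q-p) - n_{i-1}$, while the upper bound $\Rem[tx,\,b] \leq b + t_i - t_{i-1}$ from Corollary \ref{cor1} gives the lower bound $\w_{(0,c)} \geq (q-p) - k(b + t_i - t_{i-1}) = n_{i-1} - n_i$.

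The main obstacle is the translation step at the beginning: one has to match the abstract minimum defining $\w_{(0,c)}$ against a concrete residue and then recast it through the congruence $pa \equiv -t \pmod{b}$. Once that bridge is in place, the combinatorial work already invested in Corollaries \ref{cor1} and \ref{cor of elementary} delivers both inequalities of the lemma immediately.
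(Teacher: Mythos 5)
Your proof is correct and follows essentially the same route as the paper: both identify $\w_{(0,c)}$ with $\Rem[pc,\,q-p]$ and then obtain the two inequalities from the Hirzebruch--Jung bounds of Corollaries \ref{cor1} and \ref{cor of elementary}. The only difference is bookkeeping: you reduce directly via $pa\equiv -t \pmod{b}$ and bound $\Rem[tx,b]$ (noting $\Rem[tx,b]\geq t_{i-1}\geq 1$ so that $\Rem[-tx,b]=b-\Rem[tx,b]$), whereas the paper shifts by $n_i$ through Corollary \ref{cor2} and \eqref{equiv2} and bounds $\Rem[t(P_i-x),b]$; the two computations are equivalent.
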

\begin{proof}
Concerning the proof of Corollary \ref{preremainder}, we have $\w_{(0,c)}=\Rem[pc, q-p]$, which coincides with $\Rem[pc+n_i, q-p]-n_i$ by Corollary \ref{cor2}.  
On the other hand, we have 
$n_{i-1} \leq \Rem[pc+n_i, q-p]  =k \Rem[t(P_i-x),b] \leq q-p+n_i-n_{i-1}$ 
by \eqref{equiv2} and Corollaries \ref{cor1}, \ref{cor of elementary}, and \ref{cor2}, and hence the lemma. 
\end{proof}
\begin{definition}
For each $c \in m \mathbb Z_{>0}$, we define $\lambda_c:=(q-p-\w_{(0,c)},\; c,\; \w_{(0,c)}-q+p)$. 
\end{definition}
\begin{remark}
By a direct calculation, we see that 
\[
f_{\lambda_c}=X_1^{\frac{qc-\w_{(0,c)}}{q-p}+1}X_3^{\frac{pc-\w_{(0,c)}}{q-p}+1}.
\] 
Also, by applying Lemma \ref{remainder} (ii) with $n=q-p-\w_{(0,c)}$, we have 
$\w_{(0,c)}-q+p=\w_{(q-p-\w_{(0,c)}, c)}$. 
\end{remark}
\begin{example}\label{ex of rem}
By Example \ref{zi} and Lemma \ref{remainder}, we have $\w_{(0,mP_i)}=q-p-n_i$, and therefore $\lambda_{mP_i}=(n_i, mP_i, \w_{(n_i, mP_i)})$ and $f_{\lambda_{mP_i}}=X_1^{e_i}X_3^{l_i}$.  
\end{example}
\begin{lemma}\label{cor of rem}
With the above notation, we have 
$f_{\lambda_{c'}} \in (f_{\lambda_{c}})$ if $c' \geq c$. 
\end{lemma}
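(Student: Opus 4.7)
The plan is to reduce the divisibility assertion to a purely combinatorial comparison of exponents and then do a one-step induction. Since
\[
f_{\lambda_c}=X_1^{\frac{qc-\w_{(0,c)}}{q-p}+1}\,X_3^{\frac{pc-\w_{(0,c)}}{q-p}+1}
\]
is a monomial in $X_1$ and $X_3$ only, divisibility $f_{\lambda_c}\mid f_{\lambda_{c'}}$ inside $R$ is equivalent to the two inequalities
\[
\frac{qc'-\w_{(0,c')}}{q-p}\geq \frac{qc-\w_{(0,c)}}{q-p},\qquad
\frac{pc'-\w_{(0,c')}}{q-p}\geq \frac{pc-\w_{(0,c)}}{q-p}.
\]
Writing $c'=c+mk$ and iterating, it suffices to handle the single step $c'=c+m$.

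For the one-step case I would split according to Lemma~\ref{remainder}(iii),(iv). If $\w_{(0,c)}<q-p-\beta$, then $\w_{(0,c+m)}=\w_{(0,c)}+\beta$, so using $mp=\alpha(q-p)+\beta$ the two differences above become
\[
\frac{qm-\beta}{q-p}=m+\alpha,\qquad \frac{pm-\beta}{q-p}=\alpha,
\]
both non-negative. If instead $\w_{(0,c)}\geq q-p-\beta$, then $\w_{(0,c+m)}=\w_{(0,c)}-(q-p)+\beta$, and the differences become $m+1+\alpha$ and $\alpha+1$ respectively, again non-negative. Either case yields the desired exponent inequalities.

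Alternatively, and equivalently, one can phrase the argument via Lemma~\ref{1dim}(ii): the computation above shows that $\lambda_{c'}-\lambda_c$ lies in $\Lambda$ with first coordinate $0$, so $f_{\lambda_{c'}-\lambda_c}$ is a monomial in $X_1,X_3$ and $f_{\lambda_c}\cdot f_{\lambda_{c'}-\lambda_c}=f_{\lambda_{c'}}$.

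No substantial obstacle is expected: once the reduction to $c'=c+m$ is made, the entire content is the bookkeeping between the two branches of Lemma~\ref{remainder} and the relation $mp=\alpha(q-p)+\beta$ from \eqref{mp}. The only mild subtlety is remembering that $\lambda_c$ is only defined for $c\in m\Z_{>0}$, so the induction must be framed in terms of the discrete parameter $k=(c'-c)/m\in \Z_{\geq 0}$ rather than a continuous variable.
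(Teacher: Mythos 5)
Your proposal is correct and takes essentially the same route as the paper: reduce to the single step $c'=c+m$, split according to Lemma \ref{remainder}(iii),(iv), and use $mp=\alpha(q-p)+\beta$ to find that $f_{\lambda_{c+m}}/f_{\lambda_c}$ is $X_1^{\alpha+m}X_3^{\alpha}$ or $X_1^{\alpha+m+1}X_3^{\alpha+1}$, which is exactly the paper's computation. One cosmetic point in your alternative phrasing: what vanishes for $\lambda_{c'}-\lambda_c$ is the $X_0$-exponent $d_0=n+\w$ (so $f_{\lambda_{c'}-\lambda_c}$ involves only $X_1,X_3$), not the first coordinate $n$ of the triple, which equals $\w_{(0,c)}-\w_{(0,c')}$ and is in general nonzero.
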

\begin{proof}
Since $c, c' \in m \Z_{>0}$, we may assume that $c'=c+m$ . 
Then by \eqref{mp} and Lemma \ref{remainder} we have 
\[
f_{\lambda_{c'}}=
\begin{cases}
X_1^{\alpha+m+1}X_3^{\alpha+1}f_{\lambda_{c}}\qquad (\mbox{if}\; \w_{(0,c)} \geq q-p-\beta)\\
X_1^{\alpha+m}X_3^{\alpha}f_{\lambda_{c}} \qquad (\mbox{otherwise}). 
\end{cases}
\]
\end{proof}
\begin{corollary}\label{cor of rem1}
Let $\lambda=(n,c, \w_{(n,c)}) \in \Lambda_{(n,0)}$, and assume that $0 < c$ and  that $0 \leq n < q-p$. 
Then we have the following. 
\begin{itemize}
\item [\rm(i)] If $\w_{(0,c)}+n <q-p$, then $f_{\lambda} \in \tilde{K}$.
\item [\rm(ii)] If $\w_{(0,c)}+n \geq q-p$, then $f_{\lambda}=X_0^{n+\w_{(n,c)}}f_{\lambda_c}=X_0^{n+\w_{(0,c)}-q+p}f_{\lambda_c} $. 
\end{itemize}
\end{corollary}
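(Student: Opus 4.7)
The plan is to deduce both statements essentially as bookkeeping consequences of Lemma \ref{remainder} combined with the explicit formula for $f_\lambda$ given in Lemma \ref{1dim} (i). The point of the corollary is that, under the hypothesis $0 \leq n < q-p$, the dichotomy established in Lemma \ref{remainder} (i)–(ii) --- namely whether $\w_{(n,c)}$ equals $\w_{(0,c)}$ or differs from it by $q-p$ --- translates into a concrete factorization of $f_\lambda$.

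For (i), I would first invoke Lemma \ref{remainder} (i) with the hypothesis $\w_{(0,c)}+n < q-p$ to conclude $\w_{(n,c)}=\w_{(0,c)}$. Since the preamble to the proof of Lemma \ref{remainder} already gives $0 \leq \w_{(0,c)} < q-p$, in particular $\w_{(n,c)}\geq 0$. Combined with $n \geq 0$ and $c > 0$, all hypotheses of Lemma \ref{invring} are satisfied, yielding $f_\lambda \in \tilde{K}$.

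For (ii), I would apply Lemma \ref{remainder} (ii) to get $\w_{(n,c)}=\w_{(0,c)}-(q-p)$, which immediately gives the second equality $n+\w_{(n,c)}=n+\w_{(0,c)}-q+p$. To establish the first equality, I would compare the two monomials directly using Lemma \ref{1dim} (i): write
\[
f_\lambda=X_0^{n+\w_{(n,c)}}X_1^{\frac{qc-\w_{(n,c)}}{q-p}}X_3^{\frac{pc-\w_{(n,c)}}{q-p}},
\]
and use the explicit form $f_{\lambda_c}=X_1^{\frac{qc-\w_{(0,c)}}{q-p}+1}X_3^{\frac{pc-\w_{(0,c)}}{q-p}+1}$ recorded in the remark following the definition of $\lambda_c$. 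Substituting $\w_{(0,c)}=\w_{(n,c)}+(q-p)$ shows that the $X_1$- and $X_3$-exponents of $f_{\lambda_c}$ coincide with those of $f_\lambda$, so $f_\lambda=X_0^{n+\w_{(n,c)}}f_{\lambda_c}$.

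I do not anticipate a genuine obstacle: the statement is essentially a translation of the case analysis of Lemma \ref{remainder} into the language of $f_\lambda$ and $f_{\lambda_c}$. The only minor care required is verifying that the quotients $(qc-\w_{(n,c)})/(q-p)$ and $(pc-\w_{(n,c)})/(q-p)$ are genuine non-negative integers in case (ii), which follows from $(n,c,\w_{(n,c)}) \in \Lambda$ together with Lemma \ref{1dim} (i).
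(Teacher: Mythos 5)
Your proposal is correct and follows essentially the same route as the paper: item (i) via Lemma \ref{remainder} (i), the bound $0\leq\w_{(0,c)}<q-p$ (Corollary \ref{remainder1} (i)), and Lemma \ref{invring}; item (ii) via Lemma \ref{remainder} (ii) and the explicit description of $f_{\lambda_c}$. The only cosmetic difference is that you verify the factorization in (ii) by comparing exponents directly, whereas the paper leaves it implicit (it amounts to the additivity $f_{\lambda}f_{\lambda'}=f_{\lambda+\lambda'}$ of Lemma \ref{1dim} (ii) applied to $\lambda=(n+\w_{(n,c)},0,0)+\lambda_c$).
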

\begin{proof}
Item (i) follows from Corollary \ref{remainder1} (i), Lemmas \ref{remainder} (i), and \ref{invring}. 
Item (ii) is a consequence of Lemma \ref{remainder} (ii) and the definition of $\lambda_c$. 
\end{proof}
\begin{lemma}\label{contain}
Let $\lambda=(n,c,\w_{(n,c)}) \in \Lambda_{(n,0)}$ with $c=mx$. Then, the following properties are true for any   $1 \leq i \leq r+1$.
\begin{itemize}
\item [\rm(i)] If $0 < x< P_i$ and $0 \leq n < n_{i-1}$, then $f_{\lambda} \in \tilde{K}$.
\item [\rm(ii)] If $x=P_i$ and $0 \leq n < n_i$, then $f_{\lambda} \in \tilde{K}$.
\item [\rm(iii)] If $x=P_i$ and $n_i \leq n < q-p$, then $f_{\lambda} \in (X_1^{e_i}X_3^{l_i})$. 
\item [\rm(iv)] If $x > P_i$ and $0 \leq n < q-p$, then $f_{\lambda} \in (X_1^{e_i}X_3^{l_i})+\tilde{K}$.
\item [\rm(v)] If $x > P_i$ and $0 \leq n <n_{i}$, then $f_{\lambda} \in \tilde{J}^i_1$. 
\item [\rm(vi)] Let $x >P_i$ and $n_i \leq n < n_{i-1}$. 
\begin{itemize}
\item [\rm(vi-1)] If $x$ is not a multiple of $P_i$, then $f_{\lambda} \in \tilde{J}^i_1$. 
\item [\rm(vi-2)] If $x$ is a multiple of $P_i$, then  $f_{\lambda}-f_{(n,c-mP_i, \w_{(n, c-mP_i)})} \in \tilde{J}^i_1$. 
\end{itemize}
\end{itemize}
\end{lemma}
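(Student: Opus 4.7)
The central dichotomy throughout will be whether $n + \w_{(0,c)} < q - p$ (in which case Corollary~\ref{cor of rem1}(i) yields $f_\lambda \in \tilde{K}$) or $n + \w_{(0,c)} \geq q - p$ (in which case Corollary~\ref{cor of rem1}(ii) gives the clean monomial expression $f_\lambda = X_0^{n + \w_{(0,c)} - q + p} f_{\lambda_c}$). Together with Lemma~\ref{cor of rem}, which guarantees $f_{\lambda_c} \in (f_{\lambda_{mP_i}}) = (X_1^{e_i} X_3^{l_i})$ whenever $c \geq mP_i$, this reduces each assertion to a controlled exponent calculation.

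Parts (i)--(iv) follow directly. Part (i) uses the bound $\w_{(0,c)} \leq q - p - n_{i-1}$ from Lemma~\ref{bigger} to land in the first branch, giving $f_\lambda \in \tilde{K}$. Parts (ii) and (iii) use the explicit value $\w_{(0, mP_i)} = q - p - n_i$ from Example~\ref{ex of rem}: (ii) falls in the first branch ($n < n_i$), and (iii) falls in the second ($n \geq n_i$), yielding $f_\lambda = X_0^{n - n_i} f_{\lambda_{mP_i}} = X_0^{n - n_i} X_1^{e_i} X_3^{l_i}$ since Example~\ref{ex of rem} identifies $f_{\lambda_{mP_i}}$ explicitly. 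Part (iv) is the direct case-split, with the second branch producing membership in $(X_1^{e_i} X_3^{l_i})$ via Lemma~\ref{cor of rem}.

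Parts (v) and (vi) are the technical core. For (v), starting from the monomial output $f_\lambda = X_0^{n + \w_{(0,c)} - q + p} X_1^{a_1} X_3^{a_3}$ of (iv), I would iterate the substitution $X_1^{e_i} X_3^{l_i} \equiv X_0^{n_i}$ modulo the generator $X_0^{n_i} - X_1^{e_i} X_3^{l_i}$ of $\tilde{J}^i_1$. The key claim is that after the maximum number $\kappa = \min(\lfloor a_1/e_i \rfloor, \lfloor a_3/l_i \rfloor)$ of such substitutions, the resulting $X_0$-exponent $n + \w_{(0,c)} - q + p + \kappa n_i$ meets or exceeds $n_{i-1}$, so that $f_\lambda \in (X_0^{n_{i-1}}) \subset \tilde{J}^i_1$. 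For (vi-2), exactly one substitution is performed: one verifies that the resulting exponents of $X_0, X_1, X_3$ match those of $f_{(n, c-mP_i, \w_{(n, c-mP_i)})}$, using the identities $(q-p)e_i = qmP_i + n_i$ and $(q-p)l_i = pmP_i + n_i$, together with $\w_{(n, c-mP_i)} = \w_{(n, c)} + n_i$ (which follows from Lemma~\ref{remainder} provided both $n + \w_{(0, \cdot)}$ lie in the same branch). Part (vi-1) will mimic (v), exploiting the extra slack in $a_1, a_3$ when $x$ is not a multiple of $P_i$ to show that enough substitutions can be carried out.

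The main obstacle is the exponent bookkeeping in parts (v) and (vi-1): proving that, after the maximal iterated substitution, the $X_0$-exponent actually reaches $n_{i-1}$. This requires stitching together the bounds on $\w_{(0,c)}$ dictated by the position of $x$ (Lemma~\ref{bigger}, Corollary~\ref{cor1}, Corollary~\ref{cor of elementary}) with the continued-fraction recursion $n_i = c_{i-1} n_{i-1} - n_{i-2}$ from Lemma~\ref{ni}, and is precisely where the intricate combinatorial work of \S\ref{s-gen} becomes indispensable. A secondary subtlety arises in (vi-2) at the crossover $c = 2mP_i$, where the dichotomy on $n + \w_{(0,c)}$ may flip between $c$ and $c - mP_i$; in that borderline situation one must argue the identity by tracking the reduction through both branches of Corollary~\ref{cor of rem1} separately.
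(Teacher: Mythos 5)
Your handling of (i)--(iv) is correct and is essentially the paper's argument: the dichotomy $n+\omega_{(0,c)}<q-p$ versus $n+\omega_{(0,c)}\geq q-p$ via Corollary \ref{cor of rem1}, combined with Lemma \ref{bigger}, Example \ref{ex of rem} and Lemma \ref{cor of rem}. The gap is in (v) (and hence in (vi-1), which you model on it). Your key claim --- that after the maximal number $\kappa=\min(\lfloor a_1/e_i\rfloor,\lfloor a_3/l_i\rfloor)$ of substitutions $X_1^{e_i}X_3^{l_i}\mapsto X_0^{n_i}$ the $X_0$-exponent $n+\omega_{(0,c)}-q+p+\kappa n_i$ reaches $n_{i-1}$, so the residual monomial lies in $(X_0^{n_{i-1}})$ --- is false. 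Take $p=1$, $q=6$, $m=3$ (non-toric), so $q-p=5$, $k=1$, $b=5$, $\alpha=0$, $\beta=3$, $t=2$, and $b/t=5/2$ gives $c_1=3$, $c_2=2$; then $P_1=1$, $n_0=5$, $n_1=2$, $e_1=4$, $l_1=1$. For $i=1$, $n=1$, $x=3$ (so $c=9$) the hypotheses of (v) hold, $\omega_{(0,9)}=\Rem[9,5]=4$, so you are in the second branch and $f_{\lambda}=X_1^{11}X_3^{2}$. Here $\kappa=2$ and the fully substituted monomial is $X_0^{4}X_1^{3}$, whose $X_0$-exponent $4$ is strictly less than $n_{i-1}=n_0=5$, so it is not in $(X_0^{5})$. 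It does lie in $\tilde{J}^1_1$, but only because $X_0^{3}X_1^{3}\in\tilde{K}$ (the monomial attached to $(3,0)\in M^+_{1/6,3}$); membership comes from the semigroup ideal $\tilde{K}$, which your plan never invokes in the second branch. So the ``exponent bookkeeping'' you flag as the main obstacle is not merely hard: the inequality you intend to prove is wrong, and the route cannot be completed as designed.

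The paper's proof of (v) instead descends in $c$ by steps of $mP_i$, one substitution at a time: writing $f_{\lambda}=X_0^{n'}X_1^{e_i}X_3^{l_i}f$ with $f\in R^{c-mP_i}_{n-n'-n_i}$ (Lemma \ref{cor of rem}), subtracting $X_0^{n'}f\,(X_0^{n_i}-X_1^{e_i}X_3^{l_i})$ replaces $f_{\lambda}$ by $f_{\lambda'}$ with $\lambda'=(n,c-mP_i,\omega')$; if $\omega'>\omega_{(n,c-mP_i)}$ then $f_{\lambda'}\in(X_0^{n_{i-1}})$ by Lemma \ref{q-p}, and otherwise one repeats until $0<c-mP_i\leq mP_i$, at which point parts (i) and (ii) --- i.e.\ $\tilde{K}$-membership coming from Corollary \ref{cor of rem1}(i) and Lemma \ref{invring} --- finish the induction; statement (vi) is then read off from a single step of this reduction, since $f_{\lambda}-f_{\lambda'}$ is an explicit multiple of $X_0^{n_i}-X_1^{e_i}X_3^{l_i}$, which also sidesteps the branch-tracking you worry about in (vi-2). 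To salvage your version you would have to show that whenever the iterated substitution stops with $X_0$-exponent below $n_{i-1}$, the leftover monomial lies in $\tilde{K}$ --- and that is exactly the combinatorial input (the interplay of $\Rem[t\cdot,b]$ with the recursion $n_i=c_{i-1}n_{i-1}-n_{i-2}$ established in \S\ref{s-gen}) that your proposal omits.
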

\begin{proof}
(i) follows from Corollary \ref{preremainder}, Lemmas \ref{invring}, and \ref{bigger}. 

(ii) We have $\w_{(0,mP_i)}=q-p-n_i$ by Example \ref{ex of rem}, and therefore 
$f_{\lambda} \in \tilde{K}$ by Corollary  \ref{cor of rem1} (i). 

(iii) follows from applying Corollary \ref{cor of rem1} (ii) with $c=mP_i$. 

(iv) follows from Lemma \ref{cor of rem} and Corollary \ref{cor of rem1}. 

(v) Concerning Corollary \ref{cor of rem1}, we may assume that $n+\w_{(0,c)} \geq  q-p$. 
Set $n'=n+\w_{(0,c)}-q+p$. Then we have $f_{\lambda}=X_0^{n'}f_{\lambda_c}$. 
Also, by Lemma \ref{cor of rem}, $f_{\lambda_c}$ can be written as $f_{\lambda_c}= f_{\lambda_{mP_i}} f=X_1^{e_i}X_3^{l_i} f$ with some $f \in R^{c-mP_i}_{n-n'-n_i}$. 
Therefore, $
f_{\lambda}=X_0^{n'+n_i}f-X_0^{n'}f(X_0^{n_i}-X_1^{e_i}X_3^{l_i})$. 
Now, since $X_0^{n'+n_i}f \in R^{c-mP_i}_n$, we have $X_0^{n'+n_i}f=f_{\lambda'}$ with some $\lambda'=(n,c-mP_i, \w') \in \tilde{\mu}^{-1}(n,c-mP_i)$. 
If $\w'>\w_{(n,c-mP_i)}$, then we have $f_{\lambda'} \in (X_0^{n_{i-1}})$, and hence $f_{\lambda} \in \tilde
{J}^i_1$. 
Suppose that $\w'=\w_{(n,c-mP_i)}$.  
If $0 < c-mP_i \leq mP_i$, then we have $f_{\lambda'} \in \tilde{K}$ by (i) and (ii), and hence $f_{\lambda} \in \tilde{J}^i_1$. If $c-mP_i > mP_i$, we can apply the same process to $f_{\lambda'}$, and 
continuing in this way we finally obtain $f_{\lambda} \in \tilde{J}^i_1$. 

(vi) is an immediate consequence of the proof of (v). 
\end{proof}
\begin{lemma}\label{cor of bigger}
Let $\lambda=(n,c,\w_{(n,c)}) \in \Lambda_{(n,0)}$ with $c=mx$. 
Suppose that $P_j < x < P_i$, $n_j \leq n < n_{j-1}$,  and $n-n_j < n_{i-1}$ hold for some $1 \leq j < i \leq r+1$. Then, we have 
$f_{\lambda}=X_0^{n-n_j}f_{\lambda_{mP_j}}f_{\lambda'}$, 
where $\lambda'=(0, c-mP_j, \w_{(0, c-mP_j)})$. 
In particular, $f_{\lambda} \in \tilde{K}$. 
\end{lemma}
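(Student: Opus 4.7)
The plan is to verify the claimed factorization by computing the product of the three $f$-polynomials on the right via the additivity in Lemma~\ref{1dim}~(ii), and then to read off the ``in particular'' statement from Lemma~\ref{invring}. First I would recognise each factor as an $f_\mu$ for some $\mu \in \Lambda$: namely $X_0^{n-n_j}=f_{(n-n_j,0,0)}$; $f_{\lambda_{mP_j}}=X_1^{e_j}X_3^{l_j}$ with $\lambda_{mP_j}=(n_j, mP_j, -n_j)$ by Example~\ref{ex of rem}; and $\lambda'=(0,\,c-mP_j,\,\w_{(0,c-mP_j)})$ by definition. Summing the three tuples and applying Lemma~\ref{1dim}~(ii), the product equals $f_{\mu^*}$ with $\mu^* = (n,c,\w^*)$ and
\[
\w^* \;=\; 0 + (-n_j) + \w_{(0,c-mP_j)} \;=\; -n_j + \w_{(0,c-mP_j)}.
\]
Thus the desired factorization is equivalent to the identity $\w_{(n,c)} = -n_j + \w_{(0, c-mP_j)}$.

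To establish this identity I would apply Corollary~\ref{remainder1}~(ii) to the splitting $(n,c)=(n_j, mP_j)+(n-n_j, c-mP_j)$. Using $\w_{(n_j, mP_j)}=-n_j$ (Example~\ref{zi}), its hypothesis reduces to $\w_{(n-n_j, c-mP_j)}+(n-n_j) < q-p$, which holds automatically by Lemma~\ref{min}. This gives $\w_{(n,c)}=-n_j+\w_{(n-n_j,c-mP_j)}$, and the remaining equality $\w_{(n-n_j,c-mP_j)}=\w_{(0,c-mP_j)}$ is by Lemma~\ref{remainder}~(i) equivalent to the single inequality $\w_{(0,c-mP_j)}+(n-n_j)<q-p$.

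This last inequality is the only genuinely non-routine step, and I expect it to be the main obstacle. To prove it, I would invoke Lemma~\ref{bigger}: the hypothesis $P_j<x<P_i$ gives $0<x-P_j<P_i$, so Lemma~\ref{bigger} applied to $(0, m(x-P_j))$ with the same $i$ produces $\w_{(0,c-mP_j)}\leq q-p-n_{i-1}$, and combining this with the standing hypothesis $n-n_j<n_{i-1}$ yields the required bound at once. For the ``in particular'' assertion, observe that $\lambda'$ has first coordinate $0$, second coordinate $c-mP_j=m(x-P_j)>0$, and third coordinate $\w_{(0,c-mP_j)}\geq 0$ by Corollary~\ref{remainder1}~(i); Lemma~\ref{invring} then places $f_{\lambda'}$ in $\tilde{K}$, and hence the whole product $f_\lambda$ lies in $\tilde{K}$.
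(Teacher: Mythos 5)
Your proposal is correct and takes essentially the same route as the paper: the same decomposition $\lambda=(n-n_j,0,0)+\lambda_{mP_j}+\lambda'$ combined with Lemma \ref{1dim} (ii), with the crucial numerical input being the bound $\w_{(0,c-mP_j)} \leq q-p-n_{i-1}$ from Lemma \ref{bigger} together with $n-n_j<n_{i-1}$, and the membership $f_{\lambda'}\in\tilde{K}$ for the final claim. The only difference is bookkeeping: the paper verifies directly via Lemma \ref{min} that the summed third coordinate $\w_{(0,c-mP_j)}-n_j$ is the minimal weight $\w_{(n,c)}$, whereas you reach the same identity through Corollary \ref{remainder1} (ii) and Lemma \ref{remainder} (i), and you additionally spell out the ``in particular'' step via Corollary \ref{remainder1} (i) and Lemma \ref{invring}, which the paper leaves implicit.
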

\begin{proof}
Set $\lambda''=(n-n_j,0,0)$. 
Then we have $\lambda''+\lambda_{mP_j}+\lambda'=(n, c, \w_{(0, c-mP_j)}-n_j)$. Also, we see by Lemma \ref{bigger} that 
$n+\w_{(0,c-mP_j)}-n_j
< n+q-p-n_{i-1}-n_j < q-p$. 
Therefore we have $ \w_{(0,c-mP_j)}-n_j=\w_{(n,c)}$, and hence 
$\lambda''+\lambda_{mP_j}+\lambda'=\lambda$. 
Taking  Lemma \ref{1dim} (ii) into account, it follows that $f_{\lambda}=X_0^{n-n_j}f_{\lambda_{mP_j}}f_{\lambda'}$, since $X_0^{n-n_j}=f_{\lambda''}$. The last statement follows from $f_{\lambda'} \in \tilde{K}$. 
\end{proof}
\begin{proof}[Proof of Theorem \ref{idealO2}]
Set $J=\tilde{J}^i_0$, and let
$J^c=J \cap R^c$, 
$J_n=J \cap R_n$, and 
$J^c_n=J \cap R^c_n=J^c \cap J_n$. 
Then we see that $J_{(n,d)}=\bigoplus _{c \equiv d\; (mod\; m)} J^c_n$, and therefore we have 
\[
R_{(n,d)}/J_{(n,d)} \cong \bigoplus_{c \equiv d\; (mod\; m)} R^c_n/J^c_n.
\]
Recall that $R^c_n=\bigoplus_{\w \geq \w_{(n,c)}} R_{(n,c,\w)}$. Since we have $\bigoplus_{\w > \w_{(n,c)}} R_{(n,c,\w)} \subset J$ by Lemma \ref{q-p}, it suffices to prove that 
\begin{equation}
\dim \left(\bigoplus_{c \equiv d\; (mod\; m)} R_{(n,c,\w_{(n,c)})}/(R_{(n,c,\w_{(n,c)})} \cap J)\right) \leq 1 \label{less}
\end{equation}
holds for any weight $(n,d) \in \mathbb Z \times \mathbb Z/m \mathbb Z$. 
We divide the proof into two steps. 

\emph{Step 1.} We show that \eqref{less} 
holds if $0 \leq n < q-p$ and $d=0$. 
Let  $\lambda=(n,c,\w_{(n,c)}) \in \Lambda_{(n,0)}$. Note that we have $c \geq 0$ by Example \ref{positive}, and recall that every 
$R_{(n,c,\w_{(n,c)})}$ is $1$-dimensional, namely
$R_{(n,c,\w_{(n,c)})}=\langle f_{(n,c,\w_{(n,c)})} \rangle$. 

\emph{Case 1 of Step 1.} Let $0 \leq n < n_{i-1}$. By Lemma \ref{contain}, we see that $f_{\lambda} \in J$ if $c >0$. 
This implies \eqref{less}. 

\emph{Case 2 of Step 1.} Let $n_{i-1} \leq n < q-p$. 
By Lemma \ref{ni}, there is a unique integer $1 \leq j_1 \leq i-1$ such that $n_{j_1} \leq n < n_{j_1-1}$. 
If $n-n_{j_i} \geq  n_{i-1}$, then we can take $1 \leq j_2 \leq i-1$ uniquely to satisfy $n_{j_2} \leq n-n_{j_1} < n_{j_2-1}$. 
By continuing in this way, we get 
$n-(n_{j_1}+n_{j_2}+\dots+n_{j_{u_n-1}}+n_{j_{u_n}}) < n_{i-1}$ for some $1 \leq j_1,\; j_2,\; \dots,\; j_{u_n} \leq i-1$. Namely, we have 
\begin{align*}
n_{j_1} \leq n < n_{j_1-1}, \qquad 
 & n-n_{j_1} \geq n_{i-1},\\
 n_{j_2} \leq n-n_{j_1} < n_{j_2-1}, \qquad 
 & n-(n_{j_1}+n_{j_2}) \geq n_{i-1}, \\
\dots \qquad & \dots\\
 n_{j_{u_n-1}} \leq n-(n_{j_1}+ \dots+n_{j_{u_n-2}}) < n_{j_{u_n-1}-1}, \qquad 
 & n-(n_{j_1}+\dots+n_{j_{u_n-2}}+n_{j_{u_n-1}}) \geq n_{i-1},\\
 n_{j_{u_n}} \leq n-(n_{j_1}+ \dots+n_{j_{u_n-1}}) < n_{j_{u_n}-1}, \qquad 
& n-(n_{j_1}+\dots+n_{j_{u_n-1}}+n_{j_{u_n}}) < n_{i-1}.
\end{align*} 
In the following, we show \eqref{less} by induction on $u_n$. 
Set $u=u_n$ and $P=P_{j_1}+ \dots +P_{j_u}$. 
First suppose that $u=1$. 
Since $j_1 <i$, we have $P < P_i$. 
We show that $f_{\lambda} \in J$ holds if $c \neq mP$. 
If $c=0$, then we have $f_{\lambda}=X_0^n$ by 
Example \ref{positive}, and therefore $f_{\lambda} \in J$. 
If $0 < c < mP$, then we have $f_{\lambda} \in \tilde{K}$ by applying Lemma \ref{contain} (i) with $i=j_1$. 
If $mP < c < mP_i$, then by applying Lemma  \ref{cor of bigger} with $j=j_1$ we see that $f_{\lambda} \in \tilde{K}$. 
If  $c \geq mP_i$, then we have $f_{\lambda} \in (X_1^{e_i}X_3^{l_i})+\tilde{K}$ by Lemma \ref{contain} (iii), (iv).  
Next suppose that $u >1$. 
If $c=0$, then  $f_{\lambda}\in (X_0^{n_{i-1}})$. If $0 <  c< mP_{j_1}$, then  we have $f_{\lambda} \in \tilde{K}$ as above. 
Suppose now that $c>mP_{j_1}$, and set $P'=P-P_{j_1}$, $n'=n-n_{j_1}$, $c'=c-mP_{j_1}$, and $\lambda'=(n', c', \w_{(n', c')})$.  
Since we have $\w_{(n_{j_1},mP_{j_1})}+\w_{(n', c')}+n_{j_1}+n'=\w_{(n', c')}+n' < q-p$ by Example \ref{zi}, it follows from Corollary \ref{remainder1} that $\w_{(n,c)}=\w_{(n_{j_1}, mP_{j_1})}+\w_{(n',c')}$. 
Thus we get 
$\lambda=\lambda_{mP_{j_1}}+\lambda'$, and hence 
$f_{\lambda}=f_{\lambda_{mP_{j_1}}}f_{\lambda'}$  
by Lemma \ref{1dim}. 
Now, since we have $u_{n'}=u-1$, 
it follows from the induction hypothesis and the relation $f_{\lambda}=f_{\lambda_{mP_{j_1}}}f_{\lambda'}$ that 
\eqref{less} holds. 

\emph{Step 2.}  
In this step, we prove that \eqref{less} holds for an arbitrary weight $(n, d)$. 
Let 
$\lambda=(n,c,\w_{(n,c)}) \in \Lambda_{(n,d)}$. 
Set $n'=n+\w_{(n, c_{(n,d)})}$, $c'=c-c_{(n,d)}$, and 
$\lambda'=(n', c', \w_{(n', c')}) \in \Lambda_{(n',0)}$. 
Also, let 
\[
\lambda''=\mu\left(0,\frac{qc_{(n,d)}-\w_{(n,c_{(n,d)})}}{q-p}, \frac{pc_{(n,d)}-\w_{(n,c_{(n,d)})}}{q-p}\right)=(n-n', c_{(n,d)}, \w_{(n,c_{(n,d)})}) \in \Lambda_{(n-n', d)}.
\]
Since $n-n'=-\w_{(n,c_{(n,d)})}$, we have 
$\w_{(n', c')}+\w_{(n,c_{(n,d)})}+n'+n-n'=\w_{(n', c')}+n' <q-p$.  
As in Case 2 of Step 1, we see that  
$f_{\lambda}=f_{\lambda'}f_{\lambda''}$. 
On the other hand we have $0 \leq n' < q-p$ by Lemma \ref{min}, and therefore $\dim R_{(n',0)}/J_{(n',0)} \leq 1$ by Step 1.  
This yields that $\dim R_{(n,d)}/J_{(n,d)} \leq 1$, 
since $f_{\lambda'} \in R_{(n',0)}$. 
\end{proof}
\begin{remark}\label{notin}
Let $\lambda=(n,c,\w) \in \Lambda_{(n,0)}$, where $0 \leq n <q-p$. In view of the proof of Theorem \ref{idealO2}, we 
deduce the following. 
\begin{itemize}
\item Suppose that $0 \leq n \leq n_{i-1}$. Then we have $f_{\lambda} \in \tilde{J}^i_0$ if $\lambda \neq (n,0,\w_{(n,0)})$. 
\item Suppose that $n_{i-1} \leq n < q-p$. Then we have 
$f_{\lambda} \in \tilde{J}^i_0$ if $\lambda \neq (n,mP, \w_{(n,mP)})$. 
\end{itemize}
\end{remark}
\begin{proof}[Proof of Theorem \ref{idealO}]
Set $J=\tilde{J}^i_1$. 
As in the proof of Theorem \ref{idealO2}, we show that 
\begin{equation}
\dim \left(\bigoplus_{c \equiv d\; (mod\; m)} R_{(n,c,\w_{(n,c)})}/(R_{(n,c,\w_{(n,c)})} \cap J)\right) \leq 1 \label{less2}
\end{equation}
holds for any weight $(n,d) \in \mathbb Z \times \mathbb Z/m \mathbb Z$. 

\emph{Step 1.} 
In this step, we show that \eqref{less2}
holds if  $0 \leq n < q-p$ and $d=0$. 
Let $\lambda=(n,c,\w_{(n,c)}) \in \Lambda_{(n,0)}$.  

\emph{Case 1 of Step 1.} Let $0 \leq n < n_i$. If $c >0$, 
then we have $f_{\lambda} \in J$ by  Lemma \ref{contain} (i), (ii), and (v). 


\emph{Case 2 of Step 1.} Let $n_i \leq n < n_{i-1}$. 
Taking 
\[
f_{(n,mP_i, \w_{(n,mP_i)})}-f_{(n,0,\w_{(n,0)})}=X_0^{n-n_i}X_1^{e_i}X_3^{l_i}-X_0^n=X_0^{n-n_i}(X_1^{e_i}X_3^{l_i}-X_0^{n_i}) \in J
\] 
into account, it follows from Lemma \ref{contain} (vi) that  \eqref{less2} holds. 

\emph{Case 3 of Step 1.} Let $n_{i-1} \leq n < q-p$. 
As in Case 2 of the proof of Theorem \ref{idealO2}, we have $n-(n_{j_1}+n_{j_2}+\dots+n_{j_{u_n-1}}+n_{j_{u_n}}) < n_{i-1}$ for some $1 \leq j_1,\; j_2,\; \dots,\; j_{u_n} \leq i-1$. Set $u=u_n$, and $P=P_{j_1}+ \dots +P_{j_u}$. 
Suppose that $u=1$. 
If $0 \leq c< mP$, then we see that  $f_{\lambda} \in J$ in a similar way.  
Let $c \geq mP$. Then we can write  
$f_{\lambda}=f_{\lambda'}f_{\lambda_{mP}}$, where $\lambda'= (n-n_{j_1}, c-mP, \w_{(n-n_{j_1},c-mP)})$.  
If $0 \leq n-n_{j_1} <n_i$, then we see that 
\eqref{less2} holds by applying Lemma \ref{contain} (i), (ii), and (v) for $f_{\lambda'}$. 
If $n_i  \leq n-n_{j_1} <n_{i-1}$, then \eqref{less2} follows from a similar argument to the one we used in Case 2. 

\emph{Step 2.} By arguing as in Step 2 of the proof of Theorem \ref{idealO2}, we deduce that $\dim R_{(n,d)}/J_{(n,d)} \leq 1$ holds for any $(n,d) \in \mathbb Z \times \mathbb Z/m \mathbb Z$. 
\end{proof}
\begin{corollary}\label{D}
The quotient ring $A/J^{r+1}_1$ has Hilbert function $h$. 
\end{corollary}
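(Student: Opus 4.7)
My plan is to combine the upper bound $\dim(A/J^{r+1}_1)_{(n,d)} \le 1 = h(n,d)$ coming from Theorem \ref{idealO} (applied with $i = r+1$) with a matching lower bound obtained from an explicit $\G$-equivariant tensor-product decomposition of the quotient.

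First, since $X_2, X_4 \in J^{r+1}_1$, the quotient simplifies to
\[
A/J^{r+1}_1 \;\cong\; \C[X_0, X_1, X_3]/(X_0^k,\; 1 - X_1^{aq}X_3^{ap}).
\]
The relation $X_1^{aq}X_3^{ap} = 1$ forces $X_1$ and $X_3$ to become units, with $X_1^{-1} = X_1^{aq-1}X_3^{ap}$ (and similarly for $X_3$). Since the two defining relations involve disjoint sets of variables and both are $\G$-homogeneous of trivial weight (one uses $m = ak \mid a(q-p) = abk$ to see that $1 - X_1^{aq}X_3^{ap}$ is invariant), I would identify
\[
A/J^{r+1}_1 \;\cong\; \C[X_0]/(X_0^k) \otimes_{\C} \C[G]
\]
as $\G$-equivariant rings, where $G := \{X_1^{aq}X_3^{ap} = 1\} \subset (\C^*)^2$ is a one-dimensional closed subgroup of the torus on which $\G$ acts via its natural action on $(\C^*)^2$ from Section \ref{s-popovvar}.

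Next I would compute the $\G$-Hilbert function of each tensor factor. The factor $\C[X_0]/(X_0^k)$ contributes each weight $(i, 0)$ for $0 \le i < k$ with multiplicity one. For $\C[G]$, a basis is indexed by $\Z^2/\Z(aq, ap)$ (the character lattice of $G$), and $X_1^{d_1}X_3^{d_3}$ carries $\G$-weight $(-pd_1 + qd_3,\; d_3 - d_1 \bmod m)$. The resulting weight map
\[
\Z^2/\Z(aq, ap) \;\longrightarrow\; \Z \times \Z/m\Z
\]
turns out to be injective with image $\{(n, d) : n \equiv qd \pmod{k}\}$. Both statements rest on the elementary identity $\gcd(q - p, qm) = k$, which follows from $q - p = bk$, $m = ak$, $\gcd(a, b) = 1$, and $\gcd(p, q) = 1$ (so that $\gcd(b, q) \mid \gcd(q - p, q) = 1$).

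Finally, convolving the two Hilbert functions, for any $(n, d) \in \Z \times \Z/m\Z$ exactly one $i \in \{0, 1, \dots, k-1\}$ satisfies $n - i \equiv qd \pmod{k}$, giving $\dim(A/J^{r+1}_1)_{(n,d)} \ge 1$. Combined with Theorem \ref{idealO} this forces equality, yielding $\dim(A/J^{r+1}_1)_{(n,d)} = 1 = h(n,d)$ for every weight, which is the claim. The step requiring the most attention is the tensor-product identification above, in particular verifying the $\G$-action preserves $G$; once this is set up, the remaining computations are the elementary gcd manipulations just described, with no new combinatorial input beyond what has already been assembled in Sections \ref{s-gen}--\ref{s-idealo}.
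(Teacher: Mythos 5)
Your argument is correct, but it is genuinely different from the paper's. The paper obtains the lower bound on $\dim(A/J^{r+1}_1)_{(n,d)}$ indirectly: it picks a point $[J]\in\gamma^{-1}(\pi(x'))$ over the two-dimensional orbit $\mathfrak D$ (which has Hilbert function $h$ by the very definition of $\mathcal H$), shows $J^{r+1}_1\subset J$ using Theorem \ref{generator2} together with the argument of \cite[Lemma 4.5]{Ku}, and then invokes Theorem \ref{idealO} for the matching upper bound; as a by-product this identifies the fiber, $\gamma^{-1}(\pi(x'))=\{[J^{r+1}_1]\}$, which is what gets used later in Lemma \ref{outside}. You instead compute the Hilbert function of $A/J^{r+1}_1$ directly from the presentation $\C[X_0,X_1,X_3]/(X_0^k,\,1-X_1^{aq}X_3^{ap})\cong \C[X_0]/(X_0^k)\otimes\C[G]$, with $\C[G]$ having basis indexed by $\Z^2/\Z(aq,ap)$ and the weight map into $\Z\times\Z/m\Z$ injective with image an index-$k$ congruence condition; the convolution with the weights $(i,0)$, $0\le i<k$, then gives each weight space dimension exactly $1$, so in fact your computation does not even need Theorem \ref{idealO} as an upper bound (your ``$\ge 1$'' is really ``$=1$''). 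This is more elementary and self-contained, avoiding the moduli-theoretic input and \cite{Ku}, though it does not yield the fiber description over $\mathfrak D$ that the paper's route provides. Two small points of hygiene: your $\mu_m$-weight convention ($d_3-d_1$ rather than the paper's $d_1-d_3$) flips the sign in the congruence describing the image, which is immaterial since $h\equiv 1$; and only the binomial $1-X_1^{aq}X_3^{ap}$ needs (and has) trivial $\G$-weight, while $X_0^k$ is merely a weight vector of weight $(k,0)$ --- which is all that is required for the ideal to be $\G$-stable and the tensor decomposition to be $\G$-graded.
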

\begin{proof}
We can easily see that $\mathfrak D$ is the  $SL(2) \times \C^*$-orbit of $\pi(x')$, where $ x'=(0,1,0,1,0) \in H_{q-p}$. 
Let $[J] \in \gamma^{-1}(\mathfrak D)$ be such that $\gamma([J])=\pi(x')$.  
Since $(e_{r+1}, l_{r+1})=(aq, ap) \in M^+_{l,m}$, we have $X_1^{e_{r+1}}X_3^{l_{r+1}} \in R^{\G}$ by Remark \ref{invariant ring}. 
Then by a similar argument as in the proof of \cite[Lemma 4.5]{Ku}, we see that $(X_0^{q-p},\; X_2,\; X_4,\; 1-X_1^{e_{r+1}}X_3^{l_{r+1}}) \subset J$. 
Also, concerning $X_2, X_4 \in J$, it follows from Theorem \ref{generator2} that   $s_1X_0^k+s_2X_1^{e_r}X_3^{l_r} \in J$ holds for some $(s_1, s_2) \neq 0$. 
Since we have $e_{r+1} \geq e_r$ and $l_{r+1} \geq l_r$, the condition $1-X_1^{e_{r+1}}X_3^{l_{r+1}} \in J$ implies that $s_2=0$. 
Therefore, we get $J^{r+1}_1 \subset J$, and hence $\dim (A/J^{r+1}_1)_{(n,d)} \geq \dim (A/J)_{(n,d)}=h(n,d)=1$. Taking Theorem \ref{idealO} into account, we obtain $\dim (A/J^{r+1}_1)_{(n,d)}=h(n,d)$. 
\end{proof}
\begin{remark}
By the proof of Corollary \ref{D}, we have $\gamma^{-1}(\pi(x'))=\{[J^{r+1}_1]\}$. 
We will see  in Corollary \ref{realhilb} that $J^i_1$ and $J^i_0$ have Hilbert function $h$ for any $1 \leq i \leq r+1$. 
\end{remark}
\section{Proof of Main Theorem}\label{final}
We have $\Psi(\mathcal H^{main}) \cong \widetilde{\E'}$ by Proposition \ref{image}. 
Therefore, in order to complete the proof of Theorem \ref{main theorem}, we are left to show that $\Psi |_{\mathcal H^{main
}}$ is injective. 
Indeed, considering the fact that $\widetilde{\E'}$ is normal, 
it follows from the Zariski's Main Theorem that $\Psi |_{\mathcal H^{main}}$ being injective implies that $\Psi|_{\mathcal H^{main}}$ being a closed immersion. 

The weighted blow-up $\E'\cong \varphi(\E') \subset \E \times \mathbb P^1 \times \mathbb P^1$ contains the following four $SL(2) \times \C^*$-orbits: 
\begin{align*}
& \mathfrak U \cong (SL(2) \times \C^*) \cdot (\pi(x), [1:0], [0:1]),\; \mbox{where}\; x=(1,1,0,0,1) \in H_{q-p}; \\
& \mathfrak D \cong (SL(2) \times \C^*) \cdot (\pi(x'), [1:0], [1:0]),\; \mbox{where}\; x'=(0,1,0,1,0) \in H_{q-p}; \\
& C \cong (SL(2) \times \C^*) \cdot (O, [1:0], [1:0]); \\
& C' \cong (SL(2) \times \C^*) \cdot (O, [1:0],[0:1]).
\end{align*}
\begin{lemma}\label{outside}
$\psi |_{\mathcal H^{main}} : \mathcal H^{main} \to \E'$  is bijective outside the closed orbit $C$. 
\end{lemma}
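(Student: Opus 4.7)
The plan is to establish set-theoretic bijectivity of $\psi|_{\mathcal H^{main}}$ above each of the three $SL(2) \times \C^*$-orbits $\mathfrak U$, $\mathfrak D$, and $C'$ of $\E'$ separately, using the $SL(2) \times \C^*$-equivariance of $\psi$ to reduce the check to the fiber above one representative point in each orbit. Over $\mathfrak U$ the claim is immediate: $\gamma$ is already an isomorphism onto $\mathfrak U$ by Theorem \ref{idealU}(ii), and $\psi$ merely refines $\gamma$ by the two Grassmannian factors $\eta_{-p,-1}$ and $\eta_{q,1}$. Over $\mathfrak D$, I pick the representative $\pi(x')$ with $x'=(0,1,0,1,0)\in H_{q-p}$: by Corollary \ref{D} and the remark following it, $\gamma^{-1}(\pi(x'))=\{[J^{r+1}_1]\}$, so by equivariance every fiber of $\gamma$ above $\mathfrak D$ is a singleton, and since $\psi$ refines $\gamma$ set-theoretically, each fiber of $\psi|_{\mathcal H^{main}}$ above the lift of $\mathfrak D$ to $\E'$ is at most a singleton; the surjectivity $\psi(\mathcal H^{main})=\E'$ from Corollary \ref{iota} then yields bijectivity.

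The main obstacle is the orbit $C'$. I fix the representative $(O,[1:0],[0:1])\in C'$, and claim the unique preimage in $\mathcal H^{main}$ is $[I_0]$. Any $[I]$ in this fiber must contain $X_2$ and $X_3$ (from the Grassmannian conventions set by the computation $\psi([I_1])=(\pi(x),[1:0],[0:1])$) and satisfy $\gamma([I])=O$, which forces $I$ to contain every non-constant $G$-invariant of $S$, in particular $X_0^{mp}X_1^m$. The ideal $I_0=(X_0^{q-p}-X_1X_4,\,X_2,\,X_3,\,X_0^{mp}X_1^m)$ meets these conditions by Theorem \ref{idealU}(iii), and $[I_0]\in\mathcal H^{main}$ because it arises as the limit $\lim_{s\to 0} s\cdot [I_1]$ under the $\C^*$-action of Remark \ref{clembedding}, which preserves the closed subscheme $\mathcal H^{main}$.

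For uniqueness, I invoke Theorem \ref{generator2}: under $X_2,\,X_3\in I$, the weight space $(S/I)_{(n_i,0)}$ is spanned over $\C$ by the images of $X_0^{n_i}$ and $X_1^{e_i}X_4^{l_i}$, since every other basis vector of $F_{n_i,0}=A(e_i)\otimes B(l_i)\oplus C(n_i)$ contains a factor of $X_2$ or $X_3$. Using the relation $X_0^{q-p}=X_1X_4$ which holds modulo $(X_2,X_3)$, a direct calculation yields $X_1^{e_i}X_4^{l_i}=X_0^{n_i}(X_0^{mp}X_1^m)^{P_i}$ (as elements of $S/(X_2,X_3)$), which lies in $I$ once $X_0^{mp}X_1^m\in I$; hence $X_1^{e_i}X_4^{l_i}=0$ in $S/I$. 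The Hilbert function condition $h(n_i,0)=1$ then forces $X_0^{n_i}\ne 0$ in $S/I$, and running a similar analysis across the remaining weights (paralleling the combinatorial arguments already used in the proofs of Theorems \ref{idealO2} and \ref{idealO}) pins $I$ down to $I_0$. The hard part is verifying weight by weight that no other ideal with the same basic constraints can occur; this is precisely where the combinatorial input from the spherical/Hirzebruch--Jung analysis enters, ensuring that the ``constant'' of the forced linear relation at weight $(0,0)$ must be $X_0^{mp}X_1^m$ rather than the $1-X_0^{mp}X_1^m$ of the generic ideal $[I_1]$.
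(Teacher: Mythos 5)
Your overall strategy (orbit-wise, using equivariance and the representatives $\pi(x')$ and $(O,[1:0],[0:1])$) matches the paper, and your treatment of $\mathfrak U$ and $\mathfrak D$ is fine: over $\mathfrak D$ the fiber of $\psi$ is contained in $\gamma^{-1}(\pi(x'))=\{[J^{r+1}_1]\}$, and surjectivity onto $\varphi(\E')$ from Corollary \ref{iota} gives bijectivity there. The problem is the orbit $C'$: your uniqueness argument is not actually carried out. You reduce it to ``running a similar analysis across the remaining weights'' and explicitly flag the weight-by-weight verification as the hard, unverified part. That is a genuine gap as written, and moreover the detour through Theorem \ref{generator2} and the Hirzebruch--Jung combinatorics is unnecessary here: those tools are only needed for the fibers over the closed orbit $C$ (Proposition \ref{inj}), which this lemma deliberately excludes.

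The step you are missing is that the constraints you have already listed pin the ideal down immediately. If $[I]\in\Psi^{-1}$... rather, if $[I]$ lies over $(O,[1:0],[0:1])$, then $X_2,X_3\in I$, so in $S$ one has $X_0^{q-p}-X_1X_4=-X_2X_3\in I$; and $\gamma([I])=O$ puts every nonconstant $\G$-invariant in $I$, in particular $X_0^{mp}X_1^m$. Hence $I_0\subset I$. Now Theorem \ref{idealU}(i) says $A/I_0$ has Hilbert function $h$, the same as $A/I$ since $[I]\in\mathcal H$; the surjection $A/I_0\to A/I$ is then an isomorphism on each finite-dimensional weight space, so $I=I_0$. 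This is exactly the paper's one-line argument (``$=\{[I]\in\mathcal H:\ I_0\subset I\}=\{[I_0]\}$ concerning Theorem \ref{idealU}''), and it uses only facts you had already assembled. Finally, your claim that $[I_0]=\lim_{s\to 0}s\cdot[I_1]$ is plausible but both unproven and unneeded: surjectivity of $\psi|_{\mathcal H^{main}}$ onto $\varphi(\E')\ni(O,[1:0],[0:1])$ already provides a preimage in $\mathcal H^{main}$, which by the uniqueness just established must be $[I_0]$.
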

\begin{proof}
We show the bijectivity orbit-wise. 
Taking the construction of $\psi$ and the proof of Corollary \ref{D} into account, we see that 
$\psi([J^{r+1}_1])=(\pi(x'), [1:0], [1:0])$, and that 
\[
\psi^{-1}(\psi([J^{r+1}_1]))=\{[J] \in \mathcal H\; :\; X_2, \; X_4 \in J,\; \gamma([J])=\pi(x')\}=\{[J^{r+1}_1]\}.
\] 
Thus, $\psi |_{\mathcal H^{main}}$ is bijective over $\mathfrak D$. Analogously, we see that 
\begin{align*}
\psi^{-1}(O, [1:0],[0:1])& =\{[I] \in \mathcal H\; :\; X_2,\; X_3 \in I,\; \gamma([I])=O\}\\
&=\{[I] \in \mathcal H\; :\; I_0 \subset I\}=\{[I_0]\}
\end{align*}
concerning Theorem \ref{idealU}. 
Therefore, $\psi |_{\mathcal H^{main}}$ is bijective over $C'$. 
\end{proof}
Recall that the toroidal spherical variety $\widetilde{\E'}$ corresponds to the colored fan $\mathfrak F(\widetilde{\E'})$ having $(\mathcal C_i, \phi)$ $(1 \leq i \leq r+1)$ as its maximal colored cones. 
By Remark \ref{orbit}, each colored cone in $\mathfrak F(\widetilde{\E'})$ corresponds bijectively to an $SL(2) \times \C^*$-orbit in $\widetilde{\E'}$. 
The closed orbit $Y_i$ ($1 \leq i \leq r+1$) corresponds to the maximal colored cone $(\mathcal C_i, \phi)$. 
For each $1 \leq i \leq r$, we denote by $O_i$ the $SL(2) \times \C^*$-orbit corresponds to the colored cone $(\mathbb Q_{\geq 0} \rho_i, \phi)$. 
Let 
\[
g=
\begin{pmatrix}
0 & -1 \\
1 & 0
\end{pmatrix}
\in SL(2),
\] 
and denote by $y_i$ (resp. $y_i'$) the point of $\P(V^{\vee})$ whose coordinates are all $0$ except for the one(s)  corresponding to the basis $g \cdot f_{i-1}^{\vee}$ (resp. the bases $g \cdot f_{i-1}^{\vee}$ and $g \cdot f_i^{\vee}$). 
Then we have 
\[
\Phi(Y_i)=(SL(2) \times \C^*) \cdot (O, y_i)
\]
and 
\[
\Phi(O_i)=(SL(2) \times \C^*) \cdot (O, y_i').
\]
\begin{proposition}\label{inj}
$\Psi|_{\mathcal H^{main}}$ is injective. 
\end{proposition}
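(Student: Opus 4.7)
The plan is to establish injectivity orbit-by-orbit, exploiting the $(SL(2) \times \C^*)$-equivariance of $\Psi|_{\mathcal H^{main}}$. A first reduction is immediate: the commutative diagram closing Section \ref{s-second} identifies $\psi|_{\mathcal H^{main}}$ with the composition of $\Psi|_{\mathcal H^{main}}$ followed by the minimal resolution $\widetilde{\E'} \to \E'$. Therefore, if $\Psi([I]) = \Psi([I'])$ and the common image in $\widetilde{\E'}$ lies over a point of $\E' \setminus C$, Lemma \ref{outside} already forces $[I] = [I']$. Only the orbits of $\widetilde{\E'}$ sitting above the closed orbit $C \subset \E'$ require further attention; these are the exceptional-divisor orbits $O_1, \ldots, O_r$ and the closed orbits $Y_1, \ldots, Y_{r+1}$.

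For each such orbit I would check that the fiber over its distinguished representative --- $(O, y_i)$ for $Y_i$ and $(O, y_i')$ for $O_i$ --- is a single ideal. In the case of $Y_i$, I claim $(\Psi|_{\mathcal H^{main}})^{-1}(O, y_i) = \{[J_1^i]\}$. The condition $\gamma([I]) = O$ puts $[I]$ in the singular fiber and in particular forces all nontrivial $(SL(2) \times \C^*)$-invariants $F_1, \ldots, F_{b-1}$ into $I$ (cf.\ Remark \ref{another}). The projective vanishing pattern at $y_i$ --- all coordinates of $\P(V^\vee)$ zero except the one dual to $g \cdot f_{i-1}$ --- is then decoded, via the isomorphism $V \cong \widetilde V$ and the Segre embedding, into explicit vanishing of the compositions $F_{n_j,0} \to \C[H_{q-p}]/I$ for each $j$. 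The outcome is the containments $X_2, X_4 \in I$, $X_0^{n_{i-1}} \in I$, and $X_0^{n_i} - X_1^{e_i} X_3^{l_i} \in I$. Together with the invariants above, these give $J_1^i \subset I$; combined with the Hilbert-function bound of Theorem \ref{idealO} and the fact that $A/I$ has Hilbert function exactly $h$, equality $I = J_1^i$ follows.

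The case of $O_i$ is entirely parallel. The representative $(O, y_i')$ now allows both $g \cdot f_{i-1}^\vee$ and $g \cdot f_i^\vee$ coordinates to be nonzero, which relaxes the relation tying $X_0^{n_i}$ to $X_1^{e_i} X_3^{l_i}$ and instead forces the separate containments $X_0^{n_i},\, X_1^{e_i} X_3^{l_i} \in I$. This yields $J_0^i \subset I$, and Theorem \ref{idealO2} closes the argument. Throughout, one uses the fact that $J_1^i$ and $J_0^i$ themselves belong to $\mathcal H^{main}$, i.e.\ that their Hilbert functions equal $h$; this is the content of Corollary \ref{realhilb}, invoked already at the close of Section \ref{s-idealo}.

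The principal difficulty is precisely the decoding step: translating the projective vanishing conditions at $y_i$ (respectively $y_i'$) into explicit generators for $I$. This requires tracking the Clebsch--Gordan embedding $\widetilde V \hookrightarrow V' = \bigotimes_j F_{n_j, 0}$ in coordinates, computing the $g$-action on the highest-weight vectors $f_{i-1}$ and $f_i$, and identifying which products $\phi_0(\beta_0) \cdots \phi_r(\beta_r)$ must vanish for a given vanishing pattern on $\P(\widetilde V^\vee)$. Once these combinatorial identifications are carried out carefully, the remainder of the argument reduces to a direct application of Theorems \ref{idealO2} and \ref{idealO} together with the Hilbert-function constraint on $A/I$.
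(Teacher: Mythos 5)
Your overall framework is the same as the paper's: reduce, via the factorization of $\psi|_{\mathcal H^{main}}$ through $\Psi|_{\mathcal H^{main}}$ and Lemma \ref{outside}, to the orbits of $\widetilde{\E'}$ lying over $C$, and then show that the fiber of each chosen representative $(O,y_i)$, $(O,y_i')$ is a single point, closing with the Hilbert-function bounds of Theorems \ref{idealO2} and \ref{idealO}. However, the central identification you assert is inverted, and the inversion traces back to the very ``decoding'' step you defer. A coordinate of $\Psi([I])$ dual to a basis vector of $V\cong\widetilde V$ vanishes exactly when the corresponding element of $V'=F_{n_0,0}\otimes\dots\otimes F_{n_r,0}$ maps to zero in $\C[H_{q-p}]/I$; membership in $I$ is read off from \emph{vanishing} coordinates, while a nonzero coordinate certifies non-membership. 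At $y_i$ the coordinate dual to $g\cdot f_{i-1}^{\vee}$ is nonzero and the one dual to $g\cdot f_i^{\vee}$ is zero; comparing the two resulting product identities forces the single monomial $X_1^{e_i}X_3^{l_i}$ into $I$ (and further comparisons give $X_2,\,X_4,\,X_0^{n_{i-1}}\in I$ and $K\subset I$), so the fiber of $(O,y_i)$ is $\{[J^i_0]\}$, not $\{[J^i_1]\}$. At $y_i'$ the two nonzero (and, for the chosen representative, equal) coordinates yield a proportionality $X_0^{n_i}\equiv X_1^{e_i}X_3^{l_i}$ modulo $I$, i.e.\ precisely the binomial generator of $J^i_1$, so the fiber of $(O,y_i')$ is $\{[J^i_1]\}$.

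Your asserted outcomes are not just mislabeled; they cannot occur. Two nonzero coordinates never ``force the separate containments $X_0^{n_i},\,X_1^{e_i}X_3^{l_i}\in I$'': if both containments held then, together with $X_2,X_4\in I$, we would have $F_{n_i,0}\subset I$, and Theorem \ref{generator2} would give $\dim(\C[H_{q-p}]/I)_{(n_i,0)}=0$, contradicting $h(n_i,0)=1$; this is exactly the contradiction used in Proposition \ref{morphism} to show that $\pr$ is defined on $\Psi'(\mathcal H)$, so no point of $\mathcal H$ exhibits that vanishing pattern. Similarly, at $y_i$ your conclusion $I=J^i_1$ is incompatible with the constraint the vanishing coordinate actually imposes: it forces $X_1^{e_i}X_3^{l_i}\in I$, and if in addition $X_0^{n_i}-X_1^{e_i}X_3^{l_i}\in I$ then $X_0^{n_i}\in I$ and the same Hilbert-function contradiction appears. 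A further caution: Corollary \ref{realhilb} is deduced \emph{from} this proposition, so it cannot be invoked inside its proof; it is also unnecessary, since $J^i_0\subset I$ (resp.\ $J^i_1\subset I$) together with the bound $\dim(A/J^i_0)_{(n,d)}\le h(n,d)$ (resp.\ for $J^i_1$) and the exact Hilbert function of $A/I$ already forces equality. Since the Clebsch--Gordan bookkeeping you postpone is where the entire content of the proof lies, and the output you assert for it is the wrong one, the proposal as written does not establish the proposition; carried out correctly, that computation yields $\Psi^{-1}(O,y_i)=\{[J^i_0]\}$ and $\Psi^{-1}(O,y_i')=\{[J^i_1]\}$, after which your concluding Hilbert-function argument does finish the proof as in the paper.
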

\begin{proof}
Taking Lemma \ref{outside} into account, it suffices to show that each of the set-theoretical fibers of $(O,y_i)$ and $(O, y_i')$ consists of one point. 
We show that $\Psi^{-1}(O,y_i)=\{[J^i_0]\}$ and $\Psi^{-1}(O,y_i')=\{[J^i_1]\}$ hold. Let  $[J] \in \Psi^{-1}(O, y_i)$, and write its image under $\eta_{n_0,0}$ and $\eta_{n_i,0}$ as 
\[
\eta_{n_0,0}([J])=[t^{(0)}_{0,0}: t^{(0)}_{e_0,0}: t^{(0)}_{0,l_0}: t^{(0)}_{e_0,l_0}]
\]
and 
\[
\eta_{n_i,0}([J])=[u^{(i)}: t^{(i)}_{0,0}: \dots : t^{(i)}_{e,l}: \dots  : t^{(i)}_{e_i, l_i}] \quad (1 \leq i \leq r)
\]
as in the proof of 
Proposition \ref{morphism}. 
First, it follows from $\gamma([J])=O$ that $K \subset J$. 
Since we have 
\[
g \cdot f_{i-1}=X^{e_0+ e_1+ \dots +e_{i-1}}Y^{l_0+l_1+ \dots +l_{i-1}}=X_0^{-(n_0+n_1+ \dots +n_{i-1})}X_1^{e_0+e_1+\dots +e_{i-1}}X_3^{l_0+l_1+\dots +l_{i-1}},
\] 
we see that $g \cdot f_{i-1}$ maps to 
\begin{align}
&X_1^{e_0} \otimes X_1^{e_1} \otimes \dots \otimes X_1^{e_{i-1}} \otimes X_3^{l_0} \otimes X_3^{l_1} \otimes \dots \otimes X_3^{l_{i-1}} \otimes X_0^{n_{i}} \otimes \dots \otimes X_0^{n_r} \notag \\
& \qquad \quad  \quad \in A(e_0, e_1, \dots , e_{i-1}) \otimes B(l_0, l_1, \dots , l_{i-1}) \otimes C(n_{i}) \otimes \dots \otimes C(n_r) \notag 
\end{align}
under the isomorphism $V \cong \widetilde{V}$. 
Therefore, by the definition of $y_i$, we have 
\begin{equation}
t^{(0)}_{e_0,l_0}  t^{(1)}_{e_1, l_1} \cdots t^{(i-1)}_{e_{i-1}, l_{i-1}}  u^{(i)}  \cdots  u^{(r)}=s  \label{eq1}
\end{equation}
for some $s \in \C^*$. 
Similarly, by paying attention to  the basis $g \cdot f_i^{\vee}$, we have 
\begin{equation}
t^{(0)}_{e_0,l_0} t^{(1)}_{e_1, l_1}  \cdots t^{(i-1)}_{e_{i-1}, l_{i-1}}  t^{(i)}_{e_i, l_i} u^{(i+1)}  \cdots  u^{(r)}=0.  \label{eq2}
\end{equation}
By \eqref{eq1} and \eqref{eq2} we have $t^{(i)}_{e_i, l_i}=0$, which implies that $X_1^{e_i}X_3^{l_i} \in J$.  
Next notice that the vector 
\[
Z^{e_0}X^{e_1+\dots +e_{i-1}}W^{l_0}Y^{l_1+ \dots +l_{i-1}}=X_0^{-(n_0+n_1+ \dots +n_{i-1})}X_2^{e_0}X_1^{e_1+\dots +e_{i-1}}X_4^{l_0}X_3^{l_1+ \dots +l_{i-1}}
\]
maps to 
\begin{align*}
& X_2^{e_0} \otimes X_4^{l_0} \otimes X_1^{e_1} \otimes X_3^{l_1} \otimes \dots \otimes X_1^{e_{i-1}} \otimes X_3^{l_{i-1}} \otimes X_0^{n_i} \dots \otimes X_0^{n_r} \\
& \quad \in A(e_0) \otimes B(l_0) \otimes A(e_1) \otimes B(l_1) \otimes \dots \otimes A(e_{i-1}) \otimes B(l_{i-1}) \otimes C(n_i) \otimes \dots \otimes C(n_r)
\end{align*}
under $V \cong \widetilde{V} \subset V'$, which yields that 
\begin{equation}
t^{(0)}_{0,0} t^{(1)}_{e_1, l_1} \cdots t^{(i-1)}_{e_{i-1}, l_{i-1}}  u^{(i)} \cdots u^{(r)}=0.\label{eq3}
\end{equation}
Comparing \eqref{eq1} and \eqref{eq3}, we have $t^{(0)}_{0,0}=0$, which implies that $X_2^{e_0}X_4^{l_0}=X_2X_4 \in J$. 
In a similar way, we also have $X_2X_3,\; X_1X_4 \in J$. Concerning \eqref{s1s2} and \eqref{s3s4}, it follows that  $(X_2, X_4) \subset J$ . Therefore, we get $(X_0^{q-p}, X_2, X_4, X_1^{e_i}X_3^{l_i}) +K \subset J$. 
Now, suppose that $i=1$. Then we have $J^1_0 \subset J$, since $n_0=q-p$. Taking Theorem \ref{idealO} into account, it follows that $J^1_0=J$, and thus we get $\Psi^{-1}(O,y_1)=\{[J^1_0]\}$. 
Next, suppose that $i >1$. Since the vector
\[
X^{e_0+e_1+ \dots +e_{i-2}}Y^{l_0+l_1+ \dots +l_{i-2}}=X_0^{-(n_0+n_1+ \dots +n_{i-2})}X_1^{e_0+e_1+ \dots +e_{i-2}}X_3^{l_0+l_1+ \dots +l_{i-2}}
\]
maps to 
\[
X_1^{e_0} \otimes X_3^{l_0} \otimes X_1^{e_1} \otimes X_3^{l_1} \otimes \dots \otimes X_1^{e_{i-2}}X_3^{l_{i-2}} \otimes X_0^{n_{i-1}} \otimes \dots \otimes X_0^{n_r}
\]
under the isomorphism $V \cong \widetilde{V} \subset V'$, we see that 
\begin{equation}
t^{(0)}_{e_0,l_0}  t^{(1)}_{e_1, l_1}  \cdots t^{(i-2)}_{e_{i-2}, l_{i-2}}  u^{(i-1)} \cdots u^{(r)}=0 \label{eq4}.
\end{equation}
By \eqref{eq1} and \eqref{eq4}, we get $u^{(i-1)}=0$, and hence $X_0^{i-1} \in J$. 
Summarizing, we get $J^i_0 \subset J$. Therefore, we have $J^i_0=J$ and $\Psi^{-1}(O,y_i)=\{[J^i_0]\}$. 
Next, let $[I] \in \Psi^{-1}(O, y_i')$, and  write
\[
\eta_{n_0,0}([I])=[t^{(0)}_{0,0}: t^{(0)}_{e_0,0}: t^{(0)}_{0,l_0}: t^{(0)}_{e_0,l_0}]
\]
and 
\[
\eta_{n_i,0}([I])=[u^{(i)}: t^{(i)}_{0,0}: \dots : t^{(i)}_{e,l}: \dots  : t^{(i)}_{e_i, l_i}] \quad (1 \leq i \leq r).
\]
as above. 
In a similar manner, we can show that 
 $(X_0^{n_{i-1}},\; X_2,\; X_4)+K \subset I$. 
Moreover, we see that 
\[
t^{(0)}_{e_0,l_0} t^{(1)}_{e_1, l_1} \cdots t^{(i-1)}_{e_{i-1}, l_{i-1}}  u^{(i)} u^{(i+1)}\cdots u^{(r)}=s
\]
and 
\[
t^{(0)}_{e_0,l_0} t^{(1)}_{e_1, l_1}  \cdots t^{(i-1)}_{e_{i-1}, l_{i-1}}  t^{(i)}_{e_i, l_i}  u^{(i+1)} \cdots  u^{(r)}=s
\] 
hold for some $s \in \C^*$. 
Therefore, we get $u^{(i)}=t^{(i)}_{e_i, l_i}$. Since we have already seen that 
 $X_2, X_4 \in I$, this implies that $\eta_{n_i,0}([I])=[1: 0:\dots :0: 1]$. It follows that $X_0^{n_i}-X_1^{e_i}X_3^{l_i} \in I$ concerning the construction of $\eta_{n_i,0}$. 
 As a consequence, we get $J^i_1 \subset I$, and therefore $I=J^i_1$. 
\end{proof}
\begin{corollary}\label{realhilb}
The quotient rings $A/J^i_1$ and $A/J^i_0$ have  
Hilbert function $h$ for any  $1 \leq i \leq r+1$. 
\end{corollary}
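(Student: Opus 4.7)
The plan is to combine the upper bounds from Theorems \ref{idealO2} and \ref{idealO} with the surjectivity of $\Psi|_{\mathcal H^{main}}$ onto $\widetilde{\E'}$ established in Proposition \ref{image}. The point is that the calculation in the proof of Proposition \ref{inj} does not merely identify a set-theoretical preimage: it shows that any ideal $J$ of a closed point of $\mathcal H$ mapping to $(O,y_i)$ must contain $J^i_0$, and similarly any ideal mapping to $(O,y_i')$ must contain $J^i_1$. Since such closed points exist by surjectivity, $J^i_0$ and $J^i_1$ are automatically sandwiched between the upper bound $h$ and the Hilbert function of an actual point of $\mathcal H$, forcing equality.

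More concretely, I would first fix $1 \leq i \leq r+1$ and consider the point $\Phi(y) = (O, y_i) \in \Phi(Y_i) \subset \Phi(\widetilde{\E'})$ where $y$ is a $\tilde{B}$-fixed point in the closed orbit $Y_i$. By Proposition \ref{image}, there exists $[J] \in \mathcal H^{main}$ with $\Psi([J]) = (O, y_i)$. By the argument in the proof of Proposition \ref{inj} (the $y_i$-case), $J^i_0 \subset J$, hence
\[
\dim (A/J^i_0)_{(n,d)} \;\geq\; \dim (A/J)_{(n,d)} \;=\; h(n,d).
\]
Combining this with Theorem \ref{idealO2} yields equality, so $A/J^i_0$ has Hilbert function $h$ (and incidentally $J = J^i_0$, giving a second proof of the set-theoretical statement $\Psi^{-1}(O,y_i) = \{[J^i_0]\}$).

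For $J^i_1$ with $1 \leq i \leq r$, the same strategy works, using instead the orbit $O_i$ corresponding to the ray $\mathbb Q_{\geq 0} \rho_i$ and a $\tilde{B}$-semi-invariant point of it, whose image under $\Phi$ is $(O, y_i')$. Surjectivity of $\Psi|_{\mathcal H^{main}}$ onto $\Phi(\widetilde{\E'})$ produces $[I] \in \mathcal H^{main}$ with $\Psi([I]) = (O, y_i')$, the proof of Proposition \ref{inj} (the $y_i'$-case) gives $J^i_1 \subset I$, and Theorem \ref{idealO} yields the matching upper bound. The remaining case $J^{r+1}_1$ is already handled by Corollary \ref{D}.

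There is essentially no obstacle here: all the heavy lifting has been done in Theorems \ref{idealO2} and \ref{idealO} (upper bounds via the combinatorial analysis of \S \ref{s-gen}) and in Propositions \ref{image} and \ref{inj} (existence of preimages and the containments $J^i_0 \subset J$, $J^i_1 \subset I$). The only subtlety is to make sure the argument of Proposition \ref{inj} is invoked as producing a \emph{containment} of ideals rather than as an identification of closed points, since at this stage we do not yet know a priori that $[J^i_0]$ and $[J^i_1]$ lie in $\mathcal H$; that conclusion is precisely what Corollary \ref{realhilb} asserts and is obtained as a by-product.
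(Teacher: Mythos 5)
Your proposal is correct and follows essentially the same route the paper intends: the corollary is exactly the by-product of combining the surjectivity $\Psi(\mathcal H^{main})=\Phi(\widetilde{\E'})$ from Proposition \ref{image} (which makes the fibers over $(O,y_i)$ and $(O,y_i')$ nonempty) with the containments $J^i_0\subset J$, $J^i_1\subset I$ extracted in the proof of Proposition \ref{inj} and the upper bounds of Theorems \ref{idealO2} and \ref{idealO}, the case $J^{r+1}_1$ being Corollary \ref{D}. Your explicit remark that the argument must be read as producing a containment of ideals (since $[J^i_0],[J^i_1]\in\mathcal H$ is not yet known) is precisely the point the paper leaves implicit.
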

\begin{remark}\label{notin2}
Let $\lambda=(n,c,\w) \in \Lambda_{(n,0)}$, where $0 \leq n <q-p$. 
Taking Remark \ref{notin} and Corollary \ref{realhilb} into account, we see that the following properties are true. 
\begin{itemize}
\item Suppose that $0 \leq n \leq n_{i-1}$. Then we have $f_{\lambda} \in \tilde{J}^i_0$ if and only if $\lambda \neq (n,0,\w_{(n,0)})$. 
\item Suppose that $n_{i-1} \leq n < q-p$. Then we have 
$f_{\lambda} \in \tilde{J}^i_0$ if and only if $\lambda \neq (n,mP, \w_{(n,mP)})$. 
\end{itemize}
\end{remark}
Let us denote by $\mathcal H^{\tilde{B}}$ the set of $\tilde{B}$-fixed points of $\mathcal H$. 
\begin{corollary}\label{Borel-fixed points}
We have $\mathcal H^{\tilde{B}}=\{[J^1_0],\; \dots,\; [J^{r+1}_0]\}$.
\end{corollary}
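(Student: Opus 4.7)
The plan is to prove both inclusions. For $\{[J^1_0], \dots, [J^{r+1}_0]\} \subset \mathcal H^{\tilde{B}}$, I would verify each $[J^i_0]$ is $\tilde B$-fixed directly: by Corollary \ref{realhilb}, $[J^i_0] \in \mathcal H$, and since modulo $(X_2, X_4)$ the unipotent radical $U \subset B$ acts trivially on $R = \C[X_0, X_1, X_3]$ (it sends $X_1 \mapsto X_1 + uX_2$ and $X_3 \mapsto X_3 + uX_4$), while $T$ scales $X_1, X_3$ by $t$, every monomial generator of $\tilde J^i_0 = (X_0^{n_{i-1}}, X_1^{e_i} X_3^{l_i}) + \tilde K$ is a $T \times \C^*$-weight vector; the $\tilde B$-stability of $J^i_0$ follows (also using $X_2, X_4$ are $B$-semi-invariant modulo the lower index generators).

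For the converse, let $[J] \in \mathcal H^{\tilde{B}}$. First I would show $X_2, X_4 \in J$: the $B$-submodule $F_{-p, -1} = \langle X_1, X_2 \rangle$ has only $\{0\}$, $\langle X_2 \rangle$, and itself as $B$-stable subspaces (since $U$ fixes $X_2$). By \eqref{s1s2}, $J \cap F_{-p, -1} \ne 0$, and because $F_{-p, -1}$ generates the weight space $S_{(-p, -1)}$ over $S^{\G}$ by \cite[Lemma 4.3]{Ku}, having $J \cap F_{-p, -1} = F_{-p, -1}$ would force $S_{(-p, -1)} \subset J$, contradicting $h(-p, -1) = 1$. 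Hence $J \cap F_{-p, -1} = \langle X_2 \rangle$, so $X_2 \in J$; analogously $X_4 \in J$. Writing $\tilde J := J/(X_2, X_4) \subset R$, the identity $\dim (R/\tilde J)_{(0, 0)} = h(0, 0) = 1$ forces $R^{\G} \cap \tilde J = R^{\G}_+$, hence $\tilde K = R \cdot R^{\G}_+ \subset \tilde J$.

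Next, for each $1 \le i \le r$, Theorem \ref{generator2} yields $R_{(n_i, 0)} = R^{\G} X_0^{n_i} + R^{\G} X_1^{e_i} X_3^{l_i}$; modulo $\tilde K$ this becomes a two-dimensional $\C$-span whose generators carry distinct $T \times \C^*$-weights $(0, 0)$ and $(e_i + l_i, l_i - e_i)$ respectively. Combined with the $T \times \C^*$-stability of $\tilde J$ and $\dim (R/\tilde J)_{(n_i, 0)} = 1$, this forces exactly one of $X_0^{n_i}$ and $X_1^{e_i} X_3^{l_i}$ to lie in $\tilde J$. Using $n_i > n_{i+1}$ from Lemma \ref{ni} and the monotonicity $e_i \le e_{i+1}$, $l_i \le l_{i+1}$ (extracted from the recursion \eqref{piqi} using that $P_i - Q_i$ is non-decreasing, which follows by induction from $c_k \ge 2$), the set $\{i : X_0^{n_i} \in \tilde J\}$ is closed below and $\{i : X_1^{e_i} X_3^{l_i} \in \tilde J\}$ is closed above. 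Hence there is a unique $i^* \in \{1, \dots, r+1\}$ partitioning $\{1, \dots, r\}$ into $\{1, \dots, i^* - 1\}$ and $\{i^*, \dots, r\}$ accordingly. The boundary cases are handled by the hypersurface relation $X_0^{q-p} = X_1 X_4 - X_2 X_3 \in J$ (covering $i^* = 1$) and by $X_1^{aq} X_3^{ap} \in \tilde K$ (covering $i^* = r+1$, since $(aq, ap) \in M^+_{l,m}$). Thus $\tilde J \supset (X_0^{n_{i^* - 1}}, X_1^{e_{i^*}} X_3^{l_{i^*}}) + \tilde K = \tilde J^{i^*}_0$, and since both quotients have Hilbert function $h$ by Corollary \ref{realhilb}, $\tilde J = \tilde J^{i^*}_0$ and therefore $J = J^{i^*}_0$.

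The main technical point will be the monotonicity of $e_i$ and $l_i$ used in the third paragraph; once the Hirzebruch--Jung recursion yields $P_i - Q_i$ non-decreasing, the remainder of the argument is essentially bookkeeping on weight spaces, resting on the previously established Theorem \ref{generator2} and Corollary \ref{realhilb}.
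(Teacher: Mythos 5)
Your proposal is correct and follows essentially the same route as the paper's proof: obtain $X_2,X_4\in J$ from \eqref{s1s2}, \eqref{s3s4} and $\tilde{B}$-stability, deduce $K\subset J$, use Theorem \ref{generator2} together with $h(n_i,0)=1$ and the torus weights to force the dichotomy $X_0^{n_i}\in J$ or $X_1^{e_i}X_3^{l_i}\in J$, locate the transition index (the paper takes $i=\min\{j:X_1^{e_j}X_3^{l_j}\in J\}$, which makes your monotonicity argument for $e_i,l_i$ unnecessary, though it is correct), and conclude by comparing Hilbert functions via Corollary \ref{realhilb}. One step needs repair as stated: $\dim(R/\tilde{J})_{(0,0)}=h(0,0)=1$ alone does not force $R^{\G}\cap\tilde{J}$ to be the augmentation ideal, since any maximal ideal of the invariant ring has codimension one (for instance $I_1$ has $\dim(A/I_1)_{(0,0)}=1$ but meets the invariants in the maximal ideal of $\pi(x)$, not of $O$); you must invoke the standing $\tilde{B}$-stability of $J$ --- either via the torus weights of the generators of $R^{\G}$, all of which are positive, or, as the paper does, via equivariance of $\gamma$, which forces $\gamma([J])$ to be the unique $\tilde{B}$-fixed point $O$ of $\E$ and hence $K\subset J$.
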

\begin{proof}
Let $[J] \in \mathcal H^{\tilde{B}}$. 
Then, we have $s_1X_1+s_2X_2 \in J$ for some $(s_1,s_2) \neq 0$ by \eqref{s1s2}. 
Since $J$ is stable under the action of $\tilde{B}$, we have $X_2 \in J$.  
Similarly, we have $X_4 \in J$ by \eqref{s3s4}. 
Therefore, $(X_2, X_4)+K \subset J$ concerning $\gamma([J])=O$. 
By Theorem \ref{generator2}, we see that either 
$X_0^{n_j} \in J$ or $X_1^{e_j}X_3^{l_j} \in J$ holds for any $1 \leq j\leq r+1$, since $h(n_j, 0)=1$. 
Let 
$i=\min\{j\; :\; X_1^{e_j}X_3^{l_j} \in J\}$.
Then, we have $(X_0^{n_{i-1}},\; X_1^{e_i}X_3^{l_i})  \subset J$, and hence $J^i_0 \subset J$. This implies that $J^i_0=J$, since both $J^i_0$ and $J$ have Hilbert function $h$. 
\end{proof}
\begin{corollary}\label{duval}
The invariant Hilbert scheme $\mathcal H$ coincides with $\mathcal H^{main}$. 
\end{corollary}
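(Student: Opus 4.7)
The plan is to combine Corollary \ref{Borel-fixed points} with two further facts: every irreducible component of $\mathcal H$ meets the $\tilde B$-fixed locus, and $\mathcal H$ is smooth at each of the finitely many $\tilde B$-fixed points $[J^i_0]$, all of which already lie in $\mathcal H^{main}$. Since $\mathcal H^{main}\cong\widetilde{\E'}$ has dimension $3$, this will force $\mathcal H=\mathcal H^{main}$ as schemes.

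For the first fact, note that the $\C^{*}$-factor of $\tilde B=B\times\C^{*}$ acts attractively on $\E$: in the non-toric case every non-constant generator $X^{i_\nu}Y^{j_\nu}$ of $\C[\E]$ satisfies $i_\nu>j_\nu$ and hence has strictly positive $\C^{*}$-weight (cf.\ Remark \ref{explicit}). Since the Hilbert--Chow morphism $\gamma\colon\mathcal H\to\E$ is projective and $\C^{*}$-equivariant, the valuative criterion of properness applied to each orbit map $\C^{*}\to\mathcal H$, $t\mapsto t\cdot[Z]$, shows that $\lim_{t\to 0}t\cdot[Z]$ exists in $\mathcal H$ for every closed point $[Z]$ and lies in $\gamma^{-1}(O)$. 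In particular $\mathcal H^{\C^{*}}\subset\gamma^{-1}(O)$ is projective. Any irreducible component $\mathcal H_0$ of $\mathcal H$ is $\tilde B$-stable by connectedness, so $\mathcal H_0^{\C^{*}}$ is a nonempty closed $B$-stable subscheme of $\mathcal H^{\C^{*}}$ (recall $B$ and $\C^{*}$ commute inside $\tilde B$). Borel's fixed-point theorem then yields a $B$-fixed, hence $\tilde B$-fixed, point of $\mathcal H_0$; by Corollary \ref{Borel-fixed points} this point is one of the $[J^i_0]$, and the proof of Proposition \ref{inj} places every $[J^i_0]$ inside $\mathcal H^{main}$.

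For the second fact, I would verify $\dim T_{[J^i_0]}\mathcal H=3$ for each $1\le i\le r+1$, using the standard formula $T_{[Z]}\mathcal H\cong\Hom^{\G}_{S}(I_Z/I_Z^{2},\,S/I_Z)$ from \cite{AB,B}. This is a $\G$-equivariant $\Hom$ computation that can be organized weight space by weight space, starting from the explicit generators of $J^i_0$ listed in Remark \ref{another}, namely $X_0^{n_{i-1}},\; X_2,\; X_4,\; X_1^{e_i}X_3^{l_i},\; F_1,\dots,F_{b-1}$, and using the dimension count of weight spaces of $S/J^i_0$ already established in Theorem \ref{idealO2}. The expected answer of $3$ matches the tangent dimension of $\widetilde{\E'}$ at the closed orbit $Y_i$: two contributions from the two extremal rays of the cone $\mathcal C_i$ and one transverse direction coming from the $SL(2)/B$-factor in the local structure theorem for the toroidal variety $\widetilde{\E'}$.

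Putting the two steps together, every irreducible component of $\mathcal H$ passes through some $[J^i_0]\in\mathcal H^{main}$ at which $\mathcal H$ is smooth, so local irreducibility forces each such component to coincide with $\mathcal H^{main}$; moreover, smoothness at the fixed locus combined with the smoothness of $\mathcal H^{main}$ upgrades this to a scheme-theoretic equality $\mathcal H=\mathcal H^{main}$. The main obstacle will be the tangent-space calculation: unlike the toric case \cite{Ku}, the generators of $J^i_0$ involve both the mixed monomial $X_1^{e_i}X_3^{l_i}$ whose exponents are governed by the Hirzebruch--Jung recursion of Lemma \ref{ni} and the invariants $F_1,\dots,F_{b-1}$, and the intricate syzygies among them -- recursive in the $P_i,\,Q_i$ -- must be carefully bookkept, very much in the spirit of the combinatorial arguments used for Theorems \ref{idealO2} and \ref{idealO}, to rule out spurious $\G$-equivariant homomorphisms that would inflate $\dim T_{[J^i_0]}\mathcal H$ beyond $3$.
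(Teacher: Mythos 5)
Your overall strategy coincides with the paper's: show that every irreducible component of $\mathcal H$ contains a $\tilde B$-fixed point, observe via Corollary~\ref{Borel-fixed points} that these fixed points are exactly the $[J^i_0]$ and already lie in $\mathcal H^{main}$, and then establish that $\mathcal H$ is smooth of dimension~$3$ at each $[J^i_0]$ through the tangent-space formula $T_{[Z]}\mathcal H \cong \Hom^{\G}_S(I_Z, S/I_Z)$. The paper handles the first step simply by citing \cite[Lemma~1.6]{Ter14}, rather than re-deriving it from the attractive $\C^*$-action and Borel's fixed-point theorem as you do (your reconstruction is in fact essentially the content of that reference, and your verification that the $\C^*$-weights $i_\nu-j_\nu$ are all strictly positive because $l<1$ is correct), and it invokes \cite[Proposition~3.5]{B} together with the proof of \cite[Lemma~1.7]{Ter14} to reduce smoothness of the connected scheme $\mathcal H$ to the tangent-space dimension at the $\tilde B$-fixed points. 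Up to this framing, the two arguments are the same.

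The genuine gap is that you leave the tangent-space computation $\dim \Hom^{\G}_S(J^i_0, A/J^i_0) = 3$ as an intention (``I would verify\dots'') and explicitly identify it as ``the main obstacle,'' but this calculation \emph{is} the proof of the corollary; everything else is comparatively formal. Concretely, starting from the $\G$-equivariant generators in Remark~\ref{another}, namely $X_0^{n_{i-1}},\, X_2,\, X_4,\, X_1^{e_i}X_3^{l_i},\, F_1,\dots,F_{b-1}$, one writes any $\phi \in \Hom^{\G}_S(J^i_0, A/J^i_0)$ via scalars $\alpha_1,\dots,\alpha_4,\beta_1,\dots,\beta_{b-1}$ determined by $\phi(X_0^{n_{i-1}})=\alpha_1 X_1^{e_{i-1}}X_3^{l_{i-1}}$, $\phi(X_2)=\alpha_2 X_1$, $\phi(X_4)=\alpha_3 X_3$, $\phi(X_1^{e_i}X_3^{l_i})=\alpha_4 X_0^{n_i}$, $\phi(F_j)=\beta_j$; imposing $\phi(X_0^{q-p}-X_1X_4+X_2X_3)=0$ gives one linear relation among the $\alpha$'s, and one must then show $\beta_j=0$ for all $1\le j\le b-1$. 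That vanishing is where the Hirzebruch--Jung combinatorics (Lemmas~\ref{contain}, \ref{cor of rem}, \ref{bigger}, Remark~\ref{notin2}) actually do the work, splitting into the cases $j>P_i$, $j=P_i$, $j<P_i$, and it is not a formality. Until this is carried out, the smoothness claim, and with it $\mathcal H=\mathcal H^{main}$, remains unproven.
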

\begin{proof}
By \cite[Lemma 1.6]{Ter14}, we see that every closed subset of $\mathcal H$ contains at least one fixed point for the action of $\tilde{B}$. 
Therefore it follows that $\mathcal H$ is connected, since Corollary \ref{Borel-fixed points} implies that every $\tilde{B}$-fixed point is contained in $\mathcal H^{main}$. 
In the following, we show that $\mathcal H$ is smooth. 
Concerning \cite[Proposition 3.5]{B} and the proof of \cite[Lemma 1.7]{Ter14}, it suffices to show that 
$\dim \Hom^{\G}_S(J^i_0, A/J^i_0)=\dim \mathcal H^{main}=3$  
holds for any $[J^i_0] \in \mathcal H^{\tilde{B}}$. 
Recall that we have seen in Remark \ref{another} that 
\[
J^i_0=(X_0^{n_{i-1}},\; X_2,\; X_4,\; X_1^{e_i}X_3^{l_i},\; F_1,\; \dots,\; F_{b-1}).  
\]
Let $\phi \in \Hom^{\G}_{S}(J^i_0, A/J^i_0)$. 
Since $\phi$ is $\G$-equivariant, we have 
\begin{align*}
& \phi(X_0^{n_{i-1}}) =\alpha_1 X_1^{e_{i-1}}X_3^{l_{i-1}},\; \phi(X_2)=\alpha_2 X_1,\;  \phi(X_4) =\alpha_3 X_3,\;  
 \phi(X_1^{e_i}X_3^{l_i}) = \alpha_4 X_0^{n_i},\\
 & 
 \phi(F_j) =\beta_j\quad (1 \leq j \leq b-1)
\end{align*}
for some $\alpha_1,\; \alpha_2,\; \alpha_3,\; \alpha_4,\; \beta_j \in \C$. 
Also, since $\phi$ is a homomorphism of $S$-modules, we have
\begin{align*}
0
 =\phi(X_0^{q-p}-X_1X_4+X_2X_3)
&= \alpha_1 X_0^{q-p-n_{i-1}}X_1^{e_{i-1}}X_3^{l_{i-1}} -\alpha_3 X_1X_3+\alpha_2X_1X_3\\
& =
\begin{cases}
(\alpha_1+\alpha_2-\alpha_3)X_1X_3 \qquad (\mbox{if}\; i=1) \\
(\alpha_2-\alpha_3)X_1X_3 \qquad (\mbox{otherwise})
\end{cases}
\end{align*}
concerning that  $X_0^{q-p-n_{i-1}}X_1^{e_{i-1}}X_3^{l_{i-1}}=X_1X_3 F_{P_{i-1}} \in J^i_0$ holds for every $i >1$. 

In the following, we show that $\beta_j=0$ holds for any 
$1 \leq j \leq b-1$. Set 
\[
d_1=\frac{qmj-\w_{(0,mj)}}{q-p}, \quad d_3=\frac{pmj-\w_{(0,mj)}}{q-p}.
\]
Then we have $F_j=X_0^{\w_{(0,mj)}}X_1^{d_1}X_3^{d_3}$ by Lemma \ref{1dim}. 
Also, note that we have $F_{P_i}=X_0^{q-p-n_i}X_1^{e_i-1}X_3^{l_i-1}$, $f_{\lambda_{mP_i}}=X_1^{e_i}X_3^{l_i}$, and  $f_{\lambda_{mj}}=X_1^{d_1+1}X_3^{d_3+1}$.

\emph{Case 1.} First suppose that $j > P_i$.  Then, taking Lemma \ref{cor of rem} into account, we see that $d_1+1 > e_i$ and $d_3+1 > l_i$ hold. Set $f=X_0^{\w_{(0,mj)}}X_1^{d_1+1-e_i}X_3^{d_3+1-l_i}$. 
Then, 
\[
0=\phi(X_1X_3 F_j-X_1^{e_i}X_3^{l_i}f)=\beta_jX_1X_3-\alpha_4X_0^{n_i}f.
\]  
Since $X_0^{n_i}f \in R^{mj-mP_i}_{q-p}$, it follows from Proposition \ref{long} that $X_0^{n_i}f \in K \subset J^i_0$. 
Concerning $X_1X_3 \in R^0_{q-p}$, we see that $X_1X_3 \notin J^i_0$, since otherwise we get  $\dim (A/J^i_0)_{(q-p,0)}=0$, which contradicts to Corollary \ref{realhilb}. Therefore, we have $\beta_j=0$. 

\emph{Case 2.} Next, if $j=P_i$ then we have 
\[
0=\phi(X_1X_3 F_{P_i}-X_0^{q-p-n_i}X_1^{e_i}X_3^{l_i})=
\beta_{P_i}X_1X_3-\alpha_4X_0^{q-p}, 
\]
and hence $\beta_{P_i}=0$, since $X_1X_3 \notin J^i_0$ and $X_0^{q-p} \in J^i_0$. 

\emph{Case 3.} Lastly, we consider the case where $1 \leq j < P_i$. Following the same line as in Case 1, we see that the condition $j <P_i$ implies $d_1 < e_i$ and $d_3 <l_i$.  Set $n=\w_{(0,mj)}+n_i$, and $c=m(P_i-j)$. 
Then we have $n_{i-1} \leq  n \leq q-p-n_{i-1}+n_i<q-p$ by Lemma \ref{bigger}. Also, we see that $X_1^{e_i-d_1}X_3^{l_i-d_3}=f_{\lambda_c}$. 
Therefore, we have 
\[
0=\phi(X_1^{e_i-d_1}X_3^{l_i-d_3} F_j-X_0^{\w_{(0,mj)}}X_1^{e_i}X_3^{l_i})=\beta_j f_{\lambda_c}-\alpha_4X_0^{n} .
\]
It immediately follows from $n_{i-1} \leq n$ that $X_0^{n} \in J^i_0$, and we are left to show that $f_{\lambda_c} \notin J^i_0$. As in Case 2 of the proof of Theorem \ref{idealO2}, we have \[
n-(n_{j_1}+n_{j_2}+\dots+n_{j_{u_n-1}}+n_{j_{u_n}}) < n_{i-1}
\]
for some $1 \leq j_1,\; j_2,\; \dots,\; j_{u_n} \leq i-1$. Set $P=P_{j_1}+\dots+ P_{j_{u_n}}$, and $\lambda=(n,mP, \w_{(n,mP)})$. We show that $f_{\lambda_c}$ coincides with $f_{\lambda}$. Namely, we show that $c=mP$. First, we have $f_{\lambda_c} \notin (X_1^{e_i}X_3^{l_i})$ by Lemma \ref{cor of rem}. Next, we claim that $f_{\lambda_c} \notin 
K$. Indeed, if we have $f_{\lambda_c} \in K$, then $f_{\lambda_c} \in (F_1,\; \dots,\; F_{b-1})$. On the other hand, we see that for any $1 \leq l \leq b-1$ the degree of $F_l$ with respect to $X_0$ is greater than $0$, which contradicts to $f_{\lambda_c} \in (F_1,\; \dots,\; F_{b-1})$. 
Therefore, we have $c=mP$ concerning the proof of Theorem \ref{idealO2}. It follows from Remark \ref{notin2} that $f_{\lambda_c} \notin \tilde{J}^i_0$, and 
thus we get $\beta_j=0$. 

Therefore we obtain 
$\dim \Hom^{\G}_{S}(J^i_0, A/J^i_0)  \leq 3$,  
and hence the equality. 
\end{proof}

\paragraph{Acknowledgement}  
The author would like to express her gratitude to Professor Yasunari Nagai, her supervisor, for his valuable discussion and continued unwavering encouragement. 
She is grateful to Professor Hajime Kaji for his beneficial  advice and kind support. 
She would also like to thank Professors Daizo Ishikawa, Ryo Ohkawa, and Taku Suzuki for their useful comments and helpful suggestions. 

\newpage

\begin{bibdiv}
\begin{biblist}
\bib{AB}{article}{
   author={Alexeev, Valery},
   author={Brion, Michel},
   title={Moduli of affine schemes with reductive group action},
   journal={J. Algebraic Geom.},
   volume={14},
   date={2005},
   number={1},
   pages={83--117},
 }
\bib{IUJA}{book}{
   author={Arzhantsev, Ivan},
   author={Derenthal, Ulrich},
   author={Hausen, J\"urgen},
   author={Laface, Antonio},
   title={Cox rings},
   series={Cambridge Studies in Advanced Mathematics},
   volume={144},
   publisher={Cambridge University Press, Cambridge},
   date={2015},
   pages={viii+530},
   }
\bib{BH}{article}{
   author={Batyrev, Victor},
   author={Haddad, Fatima},
   title={On the geometry of ${\rm SL}(2)$-equivariant flips},
   language={English, with English and Russian summaries},
   journal={Mosc. Math. J.},
   volume={8},
   date={2008},
   number={4},
   pages={621--646, 846},
   }
\bib{Bec}{article}{
   author={Becker, Tanja},
   title={An example of an ${\rm SL}_2$-Hilbert scheme with
   multiplicities},
   journal={Transform. Groups},
   volume={16},
   date={2011},
   number={4},
   pages={915--938},
  }
\bib{B}{article}{
   author={Brion, Michel},
   title={Invariant Hilbert schemes},
   conference={
      title={Handbook of moduli. Vol. I},
   },
   book={
      series={Adv. Lect. Math. (ALM)},
      volume={24},
      publisher={Int. Press, Somerville, MA},
   },
   date={2013},
   pages={64--117},
   }
\bib{BP}{article}{
   author={Brion, Michel},
   author={Pauer, Franz},
   title={Valuations des espaces homog\`enes sph\'eriques},
   language={French},
   journal={Comment. Math. Helv.},
   volume={62},
   date={1987},
   number={2},
   pages={265--285},
  }
\bib{Bud}{article}{
   author={Budmiger, Jonas},
   title={Deformation of Orbits in Minimal Sheets},
   journal={Dissertation, Universit$\Ddot{a}$t Basel},
   volume={16},
   date={2010},
   number={4},
   pages={915--938},
   }
\bib{CLS}{book}{
   author={Cox, David A.},
   author={Little, John B.},
   author={Schenck, Henry K.},
   title={Toric varieties},
   series={Graduate Studies in Mathematics},
   volume={124},
   publisher={American Mathematical Society, Providence, RI},
   date={2011},
   pages={xxiv+841},
 }
 \bib{F}{book}{
   author={Fulton, William},
   title={Introduction to toric varieties},
   series={Annals of Mathematics Studies},
   volume={131},
   note={The William H. Roever Lectures in Geometry},
   publisher={Princeton University Press, Princeton, NJ},
   date={1993},
   pages={xii+157},
}
\bib{G}{article}{
   author={Ga{\u\i}fullin, S. A.},
   title={Affine toric ${\rm SL}(2)$-embeddings},
   language={Russian, with Russian summary},
   journal={Mat. Sb.},
   volume={199},
   date={2008},
   number={3},
   pages={3--24},
      translation={
      journal={Sb. Math.},
      volume={199},
      date={2008},
      number={3-4},
      pages={319--339},
   },
}

\bib{Knop}{article}{
   author={Knop, Friedrich},
   title={The Luna-Vust theory of spherical embeddings},
   conference={
      title={Proceedings of the Hyderabad Conference on Algebraic Groups},
      address={Hyderabad},
      date={1989},
   },
   book={
      publisher={Manoj Prakashan, Madras},
   },
   date={1991},
   pages={225--249},
}
\bib{K}{book}{
   author={Kraft, Hanspeter},
   title={Geometrische Methoden in der Invariantentheorie},
   language={German},
   series={Aspects of Mathematics, D1},
   publisher={Friedr. Vieweg \& Sohn, Braunschweig},
   date={1984},
   pages={x+308},
  }
\bib{Ku}{article}{
author={Kubota, Ayako},
title={Invariant Hilbert scheme resolution of Popov's $SL(2)$-varieties I: the toric case},
note={preprint},
date={2018},
} 
\bib{Ter}{article}{
   author={Lehn, Christian},
   author={Terpereau, Ronan},
   title={Invariant deformation theory of affine schemes with reductive
   group action},
   journal={J. Pure Appl. Algebra},
   volume={219},
   date={2015},
   number={9},
   pages={4168--4202},
   }
\bib{LV}{article}{
   author={Luna, D.},
   author={Vust, Th.},
   title={Plongements d'espaces homog\`enes},
   language={French},
   journal={Comment. Math. Helv.},
   volume={58},
   date={1983},
   number={2},
   pages={186--245},
}
\bib{Pa}{article}{
   author={Panyushev, D. I.},
   title={Resolution of singularities of affine normal quasihomogeneous
   ${\rm SL}_2$-varieties},
   language={Russian},
   journal={Funktsional. Anal. i Prilozhen.},
   volume={22},
   date={1988},
   number={4},
   pages={94--95},
      translation={
      journal={Funct. Anal. Appl.},
      volume={22},
      date={1988},
      number={4},
      pages={338--339 (1989)},
      },
   }
\bib{Pan}{article}{
   author={Panyushev, D. I.},
   title={The canonical module of an affine normal quasihomogeneous ${\rm
   SL}_2$-variety},
   language={Russian},
   journal={Mat. Sb.},
   volume={182},
   date={1991},
   number={8},
   pages={1211--1221},
   translation={
      journal={Math. USSR-Sb.},
      volume={73},
      date={1992},
      number={2},
      pages={569--578},
   },
}
\bib{Perrin}{article}{
   author={Perrin, Nicolas},
   title={On the geometry of spherical varieties},
   journal={Transform. Groups},
   volume={19},
   date={2014},
   number={1},
   pages={171--223},
  }
\bib{P}{article}{
   author={Popov, V. L.},
   title={Quasihomogeneous affine algebraic varieties of the group ${\rm
   SL}(2)$},
   language={Russian},
   journal={Izv. Akad. Nauk SSSR Ser. Mat},
   volume={37},
   date={1973},
   pages={792--832},
   }
\bib{Ter14}{article}{
   author={Terpereau, R.},
   title={Invariant Hilbert schemes and desingularizations of quotients by
   classical groups},
   journal={Transform. Groups},
   volume={19},
   date={2014},
   number={1},
   pages={247--281},
   }

\bib{Tim}{book}{
   author={Timashev, Dmitry A.},
   title={Homogeneous spaces and equivariant embeddings},
   series={Encyclopaedia of Mathematical Sciences},
   volume={138},
   note={Invariant Theory and Algebraic Transformation Groups, 8},
   publisher={Springer, Heidelberg},
   date={2011},
   pages={xxii+253},
  }

\end{biblist}
\end{bibdiv}

\end{document}